\theoremstyle{remark}
\newtheorem{para}{\bf}[subsection]
\newtheorem{aux}[para]{\it Auxiliary result}
\theoremstyle{definition}
\newtheorem{dfn}[para]{Definition}
\theoremstyle{plain}
\newtheorem{thm}[para]{Theorem}
\newtheorem{lemma}[para]{Lemma}
\newtheorem{cor}[para]{Corollary}
\newtheorem{prop}[para]{Proposition}
\newenvironment{numequation}{\addtocounter{para}{1}
\begin{equation}}{\end{equation}}
\newcommand{\vpi}{{\varpi}}
\newcommand{\Ga}{\Gamma}
\newcommand{\bbG}{{\mathbb G}}
\newcommand{\bbN}{{\mathbb N}}
\newcommand{\bbQ}{{\mathbb Q}}
\newcommand{\fro}{{\mathfrak o}}
\newcommand{\frS}{{\mathfrak S}}
\newcommand{\frU}{{\mathfrak U}}
\newcommand{\frV}{{\mathfrak V}}
\newcommand{\frW}{{\mathfrak W}}
\newcommand{\frX}{{\mathfrak X}}
\newcommand{\frZ}{{\mathfrak Z}}
\newcommand{\cA}{{\mathcal A}}
\newcommand{\cB}{{\mathcal B}}
\newcommand{\cC}{{\mathcal C}}
\newcommand{\cE}{{\mathcal E}}
\newcommand{\cF}{{\mathcal F}}
\newcommand{\cG}{{\mathcal G}}
\newcommand{\cH}{{\mathcal H}}
\newcommand{\cI}{{\mathcal I}}
\newcommand{\cM}{{\mathcal M}}
\newcommand{\cN}{{\mathcal N}}
\newcommand{\cO}{{\mathcal O}}
\newcommand{\cP}{{\mathcal P}}
\newcommand{\cV}{{\mathcal V}}
\newcommand{\sB}{{\mathscr B}}
\newcommand{\sD}{{\mathscr D}}
\newcommand{\sI}{{\mathscr I}}
\newcommand{\sK}{{\mathscr K}}
\newcommand{\sM}{{\mathscr M}}
\newcommand{\sN}{{\mathscr N}}
\newcommand{\sT}{{\mathscr T}}
\newcommand{\tpi}{\tilde{\pi}}
\newcommand{\tpr}{\tilde{\pr}}
\newcommand{\tcF}{\tilde{{\mathcal F}}}
\newcommand{\wD}{\widehat{D}}
\newcommand{\Q}{{\mathbb Q}}
\newcommand{\Z}{{\mathbb Z}}
\newcommand{\Ne}{{\mathbb N}}
\newcommand{\Qp}{{\mathbb Q_p}}
\newcommand{\Zp}{{\mathbb Z_p}}
\newcommand{\ocI}{\overline{\cI}}
\newcommand{\what}{\widehat}
\newcommand{\ot}{\otimes}
\newcommand{\der}{\partial}
\newcommand{\lan}{\langle}
\newcommand{\ran}{\rangle}
\newcommand{\ovP}{\overline{P}}
\newcommand{\Pf}{{\it Proof. }}
\newcommand{\Spec}{{\rm Spec}}
\newcommand{\Spf}{{\rm Spf}}
\newcommand{\Spm}{{\rm Spf}}
\newcommand{\lra}{\longrightarrow}
\newcommand{\tra}{\twoheadrightarrow}
\newcommand{\sta}{\stackrel}
\newcommand{\hra}{\hookrightarrow}
\newcommand{\im}{{\rm im}}
\newcommand{\Mod}{{\rm Mod}}
\newcommand{\ra}{\rightarrow}
\renewcommand{\sp}{{\rm sp}}
\newcommand{\sub}{\subset}
\newcommand{\hsD}{{\widehat{\sD}}} 
\newcommand{\hD}{{\widehat{D}}} 
\newcommand{\hE}{{\widehat{E}}} 
\newcommand{\Sym}{{\rm Sym}}
\newcommand{\car}{\stackrel{\simeq}{\longrightarrow}}
\newcommand{\crofrac}[2]{\genfrac{\langle}{\rangle}{0pt}{}{#1}{#2}}
\newcommand{\parfrac}[2]{\genfrac{(}{)}{0pt}{}{#1}{#2}}
\newcommand{\accfrac}[2]{\genfrac{\{}{\}}{0pt}{}{#1}{#2}}
\newcommand{\uzero}{\underline{0}}
\newcommand{\uone}{\underline{1}}
\newcommand{\uder}{\underline{\partial}}
\newcommand{\lam}{\lambda}
\newcommand{\uxi}{\underline{\xi}}
\newcommand{\umu}{\underline{\mu}}
\newcommand{\unu}{\underline{\nu}}
\newcommand{\pr}{{\rm pr}}
\begin{document}

\title{arithmetic structures for differential operators on formal schemes}
\author{Christine Huyghe}
\address{IRMA, Universit\'e de Strasbourg, 7 rue Ren\'e Descartes, 67084 Strasbourg cedex, France}
\email{huyghe@math.unistra.fr}
\author{Tobias Schmidt}
\address{IRMAR, Universit\'e de Rennes 1, Campus Beaulieu, 35042 Rennes cedex, France}
\email{Tobias.Schmidt@univ-rennes1.fr}
\author{Matthias Strauch}
\address{Indiana University, Department of Mathematics, Rawles Hall, Bloomington, IN 47405, U.S.A.}
\email{mstrauch@indiana.edu}


\begin{abstract} Let $\fro$ be a complete discrete valuation ring of mixed characteristic $(0,p)$ and $\frX_0$ a smooth formal $\fro$-scheme. Let $\frX\ra \frX_0$ be an admissible blow-up. In the first part, we introduce sheaves of differential operators $\sD^\dagger_{\frX,k}$ on $\frX$, for every sufficiently large positive integer $k$, generalizing
Berthelot's arithmetic differential operators on the smooth formal scheme $\frX_0$. The coherence of these sheaves and several other basic properties are proven. In the second part, we study the projective limit sheaf $\sD_{\frX,\infty} = \varprojlim_k \sD^\dagger_{\frX,k}$ and introduce its abelian category of coadmissible modules. The inductive limit of the sheaves $\sD_{\frX,\infty}$, over all admissible blow-ups $\frX$, is a sheaf $\sD_{\langle \frX_0 \rangle}$ on the Zariski-Riemann space of $\frX_0$, which gives rise to an abelian category of coadmissible modules. Analogues of Theorems A and B  are shown to hold in each of these settings, i.e., for $\sD^\dagger_{\frX,k}$, $\sD_{\frX,\infty}$, and $\sD_{\langle \frX_0 \rangle}$.
\end{abstract}

\maketitle

\tableofcontents

\section{Introduction}
Let $\fro$ be a complete discrete valuation ring of mixed
 characteristic $(0,p)$, with uniformizer $\vpi$ and fraction field $L$.
 In \cite{PSS4} some of us (together
 with D. Patel) have introduced sheaves of arithmetic
differential operators $\sD^{\dagger}_{n,k}$ on certain semistable formal models $\frX_n$ of the rigid analytic projective line over
$L$ (for positive integers $k \ge n$). A key result of \cite{PSS4} is that $\frX_n$ is $\sD^{\dagger}_{n,k}$-affine.
When $n=0$, the formal model $\frX_0$ is formally smooth over
$\Spf(\fro)$, and the sheaf $\sD^{\dagger}_{0,0}$ equals
Berthelot's sheaf of arithmetic differential operators, as defined
in \cite{BerthelotDI}, and $\sD^{\dagger}_{0,0}$-affinity was known
before by a result of one of us \cite{NootHuyghe09}.

\vskip8pt

In this paper we generalize the construction of \cite{PSS4} and define and study sheaves of arithmetic differential operators on arbitrary admissible formal blow-ups of an arbitrary given smooth formal scheme $\frX_0$ over $\fro$.

\vskip8pt

At the moment, the main application of this generalization is the
localization theorem of \cite{HPSS}: in this context $\frX_0$ is
the smooth model of the flag variety of a connected split reductive
group $\bbG$ over $L$, and the main result of \cite{HPSS}
establishes then an anti-equivalence between the category of
admissible locally analytic $\bbG(L)$-representations (with trivial
character) \cite{ST03} and the category of so-called coadmissible
equivariant arithmetic $\sD$-modules on the system of all formal models of the rigid analytic flag variety of $\bbG$.

\vskip8pt

In the following we describe the construction and the main results of this article. Let $\frX_0$ be a smooth formal scheme over $\fro$ and let $\sD^{(m)}_{\frX_0}$ be Berthelot's sheaf of arithmetic differential operators of level $m$ on $\frX_0$
as defined in \cite{BerthelotDI}. For any number $k\geq 0$, we have the subalgebra $\sD^{(k,m)}_{\frX_0}$
consisting of those differential operators which are generated, locally where we have coordinates $x_1,...,x_M$ and corresponding derivations
$\der_1,...,\der_M$, by operators of the form

$$\vpi^{k|\unu|}\uder^{\lan \unu \ran_{(m)}}  = \vpi^{k(\nu_1 + \ldots + \nu_M)} \prod_{l=1}^M \partial_l^{\lan \nu_l \ran_{(m)}} \;, \;\; \mbox{where } \; \partial_l^{\lan \nu_l \ran_{(m)}} = \frac{\lfloor \frac{\nu_l}{p^m} \rfloor !}{\nu_l !} \partial_l^{\nu_l} \;.$$

\vskip8pt

Given an admissible blow-up $\pr: \frX\ra \frX_0$, we let $k_\frX$ be the minimal $k$ such that $\varpi^k\cO_\frX\subset\cI$
for any coherent ideal sheaf $\cI$ on $\frX_0$ whose blow-up is $\frX$. Our first basic result, cf. \ref{ringst1}, says that

$$\sD^{(k,m)}_{\frX}:=\pr^* \sD^{(k,m)}_{\frX_0}=\cO_{\frX}\otimes_{\pr^{-1}\cO_{\frX_0}} \pr^{-1}\sD^{(k,m)}_{\frX_0}$$

\vskip8pt

is naturally a sheaf of rings on $\frX$ whenever  $k\geq k_\frX$. We define

$$
 \hsD^{(k,m)}_{\frX}=\varprojlim_i \sD^{(k,m)}_{\frX}/\vpi^i \hskip5pt \textrm{ and }\hskip5pt
\sD^{\dagger}_{\frX,k}=\varinjlim_m \hsD^{(k,m)}_{\frX}\ot \bbQ \;,
$$

\vskip8pt

and call these sheaves {\it arithmetic differential operators of congruence level\footnote{The terminology is motivated by the relation to congruence subgroups in reductive groups in the case of formal models of flag varieties, cf. \cite{HPSS}.} $k$} on
$\frX$.

\vskip8pt

The structure theory of these differential
operators goes largely parallel to the classical smooth
setting (when $\frX = \frX_0$ and $k=0$), as developed by
Berthelot \cite{BerthelotDI}. In particular, the sheaves
$\sD^{(k,m)}_{\frX}, \hsD^{(k,m)}_{\frX}$ and
$\sD^{\dagger}_{\frX,k}$ are sheaves of coherent rings on
$\frX$. We then show that Cartan's theorems A and B hold for
the sheaf $\sD^{\dagger}_{\frX,k}$, when restricted to an affine open
subscheme $\frU$ of $\frX$, cf. \ref{easy_thmAB2}. This means that
the global sections functor $\Gamma(\frU,-)$ furnishes an
equivalence of categories between the coherent modules over $\sD^{\dagger}_{\frU,k}$ and over $\Gamma(\frU,\sD^{\dagger}_{\frX,k})$, respectively.
A key result (the 'invariance theorem') shows that in case of a morphism $\frX' \ra \frX$ between admissible blow-ups of $\frX_0$, the categories of coherent modules over $\sD^\dagger_{\frX',k}$ and over $\sD^\dagger_{\frX,k}$, respectively, are naturally equivalent, cf. \ref{prop-exactdirectimage}.
As a consequence, we obtain global versions of theorem A and B on the whole blow-up $\frX$ provided the base $\frX_0$ is affine, cf. \ref{thmA}.
\vskip8pt

Our next objective is to pass to the projective limit $$\sD_{\frX,\infty} = \varprojlim_k \sD^\dagger_{\frX,k}$$ and to define the category $\cC_{\frX}$ of
 {\it coadmissible $\sD_{\frX,\infty}$.modules}. This is a full abelian subcategory of the category of all $\sD_{\frX,\infty}$-modules. Its construction relies on the fact that the ring of local sections  $\Gamma(\frV,\sD_{\frX,\infty})$ over an open affine $\frV\subseteq\frX$ is a Fr\'echet-Stein algebra. Our terminology (as well as the general philosophy behind these constructions) goes back to the fundamental work of P. Schneider and J. Teitelbaum who introduced the concept of a Fr\'echet-Stein algebra and defined and studied the category of coadmissible modules over such a ring, cf. \cite{ST03}. In fact, we show that the global sections functor $\Gamma(\frV,-)$ induces an equivalence of categories
between $\cC_{\frV}$ and the category of coadmissible  $\Gamma(\frV,\sD_{\frX,\infty})$-modules, cf. \ref{thm_A_for_frX_infty}.
Moreover, any coadmissible $\sD_{\frX,\infty}$-module has vanishing higher cohomolgy. These results should be regarded as Cartan's theorems A and B in this setting, cf. \ref{thm_A_for_frX_infty} and \ref{thm_B_for_frX_infty}.

\vskip8pt

Finally we consider the Zariski-Riemann space of $\frX_0$, i.e., the projective limit $\lan \frX_0 \ran = \varprojlim \frX$ of all admissible formal blow-ups $\frX \ra \frX_0$, cf. \cite{BoschLectures}. One can then form the inductive limit $$\sD_{\langle \frX_0 \rangle} = \varinjlim_\frX \sp_\frX^{-1}\sD_{\frX,\infty} \;,$$ where $\sp_\frX: \lan \frX_0 \ran \ra \frX$ is the projection map. This is a sheaf of rings on $\lan \frX_0 \ran$ and we define the abelian category of coadmissible $\sD_{\langle \frX_0 \rangle}$-modules. We establish analogues of Theorems A and B in this setting, cf. \ref{thm_AB_for_frX_infty}.

\vskip8pt

After we developed much of the theory presented here (which began with \cite{PSS2,PSS4}) we became aware of the article \cite{Shiho15} by A. Shiho, where he introduces sheaves of $p$-adic differential operators of negative level $-m$, as they are called there. These are closely related to the sheaves considered here, where the congruence level $k$ corresponds to Shiho's level $-m$. We are currently investigating the implications that Shiho's work has in our context.

\vskip8pt

We also want to mention that K. Ardakov and S. Wadsley are developing a theory of $D$-modules on general smooth rigid-analytic
spaces, cf. \cite{ArdakovICM, AWDcapII, AWDcapI}. In their work they consider deformations of crystalline
differential operators (as in \cite{AW}), whereas we take as a starting point deformations of Berthelot's arithmetic differential operators.
Though we have not carried this out in the present paper, it is not too difficult to see that the category of coadmissible
$\sD_{\langle \frX_0 \rangle}$-modules as defined here, when pulled back to the site of the rigid-analytic space of classical points, coincides with the corresponding category studied in \cite{ArdakovICM, AWDcapII,
AWDcapI}.
\vskip8pt

{\it Acknowledgments.} {T.S. would like to acknowledge support of the Heisenberg programme of Deutsche Forschungsgemeinschaft (SCHM 3062/1-1). M.S. is grateful for the hospitality and support of the following institutions where work on this project has been accomplished: Institut de Recherche Math\'ematique Avanc\'ee (IRMA) of the University
of Strasbourg, Centre Henri Lebesgue, Institut de
Recherche Math\'ematique de Rennes (IRMAR).

\vskip8pt

{\it Notations and Conventions.} \label{notations}
We denote by $\vpi$ a uniformizer of the complete discrete valuation ring $\fro$, and we let
 $|.|_p$ be the absolute value on $L=Frac(\fro)$ which is normalized by $|p|_p = p^{-1}$. Throughout this paper $\frS = \Spf(\fro)$.
A formal scheme $\frX$ over $\frS$ such that $\vpi \cO_{\frX}$ is an ideal of definition and which is locally noetherian is called a {\it $\frS$-formal scheme}.
If the $\frS$-formal scheme $\frX$ is smooth over $\frS$ we denote by $\sT_{\frX}$ its relative tangent sheaf. A coherent sheaf of ideals $\cI\subset \cO_{\frX}$ is called {\it open} if for any open $\frU \sub \frX$ the restriction of $\cI$ to $\frU$ contains  $\varpi^k \cO_{\frU}$ (for some $k \in \bbN$ depending on $\frU$). A scheme which arises
from blowing up an open ideal sheaf on $\frX$ will be called {\it an admissible blow-up} of $\frX$. For an integer $i \geq 0$ we also denote $X_i$ the scheme

$$X_i= \frX\times_{\frS}\Spec \left(\fro/\varpi^{i+1}\fro\right) \;,$$

\vskip8pt

where the Cartesian product is taken in the category of locally ringed spaces.
Without further mentioning, all occuring modules will be left modules.
We let $\bbN=\{0,1,2,...\}$ be the set of non-negative integers.

\vskip12pt

\section{Arithmetic differential operators with congruence level}
\label{new_sheaves}
Let $\frX_0$ be a smooth and separated $\frS$-formal scheme, and let
$$X_{0,i}=\frX_0\times_{\frS}\Spec \left(\fro/\varpi^{i+1}\fro\right) \;.$$
Let us write $\cI_{\Delta}$ for the diagonal ideal of the closed immersion of formal schemes
$\frX_0 \hra \frX_0 \times \frX_0$ and $\cI_{\Delta,i}$ for the diagonal ideal : $X_{0,i}\hra X_{0,i} \times X_{0,i}$.
We also introduce $\frX_{0,\Q}$, the generic fiber of $\frX_0$. It is a rigid analytic space over $L$. We write $\ocI_{\Delta}$ for
 the diagonal ideal of the closed immersion of analytic spaces $\frX_{0,\Q} \hra \frX_{0,\Q}\times \frX_{0,\Q}$. We have
specialization maps $\frX_{0,\Q}\ra \frX_0$ and $\frX_{0,\Q}\times \frX_{0,\Q}\ra \frX_0 \times
\frX_0$ which we denote by $\sp$. There is a canonical isomorphism $\sp^*(\cI_{\Delta})\simeq \ocI_{\Delta} $.
Finally, the relative tangent sheaf $\sT_{\frX_0}$ is a locally free $\cO_{\frX_0}$-module of finite rank
equal to the relative dimension $M$ of $\frX_0$ over $\frS$.

\subsection{The main construction}\label{main_construction}
%
\subsubsection{Definitions over $\frX_0$}\label{def_X0}
 The sheaf of relative differential operators of $X_{0,i}$ over $\fro/\varpi^{i+1}$, as introduced in \cite[16.8]{EGA_IV_4}, will be denoted by $\sD_{X_{0,i}}$. It naturally acts on the structure sheaf $\cO_{X_{0,i}}$.
Its subsheaf $\sD_{X_{0,i}}^n$ of differential operators of order $\leq n$ is defined
by
\begin{numequation}\label{defDi} \sD_{X_{0,i}}^n=\cH om_{\cO_{X_{0,i}}}(\cO_{X_{0,i}\times
X_{0,i}}/\cI_{\Delta,i}^{n+1},\cO_{X_{0,i}}) \;.\end{numequation}
It is a sheaf of locally free $\cO_{X_{0,i}}$-modules of finite type and we have $\sD_{X_{0,i}}=\varinjlim_{n}  \sD_{X_{0,i}}^n$.

For fixed $n$, the projective limit $\varprojlim_i \sD_{X_{0,i}}^n$ is a locally free $\cO_{\frX_0}$-module of finite type. Taking the inductive limit produces a sheaf of rings
$$\sD_{\frX_0}=\varinjlim_n \left( \varprojlim_i \sD_{X_{0,i}}^n\right)$$
on $\frX_0$. It naturally acts on the structure sheaf
 $\cO_{\frX_0}$ and can be described in local coordinates as follows.
Let $\frU_{0}\subseteq \frX_0$ be an open affine endowed with \'etale coordinates
$x_1,\ldots,x_M$ and corresponding set of derivations $\der_1,\ldots, \der_M$. Write $\der_l^{[\nu]}\in \sD_{\frU_{0}}$ for the differential operator defined by $\nu!\der_l^{[\nu]}=\der_l^{\nu}$, and put $\unu=(\nu_1,\ldots,\nu_M)\in\bbN^M$, $\uder^{[\unu ]}=\prod_{l=1}^M \der_{l}^{[\nu_l]}$. One has the following description, involving finite sums,
\begin{gather*}\sD_{\frX_0}(\frU_{0})=\left\{\sum^{<\infty}_{\unu}a_{\unu}\uder^{[ \unu ]}\,|\, a_{\unu}\in
\cO_{\frX_0}(\frU_0)\right\} \;.\end{gather*}

Since $\frX_{0}$ is a smooth $\frS$-formal scheme, one also has the usual sheaves of arithmetic differential operators defined by
Berthelot in \cite{BerthelotDI}. In particular, for a fixed non-negative $m$,
$\sD^{(m)}_{\frX_{0}}$ will denote the sheaf of differential operators over $\frX_{0}$ of level $m$. Taking
$\frU_0$ to be endowed with local coordinates $x_1,\ldots,x_M$ as before,
one introduces the following differential operators
\begin{numequation}\label{notbracket}
\der_{l}^{\lan \nu \ran}=q_{\nu}!\der_{l}^{[\nu]} \;,
\end{numequation}
 where $q_{\nu}$ denotes the quotient of the euclidean division
of $\nu$ by $p^m$.
For $\unu=(\nu_1,\ldots,\nu_M)\in\bbN^M$, we also define
$\uder^{\lan \unu \ran}=\prod_{l=1}^M \der_{l}^{\lan \nu_l \ran}$.
Restricted to $\frU_{0}$, the sheaf $\sD^{(m)}_{\frX_{0}}$ is a sheaf of free $\cO_{\frU_{0}}$-modules
with basis given by the elements $\uder^{\lan \unu \ran}$. Berthelot introduces also the following
sheaves over $\frX_0$
$$ \hsD_{\frX_0}^{(m)}=\varprojlim_i \sD^{(m)}_{X_{0}}/\vpi^i \hskip5pt \textrm{ and }\hskip5pt
\sD^{\dagger}_{\frX_0}=\varinjlim_m \hsD_{\frX_0}^{(m)}\ot \bbQ \;.$$\\

Let $k$ be a non-negative integer, called a {\it congruence level}. We define subsheaves $\sD^{(k,m)}_{\frX_0}$
of subalgebras of the previous sheaves $\sD^{(m)}_{\frX_0}$ in the following way.
 Take $\frU_0$ endowed with local coordinates $x_1,\ldots,x_M$ as before. Then the sheaf $\sD^{(k,m)}_{\frX_0}$
is free over $\frU_0$ as a sheaf of $\cO_{\frU_0}$-modules with a basis given by the elements $\vpi^{k|\unu|}\uder^{\lan \unu \ran}$.
In particular, one has
\begin{numequation}\label{coord1}\sD^{(k,m)}_{\frX_0}(\frU_{0})=
\left\{\sum^{<\infty}_{\unu}\vpi^{k|\unu|}a_{\unu}\uder^{\lan \unu \ran}\,|\, a_{\unu}\in
\cO_{\frX_0}(\frU_0)\right\} \;.\end{numequation}
It is easy to check that these sheaves define a subsheaf of $\sD^{(m)}_{\frX_0}$ and we call them
{\it level $m$ arithmetic differential operators of congruence level $k$ on $\frX_0$}. We then define as before
$$  \sD^{(k,m)}_{X_{0,i}}=\sD^{(k,m)}_{\frX_{0}}/\varpi^{i+1}, \;
\hsD_{\frX_0}^{(k,m)}=\varprojlim_i \sD^{(k,m)}_{X_{0,i}} \hskip5pt \textrm{ and }\hskip5pt
\sD^{\dagger}_{\frX_0,k}=\varinjlim_m \hsD_{\frX_0}^{(k,m)}\ot \bbQ \;.$$

Of course, for $k=0$ one recovers
the sheaves of Berthelot $\sD^{(0,m)}_{\frX_{0}}=\sD^{(m)}_{\frX_{0}}$. Note also, by definition,
the sheaf $\sD^{\dagger}_{\frX_0,k}$ is a sheaf of $\Q$-algebras.

\vskip5pt
 We have the following
description over an affine open $\frU_0$ of $\frX_0$
endowed with coordinates $x_1,\ldots,x_M$,

\begin{gather*} \label{ddagkX0}
\sD^{\dagger}_{\frX_0,k}(\frU_0)=\left\{\sum_{\unu}\vpi^{k|\unu|}a_{\unu}\uder^{[ \unu ]}\,|\, a_{\unu}\in
\cO_{\frX_0,\Q}(\frU_0),
\textrm{ and } \exists C>0, \eta<1 \, | \, |a_{\unu}|< C \eta^{|\unu|} \right\},
\end{gather*}
where $|\cdot|$ is any Banach norm on the affinoid algebra $\cO_{\frX_0,\Q}(\frU_0)$.

\vskip8pt

\subsubsection{Definitions over $\frX_{0,\Q}$}\label{def_X0K}

We refer to~\cite[1.1.1]{ChiarLeStum} for a basic discussion of the sheaf of algebraic differential operators over
a smooth rigid analytic space such as $\frX_{0,\Q}$. It is defined in the following way, analogously
to definition given in \cite[16.8]{EGA_IV_4} which we have recalled in~\ref{def_X0}. As before, $\ocI_{\Delta}$ denotes the diagonal ideal of the immersion
$\frX_{0,\Q} \hra \frX_{0,\Q}\times \frX_{0,\Q}$. One puts
\begin{numequation}\label{defDrig}
 \sD_{\frX_{0,\Q}}^n=
\cH om_{\cO_{\frX_{0,\Q}}}(\cO_{\frX_{0,\Q}\times \frX_{0,\Q}}/\ocI_{\Delta}^{n+1},\cO_{\frX_{0,\Q}}),
\end{numequation}
which is a sheaf of locally free $\cO_{\frX_{0,\Q}}$-modules of finite type and $ \sD_{\frX_{0,\Q}}=\varinjlim_n \sD_{\frX_{0,\Q}}^n$. The latter is a sheaf of
rings acting naturally on the structure sheaf $\cO_{\frX_{0,\Q}}$ of the rigid analyic space $\frX_{0,\Q}$.

\vskip8pt

 Suppose now that  $$\pr:\frX\rightarrow \frX_0$$ is an admissible blow-up of the formal scheme $\frX_0$
defined by a coherent sheaf of open ideals
$\cI$ of $\cO_{\frX_0}$ containing $\vpi^k$. We remark that the ideal $\cI$ is not determined by the blow-up
$\pr:\frX\rightarrow \frX_0$, i.e., different open ideal sheaves can give rise to isomorphic blow-ups. (For example, the blow-ups
defined by $\cI$ and by $\varpi^n\cI$ are isomorphic as formal schemes over $\frX_0$. The same holds for the ideals $\cI$ and $\cI^r$.)
In the sequel we denote by $k_{\cI}$ the minimal $k$ such that $\vpi^k\in \cI$ and put
\begin{numequation}\label{defkX} k_{\frX}=\min\{k_{\cI}\, | \text{ $\frX$ is~the~blowing-up~ of~$\cI$ on $\frX_0$} \}.
\end{numequation}
Let us define now for $k \geq k_{\frX}$ the $\cO_{\frX}$-module
$$ \sD_{\frX}^{(k,m)}=\pr^* \sD_{\frX_0}^{(k,m)} \;.$$

We have the following commutative diagram of ringed spaces, involving the specialization maps
$\frX_{0,\Q} \ra \frX_0$ and $\frX_\Q \ra \frX$, which we denote both by $\sp$, and the isomorphism
$\tpr: \frX_\Q \stackrel{\simeq}{\lra}\frX_{0,\Q}$ induced by the morphism $\pr$ on generic fibres:

$$\xymatrix {\frX_\Q \ar@{->}[r]^{\sim}_{\tpr}\ar@{->}[d]^{\sp}  & \frX_{0,\Q}\ar@{->}[d]^{\sp}\\
          \frX \ar@{->}[r]_{\pr}& \frX_0. }$$

Note that $\sD_{\frX_{\Q}}\simeq \tpr^*\sD_{\frX_{0,\Q}}$ for the corresponding sheaves of differential operators on $\frX_{\Q}$ respectively $\frX_{0,\Q}$, as follows from the definition of these sheaves.

 \begin{lemma}\label{key-lem3}\hskip0pt
  \begin{enumerate} \item There is a canonical isomorphism
              $\sD_{\frX_{0,\Q}}\simeq \sp^*\sD_{\frX_0}$ inducing an injective morphism of sheaves of rings
                         $$\sD_{\frX_0} \hra \sp_*\sD_{\frX_{0,\Q}} \;.$$
                        \item There is a canonical injective map of sheaves of abelian groups
                      $$ \sD_{\frX}^{(k,m)} \hra \sp_* \sD_{\frX_{\Q}} \;,$$ which becomes an isomorphism upon tensoring with $\Q$.
                               \end{enumerate}
\end{lemma}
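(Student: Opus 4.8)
The two parts are essentially the same statement, with part (i) being the special case $k=m=0$, $\frX = \frX_0$ (after dividing out no $\vpi$-powers) — though strictly one proves (i) first and then bootstraps to (ii) via pullback along $\pr$. So let me describe the strategy uniformly. The key input is the comparison of $\sD_{\frX_0}$ and $\sD_{\frX_{0,\Q}}$ in local coordinates. On an affine open $\frU_0 \subseteq \frX_0$ with \'etale coordinates $x_1,\dots,x_M$, the sheaf $\sD_{\frX_0}$ is free over $\cO_{\frX_0}$ on the basis $\{\uder^{[\unu]}\}$, and the same computation (using that the formation of the sheaves of principal parts $\cO/\cI_\Delta^{n+1}$ commutes with the analytification $\sp$, as recorded just before the lemma via $\sp^*(\cI_\Delta) \simeq \ocI_\Delta$) shows that $\sD_{\frX_{0,\Q}}$ is free over $\cO_{\frX_{0,\Q}}$ on the \emph{same} basis $\{\uder^{[\unu]}\}$. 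Therefore first I would check that the natural map $\sp^*\sD_{\frX_0} \to \sD_{\frX_{0,\Q}}$, sending $\uder^{[\unu]} \mapsto \uder^{[\unu]}$, is an isomorphism: locally both sides are free $\cO_{\frX_{0,\Q}}$-modules on matching bases, and the maps are compatible with the ring structure because the commutation relations $[\der_l, x_j] = \delta_{lj}$ and the Leibniz-type identities defining the multiplication of the $\uder^{[\unu]}$ are purely formal and identical on both sides.

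For the injectivity of $\sD_{\frX_0} \hookrightarrow \sp_*\sD_{\frX_{0,\Q}}$, I would argue as follows. The map is the adjoint of the isomorphism $\sp^*\sD_{\frX_0} \xrightarrow{\sim} \sD_{\frX_{0,\Q}}$, i.e., it is $\sD_{\frX_0} \to \sp_*\sp^*\sD_{\frX_0} \xrightarrow{\sim} \sp_*\sD_{\frX_{0,\Q}}$. So it suffices that the unit $\sD_{\frX_0} \to \sp_*\sp^*\sD_{\frX_0}$ be injective, and since $\sD_{\frX_0}$ is, locally, a filtered union of finite free $\cO_{\frX_0}$-modules, it is enough that $\cO_{\frX_0} \to \sp_*\cO_{\frX_{0,\Q}}$ be injective. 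On an affine $\frU_0 = \Spf A$ this is the map $A \to A \otimes_{\fro} L = A_{\Q}$ from the $\vpi$-adically complete, $\fro$-flat (as $\frX_0$ is smooth over $\frS$, hence $A$ is $\fro$-torsion-free) ring $A$ to its generic fibre, which is manifestly injective. Passing to the inductive limit over $n$ preserves injectivity, giving part (i).

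For part (ii): by definition $\sD_{\frX}^{(k,m)} = \pr^*\sD_{\frX_0}^{(k,m)}$, and $\sD_{\frX_0}^{(k,m)}$ sits inside $\sD_{\frX_0}^{(m)} \subseteq \sD_{\frX_0}$ (the inclusion of the free $\cO_{\frU_0}$-module on $\{\vpi^{k|\unu|}\uder^{\lan\unu\ran}\}$ into the one on $\{\uder^{[\unu]}\}$). Applying $\pr^*$ and then $\sp_*$, and using the commutative diagram relating $\sp$, $\pr$, $\tpr$, one obtains a map
$$\sD_{\frX}^{(k,m)} = \pr^*\sD_{\frX_0}^{(k,m)} \lra \pr^*\sD_{\frX_0} \otimes \Q \simeq \pr^*\sp_*\sD_{\frX_{0,\Q}} \otimes \Q \simeq \sp_*\tpr^*\sD_{\frX_{0,\Q}} \otimes \Q \simeq \sp_*\sD_{\frX_\Q} \otimes \Q,$$
using part (i), the identity $\sp \circ \tpr = \pr \circ \sp$ on generic fibres (which gives a base-change comparison $\pr^*\sp_* \to \sp_*\tpr^*$ that is an isomorphism here because $\tpr$ is an isomorphism of rigid spaces), and $\sD_{\frX_\Q} \simeq \tpr^*\sD_{\frX_{0,\Q}}$. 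Composing with $\sp_*\sD_{\frX_\Q} \to \sp_*\sD_{\frX_\Q}\otimes\Q$ gives the desired map $\sD_{\frX}^{(k,m)} \to \sp_*\sD_{\frX_\Q}$. After tensoring the source with $\Q$, the congruence-level and level-$m$ decorations disappear upon inverting $p$ and the first arrow above becomes an isomorphism $\pr^*\sD_{\frX_0}^{(k,m)}\otimes\Q \simeq \pr^*\sD_{\frX_0}\otimes\Q$; combined with all the other isomorphisms this shows $\sD_{\frX}^{(k,m)}\otimes\Q \xrightarrow{\sim} \sp_*\sD_{\frX_\Q}$.

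For \emph{injectivity} of the integral map $\sD_{\frX}^{(k,m)} \hookrightarrow \sp_*\sD_{\frX_\Q}$ — and this is the point requiring care, since $\pr^*$ is only right exact — I would work locally on $\frX$. Restricting to an affine $\frV \subseteq \frX$ small enough that $\pr(\frV)$ lies in a coordinatized affine $\frU_0$, the sheaf $\sD_{\frX}^{(k,m)}|_{\frV} = \cO_{\frX}|_{\frV} \otimes_{\pr^{-1}\cO_{\frX_0}} \pr^{-1}\sD_{\frX_0}^{(k,m)}|_{\frV}$ is the free $\cO_{\frX}|_{\frV}$-module on the basis $\{\vpi^{k|\unu|}\uder^{\lan\unu\ran}\}$ (pulled back), because $\sD_{\frX_0}^{(k,m)}|_{\frU_0}$ is free over $\cO_{\frX_0}|_{\frU_0}$ and pullback of a free module is free of the same rank; in particular $\sD_{\frX}^{(k,m)}|_{\frV}$ is $\vpi$-torsion-free, being free over the $\vpi$-torsion-free ring $\cO_{\frX}(\frV)$ (recall $\frX$ is $\fro$-flat, since admissible blow-ups of $\fro$-flat formal schemes are $\fro$-flat). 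Hence the map $\sD_{\frX}^{(k,m)} \to \sD_{\frX}^{(k,m)}\otimes\Q \xrightarrow{\sim} \sp_*\sD_{\frX_\Q}$ is injective, the first arrow being injective precisely by $\vpi$-torsion-freeness. The only genuine obstacle, then, is verifying the base-change isomorphism $\pr^*\sp_*\sD_{\frX_{0,\Q}} \xrightarrow{\sim} \sp_*\tpr^*\sD_{\frX_{0,\Q}}$ and compatibility of all the identifications with the ring structures; both follow from the fact that $\tpr$ is an isomorphism (so $\tpr^*$ is exact and $\sp_*\tpr^* = \sp_*\tpr_*^{-1}$ on the nose) and from the elementary local freeness statements above, so no deep input beyond part (i) is needed.
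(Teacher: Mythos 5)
Your argument is correct and rests on the same pillars as the paper's proof: reduction to an affine $\frU_0$ with \'etale coordinates, the explicit free bases $\uder^{[\unu]}$, $\uder^{\lan\unu\ran}$, $\vpi^{k|\unu|}\uder^{\lan\unu\ran}$, the identification $\sp^*(\cI_{\Delta})\simeq\ocI_{\Delta}$ for the ring structures, and $\fro$-torsion-freeness of $\cO_{\frX_0}$ and $\cO_{\frX}$ for injectivity. The only structural difference is in (ii): the paper obtains the map $\pr^*\sD_{\frX_0}\ra\sp_*\sD_{\frX_\Q}$ by adjunction from the isomorphism $\sp^*\pr^*\sD_{\frX_0}\simeq\tpr^*\sD_{\frX_{0,\Q}}\simeq\sD_{\frX_\Q}$ and then checks injectivity directly on the integral basis (the basis element $\vpi^{k|\unu|}\uder^{\lan\unu\ran}$ maps to $\vpi^{k|\unu|}(q_{\unu}!/\unu!)\uder^{\unu}$, and $\cO_{\frU}\hra\cO_{\frU}\ot\Q$), whereas you route everything through a base-change comparison $\pr^*\sp_*\sD_{\frX_{0,\Q}}\ra\sp_*\tpr^*\sD_{\frX_{0,\Q}}$ and deduce injectivity from $\vpi$-torsion-freeness plus the rational isomorphism. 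One caveat: the base-change map being an isomorphism does \emph{not} follow merely from $\tpr$ being an isomorphism of rigid spaces (for a general abelian sheaf it would fail); what makes it work is exactly the local computation that both sides are, over a small affine $\frV\subseteq\pr^{-1}(\frU_0)$, the free $\cO_{\frX,\Q}(\frV)$-module of finite sums in the $\uder^{[\unu]}$ — i.e., that $\sD_{\frX_{0,\Q}}$ is an inductive limit of coherent sheaves whose $\sp_*$ is computed by $\ot\Q$ of the formal-model sheaves. You do gesture at this via your ``elementary local freeness statements,'' so the gap is one of emphasis rather than substance, but in a written version you should make that local verification explicit (at which point your proof and the paper's essentially coincide); the paper's adjunction-plus-direct-basis-check packaging avoids having to formulate the base-change statement at all.
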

\begin{proof} We have a canonical map
$ \cO_{\frX,\Q} \ra \sp_* \cO_{\frX_{\Q}} $, that is locally an isomorphism
over the formal scheme $\frX$ (resp. $\frX_0$) and is thus an isomorphism of sheaves.
 Let us begin by (i).
We have a canonical map
$\sD_{\frX_{0,\Q}}\ra \sp^*\sD_{\frX_0}$. To check that this is an isomorphism, we can work locally on $\frX_0$ and
		assume that $\frX_0$ is affine, endowed with local coordinates $x_1,\ldots,x_M$. Then, using
notations ~\ref{notbracket} we see that both sheaves are free
$\cO_{\frX_{0,\Q}}$-modules with basis $\uder^{\unu }$ and $\uder^{[ \unu ]}$ respectively. The previous map
takes $\uder^{\unu }$ to $\uder^{[ \unu ]}\unu !$ and is an isomorphism of sheaves of $\cO_{\frX_{0,\Q}}$-modules.
We obtain the second map of (i) by adjunction and its injectivity follows again using local coordinates.
As in \cite[16.8.9]{EGA_IV_4}, the ring structure on both sheaves makes use of the descriptions~\ref{defDi}, resp. ~\ref{defDrig}.
That the map $\sD_{\frX_0} \hra \sp_*\sD_{\frX_{0,\Q}}$ is compatible with ring structures comes then from
the fact that $\sp^*(\cI_{\Delta})\simeq \ocI_{\Delta}$ and
$$\sp^*(\cO_{\frX_0}\ot\cO_{\frX_0}/\cI_{\Delta}^{n+1})\simeq \cO_{\frX_{0,\Q}}\ot\cO_{\frX_{0,\Q}}/\ocI_{\Delta}^{n+1} \;.$$

Let us prove (ii). From the previous isomorphism, we get an isomorphism
$$ \sp^* \pr^* \sD_{\frX_{0}}\simeq \tpr^* \sp^* \sD_{\frX_{0}} \simeq \tpr^* \sD_{\frX_{0,\Q}} \simeq \sD_{\frX_{\Q}} \;,$$
that induces a canonical map
$ \pr^*\sD_{\frX_{0}} \ra \sp_*\sD_{\frX_{\Q}} \;.$ Composing with the map
$\sD^{(k,m)}_{\frX_{0}}\ra \sD_{\frX_{0}}$ we get a map
$$ \pr^*\sD^{(k,m)}_{\frX_{0}} \ra \sp_*\sD_{\frX_{\Q}} \;.$$ It is a local question to prove that this map is
injective. Let $\frU\subset \frX_0$ be an open affine formal scheme of $\frX_0$ such that
$\frU \subset \pr^{-1}\frU_0$, with $\frU_0$ endowed with local coordinates $x_1, \ldots,x_M$. Then these
local coordinates give local coordinates always denoted $x_1, \ldots,x_M$ over the generic fiber $\frU_\Q$ of $\frU$.
Using notations  ~\ref{notbracket}, the sheaf $\pr^*\sD^{(k,m)}_{\frU_{0}}$ is a free $\cO_{\frU}$-module with
basis the operators $\varpi^{k|\unu|}\uder^{\lan \unu \ran}$ whereas the sheaf $\sp_*\sD_{\frU_{\Q}}$ is a free
$\cO_{\frU}\ot \Q$-module with basis $\uder^{\unu}$. The map we consider takes $\varpi^{k|\unu|}\uder^{\lan \unu \ran}$
to $\varpi^{k|\unu|}q_{\unu}!/\unu! \uder^{\unu}$. Since $\frU$ is flat over $\fro$, the map $\cO_{\frU}\ra \cO_{\frU}\ot \Q$ is
injective, and this proves that the canonical map $ \pr^*\sD^{(k,m)}_{\frX_{0}} \ra \sp_*\sD_{\frX_{\Q}}$
is injective as well. The same argument shows that this map becomes an isomorphism upon tensoring with $\Q$.

\end{proof}

\vskip8pt

Following ~\cite{PSS4}, given $k \geq k_{\frX}$, we will construct a $p$-adically complete sheaf of arithmetic differential operators $\hsD^{(k,m)}_{\frX}$ over the (usually non-smooth) formal scheme $\frX$.

\subsubsection{Construction of the ring of differential operators of level $k$ over $\frX$.}
\label{subsec-const}
We first observe that the sheaf $\sp_* \sD_{\frX_{\Q}} $ acts on $\sp_*\cO_{\frX_\Q}\simeq \cO_{\frX,\Q}$. By
~\ref{key-lem3}, the sheaf $\sD_{\frX}^{(k,m)}$ is a subsheaf of $\sp_* \sD_{\frX_{\Q}}$ and
$\sD_{\frX,\Q}^{(k,m)}\simeq \sp_* \sD_{\frX_{\Q}}$. We will first
check that if $k \geq k_{\frX}$,
the action of $\sp_* \sD_{\frX_{\Q}}$ on $\cO_{\frX,\Q}$ restricts to an action of
$\sD_{\frX}^{(k,m)}$ on $\cO_{\frX}$.
 This can be checked locally on $\frX$. For this, we assume that $\frX_0=\Spf A$, where $A$ is a smooth, complete,
$\fro$-algebra, endowed with local coordinates $x_1,\ldots,x_M$. Since $\frX_0$ is smooth over $\Spf \fro$, both rings $A$ and $A/\varpi A$
are integral domains. We also introduce the differential operators $\uder^{\lan \unu \ran}$ and
$\varpi^{k|\unu|}\uder^{\lan \unu \ran}$ according to ~\ref{notbracket}, of $D^{(m)}=\Ga(\frX_0,\sD^{(m)}_{\frX_{0}})$
and
$D^{(k,m)}=\Ga(\frX_0,\sD^{(k,m)}_{\frX_{0}})$ respectively. We also denote $I=\Ga(\frX_0,\cI)$ where $V(\cI)$ is the
center of the blowing-up $\frX$.

\vskip8pt

Consider the $\bbN$-graded $A$-algebra $$B=\bigoplus_n B_n$$
where the degree $n$-part $B_n$ equals the $n$-th power $I^n$ of the ideal $I$. In particular, $I^0=A$.
This means that $$\frX=\what{{\rm Proj}}(B) \;,$$ the formal completion of ${\rm Proj}(B)$.
  The algebra $B$ is integral, as $A$ is integral.
Let $t\in B_D$ be a homogeneous element of
degree $D>0$, and let $C_t=B[1/t]_0$ be the algebra of degree $0$ elements in the homogeneous localization $B[1/t]$. Then $C_t$
is non-zero, since $B$ is integral. Put $D_+(t)=\Spf \;\what{C}_t$. These open sets form a basis
for the Zariski topology of $\frX$.

Let us observe that the algebra $B$ is a graded subalgebra of $A[T]$. Indeed, there is a graded injective ring morphism
$$\label{defphi}\xymatrix @R-25pt {B \ar@{->}[r]^(.4){\varphi}& A[T]\\
          x_n \in B_n  \ar@{->}[r]& x_n T^n.  }$$
By definition, $\varphi(t)=tT^D$.
Since localization is flat, we get from this an injective graded morphism
$B[1/t]\rightarrow A[1/t][T^{\pm 1}]$, where $A[1/t][T^{\pm 1}]$ is graded by the degree of
$T$. Because $C_t=B[1/t]_0$ is the subring of degree zero elements in $B[1/t]$, we get an injection
\begin{numequation}\label{key-inject}C_t \hra A[1/t] \;.\end{numequation}

Since $A\{1/t\}=\Ga(D(t),\cO_{\frX_0})$, and because $\sD_{\frX_0}^{(k,m)}$ acts on the structural
sheaf $\cO_{\frX_0}$, we get that $A\{1/t\}$ is a $D^{(m)}$-module and thus a $D^{(k,m)}$-module.
Moreover, as $A[1/t]$ is integral and noetherian,
it embeds into its $p$-adic completion $A\{1/t\}$.
This leads us to the following

\begin{lemma}\label{key-lem1}\hskip0pt
\begin{enumerate} \item $A[1/t]$ is a $D^{(m)}$-submodule of $A\{1/t\}$,
         \item If $k \geq k_{\frX}$, then $C_t$ is a $D^{(k,m)}$-submodule of $A[1/t]$.
\end{enumerate}
\end{lemma}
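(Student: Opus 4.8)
The plan is to prove both statements by an explicit computation in local coordinates, using the key embedding \eqref{key-inject} and the integrality of $A$. For part (i), the point is that $\sD^{(m)}_{\frX_0}$ acts on $\cO_{\frX_0}$, hence on the localization $A_t = A[1/t]$ by the usual quotient rule for differential operators; concretely, for $D^{(m)} = \Gamma(\frX_0,\sD^{(m)}_{\frX_0})$ and a homogeneous $t \in B_D$, an operator $\uder^{\lan \unu \ran}$ applied to $a/t^j$ produces an element of $A[1/t]$ by the Leibniz rule, since $A[1/t]$ is a localization of $A$ and differential operators extend uniquely to localizations. So $A[1/t]$ is stable under $D^{(m)}$ inside $A\{1/t\}$, giving (i). (That $A[1/t]$ embeds into $A\{1/t\}$ was already noted above, using that $A[1/t]$ is integral and noetherian.)

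For part (ii), the real content, I would first reduce to showing that for each basis operator $\varpi^{k|\unu|}\uder^{\lan\unu\ran}$ of $D^{(k,m)}$ and each degree-zero element $c \in C_t$, one has $\varpi^{k|\unu|}\uder^{\lan\unu\ran}(c) \in C_t$. Via the injection \eqref{key-inject}, write $c$ as an element of $A[1/t]$; but more usefully, track the grading: under $\varphi\colon B \hra A[T]$, an element of $B_n$ becomes homogeneous of degree $n$ in $T$, and $C_t = B[1/t]_0$ consists of elements of $B[1/t]$ of $T$-degree $0$, i.e. finite sums $b/t^j$ with $b \in B_{jD} = I^{jD}$. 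So it suffices to show: if $b \in I^{jD}$ then $\varpi^{k|\unu|}\uder^{\lan\unu\ran}(b/t^j) \in C_t$, i.e. after clearing denominators it lies in $I^{jD}\cdot(\text{something})$ appropriately — more precisely that the numerator, once written over a common power $t^{j'}$, lies in $I^{j'D}$.

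The crux is the following: applying $\partial_l^{\lan\nu_l\ran_{(m)}}$ to $b/t^j$ via Leibniz produces terms of the shape $\partial^{\lan\ual\ran}(b)\cdot\partial^{\lan\ube\ran}(1/t^j)$, and $\partial^{\lan\ube\ran}(t^{-j})$ is $t^{-(j+|\ube|)}$ times a polynomial in the $\partial^{\lan\cdot\ran}(t)$'s and in $t$. Each derivative $\partial_l^{\lan\gamma\ran}(t)$ lowers the ideal-power-filtration at most by one per unit of order — that is, $\partial_l(I^s) \subseteq I^{s-1}$ since $\cI$ is an ideal of $\cO_{\frX_0}$ and differentiation drops the order of vanishing by at most one — so $\partial^{\lan\ual\ran}(b) \in I^{jD - |\ual|}$ and the product over a common denominator $t^{j+|\ube|}$ has numerator in $I^{(j+|\ube|)D - |\unu|}$, which sits in $C_t$ precisely when $(j+|\ube|)D - |\unu| \ge 0$ fails to be an obstruction only because... and here is where the congruence level enters: when $|\unu|$ is large relative to the available ideal powers we multiply by $\varpi^{k|\unu|}$, and since $k \ge k_{\frX}$ we have $\varpi^k \in \cI$, so $\varpi^{k|\unu|} \in I^{|\unu|}$, which exactly compensates the $|\unu|$ units of filtration lost by the derivatives. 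Making this bookkeeping precise — carefully matching the denominator power against the ideal power of the numerator, and using $\varpi^{k}\cO_{\frX_0}\subseteq\cI$ to absorb the deficit — is the main obstacle; it is essentially the computation done in \cite{PSS4} adapted to the general base, and the divided-power structure of the $\partial_l^{\lan\cdot\ran}$ (i.e. the factor $\lfloor\nu_l/p^m\rfloor!/\nu_l!$) is what guarantees the coefficients stay in $A$ rather than merely in $A\otimes\bbQ$.
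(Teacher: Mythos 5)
Your overall mechanism is the right one ($k\ge k_\frX$ gives $\vpi^k\in I$, and the twist $\vpi^{k|\unu|}$ is what absorbs the loss of $I$-adic filtration caused by the derivatives), and your reduction to showing $\vpi^{k|\unu|}\uder^{\lan\unu\ran}(b/t^j)\in C_t$ for $b\in I^{jD}$ is exactly how the paper begins. But there is a genuine gap at the crux of (ii), and you flag it yourself ("making this bookkeeping precise \dots is the main obstacle"). Your accounting rests on the claim that $\der^{\lan\ube\ran}(t^{-j})$ can be written as $t^{-(j+|\ube|)}$ times a polynomial in $t$ and the $\der^{\lan\cdot\ran}(t)$, with coefficients in $A$ (integrality, not just in $A\ot\Q$) and with the homogeneity needed to place the numerator of each Leibniz term in $I^{(j+|\ube|)D-|\unu|}$. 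For ordinary powers $\der^{\nu}$ this is elementary, but for $\der^{\lan\nu\ran}=q_\nu!\,\der^{[\nu]}$ this integral, graded expansion is not free: proving it is an induction of essentially the same size as the lemma itself, and your text trails off precisely where this estimate should be carried out. (A smaller instance of the same issue occurs in (i): that the level-$m$ operators "extend to localizations" is exactly the statement that $\der_i^{\lan\nu\ran}(s^{-1})$ lies in $s^{-(\nu+1)}A$ rather than merely in $A[1/s]\ot\Q$; the paper proves this by induction on $\nu$ using formula (iv) of 2.2.4 of \cite{BerthelotDI}, and some such argument is needed rather than a bare appeal to the quotient rule. Likewise $\der^{\lan\ual\ran}(I^{jD})\subseteq I^{jD-|\ual|}$ needs the divided Leibniz rule, not just the first-order statement $\der(I^s)\subseteq I^{s-1}$.)

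For comparison, the paper avoids your unproven expansion by a different bookkeeping: it first proves the auxiliary assertion that the twisted operators preserve ideal powers on the nose, namely $f\in I^r\Rightarrow \vpi^{kl}\der_i^{\lan l\ran}(f)\in I^r$, by induction on $r$ via the Leibniz formula (the case $r=1$, $l\ge 1$ is where $\vpi^k\in I$ enters). It then shows, by induction on $\nu$ using Berthelot's formula (iv) of 2.2.4 for $\der_i^{\lan\nu+1\ran}(s^{-1})$, that $\vpi^{k\nu}\der_i^{\lan\nu\ran}(s^{-1})\in I^{\nu d}/s^{\nu+1}$ for any $s\in I^d$ invertible in $A[1/t]$; applying this to $s=t^d$ and one more Leibniz expansion for $g/t^d$ with $g\in I^{dD}$ yields terms in $I^{dD}\cdot I^{\mu dD}/t^{d(\mu+1)}\subset C_t$. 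In this route the operators are only ever applied to elements of $A$ or to $s^{-1}$ through the quoted integral formula, so the integrality and homogeneity questions you would have to settle never arise. If you want to keep your route, the missing piece you must supply is precisely an integral, weight-graded formula for $\der^{\lan\ube\ran}(t^{-j})$ over the denominator $t^{j+|\ube|}$; otherwise the "deficit is at most $|\unu|$" count is not established.
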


\begin{proof}
Before giving the proof, we need some notation.
Given a fixed nonnegative integer $m$ and a nonnegative integer $\nu$, one denotes as before by $q$ the quotient of the Euclidean division
of $\nu$ by $p^m$. Let $\nu\geq \nu'$ be two nonnegative integers and $\nu'':=\nu-\nu'$; then for the corresponding numbers $q,q'$ and $q''$, we define
\begin{numequation}\label{notacc}\accfrac{\nu}{\nu'} =\frac{q!}{q'!q''!} \;,\end{numequation}

which is an integer because $q \ge q' + q''$. Let us begin with the proof of (i). It is enough to prove that for each $i \leq M$, for each invertible $s\in A[1/t]$, for
each $\nu\in \Ne$, $$\der_{i}^{\lan \nu \ran}(s^{-1})\in \frac{A}{s^{\nu+1}} \;.$$
 We will prove this by induction on $\nu$, the case $\nu=0$ being straightforward.
We have
$$ \der_{i}^{\lan \nu +1 \ran}(s^{-1})=-\sum_{\mu =0}^{\nu}\accfrac{\nu+1}{\mu }s^{-1}
\der_{i}^{\lan \nu+1 -\mu  \ran}(s) \der_{i}^{\lan \mu  \ran}(s^{-1}) \;.$$
cf. \cite[(iv) of 2.2.4]{BerthelotDI}. By induction hypothesis the elements $\der_{i}^{\lan \mu  \ran}(s^{-1})$ lie in $s^{-(\nu+1)}A$, which
proves that $\der_{i}^{\lan \nu+1  \ran}(s^{-1})\in s^{-(\nu+2)}A$. By applying this to $s=t^{-n}$ for $n >0$,
we see that (i) holds.

Let us prove now (ii).
We begin the proof with an auxiliary assertion (it is here where we use the assumption $k \geq k_{\frX}$).

\vskip5pt

{\it Assertion}. Let $f\in I^{r}$, $l\in \mathbb{N}$, then $\varpi^{kl}\der_{i}^{\lan l \ran}(f)\in I^r$.

\vskip5pt

{\it Proof of the assertion.} The proof relies on the Leibniz formula \cite[2.3.4.1]{BerthelotDI}. We proceed by induction on $r$. For
$r=0$ the assertion is trivial and for $r=1$, it is true if $l \geq 1$ since $\varpi^k\in I$. For $r=1$, it is also
true if $l=0$ since $f\in I$. Let us assume that the result is true for $s\leq r$. It is enough to prove that
$$ \forall g \in I \; \forall h\in I^r \,: \; \varpi^{kl}\der_{i}^{\lan l \ran}(hg)\in I^{r+1} \;.$$
Denote $f=hg$, the Leibniz formula of loc. cit. states that
\begin{gather*} \varpi^{kl} \der_{i}^{\lan l \ran}(f)=\sum_{j=0}^l \accfrac{l}{j}\varpi^{kj}\der_{i}^{\lan j \ran}(h)
\varpi^{k(l-j)}\der_{i}^{\lan l-j \ran}(g) \;.
\end{gather*}
By induction hypothesis, for all $j \leq l$, $\varpi^{kj}\der_{i}^{\lan j \ran}(h)\in I^r$ and
$\varpi^{k(l-j)}\der_{i}^{\lan l-j \ran}(g)\in I$, which implies that $ \varpi^{kl}\der_{i}^{\lan l
\ran}(f)\in I^{r+1}$. This establishes the assertion.

\vskip5pt

After this preliminary discussion, let $d>0$. Let us first prove by induction on $\nu$ that for an arbitrary element $s\in I^d$ which becomes invertible in $A[1/t]$, one has
\begin{numequation} \vpi^{k\nu}\der_{i}^{\lan \nu \ran} \left(s^{-1}\right)\in \frac{I^{\nu d}}{s^{\nu+1}} \;.
\end{numequation}
This is true for $\nu=0$. Consider then the formula \cite[(iv) of 2.2.4]{BerthelotDI} with notation~\ref{notacc}
$$ \vpi^{k(\nu+1)}\der_{i}^{\lan \nu +1\ran}(s^{-1})=-\sum_{\mu =0}^{\nu}\accfrac{\nu+1}{\mu }s^{-1}
\vpi^{k(\nu+1-\mu )}\der_{i}^{\lan \nu+1 -\mu  \ran}(s)
\vpi^{k\mu }\der_{i}^{\lan \mu  \ran}(s^{-1}) \;.$$ By the induction hypothesis, one knows for any integer $\mu\leq \nu$,
$$\vpi^{k\mu}\der_{i}^{\lan \mu \ran}(s^{-1})\in\frac{I^{\mu d}}{s^{\mu+1}}$$
and, by our auxiliary assertion above, one knows

$$ s^{-1}\vpi^{k(\nu+1-\mu)}\der_{i}^{\lan \nu+1 -\mu \ran}(s)
\in \frac{I^d}{s} \;.$$ This implies $$s^{-1}
\vpi^{k(\nu+1-\mu )}\der_{i}^{\lan \nu+1 -\mu  \ran}(s)
\vpi^{k\mu }\der_{i}^{\lan \mu  \ran}(s^{-1}) \in \frac{I^d}{s}\frac{I^{\mu d}}{s^{\mu+1}} = \frac{I^{d(\mu +1)}}{s^{\mu+2}} = \frac{s^{\nu-\mu} I^{d(\mu +1)}}{s^{\nu+2}} \subseteq\frac{I^{(\nu+1)d}}{s^{\nu+2}} \;,$$
which proves our claim. Applying this claim to the element $s=t^d\in I^{dD}$ gives for $\mu\leq p^m$
$$\vpi^{k\mu }\der_{i}^{\lan \mu \ran}(t^{-d})\in \frac{I^{\mu dD}}{t^{d(\mu +1)}} \;.$$
Then, using again the Leibniz formula, we deduce from this and the auxiliary assertion, for a given homogeneous element
$g\in B$ of degree $dD$, i.e $\frac{g}{t^d}\in C_t$, the identity
$$\vpi^{k\nu}\der_{i}^{\lan \nu \ran}\left(\frac{g}{t^d}\right)=\sum_{\mu =0}^{\nu}
\accfrac{\nu}{\mu }\vpi^{k(\nu- \mu)}\der_{i}^{\lan \nu- \mu \ran}(g)\vpi^{k \mu}\der_{i}^{\lan \mu \ran}(t^{-d}) \;,$$
whose right-hand terms are contained in $$I^{dD}\frac{I^{ \mu dD}}{t^{d(\mu +1)}}\subset C_t \;.$$
This completes the proof of the lemma.
\end{proof}
From this we get the
\begin{cor}\label{key-coro} $\what{C_t}$ is a $D^{(k,m)}$-submodule of $A\{1/t\}_{\Q}$ if $k \geq k_{\frX}$.
\end{cor}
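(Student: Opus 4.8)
\emph{Proof proposal.} I would deduce the corollary from Lemma \ref{key-lem1} by $\varpi$-adic completion; the two points worth isolating are the $\varpi$-adic continuity of the operators in $D^{(k,m)}$ and the identification of $\what{C_t}$ with its image in $A\{1/t\}_\Q$.

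First I would fix the setup: working over $\frX_0=\Spf A$ with the global coordinates $x_1,\ldots,x_M$, the ring $D^{(k,m)}=\Ga(\frX_0,\sD^{(k,m)}_{\frX_0})$ is the free $A$-module on the operators $\varpi^{k|\unu|}\uder^{\lan\unu\ran}$, and $A=B_0$ sits inside $C_t$ compatibly with the embeddings $C_t\hra A[1/t]$ and $A\hra A[1/t]$. Since $D^{(k,m)}\subseteq D^{(m)}$, parts (i) and (ii) of Lemma \ref{key-lem1} combine to say that $C_t$ is stable under the $D^{(k,m)}$-action on $A\{1/t\}$; concretely this amounts to $\varpi^{k|\unu|}\uder^{\lan\unu\ran}(C_t)\subseteq C_t$ for all $\unu$ (part (ii)), together with the trivial fact that multiplication by an element of $A\subseteq C_t$ preserves $C_t$.

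Then I would perform the completion step. Each operator $\varpi^{k|\unu|}\uder^{\lan\unu\ran}$ is $\fro$-linear, hence $\varpi^{k|\unu|}\uder^{\lan\unu\ran}(\varpi^n C_t)\subseteq\varpi^n C_t$, so it — and therefore every $P\in D^{(k,m)}$, being a finite $A$-linear combination of such operators — is non-expanding for the $\varpi$-adic topology. The same computation on $A\{1/t\}$ (which carries a $D^{(k,m)}$-action because $\sD^{(k,m)}_{\frX_0}$ acts on $\cO_{\frX_0}$ and $A\{1/t\}=\Ga(D(t),\cO_{\frX_0})$) shows that $D^{(k,m)}$ acts there by continuous operators, restricting on $A[1/t]$ to the explicit action of Lemma \ref{key-lem1}(i). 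Hence every $P\in D^{(k,m)}$ maps the $\varpi$-adic closure of $C_t$ in $A\{1/t\}$ into itself. It remains to note that this closure is a copy of $\what{C_t}$: $C_t$ is $\varpi$-adically separated (a subring of the $\varpi$-adically separated ring $A[1/t]$) and noetherian (it equals $\Ga(D_+(t),\cO_\frX)$, $D_+(t)$ being an affine open of the noetherian scheme $\Proj B$), so the canonical map $\what{C_t}\to A\{1/t\}$ is injective onto that closure. After inverting $p$ this gives the corollary; alternatively the injectivity $\what{C_t}_\Q\hra A\{1/t\}_\Q$ follows by passing to rigid generic fibres, where $D_+(t)_\Q$ is an admissible open of the irreducible smooth space $\frX_{0,\Q}$ in which $D(t)_\Q$ is dense.

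I do not expect a genuine obstacle. If anything is delicate it is only the last bookkeeping — that $\what{C_t}$ really embeds into $A\{1/t\}_\Q$, so that ``submodule'' is literally meaningful — which one settles as above, or bypasses entirely by \emph{defining} the $D^{(k,m)}$-action on $\what{C_t}$ to be the unique continuous extension of the action on $C_t$ and checking its compatibility with the map $\what{C_t}\to A\{1/t\}$.
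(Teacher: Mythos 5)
Your proposal is correct and is essentially the paper's own argument: by Lemma \ref{key-lem1} the operators $\vpi^{k|\unu|}\uder^{\lan \unu \ran}$ preserve $C_t$ and, being $\fro$-linear, are $\vpi$-adically continuous, so the action extends by continuity to $\what{C_t}$ and is by construction induced by the action of $\Ga(\frX_0,\sp_*\sD_{\frX_{0,\Q}})$ on $A\{1/t\}_{\Q}$ --- i.e.\ precisely the ``bypass'' you describe in your last sentence, which is all the paper does. The only place you go beyond the paper is the optional injectivity digression, and there one small caution: $A[1/t]$ need not be $\vpi$-adically separated (e.g.\ for $t=\vpi^{k_{\frX}}$ one has $A[1/t]=A_{\Q}$, where $\vpi$ is a unit), so separatedness of $C_t$ should rather come from Krull's intersection theorem (when $\vpi$ is not a unit in $C_t$; otherwise $\what{C_t}=0$), and separatedness of $C_t$ alone does not formally yield injectivity of $\what{C_t}\ra A\{1/t\}$ --- one needs a comparison of the $\vpi$-adic topology of $C_t$ with the subspace topology, or an argument on generic fibres as you sketch.
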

\begin{proof} Using previous notations, we see that $\vpi^{k|\unu|}\der_{i}^{\lan \unu \ran}$ acts continuously
 on $C_t$. We can thus extend this action by continuity to get an action on $\what{C_t}$. By construction
this action is induced by the action of $\Ga(\frX_0,\sp_*\sD_{\frX_{0,\Q}})$ on $\what{A\{1/t\}}_{\Q}$.
\end{proof}
%

%
After these local considerations, we come back now to the general situation.
\begin{cor}\label{ringst1} Let $k\geq k_{\frX}$. The sheaf $\sD^{(k,m)}_{\frX}=\pr^*\sD^{(k,m)}_{\frX_0}$
is a subsheaf of rings of the sheaf $\sp_*\sD_{\frX_\Q}$. Moreover it is locally free over $\cO_{\frX}$.
   \end{cor}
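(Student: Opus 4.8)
The plan is to deduce the corollary from the local results just proven by a standard descent/glueing argument over a basis of affine opens. First I would reduce to the case where $\frX_0 = \Spf A$ is affine with étale coordinates $x_1,\ldots,x_M$, since the statement is local on $\frX_0$ and both sides of $\sD^{(k,m)}_\frX = \pr^*\sD^{(k,m)}_{\frX_0}$ as well as $\sp_*\sD_{\frX_\Q}$ are compatible with restriction to opens in $\frX_0$. Having fixed such an $A$, I would choose the presentation $\frX = \what{\rm Proj}(B)$ with $B = \bigoplus_n I^n$ as in \ref{subsec-const}, and work on the basis of affine opens $D_+(t) = \Spf \what{C_t}$ for homogeneous $t \in B_D$, $D > 0$.

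The heart of the argument is already contained in Lemma \ref{key-lem1} and Corollary \ref{key-coro}: on each $D_+(t)$, the module $\Ga(D_+(t), \sD^{(k,m)}_\frX)$, which by definition of $\pr^*$ and by $\cO_\frX(D_+(t)) = \what{C_t}$ equals $\what{C_t} \ot_A D^{(k,m)}$ (completed appropriately), sits inside $\Ga(D_+(t), \sp_*\sD_{\frX_\Q}) = \Ga(D_+(t)_\Q, \sD_{\frX_\Q})$ via the map from \ref{key-lem3}(ii), and Corollary \ref{key-coro} says precisely that the action of the operators $\vpi^{k|\unu|}\uder^{\lan\unu\ran}$ preserves $\what{C_t}$. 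Since these operators form an $\cO_\frX$-basis of $\sD^{(k,m)}_\frX$ locally and since the multiplication in $\sp_*\sD_{\frX_\Q}$ is determined by its action on $\cO_{\frX,\Q}$ (the sheaf $\sp_*\sD_{\frX_\Q}$ acts faithfully on $\cO_{\frX,\Q}$, which is an integral — hence torsion-free — $\fro$-algebra because $A$ is), the subsheaf $\sD^{(k,m)}_\frX \subset \sp_*\sD_{\frX_\Q}$ is closed under the ring multiplication: the product of two sections of $\sD^{(k,m)}_\frX$, computed in $\sp_*\sD_{\frX_\Q}$, again maps $\cO_\frX$ to $\cO_\frX$ and hence, by \ref{key-lem3}(ii) again, lies in $\sD^{(k,m)}_\frX$. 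I would also note that the identity operator is visibly in $\sD^{(k,m)}_\frX$, so it is genuinely a sheaf of (unital) subrings.

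For the local freeness over $\cO_\frX$, this is immediate from the construction: $\sD^{(k,m)}_{\frX_0}$ is free over $\cO_{\frX_0}$ with basis $\{\vpi^{k|\unu|}\uder^{\lan\unu\ran}\}_{\unu \in \bbN^M}$ on any coordinate open $\frU_0$, by \ref{coord1}; pulling back along $\pr$ preserves freeness, so $\sD^{(k,m)}_\frX = \pr^*\sD^{(k,m)}_{\frX_0}$ is free over $\cO_\frX$ with the same (pulled-back) basis on $\pr^{-1}(\frU_0)$. One should perhaps remark that the formation of $\pr^*$ here is literal pullback of $\cO$-modules and that the $p$-adic completion issues do not interfere because everything in sight is already $\varpi$-adically complete and noetherian; but no genuine completion subtlety arises at the level of $\sD^{(k,m)}$ (as opposed to $\hsD^{(k,m)}$).

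The main obstacle, such as it is, is bookkeeping rather than a real difficulty: one must check that the ring structure inherited from $\sp_*\sD_{\frX_\Q}$ is compatible with the glueing of the local rings $\what{C_t} \ot D^{(k,m)}$ over the overlaps $D_+(t) \cap D_+(t')$, and that the "obvious" ring structure on $\pr^*\sD^{(k,m)}_{\frX_0}$ (coming from the fact that $\pr^{-1}\sD^{(k,m)}_{\frX_0}$ is a sheaf of rings acting on $\pr^{-1}\cO_{\frX_0} \to \cO_\frX$) coincides with the one extracted from $\sp_*\sD_{\frX_\Q}$. Both reduce to the faithfulness of the $\sp_*\sD_{\frX_\Q}$-action on $\cO_{\frX,\Q}$ together with the injectivity statement \ref{key-lem3}(ii), so once those are in hand the identification of ring structures is forced and glueing is automatic.
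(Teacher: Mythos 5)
Your reduction to the local situation and the local‑freeness part are fine, but the key step --- closure of $\sD^{(k,m)}_{\frX}$ under the multiplication of $\sp_*\sD_{\frX_\Q}$ --- rests on a claim that is not available and is in fact false: you argue that the product of two sections maps $\cO_{\frX}$ into $\cO_{\frX}$ and "hence, by \ref{key-lem3}(ii) again, lies in $\sD^{(k,m)}_{\frX}$". Lemma \ref{key-lem3}(ii) is only an \emph{injectivity} statement; it does not identify $\sD^{(k,m)}_{\frX}$ with the stabilizer of $\cO_{\frX}$ inside $\sp_*\sD_{\frX_\Q}$, and that stabilizer is strictly larger. Already for $\frX=\frX_0$ and $k=0$, every divided-power operator $\uder^{[\unu]}$ preserves $\cO_{\frX_0}$, yet $\der_l^{[\nu]}=\frac{1}{q!}\der_l^{\lan\nu\ran}\notin\sD^{(m)}_{\frX_0}$ as soon as $\nu\geq p^{m+1}$; likewise on a blow-up the sheaves $\sD^{(k,m+1)}_{\frX}$, or the operators $\vpi^{k'|\unu|}\uder^{\lan\unu\ran}$ with $k_{\frX}\le k'<k$, preserve $\cO_{\frX}$ without lying in $\sD^{(k,m)}_{\frX}$. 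So "preserves $\cO_{\frX}$" cannot detect membership in the fixed level $m$ and congruence level $k$, which is precisely what has to be controlled here; faithfulness of the action on $\cO_{\frX,\Q}$ does not repair this, it only says the product is a well-defined element of $\sp_*\sD_{\frX_\Q}$.

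What actually closes the argument (and is the paper's proof) is an explicit computation with Berthelot's formulas, for which your Corollary \ref{key-coro} is indeed the essential input. By $\cO_{\frX}$-linearity one only has to multiply a basis operator by a function and by another basis operator: one has $\uder^{\lan \unu \ran}\cdot \uder^{\lan \unu' \ran}= \crofrac{\unu+\unu'}{\unu}\uder^{\lan \unu +\unu' \ran}$ with $\crofrac{\unu+\unu'}{\unu}\in \Zp$, and for $b\in\cO_{\frX}(\frV)$,
$$  \vpi^{k|\unu|}\uder^{\lan \unu \ran}\cdot b=\sum_{\unu'+\unu''=\unu}\accfrac{\unu}{\unu'}\,
\vpi^{k|\unu'|}\uder^{\lan \unu' \ran}(b)\,
\vpi^{k|\unu''| }\uder^{\lan \unu''  \ran} \;,$$
and Corollary \ref{key-coro} (this is where $k\geq k_{\frX}$ enters) guarantees $\vpi^{k|\unu'|}\uder^{\lan \unu' \ran}(b)\in \cO_{\frX}(\frV)$, so the product is exhibited directly as an $\cO_{\frX}(\frV)$-linear combination of the basis operators $\vpi^{k|\unu''|}\uder^{\lan\unu''\ran}$, i.e.\ as a section of $\sD^{(k,m)}_{\frX}$. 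With this substitution your outline becomes correct; no gluing or comparison of ring structures beyond this is needed, since the ring structure is simply the one induced from $\sp_*\sD_{\frX_\Q}$.
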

\begin{proof} The assertion is local on $\frX$ and we can assume that $\frX_0=\Spf \;A$ is affine, endowed with
local coordinates $x_1,\ldots,x_M$. Then, the sheaf $\sD^{(k,m)}_{\frX}$
is a free $\cO_{\frX}$-module generated by the operators $\varpi^{k|\unu|}\uder^{\lan \unu \ran}$
(using notations~\ref{notbracket}).
By the formula of Berthelot \cite[2.2.4]{BerthelotDI}, one has
\begin{gather*}
    \uder^{\lan \unu \ran}\cdot \uder^{\lan \unu' \ran}= \crofrac{\unu+\unu'}{\unu}\uder^{\lan \unu +\unu' \ran} \;,
\end{gather*}
where
  $$\crofrac{\unu+\unu'}{\unu} = \parfrac{\unu+\unu'}{\unu}\accfrac{\unu+\unu'}{\unu}^{-1}\, \in \Zp \;.$$
To check that $\sD^{(k,m)}_{\frX}$ is a subsheaf of rings of $\sp_*\sD_{\frX_\Q}$,
we thus only have to check, by linearity, that
 if $b$ is a section in $\cO_{\frX}(\frV)$ where $\frV=\Spf\;\what{C_t[1/h]}$, cf. \ref{key-coro}, for some non zero $t\in A$,
and non zero $h\in C_t$,
 and if $\unu\in \Ne^M$, then the element $\uder^{\lan \unu \ran}\cdot b $ lies in $\sD^{(k,m)}_{\frX}(\frV)$. By
\cite[2.2.4]{BerthelotDI}, one has
$$  \vpi^{k|\unu|}\uder^{\lan \unu \ran}\cdot b=\sum_{\unu'+\unu''=\unu}\accfrac{\unu}{\unu'}
\vpi^{k|\unu'|}\uder^{\lan \unu' \ran}(b)
\vpi^{k|\unu''| }\uder^{\lan \unu''  \ran} \;.$$
Since $\vpi^{k|\unu'|}\uder^{\lan \unu' \ran}(b)\in \cO_{\frX}({\frV})$ by~\ref{key-coro}, this proves that
$\sD^{(k,m)}_{{\frX}}$ is a subring
of $\sD_{{\frX}_\Q}$. \end{proof}

\vskip8pt

We finally define the following sheaves of differential operators over $\frX$ and $X_i$
\begin{numequation}\label{def1D}
  \sD^{(k,m)}_{X_{i}}=\sD^{(k,m)}_{\frX}/\varpi^{i+1}, \;
\hsD_{\frX}^{(k,m)}=\varprojlim_i \sD^{(k,m)}_{X_{i}} \hskip5pt \textrm{ and }\hskip5pt
\sD^{\dagger}_{\frX,k}=\varinjlim_m \hsD_{\frX}^{(k,m)}\ot \bbQ.
\end{numequation}
We have the following
local description over an affine open $\frV\subseteq \pr^{-1}(\frU_0)$ where $\frU_0$ is an affine open of $\frX_0$
endowed with coordinates $x_1,\ldots,x_M$ and derivations $\der_1,\ldots,\der_M$:

\begin{numequation} \label{ddagkX}
\sD^{\dagger}_{\frX,k}(\frV)=
\left\{\sum_{\unu}a_{\unu}\vpi^{k|\unu|}\uder^{[ \unu ]}\,|\, a_{\unu}\in \cO_{\frX,\Q}(\frV),
\textrm{ and } \exists C>0, \eta<1 \, | \, |a_{\unu}|< C \eta^{|\unu|} \right\},
\end{numequation}
where $|.|$ denotes any Banach norm on $\cO_{\frX,\Q}(\frV)$.

\subsection{First properties}
We keep here the hypothesis from the previous section. In particular, $\frX_0$ denotes a smooth formal $\frS$-scheme and $$\pr:\frX\ra\frX_0$$ denotes an admissible blow-up. For a given natural number $k\geq 0$ we let $$\sT_{\frX,k} := \vpi^k (\pr)^*(\sT_{\frX_0}) \;,$$
where $\sT_{\frX_0}$ is the relative tangent sheaf of $\frX_0$ over $\frS$.
\begin{lemma}\label{tcT_lemma1} \hskip0pt \begin{enumerate}
\item The sheaf $\sT_{\frX,k}$ is a locally free $\cO_{\frX}$-module of rank equal to the relative dimension
 of $\frX_0$ over $\frS$.
\item Suppose $\pi: \frX'\rightarrow \frX$ is a morphism over $\frX_0$ from another admissible blow-up $\pr':\frX'\ra \frX_0$. Let $k',k\geq 0 $. One has as subsheaves of $\sT_{\frX'}\otimes L$
$$\sT_{\frX',k'} = \vpi^{k'-k} {\rm \pi}^*(\sT_{\frX,k}) $$
\end{enumerate}
\end{lemma}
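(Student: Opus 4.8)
The statement is local on $\frX'$, so I would reduce to the affine situation $\frX_0 = \Spf A$ with étale coordinates $x_1,\ldots,x_M$, and argue by comparing the three locally free sheaves $\sT_{\frX',k'}$, $\pi^*\sT_{\frX,k}$ and $\pr'^*\sT_{\frX_0}$ inside the common ambient sheaf $\sT_{\frX'}\otimes L = \pr'^*\sT_{\frX_0}\otimes L$. Part (i) is the easy input: $\sT_{\frX_0}$ is locally free of rank $M$ by smoothness, pullback along $\pr$ preserves local freeness and rank, and multiplication by the nonzerodivisor $\vpi^k$ (using that $\frX$ is flat over $\fro$, as it arises from a blow-up of the flat scheme $\frX_0$) is an isomorphism onto its image; so $\sT_{\frX,k}\cong \pr^*\sT_{\frX_0}$ is locally free of rank $M$.

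For part (ii), the key point is that pullback commutes with itself and with twisting by the invertible ideal generated by $\vpi^k$. Concretely, since $\pr = \pr'\circ\pi$ is just the chain of blow-up maps, one has canonical isomorphisms
\begin{numequation}\label{planchain}
\pi^*\bigl(\pr^*\sT_{\frX_0}\bigr)\;\simeq\;(\pr\circ\pi)^*\sT_{\frX_0}\;\simeq\;\pr'^*\sT_{\frX_0}
\end{numequation}
of $\cO_{\frX'}$-modules. Now $\sT_{\frX,k} = \vpi^k\,\pr^*\sT_{\frX_0}$ is the image of $\pr^*\sT_{\frX_0}$ under multiplication by $\vpi^k$; applying the right-exact functor $\pi^*$ and using that $\pi^*\cO_{\frX} = \cO_{\frX'}$ is $\vpi$-torsion-free (flatness of $\frX'$ over $\fro$), we get $\pi^*\sT_{\frX,k} = \vpi^k\,\pi^*(\pr^*\sT_{\frX_0}) = \vpi^k\,\pr'^*\sT_{\frX_0}$ as subsheaves of $\pr'^*\sT_{\frX_0}\otimes L$, via \eqref{planchain}. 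On the other hand $\sT_{\frX',k'} = \vpi^{k'}\,\pr'^*\sT_{\frX_0}$, so $\sT_{\frX',k'} = \vpi^{k'-k}\bigl(\vpi^{k}\,\pr'^*\sT_{\frX_0}\bigr) = \vpi^{k'-k}\,\pi^*\sT_{\frX,k}$, which is the asserted identity inside $\sT_{\frX'}\otimes L$. (Note the equality is literally an equality of subsheaves of the $L$-sheaf; the factor $\vpi^{k'-k}$ may be a negative power, which is why one must pass to $\sT_{\frX'}\otimes L$ as in the statement.)

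The only genuine subtlety — what I would be most careful about — is making the identifications in \eqref{planchain} and the twisting compatible, i.e.\ checking that under the canonical isomorphism $\pi^*\pr^*\sT_{\frX_0}\simeq\pr'^*\sT_{\frX_0}$ the subsheaf $\vpi^k\pr^*\sT_{\frX_0}$ really maps onto $\vpi^k\pr'^*\sT_{\frX_0}$ and not some other lattice. This is where the coordinate description pays off: after choosing $x_1,\ldots,x_M$, both $\pr^*\sT_{\frX_0}$ and $\pr'^*\sT_{\frX_0}$ are free on the pulled-back derivations $\der_1,\ldots,\der_M$, the isomorphism \eqref{planchain} sends basis to basis, and multiplication by $\vpi^k$ is visibly the same operation on both sides. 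Granting the $\fro$-flatness of admissible blow-ups (so that all the relevant modules embed into their rationalizations and $\vpi$-powers act injectively), the rest is the bookkeeping just indicated, and there is no real obstacle.
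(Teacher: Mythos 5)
Your argument is correct and is essentially the paper's own proof, which simply observes that everything follows from the definitions together with the identity $(\pr')^*=\pi^*\circ\pr^*$; you have merely spelled out the routine verifications (local freeness of $\pr^*\sT_{\frX_0}$, injectivity of multiplication by $\vpi^k$ via $\fro$-flatness, and compatibility of the canonical isomorphism $\pi^*\pr^*\sT_{\frX_0}\simeq\pr'^*\sT_{\frX_0}$ with the $\vpi$-twists) that the paper leaves implicit.
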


\begin{proof} This follows directly from the definitions. Note that
$(\pr')^*=\pi^*\circ \pr^*$ in (ii).
\end{proof}

\vskip8pt

Before stating the next proposition, let us recall that $\Sym^{(m)}(\sT_{\frX,k})$ denotes the graded level $m$ symmetric algebra generated by the sheaf $\sT_{\frX,k}$ defined in~\cite[sec. 1]{Huyghe97}. This is a graded $\cO_{\frX}$-algebra

$$\Sym^{(m)}(\sT_{\frX,k})=
\bigoplus_{d\geq 0}\Sym^{(m)}_d(\sT_{\frX,k}) \;.$$

Over some sufficiently small open affine set $\frU \subseteq \pr^{-1}(\frU_0)$ such that $\sT_{\frU_0}$ is free with basis
$\xi_1,\ldots,\xi_M$, one has the description using notation \ref{notbracket} (i.e. $\nu! \xi_{l}^{\lan \nu
\ran}=q_{\nu}!\xi_{l}^{\nu}$)

$$\Sym^{(m)}_d(\sT_{\frX,k})(\frU) = \bigoplus_{|\unu|=d}
\cO_{\frX}(\frU) \vpi^{kd}\uxi^{\lan \unu \ran} \;,$$

where the right hand side is a free $\cO_{\frX}(\frU)$-module. For the rest of this subsection we fix a number $k\geq k_\frX$ (\ref{defkX}).

\begin{prop}\label{finite_tDm} \hskip0pt
\begin{enumerate} \item  The sheaves $\sD^{(k,m)}_{\frX}$ are filtered by the sheaves of differential operators of order $\le d$, which are $\cO_{\frX}$-coherent modules, and which we denote by $\sD^{(k,m)}_{\frX,d}$.
There is a canonical isomorphism of sheaves of graded algebras

$${\rm gr}\left(\sD^{(k,m)}_{\frX}\right) \simeq \Sym^{(m)}(\sT_{\frX,k}) \;.$$

Moreover the sheaves $\sD^{(k,m)}_{X_i}$ are quasi-coherent $\cO_{X_i}$-modules.

\item There is a basis of the topology $\sB$ of $\frX$ (resp. $X_i$), consisting of open affine subsets, such that for
any $\frU \in \sB$ (resp. $U_i \in \sB$), the ring $\sD^{(k,m)}_{\frX}(\frU)$ (resp. $\sD^{(k,m)}_{X_i}(U_i)$) is two-sided noetherian.

\item For every formal affine open $\frU\subseteq \frX$ (resp. affine open $U_i\subseteq X_i$), the ring $\sD^{(k,m)}_{\frX}(\frU)$ (resp. $\sD^{(k,m)}_{X_i}(U_i)$) is two-sided noetherian.

\item The sheaves of rings $\sD^{(k,m)}_{\frX}$ (resp.  $\sD^{(k,m)}_{X_i}$) are coherent.

\item For every formal affine open $\frU\subset \frX$, the ring
$\hsD^{(k,m)}_{\frX}(\frU)$ is two-sided noetherian.
\item The sheaf of rings $\hsD^{(k,m)}_{\frX}$ is coherent.
\end{enumerate}
\end{prop}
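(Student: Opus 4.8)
The goal is to show that $\hsD^{(k,m)}_{\frX}$ is a coherent sheaf of rings, i.e.\ coherent as a left module over itself; since it is generated by $1$, this amounts to proving that for every open $\frU\sub\frX$ the kernel of any homomorphism $(\hsD^{(k,m)}_{\frX})^n|_{\frU}\to\hsD^{(k,m)}_{\frX}|_{\frU}$ of left $\hsD^{(k,m)}_{\frX}|_{\frU}$-modules is locally of finite type. The plan is to run the argument by which Berthelot deduces coherence of $\hsD^{(m)}_{\frX_0}$ from the corresponding properties of $\sD^{(m)}_{\frX_0}$ and the $\sD^{(m)}_{X_{0,i}}$ (see \cite{BerthelotDI}); the point is that all the ingredients it requires are now available: $\sD^{(k,m)}_{\frX}$ and the $\sD^{(k,m)}_{X_i}$ are coherent sheaves of rings (part (iv)), their rings of sections over formal affine opens are two-sided noetherian (parts (iii) and (v)), and there is a basis $\sB$ of affine opens of $\frX$ with the appropriate behaviour of these rings under restriction.

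First I would record the structural facts that reduce the problem to an affine-local, noetherian situation. Since $\sD^{(k,m)}_{\frX}=\pr^*\sD^{(k,m)}_{\frX_0}$ is locally free over $\cO_{\frX}$ by part (i), it is $\varpi$-torsion free; hence the system $\{\sD^{(k,m)}_{X_i}\}_i=\{\sD^{(k,m)}_{\frX}/\varpi^{i+1}\}_i$ has surjective transition maps, $\hsD^{(k,m)}_{\frX}=\varprojlim_i\sD^{(k,m)}_{X_i}$ is the $\varpi$-adic completion of $\sD^{(k,m)}_{\frX}$, and $\hsD^{(k,m)}_{\frX}/\varpi^{i+1}\simeq\sD^{(k,m)}_{X_i}$. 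For a formal affine open $\frU\in\sB$ one then has that $D_{\frU}:=\Gamma(\frU,\hsD^{(k,m)}_{\frX})$ is the $\varpi$-adic completion of the noetherian ring $\Gamma(\frU,\sD^{(k,m)}_{\frX})$; it is two-sided noetherian by part (v), and, as in \cite{BerthelotDI}, for $\frU'\sub\frU$ in $\sB$ the restriction $D_{\frU}\to D_{\frU'}$ is flat, obtained by completion from the corresponding flatness for $\sD^{(k,m)}_{\frX}$.

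The heart of the proof is the affine comparison statement, to be established exactly as in \cite{BerthelotDI}: for $\frU\in\sB$ and a finitely generated left $D_{\frU}$-module $N$, the presheaf $\frU'\mapsto D_{\frU'}\otimes_{D_{\frU}}N$ on $\sB_{|\frU}$ is a sheaf whose higher \v Cech cohomology vanishes, and $\Gamma(\frU,-)$ applied to the associated $\hsD^{(k,m)}_{\frX}|_{\frU}$-module returns $N$. One obtains this by writing $N=\varprojlim_i N/\varpi^{i+1}N$, identifying $(D_{\frU'}\otimes_{D_{\frU}}N)/\varpi^{i+1}$ with the sections over $U'_i$ of the quasi-coherent $\sD^{(k,m)}_{X_i}$-module attached on the scheme $X_i$ to the finite $\Gamma(U_i,\sD^{(k,m)}_{X_i})$-module $N/\varpi^{i+1}N$ (legitimate by the quasi-coherence from (i) and the coherence from (iv)), and passing to the limit. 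Granting this, coherence follows: given $\varphi\colon(\hsD^{(k,m)}_{\frX})^n|_{\frU}\to\hsD^{(k,m)}_{\frX}|_{\frU}$ with $\frU\in\sB$, the left $D_{\frU}$-module $K:=\ker\Gamma(\frU,\varphi)$ is finitely generated since $D_{\frU}$ is noetherian, and the comparison statement, applied to $K$ and to a finite presentation of it, identifies $\ker\varphi$ with the sheaf of finite type associated to $K$ over $\frU$.

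I expect the main obstacle to be the passage to the limit in the comparison statement: commuting $\varprojlim_i$ with the tensor and \v Cech-cohomology functors and showing that the relevant $\varprojlim^1$-terms vanish. This is precisely where coherence of the $\sD^{(k,m)}_{X_i}$ (part (iv)) and noetherianity of the rings of sections (parts (iii) and (v)) enter, guaranteeing the Mittag--Leffler property for the systems $\{N/\varpi^{i+1}N\}_i$ and for the systems of cohomology groups on the schemes $X_i$; the Artin--Rees lemma over the noetherian ring $D_{\frU}$ then controls the comparison between $D_{\frU'}\otimes_{D_{\frU}}N$ and $\varprojlim_i(D_{\frU'}\otimes_{D_{\frU}}N)/\varpi^{i+1}$.
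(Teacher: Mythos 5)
There is a genuine gap: your write-up only addresses item \emph{(vi)} of the proposition. You declare the goal to be the coherence of $\hsD^{(k,m)}_{\frX}$ and then explicitly take items \emph{(i)}--\emph{(v)} as ``available ingredients'': the coherence of $\sD^{(k,m)}_{\frX}$ and of the $\sD^{(k,m)}_{X_i}$, the noetherianity of their rings of sections on all affine opens, the noetherianity of $\hsD^{(k,m)}_{\frX}(\frU)$, and the existence of the basis $\sB$ with the right behaviour. But these are precisely the statements to be proved, and they are where the real work lies. In the paper, \emph{(i)} is obtained by applying $\pr^*$ to the exact sequences $0\to\sD^{(k,m)}_{\frX_0,d-1}\to\sD^{(k,m)}_{\frX_0,d}\to\Sym^{(m)}_d(\sT_{\frX_0,k})\to 0$ and using that $\Sym^{(m)}_d(\sT_{\frX_0,k})$ is locally free, so that ${\rm gr}(\sD^{(k,m)}_{\frX})\simeq\Sym^{(m)}(\sT_{\frX,k})$; \emph{(ii)} then follows from the noetherianity of the level-$m$ symmetric algebra (Huyghe); \emph{(iii)} is not formal, since an arbitrary affine open of the blow-up need not sit inside some $\pr^{-1}(\frU_0)$ with coordinates --- the paper proves it by covering $\frU$ by finitely many members of $\sB$, using flatness of $\sD^{(k,m)}_{\frU}$ over $\Gamma(\frU,\sD^{(k,m)}_{\frX})$ and a stabilization argument for an increasing chain of ideals; \emph{(iv)} and \emph{(v)} then come from Berthelot's criteria and the noetherianity of $p$-adic completions. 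None of this appears in your proposal, so as a proof of the proposition it is incomplete, and citing \emph{(iii)}--\emph{(v)} in the course of proving \emph{(vi)} is circular unless you supply them first.

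For item \emph{(vi)} itself, your route is essentially the paper's: the paper deduces coherence of $\hsD^{(k,m)}_{\frX}$ from \emph{(iii)} and the quasi-coherence of the $\sD^{(k,m)}_{X_i}$ ``literally as in'' Berthelot's argument, which is the affine comparison between finite modules over $\Gamma(\frU,\hsD^{(k,m)}_{\frX})$ and coherent $\hsD^{(k,m)}_{\frU}$-modules that you sketch (mod $\varpi^{i+1}$ reduction, quasi-coherent modules on the $X_i$, passage to the limit, then kernels of maps of free modules are of finite type). The limit issues you flag at the end (Mittag--Leffler, Artin--Rees over the noetherian ring of sections) are exactly what Berthelot's propositions handle, so that part of your plan is sound once \emph{(i)}--\emph{(v)} are in place; the missing content is the proof of those five items.
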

\begin{proof}
We only do the proof of (i) to (iv) in the case of $\sD^{(k,m)}_{\frX}$, since the same proof works for the sheaf
$\sD^{(k,m)}_{X_i}$. Denote by $\sD^{(k,m)}_{\frX_0,d}$ the sheaf of differential operators of $\sD^{(k,m)}_{\frX_0}$ of
order $\le d$, and $\sD^{(k,m)}_{\frX,d}=\pr^*\sD^{(k,m)}_{\frX_0,d} \;.$
It is straightforward that we have an exact sequence of $\cO_{\frX_0}$-modules on $\frX_0$
\begin{numequation}\label{fil_gr_ex_seq_n0} 0 \lra \sD^{(k,m)}_{\frX_0,d-1} \lra \sD^{(k,m)}_{\frX_0,d} \lra
		\Sym^{(m)}_d(\sT_{\frX_0,k}) \lra 0 \;.
\end{numequation}
 Now we apply $\pr^*$ and get an exact sequence since
 $\Sym^{(m)}_d(\sT_{\frX_0,k})$ is a locally free $\cO_{\frX_0}$-module of finite rank. This gives (i).
Let $\frU$ be an affine subset of $ \pr^{-1}(\frU_0)$,
 where $\frU_0\subset \frX_0$ has some coordinates $x_1,\ldots,x_M$. One has the following description
$$\sD^{(k,m)}_{\frX}(\frU)=\left\{\sum^{<\infty}_{\unu}\vpi^{k|\unu|}a_{\unu}\uder^{\lan \unu \ran}\,|\, a_{\unu}\in \cO_{\frX}(\frU)\right\} \;.$$
Since $\frU$ is affine and the filtration steps $\sD^{(k,m)}_{\frX,d}$ are coherent $\cO_{\frX}$-modules for all $d$,
the previous exact sequences gives us the following isomorphism

$${\rm gr}\left(\sD^{(k,m)}_{\frX}\right)(\frU) \simeq \Sym^{(m)}_{\cO_{\frX}}(\sT_{\frX,k})(\frU) \;.$$

Since the latter level $m$ symmetric algebra is known to be noetherian \cite[Prop. 1.3.6]{Huyghe97},
this proves (ii).
 As $\sB$ we may take the set of open affine subsets
of $\frX$ that are contained in some $\pr^{-1}(\frU_0)$, for some open $\frU_0\subset \frX_0$ endowed with global
coordinates.
Let now $\frV, \frU \in \sB$ such that $\frV\subset \frU$. Since the sheaf $\sD^{(k,m)}_{\frX}$ is
an inductive limit of coherent $\cO_{\frX}$-modules, one has
\begin{numequation}
\cO_{\frX}(\frV)\ot_{\cO_{\frX}(\frU)}\sD_\frX^{(k,m)}(\frU)\simeq \sD_\frX^{(k,m)}(\frV) \;.
\end{numequation}
In particular $\sD_\frX^{(k,m)}(\frV)$ is flat over $\sD_\frX^{(k,m)}(\frU)$.
This remark and (ii) prove the coherence of the sheaves $\sD^{(k,m)}_{\frX}$
 exactly as in the proof \cite[3.1.3]{BerthelotDI}.

Let us now prove (iii) for 'left noetherian' (the proof of the right version is similar). Let
$\frU$ be an affine open of $\frX$, $\sD=\sD^{(k,m)}_{\frU}$, $D=\Ga(\frU,\sD)$, $A=\Ga(\frU,\cO_{\frX})$
 and $\frU =\bigcup \frU_l$ a finite cover of $\frU$ by open
$\frU_l \in \sB$. Since the sheaf $\sD$ is an inductive limit of coherent $\cO_{\frX}$-modules,
one has
\begin{numequation}\label{Dflat}
\sD=\cO_{\frU}\ot_A D \;,
\end{numequation}
and $\sD$ is a flat $D$-module.
Moreover, thanks to (ii), we know that $\sD_{\frX}(\frU_l)$ is noetherian for each $l$.
 Let
$(M_i)$ be an increasing sequence of left ideals of $D$, and consider
$$\cM_i=\sD\ot_D M_i=\cO_{\frU}\ot_A M_i\;,$$ which form an increasing sequence of sheaves of ideals
of $\sD$ by flatness of $\sD$ over $D$. The sequence $\Ga(\frU_l,\cM_i)$
is thus an increasing sequence of ideals of $\Ga(\frU_l,\sD)$,
that is stationary since this algebra is noetherian.
Since $M_i$ is an inductive limit of finite type $A$-modules,
$\cM_i$ is an inductive limit of coherent $\cO_{\frX}$-modules, thus
$$\forall l, \;\cM_{i|\frU_l}\simeq \cO_{\frU_l}\ot_{\cO_{\frX}(\frU_l)}\Ga(\frU_l,\cM_i)\,
\textrm{ and } \, \Ga(\frU,\cM_i)=M_i\;.$$
Finally we see that $\cM_{i|\frU_l}$ is stationary for each $l$. Since there are finitely many
affine open $\frU_l$, we see that the sequence $(\cM_i)$ and thus $(M_i)$ are stationary. This proves (iii).
Since $\hsD^{(k,m)}_{\frX}(\frU)$ is the $p$-adic completion of $\hsD^{(k,m)}_{\frX}(\frU)$, it is also left
and right noetherian \cite[3.2.3]{BerthelotDI}, which proves (v).

The coherence of $\hsD^{(k,m)}_{\frX}$ follows from (iii), and the fact that
$\sD^{(k,m)}_{X_i}$ is a quasi-coherent $\cO_{X_i}$-module, literally as in \cite[3.3.3]{BerthelotDI}.
\end{proof}

\vskip8pt

From these considerations, and under our initial condition $k\geq k_\frX$, we have the
following local versions of Cartan's Theorem A
and B for the restrictions of the sheaves $\sD^{(k,m)}_{X_i}$ and $\hsD^{(k,m)}_{\frX}$ to an open affine (formal)
subscheme. 

\begin{prop}\label{easy_thmAB} {\rm (Local theorem A and B for fixed $m$)} \hskip0pt \begin{enumerate}
\item Let $U_i\subset X_i$ be an open affine subscheme of $X_i$. The functor $\Ga(U_i,.)$
establishes an equivalence of categories
between coherent $\sD^{(k,m)}_{U_i}$-modules and finite type $\Ga(U_i,\sD^{(k,m)}_{X_i})$-modules. In particular,
the functor $\Ga(U_i,.)$ is exact on coherent $\sD^{(k,m)}_{U_i}$-modules. Moreover, for any coherent $\sD^{(k,m)}_{U_i}$-module $\sM$ and any $q>0$ one has $H^q(U_i,\sM) = 0$.
\item Let $\frU\subset \frX$ be an open affine formal subscheme of $\frX$. The functors
$\cM \mapsto \Ga(\frU,\cM)$ and $M \mapsto \hsD^{(k,m)}_{{\frU}}\ot M$ are quasi-inverse equivalences of categories
				between the category of coherent left $\hsD^{(k,m)}_{{\frU}}$-modules and the category of left
				$\Ga(\frU,\hsD^{(k,m)}_{{\frU}})$-modules of finite type.
\item Let $\frU$ be as in (ii). The functor $\Ga(\frU,.)$ is exact on coherent $\hsD^{(k,m)}_{\frU}$-modules. Moreover, for any coherent $\hsD^{(k,m)}_\frU$-module $\sM$ and any $q>0$ one has $H^q(\frU,\sM) = 0$.
\end{enumerate}
\end{prop}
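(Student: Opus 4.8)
The plan is to reduce everything to the noetherian commutative-to-noncommutative machinery of Berthelot's \cite[Ch.~3]{BerthelotDI}, using the structural facts already established in \ref{finite_tDm}. For part (i), the key inputs are: the sheaves $\sD^{(k,m)}_{X_i}$ are quasi-coherent $\cO_{X_i}$-modules (from \ref{finite_tDm}(i)), they are coherent sheaves of rings (\ref{finite_tDm}(iv)), the global sections over an affine open are two-sided noetherian (\ref{finite_tDm}(iii)), and locally over a $\sB$-set one has the flatness relation $\sD^{(k,m)}_{X_i}|_{U_i} \simeq \cO_{U_i}\ot_{\Ga(U_i,\cO_{X_i})}\Ga(U_i,\sD^{(k,m)}_{X_i})$ exactly as in \ref{finite_tDm}, eq.~(\ref{Dflat}). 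First I would show that for a coherent $\sD^{(k,m)}_{U_i}$-module $\sM$ the $\cO_{X_i}$-module underlying $\sM$ is quasi-coherent (being locally a quotient of a free $\sD$-module, hence a cokernel of a map of quasi-coherent $\cO_{X_i}$-modules), so that $H^q(U_i,\sM)=0$ for $q>0$ by Serre's vanishing on the affine scheme $U_i$; in particular $\Ga(U_i,-)$ is exact on coherent $\sD$-modules. Then the adjunction $\sM \mapsto \Ga(U_i,\sM)$ and $M \mapsto \sD^{(k,m)}_{U_i}\ot_{\Ga(U_i,\sD^{(k,m)}_{U_i})} M$ is checked to be an equivalence: the localization formula (\ref{Dflat}) plus flatness shows the counit and unit are isomorphisms, and noetherianity of $\Ga(U_i,\sD^{(k,m)}_{X_i})$ guarantees that finitely generated modules go to coherent sheaves and vice versa. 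This is precisely the argument of \cite[Thm.~3.4.1, 3.4.5]{BerthelotDI}, and I would cite it after verifying the hypotheses hold in our setting.

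For part (ii), I would pass to the $p$-adic completions. Over an affine formal open $\frU\subset\frX$, write $\hsD = \hsD^{(k,m)}_{\frU}$ and $\hD = \Ga(\frU,\hsD)$; by \ref{finite_tDm}(v) the ring $\hD$ is two-sided noetherian, and $\hsD = \varprojlim_i \sD^{(k,m)}_{X_i}$ with each $\sD^{(k,m)}_{X_i}$ a quasi-coherent $\cO_{X_i}$-module satisfying part (i). The strategy is the standard dévissage by reduction mod $\vpi^{i+1}$: a coherent $\hsD|_{\frU}$-module $\sM$ is $\vpi$-adically complete with $\sM/\vpi^{i+1}\sM$ a coherent $\sD^{(k,m)}_{X_i}$-module, one applies part (i) levelwise, and then takes the inverse limit, controlling the $\varprojlim^1$-terms via the Mittag-Leffler condition (the transition maps $\Ga(U_i,\sM/\vpi^{i+1}) \to \Ga(U_{i-1},\sM/\vpi^i)$ are surjective because $\Ga(U_i,-)$ is exact). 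This yields that $\Ga(\frU,-)$ and $\hsD\ot_{\hD}(-)$ (the $p$-adically completed tensor product, but noetherianity makes ordinary and completed tensor product agree on finitely generated modules) are quasi-inverse equivalences between coherent $\hsD|_{\frU}$-modules and finite-type $\hD$-modules. This is exactly \cite[Thm.~3.5.3]{BerthelotDI}, whose proof transfers verbatim once the finiteness inputs are in place.

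Part (iii) is then a corollary of the machinery assembled for (ii): exactness of $\Ga(\frU,-)$ on coherent $\hsD^{(k,m)}_\frU$-modules follows because the equivalence of (ii) identifies it with the identity functor on the category of finite-type $\hD$-modules (an abelian category, since $\hD$ is noetherian), and the cohomology vanishing $H^q(\frU,\sM)=0$ for $q>0$ follows from the levelwise vanishing in part (i) together with the Mittag-Leffler argument applied to the complex computing cohomology. The main obstacle I anticipate is not any single step but rather the verification that the quasi-coherence and flatness bookkeeping of \cite{BerthelotDI} genuinely carries over to the blow-up $\frX$ rather than the smooth $\frX_0$: the sheaves $\sD^{(k,m)}_{X_i}$ are only quasi-coherent (not locally free) over the possibly singular $X_i$, so one must take care that the filtration-by-order argument, the associated-graded identification with $\Sym^{(m)}(\sT_{\frX,k})$, and the localization formula (\ref{Dflat}) are used only where they have actually been proven in \ref{finite_tDm}. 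Once that is granted, the proof is a routine transcription of Berthelot's Chapter~3 arguments.
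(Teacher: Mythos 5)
Your proposal is correct and follows essentially the same route as the paper: one verifies, using the structural facts of \ref{finite_tDm} (quasi-coherence of $\sD^{(k,m)}_{X_i}$ over $\cO_{X_i}$, the filtration by the $\cO_{\frX}$-coherent order pieces $\sD^{(k,m)}_{\frX,d}$, and noetherianity of sections), that Berthelot's hypotheses are satisfied, and then invokes his general theorems A/B machinery --- the scheme-level statement for part (i) and the $p$-adic d\'evissage with Mittag-Leffler control for parts (ii) and (iii) --- exactly as the paper does via its auxiliary results. The only small inaccuracy is in the references to \cite{BerthelotDI}: the relevant statements are Prop.\ 3.1.3 and Prop.\ 3.3.4, 3.3.9, 3.3.10/11, not 3.4.1/3.4.5 and 3.5.3, but the arguments you describe are precisely those.
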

Note that (ii) of the proposition remains true for the sheaf $\hsD^{(k,m)}_{\frU,\bbQ}$ and coherent modules
over this sheaf by \cite[3.4.6]{BerthelotDI}.

\begin{proof} For the convenience of the reader, we start by recalling the following result

\begin{aux}\label{aux1} (cf. \cite[Prop. 3.1.3]{BerthelotDI}) Let $X$ be a scheme, ${\sD} $ be a sheaf of rings over $X$ such that, for all affine open $U \subset X$, $\Ga(U,\sD)$ is a noetherian ring. We fix an homomorphism
$\cO_X \ra \sD$ such that the left multiplication by the sections of $\cO_X$ induces a structure of
$\cO_X$-coherent ring over $\sD$.
		\begin{enumerate} \item The sheaf $\sD$ is a left coherent sheaf of rings.
                         \item A left $\sD$-module $\cM$ is coherent if and only if it is a quasi-coherent
                $\cO_X$-module and, for all affine open $U$ of an affine cover of $X$, $\Ga(U,\cM)$ is a
                    left $\Ga(U,\sD)$-module of finite type.
                  \item Assume that $X$ is affine and let $D=\Ga(X,\sD)$. The functors
$\cM \mapsto \Ga(X,\cM)$ and $M \mapsto \sD\ot M$ are quasi-inverse equivalences of categories between the
category of coherent left $\sD$-modules and the category of left $D$-modules of finite type. \qed
		\end{enumerate}
\end{aux}

Consider now the following situation, compare \cite[3.3.3]{BerthelotDI}. Let $\frX'$ be  an $\frS$-formal scheme and let
$\sD$ be a sheaf of rings over $\frX'$, endowed with a homomorphism $\cO_{\frX'}\ra \sD$, $\sD_i=\sD/p^{i+1}$,
$\hsD=\varprojlim_i \sD_i$. In addition, assume the following conditions (Berthelot's conditions)
		\begin{enumerate}\item[(1)]
				As an $\cO_{\frX'}$-module, $\sD$ is filtered inductive limit of a family of $\cO_{\frX'}$-module
                 $\sD_{\lambda}$ such that $\sD_{\lambda}/p^i \sD_{\lambda}$ are $\cO_{X'_i}$-quasi-coherent and
				 $\sD_{\lambda}\simeq \varprojlim_i \sD_{\lambda}/p^i \sD_{\lambda}$.
				\item[(2)] For every open set $\frU \subset \frX'$, the ring $\Ga(\frU,\sD)$ is left noetherian.
		\end{enumerate}
\begin{aux}\label{aux2} With the previous hypotheses, suppose that
$\frX'$ is affine, and let $\hD=\Ga(\frX',\hsD)$. Then $\hD$ is left noetherian.
		If $M$ is a $\hD$-module one defines
a $\hsD$-module
$$ M^{\Delta}=\varprojlim_i \sD_i \ot_{\cO_{X'_i}}M/p^{i+1}M \;.$$
 For a $\hsD$-module $\cM$, the following statements are equivalent
\begin{enumerate} \item For all $i$, the $\sD_i$-module $\cM/p^{i+1}\cM$ is coherent and the canonical
homomorphism $\cM\ra \varprojlim_i \cM/p^{i+1}\cM$ is an isomorphism.
\item There exists an isomorphism $\cM \simeq \varprojlim_i \cM_i$, where $(\cM_i)$ is a projective system
of coherent $\sD_i$-modules, such that the transition morphisms factorize via isomorphisms
$\cM_i/p^i \cM_i \simeq \cM_{i-1}$.
\item There exists a finite type $\hD$-module $M$ and an isomorphism $\cM \simeq M^{\Delta} \;.$
\item $\cM$ is a coherent $\hsD$-module.
\end{enumerate}
\end{aux}
\begin{proof} The ring $\hD$ is noetherian by \cite[Prop. 3.3.4]{BerthelotDI} and the other results
		come from \cite[Prop. 3.3.9]{BerthelotDI}.
\end{proof}

Under the same hypotheses, Berthelot proves in addition the following.
\begin{aux}\label{aux3} (cf. \cite[Prop. 3.3.10/11]{BerthelotDI}) With the previous hypotheses, suppose that
$\frX'$ is affine. The functors
$\Ga(\frX',\cdot)$ and $M \mapsto M^{\Delta}$ are equivalences between the category of
coherent $\hsD$-modules and the category of finite type $\hD$-modules. If
$\cM$ is a coherent $\hsD$-module, then $\forall q\geq 1$, $H^q(\frX',\cM)=0$. \qed
\end{aux}

Now it is clear that part (i) of our proposition \ref{easy_thmAB} follows from auxiliary result \ref{aux1},
since by (iii) of ~\ref{finite_tDm}, the rings $\sD^{(k,m)}_{X_i}(U_i)$ are indeed noetherian. Again, from (iii) of
~\ref{finite_tDm}, we see that the rings $\sD^{(k,m)}_{\frX}(\frU)$ are noetherian. Moreover the sheaf $\sD^{(k,m)}_{\frX}$
is a filtered inductive limit of the $\cO_{\frX}$-coherent sheaves $\sD^{(k,m)}_{\frX,d}$ defined in the proof
of~\ref{finite_tDm}. This means that Berthelot's
conditions (1) and (2) are satisfied for $\frX'=\frX$ and $\sD=\sD^{(k,m)}_{\frX}$. Hence, the auxiliary results \ref{aux2} and \ref{aux3} can be applied in our context, proving (ii) of the proposition. The point (iii) is a direct consequence of (ii). This ends the proof of the proposition \ref{easy_thmAB}.
\end{proof}

\begin{prop}\label{resolution-m}  Let $\frU\subset \frX$ be an open affine formal subscheme of $\frX$, and $\sM$ a coherent
      $\hsD^{(k,m)}_{\frU}$-module. Then there are integers $a,b \geq 0$ and a short exact
		sequence of coherent $\hsD^{(k,m)}_{\frU}$-modules:
          $$    \left(\hsD^{(k,m)}_{\frU}\right)^a \ra \left(\hsD^{(k,m)}_{\frU}\right)^b \ra \sM \ra 0 \;.$$
\end{prop}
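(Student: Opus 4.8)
The plan is to transport the statement, via the equivalence of categories of \ref{easy_thmAB}(ii), to a routine presentation statement for finitely generated modules over the noetherian ring $\hD:=\Ga(\frU,\hsD^{(k,m)}_{\frU})$. So first I would put $M=\Ga(\frU,\sM)$. By \ref{easy_thmAB}(ii) the functor $\Ga(\frU,-)$ identifies coherent $\hsD^{(k,m)}_{\frU}$-modules with finite type $\hD$-modules, so $M$ is finitely generated over $\hD$; and by \ref{finite_tDm}(v) the ring $\hD$ is two-sided noetherian. Choosing generators gives a surjection $\hD^{b}\tra M$, whose kernel $N$ is again finitely generated over $\hD$ by noetherianity; choosing generators of $N$ gives a surjection $\hD^{a}\tra N$, and composing produces an exact sequence of finite type $\hD$-modules
$$ \hD^{a}\lra \hD^{b}\lra M\lra 0 \;. $$

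Next I would apply the quasi-inverse functor $M\mapsto M^{\Delta}=\hsD^{(k,m)}_{\frU}\ot M$ of \ref{easy_thmAB}(ii) to this sequence. It commutes with finite direct sums and sends $\hD$ to $\hsD^{(k,m)}_{\frU}$, so $(\hD^{b})^{\Delta}\simeq(\hsD^{(k,m)}_{\frU})^{b}$ and likewise for $a$; and, being one half of an equivalence of abelian categories (\ref{aux3}), it is exact. Since $M^{\Delta}\simeq\sM$, applying it to the displayed sequence yields the asserted exact sequence
$$ \left(\hsD^{(k,m)}_{\frU}\right)^{a}\lra\left(\hsD^{(k,m)}_{\frU}\right)^{b}\lra\sM\lra 0 $$
of coherent $\hsD^{(k,m)}_{\frU}$-modules (the finite free terms being coherent because $\hsD^{(k,m)}_{\frU}$ is a coherent sheaf of rings, \ref{finite_tDm}(vi)).

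I do not expect a genuine obstacle: the only mild point is the bookkeeping that $M\mapsto M^{\Delta}$ is exact and carries finite free modules to finite free $\hsD^{(k,m)}_{\frU}$-modules, both of which follow at once from \ref{aux2} and \ref{aux3}. Should one prefer to avoid invoking exactness of $M\mapsto M^{\Delta}$, one can argue directly with sheaves instead: a finite generating family of $M$ over $\hD$ defines, by Theorem A (\ref{easy_thmAB}(ii)), a surjection $(\hsD^{(k,m)}_{\frU})^{b}\tra\sM$ of coherent $\hsD^{(k,m)}_{\frU}$-modules; its kernel is coherent by \ref{finite_tDm}(vi), and repeating the construction for this kernel supplies the map from $(\hsD^{(k,m)}_{\frU})^{a}$, giving the required presentation.
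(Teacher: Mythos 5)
Your argument is correct and is essentially the paper's own proof: take global sections, use that $\Ga(\frU,\hsD^{(k,m)}_{\frU})$ is noetherian (\ref{finite_tDm}) to get a finite presentation of $M=\Ga(\frU,\sM)$, then tensor back with $\hsD^{(k,m)}_{\frU}$ and identify $\hsD^{(k,m)}_{\frU}\ot_{\hD}M\simeq\sM$ via the equivalence of \ref{easy_thmAB}. The only cosmetic difference is that the paper simply tensors the presentation (right exactness of the tensor product suffices), whereas you additionally invoke exactness of the quasi-inverse functor, which is harmless.
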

\begin{proof} Denote $\wD^{(m)}=\Ga(\frU,\hsD^{(k,m)}_{\frU})$ which is a noetherian ring by~\ref{finite_tDm}
   and $M=\Ga(\frU,\sM)$, which is a finite type $\wD^{(m)}$-module by the previous proposition ~\ref{easy_thmAB}.
Since the algebra $\wD^{(m)}$ is noetherian, there exists a finite presentation of $\wD^{(m)}$-modules
        $$ \left(\wD^{(m)}\right)^a \ra \left(\wD^{(m)}\right)^b \ra \ M \ra 0\;.$$ Tensoring this presentation by
$\hsD^{(k,m)}_{\frU}$ over the ring $\wD^{(m)}$ and observing that $$\sM \simeq \hsD^{(k,m)}_{\frU}\ot_{\wD^{(m)}}M \;,$$ again by  ~\ref{easy_thmAB}, gives the result.
\end{proof}
\begin{prop}  \label{prop-coherence_D-dagger2} We have:
\begin{enumerate} \item The morphism of sheaves $\hsD^{(k,m)}_{\frX,\Q}\ra  \hsD^{(k,m+1)}_{\frX,\Q}$
                     is left and right flat.
\item The sheaf of rings $\sD^{\dagger}_{\frX,k}$ is coherent.

\item For any affine open $\frU\subset\frX$,
$\Ga(\frU,\hsD^{(k,m+1)}_{\frX,\Q})$ is left and right flat over $\Ga(\frU,\hsD^{(k,m)}_{\frX,\Q})$.

\item For any affine open $\frU\subset\frX$, $\Ga(\frU,\sD^{\dagger}_{\frX,k})$ is left and right flat over $\Ga(\frU,\hsD^{(k,m)}_{\frX,\Q})$.
\end{enumerate}

\end{prop}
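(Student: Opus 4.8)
The plan is to follow closely Berthelot's treatment of $\sD^\dagger_{\frX_0}$ in \cite[3.5, 3.6]{BerthelotDI}; the only new feature is the congruence twist, and it is harmless because $\vpi^k$ becomes a unit in $\cO_{\frX,\bbQ}$, so every $\bbQ$-linear flatness statement reduces, via the invertible change of generators $\uder^{\lan\unu\ran}\leftrightarrow\vpi^{k|\unu|}\uder^{\lan\unu\ran}$ (resp. $\uxi^{\lan\unu\ran}\leftrightarrow\vpi^{k|\unu|}\uxi^{\lan\unu\ran}$), to the untwisted one. I would prove the four assertions in the order (i), (iii), (iv), (ii): part (i) is the technical heart, and the other three are formal consequences of it together with \ref{finite_tDm} and the local theorems A and B from \ref{easy_thmAB} (and the remark following it, valid after $\ot\bbQ$ by \cite[3.4.6]{BerthelotDI}).

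For (i), flatness of a morphism of sheaves of rings may be tested on stalks, and by \ref{key-coro} and \ref{ringst1} the opens $\frV=\Spf\,\what{C_t[1/h]}$ form a basis of the topology of $\frX$; since a filtered colimit of flat ring maps is flat, it is enough to show that $\Ga(\frV,\hsD^{(k,m+1)}_{\frX,\bbQ})$ is left and right flat over $\Ga(\frV,\hsD^{(k,m)}_{\frX,\bbQ})$ for such $\frV$. This is the local statement underlying \cite[3.5.3]{BerthelotDI} in the smooth case, whose proof I would recall in outline: one equips $\Ga(\frV,\hsD^{(k,m)}_{\frX,\bbQ})$ with the filtration by order of differential operators, identifies (using \ref{finite_tDm}(i) and a Mittag--Leffler argument for $\Ga(\frV,-)$) the associated graded ring with the degreewise $\vpi$-adic completion of $\Ga(\frV,\Sym^{(m)}(\sT_{\frX,k}))$ tensored with $\bbQ$ --- which is noetherian by \cite[Prop.~1.3.6]{Huyghe97} --- and observes that the transition map on associated gradeds is, after the change of generators above, the level-change morphism for the completed, rationalized level-$m$ symmetric algebra of $(\pr)^*\sT_{\frX_0}$; this latter morphism is left and right flat by \cite[sec.~1]{Huyghe97} (the genuine commutative-algebra input, which holds only after $\ot\bbQ$). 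The flatness criterion for filtered rings used in the proof of \cite[3.5.3]{BerthelotDI} then yields the desired flatness at the level of sections, and hence (i).

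Granting (i), assertion (iii) for an \emph{arbitrary} affine open $\frU\subset\frX$ follows from \ref{easy_thmAB}: the functor $\Ga(\frU,-)$ is an exact equivalence between coherent $\hsD^{(k,m)}_{\frU,\bbQ}$-modules and finite type $\Ga(\frU,\hsD^{(k,m)}_{\frX,\bbQ})$-modules, carrying $\hsD^{(k,m+1)}_{\frU,\bbQ}\ot_{\hsD^{(k,m)}_{\frU,\bbQ}}(-)$ to $\Ga(\frU,\hsD^{(k,m+1)}_{\frX,\bbQ})\ot_{\Ga(\frU,\hsD^{(k,m)}_{\frX,\bbQ})}(-)$; the former is exact on coherent modules by (i), and since $\Ga(\frU,\hsD^{(k,m)}_{\frX,\bbQ})$ is noetherian (\ref{finite_tDm}(v), after $\ot\bbQ$) it suffices to test flatness on finite type modules, giving (iii). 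For (iv): $\frX$ is locally noetherian, hence qcqs, so $\Ga(\frU,\sD^\dagger_{\frX,k})=\varinjlim_m\Ga(\frU,\hsD^{(k,m)}_{\frX,\bbQ})$; by (iii) each term with $m\geq m_0$ is flat over $\Ga(\frU,\hsD^{(k,m_0)}_{\frX,\bbQ})$, and a filtered colimit of flat modules is flat. Finally (ii) is Berthelot's colimit argument \cite[3.6]{BerthelotDI}: $\sD^\dagger_{\frX,k}=\varinjlim_m\hsD^{(k,m)}_{\frX,\bbQ}$ is a filtered colimit of coherent sheaves of rings (\ref{finite_tDm}(vi) and the remark after \ref{easy_thmAB}) with flat transitions by (i); a finitely generated left ideal $\sI\subset\sD^\dagger_{\frX,k}|_\frU$ over an affine $\frU\in\sB$ is generated by finitely many sections lying in $\Ga(\frU,\hsD^{(k,m_0)}_{\frX,\bbQ})$ for $m_0$ large, and writing $\sI=\varinjlim_{m\geq m_0}\sI_m$ with $\sI_m$ the coherent ideal generated in $\hsD^{(k,m)}_{\frX,\bbQ}$, theorem B at finite level identifies $\Ga(\frU,\sI)$ with the left ideal generated by those sections (this avoids circularity); since $\Ga(\frU,\hsD^{(k,m_0)}_{\frX,\bbQ})$ is noetherian that ideal admits a finite presentation there, and tensoring up along the flat map of (iv) produces a finite presentation of $\Ga(\frU,\sI)$, which sheafifies. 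Hence $\sD^\dagger_{\frX,k}$ is coherent.

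The main obstacle is the reduction-to-graded step inside the proof of (i): one must check that Berthelot's passage to associated gradeds for the order filtration goes through over the (generally non-smooth) blow-up $\frX$ with the twisted locally free sheaf $\sT_{\frX,k}$ in place of $\sT_{\frX_0}$, and in particular that the order filtration on the $p$-adically complete rings $\Ga(\frV,\hsD^{(k,m)}_{\frX,\bbQ})$ still satisfies the hypotheses of the filtered-ring flatness criterion --- the filtration is only topologically exhaustive, which is why the $p$-adic completion must be handled with care --- and that the nontrivial commutative-algebra input, flatness of the level-change morphism for the completed rationalized level-$m$ symmetric algebras, survives the $\vpi^k$-twist, which it does precisely because $\vpi^k$ is invertible after $\ot\bbQ$. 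Once these inputs are in place, everything else is formal.
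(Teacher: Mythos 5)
Your overall architecture — prove (i) first, then deduce (iii) by the fixed-level theorems A/B, (iv) by passing to the colimit over $m$, and (ii) from (i) plus coherence of the $\hsD^{(k,m)}_{\frX,\Q}$ via Berthelot's criterion \cite[3.6.1]{BerthelotDI} — matches the paper for parts (ii)--(iv). The problem is the heart of the matter, part (i), where your argument has a genuine gap. You propose to filter $\Ga(\frV,\hsD^{(k,m)}_{\frX,\Q})$ by the order of differential operators and to conclude by a ``flatness criterion for filtered rings'' applied to the associated graded map. But the order filtration on the $p$-adically completed ring is not exhaustive: an element $\sum_{\unu}\vpi^{k|\unu|}b_{\unu}\uder^{\lan\unu\ran}$ with $b_{\unu}\to 0$ has unbounded order, so the union of the filtration steps is only the uncompleted ring $D^{(k,m)}$. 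Consequently the associated graded you compute does not see the completion at all (it is just $\Sym^{(m)}(\sT_{\frX,k})(\frV)$, each graded piece being already complete), and any gr-based criterion could at best control the map $D^{(k,m)}_{\Q}\ra D^{(k,m+1)}_{\Q}$ — which is an isomorphism and carries no information — not the map of completions. You yourself flag exactly this (``the filtration is only topologically exhaustive\dots'') as the main obstacle, but you do not resolve it; moreover \cite[3.5.3]{BerthelotDI} contains no such filtered-ring criterion, and I do not know of a flatness statement in \cite[sec.~1]{Huyghe97} for a ``completed rationalized level-change morphism'' of symmetric algebras of the kind you invoke — after $\ot\Q$ the uncompleted symmetric algebras at different levels coincide, so the genuine content would again be a statement about completions, circling back to the unresolved point.

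What the paper (following Berthelot's actual method in \cite[3.5.3]{BerthelotDI}) does instead is purely ring-theoretic: inside $\hD^{(k,m)}_{\Q}$ one forms the intermediate ring $E=\hD^{(k,m)}+D^{(k,m+1)}$, checks it is a subring, shows that its $\vpi$-adic completion satisfies $\widehat{E}_{\Q}\simeq\hD^{(k,m+1)}_{\Q}$ (via bijectivity of the maps $D^{(k,m+1)}/\vpi^iD^{(k,m+1)}\ra E/\vpi^iE$), and then proves $E$ is two-sided noetherian by adjoining the twisted generators $\vpi^{kp^{m+1}}\der_l^{[p^{m+1}]}$ one at a time to $E_0=\hD^{(k,m)}$, the key input being the commutator estimate $[\vpi^{kp^{m+1}}\der_l^{[p^{m+1}]},P]\in\hD^{(k,m)}$ for $P\in\hD^{(k,m)}$ and its iterate $[(\vpi^{kp^{m+1}}\der_l^{[p^{m+1}]})^{\nu},P]\in\sum_{\mu<\nu}\hD^{(k,m)}(\vpi^{kp^{m+1}}\der_l^{[p^{m+1}]})^{\mu}$. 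Flatness then follows because the $\vpi$-adic completion of a noetherian ring is flat, so $\hD^{(k,m+1)}_{\Q}=\widehat{E}_{\Q}$ is flat over $E_{\Q}=\hD^{(k,m)}_{\Q}$. This is where the congruence twist has to be checked by hand (inserting powers of $\vpi$ into Berthelot's formulas), and it is the step your proposal replaces by an argument that does not go through; you would need to supply this (or an equally complete substitute) for (i), after which your deductions of (ii), (iii) and (iv) are essentially the paper's.
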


\begin{proof}
Let us first observe that (ii) follows from the flatness statement of (i) and the last part of \ref{finite_tDm} thanks to \cite[3.6.1]{BerthelotDI}.
For (i), we follow Berthelot's method described in \cite[3.5.3]{BerthelotDI}. For the proof we can restrict ourselves to proving that
if $\frU$ is an affine open of $\frX$ contained in the basis of open sets $\sB$ from \ref{finite_tDm}, then
the map $\hsD^{(k,m)}_{\frX,\bbQ}(\frU)\ra  \hsD^{(k,m+1)}_{\frX,\bbQ}(\frU)$ is left and right flat. In this
situation, we have the following explicit description, assuming that $\pr^*\sT_{\frX_0}$ is free
restricted to $\frU$, with basis $\der_1,\ldots,\der_M$ as in~\ref{ddagkX},
$$\sD^{(k,m)}_{\frX}(\frU)=\left\{\sum^{<\infty}_{\unu}\vpi^{k|\unu|}b_{\unu}\uder^{\lan \unu \ran}\,|\,b_{\unu}\in B\right\}\subset
		\widehat{\sD}^{(k,m)}_{\frX}(\frU)=\left\{\sum_{\unu}\vpi^{k|\unu|}b_{\unu}\uder^{\lan \unu \ran}\,|\,b_{\unu}\in B, b_{\unu}\ra 0\right\} $$
where the convergence is in the $\vpi$-adic topology of $B=\cO_{\frX}(\frU)$.
With this description, we can copy Berthelot's proof of ~\cite[3.5.3]{BerthelotDI}, replacing everywhere
the operators $\uder^{\lan \unu \ran}$ by $\vpi^{k|\unu|}\uder^{\lan \unu \ran}$, as follows. First of all, by inserting suitable powers of $\vpi$ into the formula \cite[(2.2.5.1)]{BerthelotDI} we see that the $\fro$-algebra $\sD^{(k,m)}_{\frX}(\frU)$ is generated by $B$ and the operators $\vpi^{kp^r}\der_l^{[p^r]}$ for $1 \leq r \leq m$ and $1 \leq l \leq M$.
Now write
$$ D^{(k,m)}=\sD^{(k,m)}_{\frX}(\frU) ,\textrm{ and } \hD^{(k,m)}=\hsD^{(k,m)}_{\frX}(\frU)\;,$$
the latter being the $\vpi$-adic completion of $D^{(k,m)}$~\cite[4.3.3.2]{BerthelotDI}. By the above explicit description, the two canonical maps
$ D^{(k,m)}\ra \hD^{(k,m)}\ra \hD^{(k,m)}_{\Q}$ are injective and this is also true for the canonical map $\hD^{(k,m)}\ra \hD^{(k,m+1)}$. Indeed, the latter is induced by mapping \begin{numequation}\label{formula_transition} \vpi^{k|\unu|}\uder^{\lan\unu\ran_{(m)}}\mapsto \vpi^{k|\unu|}\frac{\underline{q}_{(m)}!}{\underline{q}_{(m+1)}!}\uder^{\lan\unu\ran_{(m+1)}}\end{numequation}
and is therefore injective by the above explicit description and the fact that $\fro$ is torsionfree. Let us consider
the subring $E$ of $\hD^{(k,m)}_{\Q}$ generated by the subsets $\hD^{(k,m)}$ and $D^{(k,m+1)}$. Since
$D^{(k,m+1)}_{\Q}=D^{(k,m)}_{\Q}$, we see that $$E_{\Q}=\hD^{(k,m)}_{\Q} \;.$$
 As in  Berthelot's proof, we have the following
\begin{aux} $ E = \hD^{(k,m)} + D^{(k,m+1)} \;.$
\end{aux}
\begin{proof} Denote by $E'=\hD^{(k,m)} + D^{(k,m+1)}$. We have to prove that $E'$ is a subalgebra of $\hD^{(k,m+1)}$.
This is enough to prove that if $(P,Q)\in \hD^{(k,m)} \times D^{(k,m+1)}$, then $P\cdot Q\in E'$ and $Q\cdot P\in E'$.
Since the proof is the same in both cases, we only treat the  product $P\cdot Q$. As $Q\in D^{(k,m+1)}$, there
exists $c>0$ such that $\varpi^cQ \in D^{(k,m)}$. As $P\in \hD^{(k,m)}$, there exist $(P_1,R_1)\in D^{(k,m)}\times
\hD^{(k,m)}$ such that $P=P_1+\varpi^c R_1$, then $PQ=\varpi^cR_1Q+P_1Q\in E'$, as $\varpi^cR_1Q\in \hD^{(k,m)}$ and
$P_1Q\in D^{(k,m+1)}$.
\end{proof}
Denote by $\widehat{E}$
the $\varpi$-adic completion of $E$. We can then prove the
\begin{aux} $\widehat{E}_{\Q}=\hD^{(k,m+1)}_{\Q} \;.$
\end{aux}
\begin{proof} By construction, there are maps $$\lam_i \, \colon \,  D^{(k,m+1)}/\varpi^i D^{(k,m+1)}  \ra E/\varpi^i
E \;.$$ Let us prove that these maps are bijective. If $R\in D^{(k,m+1)}$ is such that $\lam_i(R)\in \varpi^i E$ then
there exist $(P,Q)\in \hD^{(k,m)}\times D^{(k,m+1)}$ such that $R=\varpi^i(P+Q)$, thus $\varpi^i P=R-\varpi^i Q$
has finite order, and $P\in D^{(k,m)}$, that implies that $R\in D^{(k,m+1)}$ and $\lam_i$ is injective. Pick
now $R\in E$, and $(P,Q)\in \hD^{(k,m)}\times D^{(k,m+1)}$ such that $R=P+Q$. The operator $P$ can be written
$P=P_1+\varpi^i R_1$, with $P_1\in D^{(k,m)}$ and $R_1\in \hD^{(k,m)}$, then $P=P_1+R+\varpi^i R_1$
and $\lam_i(P_1+R \textrm{ mod }\varpi^i D^{(k,m+1)})=P \textrm{ mod }\varpi^i E$, so that $\lam_i$ is surjective.
We finally see that $\lam_i$ is bijective, which proves the auxiliary result.
\end{proof}
 Now, the remaining thing to prove
is that $E$ is noetherian, since this result implies that $\widehat{E}$ is flat over $E$, thus
that $\hD^{(k,m+1)}_{\Q}$ is flat over $\hD^{(k,m)}_{\Q}$. The proof that $E$ is noetherian
proceeds by induction. By our above remark, $E$ is generated as (left) $D^{(k,m)}$-module by the elements
$\vpi^{k|\unu|p^{m+1}}(\uder^{[p^{m+1}]})^{\unu}$ for $\unu\in\bbN^M$. Let $1\leq l\leq M$. Inserting appropriate powers of $\vpi$ into the corresponding formula in Berthelot's proof one finds $[\vpi^{kp^{m+1}}\der_l^{[p^{m+1}]},b]\in D^{(k,m)}$ for any $b\in B$ and
so $[\vpi^{kp^{m+1}}\der_l^{[p^{m+1}]},P]\in \hD^{(k,m)}$ for any $P\in \hD^{(k,m)}.$ Using the general commutator identity (valid in any associative ring) $[Q^\nu,P]=[Q^{\nu-1},P]Q+Q^{\nu-1}[Q,P]$
one deduces from this inductively
$$ [ (\vpi^{kp^{m+1}}\der_l^{[p^{m+1}]})^\nu,P]\in\sum_{\mu<\nu} \hD^{(k,m)}(\vpi^{kp^{m+1}}\der_l^{[p^{m+1}]})^\mu\;.$$
This is the analogue of the key formula \cite[(3.5.3.2)]{BerthelotDI}. Now let $1\leq j\leq M$ and consider the subring $E_j$ of $E$ generated by
$E_0=\hD^{(k,m)}$ and the operators $\vpi^{kp^{m+1}}\der_l^{[p^{m+1}]}$ for $1 \leq l \leq j$.
Then $E_0$ is noetherian by~prop. \ref{finite_tDm} and, by our above discussion, $E_M=E$. With the key formula at hand, one may now follow Berthelot's proof word for word to obtain that $E_{j-1}$ noetherian implies $E_j$ noetherian. This proves (i).

\vskip8pt

Let us now prove (iii). Denote $\wD^{(m)}_{k,\Q}=\Ga(\frU,\hsD^{(k,m)}_{\frU,\Q})$ and consider
 $\alpha$ an injective map of coherent $\wD^{(m)}_{k,\Q}$-modules $\alpha$ : $M \hra N$. As a consequence of
(ii) and (iii) of~\ref{easy_thmAB}, we know that the sheaf $\hsD^{(k,m)}_{{\frU},\Q}$ is a flat
$\wD^{(m)}_{k,\Q}$-module. In particular, the map $\alpha $ provides an injection of
coherent $\hsD^{(k,m)}_{{\frU},\Q}$-modules $$ \hsD^{(k,m)}_{{\frU},\Q}\ot_{\wD^{(m)}_{k,\Q}}M \hra
\hsD^{(k,m)}_{{\frU},\Q}\ot_{\wD^{(m)}_{k,\Q}}N\;.$$ Using the flatness result (i) we also have an injection of coherent
$\hsD^{(k,m+1)}_{{\frU},\Q}$-modules  $$ \hsD^{(k,m+1)}_{{\frU},\Q}\ot_{\wD^{(m)}_{k,\Q}}M \hra
\hsD^{(k,m+1)}_{{\frU},\Q}\ot_{\wD^{(m)}_{k,\Q}}N\;.$$ Then we identify (resp. for $N$)
$$  \hsD^{(k,m+1)}_{{\frU},\Q}\ot_{\wD^{(m)}_{k,\Q}}M \simeq
\hsD^{(k,m+1)}_{{\frU},\Q}\ot_{\wD^{(m+1)}_{k,\Q}}\wD^{(m+1)}_{k,\Q}\ot_{\wD^{(m)}_{k,\Q}}M \;.$$
Finally taking global sections of the previous injection and using again ~\ref{easy_thmAB},
we get an injection of coherent $\wD^{(m+1)}_{k,\Q}$-modules
$$ \wD^{(m+1)}_{k,\Q}\ot_{\wD^{(m)}_{k,\Q}}M \hra \wD^{(m+1)}_{k,\Q}\ot_{\wD^{(m)}_{k,\Q}} N \;,$$ that proves (iii). Assertion (iv) follows from the previous one, since, as $\frU$ is quasi-compact,
$$\Ga(\frU,\sD^{\dagger}_{\frX,k})=\varinjlim_m \Ga(\frU,\hsD^{(k,m)}_{\frX,\Q})\;,$$
and we deduce from (iii) that, for all integer $m'\geq m$, the module
 $\Ga(\frU,\hsD^{(k,m')}_{\frX,\Q})$ is left and right flat over $\Ga(\frU,\hsD^{(k,m)}_{\frX,\Q})$. We obtain thus
(iv) by passing to the inductive limit over $m$. This ends the proof of the proposition \ref{prop-coherence_D-dagger2}.
\end{proof}

We deduce from this the corresponding version of proposition \ref{easy_thmAB} for the sheaf $\sD^{\dagger}_{\frX,k}$.
\begin{cor}\label{easy_thmAB2}{\rm (Local theorem A and B for varying $m$)}
Let $\frU \subset \frX$ be an open affine formal subscheme of $\frX$. Then :

\begin{enumerate}
\item The algebra $D^{\dagger}_k=\Ga(\frU,\sD^{\dagger}_{\frX,k})$ is coherent.
\item For any open affine subset $\frU \sub \frX$, any coherent $\sD^{\dagger}_{\frU,k}$-module $\sM$, and any $q>0$ one has $H^q(\frU,\sM) = 0$.
\item The functor $\Ga(\frU,.)$ establishes an equivalence of categories
between coherent $\sD^{\dagger}_{\frU,k}$-modules and coherent $D^{\dagger}_k$-modules. In particular, the functor $\Ga(\frU,.)$ is exact on coherent $\sD^{\dagger}_{\frU,k}$-modules.
\end{enumerate}
\end{cor}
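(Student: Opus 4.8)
The plan is to obtain the corollary from the fixed‑level statements of \ref{easy_thmAB} together with the flatness assertions of \ref{prop-coherence_D-dagger2}, mimicking Berthelot's passage to the limit over $m$ \cite[3.6]{BerthelotDI}. Restricting all sheaves to $\frU$ one has $\sD^{\dagger}_{\frU,k}=\varinjlim_m \hsD^{(k,m)}_{\frU,\bbQ}$ and, $\frU$ being quasi‑compact, $D^{\dagger}_k=\varinjlim_m \wD^{(m)}_{k,\bbQ}$ with $\wD^{(m)}_{k,\bbQ}:=\Ga(\frU,\hsD^{(k,m)}_{\frU,\bbQ})$. Each ring $\wD^{(m)}_{k,\bbQ}$ is two‑sided noetherian by \ref{finite_tDm}, and by \ref{prop-coherence_D-dagger2}(iii) the transition maps $\wD^{(m)}_{k,\bbQ}\to\wD^{(m+1)}_{k,\bbQ}$ are left and right flat. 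Since a filtered inductive limit of two‑sided noetherian rings along flat ring homomorphisms is two‑sided coherent --- a finitely generated ideal is extended from a finite stage, where it is finitely presented, and flatness preserves this, exactly as in \cite[3.6.1]{BerthelotDI} --- assertion (i) follows.

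The technical heart of the proof, and the step I expect to be the only genuine obstacle, is the descent of coherent modules to finite level: \emph{for every coherent $\sD^{\dagger}_{\frU,k}$-module $\sM$ there exist $m$ and a coherent $\hsD^{(k,m)}_{\frU,\bbQ}$-module $\sM_m$ with $\sM\simeq \sD^{\dagger}_{\frU,k}\otimes_{\hsD^{(k,m)}_{\frU,\bbQ}}\sM_m$}. Granting it, $\sM_{m'}:=\hsD^{(k,m')}_{\frU,\bbQ}\otimes_{\hsD^{(k,m)}_{\frU,\bbQ}}\sM_m$ is coherent over $\hsD^{(k,m')}_{\frU,\bbQ}$ (a coherent sheaf of rings, by \ref{finite_tDm}) for $m'\geq m$, and $\sM=\varinjlim_{m'}\sM_{m'}$ since the tensor product commutes with the filtered colimit $\sD^{\dagger}_{\frU,k}=\varinjlim_{m'}\hsD^{(k,m')}_{\frU,\bbQ}$. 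To prove the descent I would cover $\frU$ by finitely many open affine subsets $\frW_i$ on which $\sM$ admits a finite presentation over $\sD^{\dagger}_{\frW_i,k}$ --- possible since $\sM$ is coherent and $\frU$ is quasi‑compact --- and pass to the intersections $\frW_i\cap\frW_j$, which are affine because $\frX_0$, and hence its admissible blow‑up $\frX$, is separated. Each presentation matrix, being a finite matrix over $\Ga(\frW_i,\sD^{\dagger}_{\frX,k})=\varinjlim_m\Ga(\frW_i,\hsD^{(k,m)}_{\frX,\bbQ})$, is defined at a finite level; moreover, since $\Ga(\frW_i\cap\frW_j,-)$ carries coherent $\hsD^{(k,m)}$-modules on $\frW_i\cap\frW_j$ to finitely presented modules --- this is \ref{easy_thmAB}, valid on every affine open --- the $\Hom$ and $\End$ groups between the restricted modules commute with the colimit over $m$, so the gluing isomorphisms identifying the localized presentations, and the cocycle identities among them, are likewise defined at a finite level. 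Taking $m$ large enough to absorb all this finite data, the level‑$m$ pieces glue to the desired $\sM_m$.

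Granting the descent, the rest is formal. For (ii): the underlying space of $\frU$ is quasi‑compact and quasi‑separated --- indeed noetherian --- so cohomology commutes with filtered colimits, and for $q>0$
\[
H^q(\frU,\sM)=H^q\Big(\frU,\varinjlim_{m'}\sM_{m'}\Big)=\varinjlim_{m'}H^q(\frU,\sM_{m'})=0
\]
by the $\bbQ$‑version of \ref{easy_thmAB} (cf. the remark following it and \cite[3.4.6]{BerthelotDI}). For (iii): set $M_m:=\Ga(\frU,\sM_m)$, finitely presented over $\wD^{(m)}_{k,\bbQ}$ by \ref{easy_thmAB}, with $\sM_m\simeq \hsD^{(k,m)}_{\frU,\bbQ}\otimes_{\wD^{(m)}_{k,\bbQ}}M_m$; base change gives $\sM_{m'}\simeq \hsD^{(k,m')}_{\frU,\bbQ}\otimes_{\wD^{(m)}_{k,\bbQ}}M_m$, so taking global sections level by level and passing to the limit yields $\Ga(\frU,\sM)\simeq D^{\dagger}_k\otimes_{\wD^{(m)}_{k,\bbQ}}M_m$, which is finitely presented, hence coherent over $D^{\dagger}_k$ by (i). Conversely, for a coherent $D^{\dagger}_k$-module $N$ the flatness in \ref{prop-coherence_D-dagger2}(iv) lets one write $N\simeq D^{\dagger}_k\otimes_{\wD^{(m)}_{k,\bbQ}}N_m$ with $N_m$ finitely presented over $\wD^{(m)}_{k,\bbQ}$, whence $\sD^{\dagger}_{\frU,k}\otimes_{D^{\dagger}_k}N\simeq \sD^{\dagger}_{\frU,k}\otimes_{\hsD^{(k,m)}_{\frU,\bbQ}}\big(\hsD^{(k,m)}_{\frU,\bbQ}\otimes_{\wD^{(m)}_{k,\bbQ}}N_m\big)$ is coherent over $\sD^{\dagger}_{\frU,k}$ with global sections $N$. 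Chasing these identifications, using \ref{easy_thmAB} at level $m$, shows that $\Ga(\frU,-)$ and $\sD^{\dagger}_{\frU,k}\otimes_{D^{\dagger}_k}-$ are mutually quasi‑inverse equivalences between coherent $\sD^{\dagger}_{\frU,k}$-modules and coherent $D^{\dagger}_k$-modules; exactness of $\Ga(\frU,-)$ on coherent modules then follows from the vanishing of $H^1$ in (ii).
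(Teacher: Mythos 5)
Your proposal is correct and follows essentially the same route as the paper: coherence of $D^{\dagger}_k$ via noetherianness of the $\wD^{(m)}_{k,\bbQ}$ and the flatness from \ref{prop-coherence_D-dagger2}, descent of a coherent $\sD^{\dagger}_{\frU,k}$-module to a coherent module at some finite level $m$, vanishing of cohomology level by level plus commutation with filtered colimits, and reduction to the free module case for the equivalence. The only difference is cosmetic: where the paper simply cites Berthelot's \cite[3.6.2]{BerthelotDI} for the descent step (and uses \ref{resolution-m} for the global presentation), you sketch that gluing argument yourself, and your appeal to \ref{prop-coherence_D-dagger2}(iv) for descending a coherent $D^{\dagger}_k$-module to finite level is a slight misattribution (right-exactness of the tensor product and finite presentation suffice), but neither point affects correctness.
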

\begin{proof}
Denote $\wD^{(m)}_{k,\bbQ}=\Ga(\frU,\hsD^{(k,m)}_{\frU,\bbQ})$.
 Since the
scheme $\frU$ is quasi-compact, the functors $H^q(\frU,.)$ commute with inductive limits and we have
$$D^{\dagger}_k=\varinjlim_m \wD^{(m)}_{k,\bbQ}\;.$$
By (iii) of the ~\ref{prop-coherence_D-dagger2}, the maps $\wD^{(m)}_{k,\bbQ}\ra \wD^{(m+1)}_{k,\bbQ}$ are flat,
and by ~\ref{finite_tDm} these algebras $\wD^{(m)}_{k,\bbQ}$ are noetherian. This showes that the algebra $D^{\dagger}_k$ is coherent \cite[3.6.1]{BerthelotDI}.

Let $\sM$ be a coherent $\sD^{\dagger}_{\frU,k}$-module. The proof of \cite[3.6.2]{BerthelotDI} literally applies in our situation and shows
that there is a non-negative integer $m_0$ and a coherent
$\hsD^{(k,m_0)}_{{\frU},\Q}$-module $\sN$ such that
$$ \sM \simeq \sD^{\dagger}_{\frU,k}\ot_{\hsD^{(k,m_0)}_{{\frU},\Q}}\sN \;.$$
Denote $M=\Ga(\frU,\sM)$ and for $m \geq m_0$  $$ \sM^{(m)}=\hsD^{(k,m)}_{\frU,\bbQ}\ot_{\hsD^{(k,m_0)}_{{\frU},\Q}} \sN\;,$$
so that $ \sM \simeq \varinjlim_m \sM^{(m)}.$ Then, $\forall m \geq m_0$, $H^q(\frU,\sM^{(m)})=0$ by~\ref{easy_thmAB}, and by passing to the inductive limit
we see that $H^q(\frU,\sM)=0$, which proves (ii).

The rest of the proof follows 2.3.7, 2.4.1, 2.4.2 of~\cite{NootHuyghe09}. For the convenience of the reader, let us
summmarize the arguments here. From~\ref{resolution-m}, the $\hsD^{(k,m_0)}_{{\frU},\Q}$-coherent module $\sN$ admits a resolution
		$$    {\left(\hsD^{(k,m_0)}_{\frU}\right)}^{a} \ra {\left(\hsD^{(k,m_0)}_{\frU}\right)}^b \ra \sN \ra 0 \;.$$
Tensoring this resolution with the sheaf $\sD^{\dagger}_{\frU,k}$ gives us a resolution of coherent
$\sD^{\dagger}_{\frU,k}$-modules. Since the global section functor is exact on the category of coherent
$\sD^{\dagger}_{\frU,k}$-modules because of (ii), we get an exact sequence of coherent $D^{\dagger}_k$-modules
		$$ \left(D^{\dagger}_k\right)^a \ra \left(D^{\dagger}_k\right)^b \ra M \ra 0 \;.$$
To see that $\sD^{\dagger}_k\ot_{D^{\dagger}_k}M \simeq \sM$, we are thus reduced to the case $\sM=\sD^{\dagger}_{\frU,k}$, for
which it is obvious. We prove similarly that if $M$ is a $D^{\dagger}_k$-coherent module, then
$$M \simeq \Ga(\frU,\sD^{\dagger}_{\frU,k}\ot_{D^{\dagger}_k} M) \;,$$
by reducing to the case where $M=D^{\dagger}_k$. This proves the proposition.
\end{proof}

We now give a flatness result when the congruence niveau $k$ varies.
\begin{prop}\label{flatFS} Let $k,k'$ be nonnegative integers such that $k'\geq k \geq k_{\frX}$, then the morphism
		of sheaves of rings $\hsD^{(k',m)}_{\frX,\Q}\hra \hsD^{(k,m)}_{\frX,\Q}$ is left and right flat.
\end{prop}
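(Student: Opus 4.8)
The strategy is to follow Berthelot's method of \cite[3.5.3]{BerthelotDI}, already used in the proof of \ref{prop-coherence_D-dagger2}(i), now for the passage from congruence level $k'$ down to the smaller level $k$. Since flatness of a morphism of sheaves of rings may be checked on a basis of opens, I first reduce to an affine $\frU\in\sB$ as in \ref{finite_tDm} --- contained in some $\pr^{-1}(\frU_0)$ over which $\pr^*\sT_{\frX_0}$ is free on $\der_1,\dots,\der_M$ --- and, writing $B=\cO_\frX(\frU)$, to showing that $\hsD^{(k,m)}_{\frX}(\frU)$ is left and right flat over $\hsD^{(k',m)}_{\frX}(\frU)$; flatness of this integral statement gives the one after $\ot\bbQ$ by localisation. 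By \ref{key-lem3} all the rings involved are subrings of $\Ga(\frU,\sp_*\sD_{\frX_\bbQ})$, and because $k\le k'$ the operators $\vpi^{k'|\unu|}\uder^{\lan\unu\ran}$ generating $\sD^{(k',m)}_\frX(\frU)$ lie in $\sD^{(k,m)}_\frX(\frU)$, an inclusion compatible with $\vpi$-adic completion.

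The point at which the argument departs from that of \ref{prop-coherence_D-dagger2}(i) is that $\sD^{(k,m)}_\frX(\frU)$ is generated as an $\fro$-algebra over $\sD^{(k',m)}_\frX(\frU)$ not by operators of a single order but by the family $\vpi^{kp^r}\der_l^{[p^r]}$ with $0\le r\le m$, $1\le l\le M$, which involves every level $r$. Accordingly I would construct a chain of $p$-adically complete subrings
\[
\hsD^{(k',m)}_\frX(\frU)=\Lambda^{(-1)}\subseteq\Lambda^{(0)}\subseteq\cdots\subseteq\Lambda^{(m)}=\hsD^{(k,m)}_\frX(\frU)\,,
\]
where $\Lambda^{(r)}$ is the $\vpi$-adic completion of the subalgebra generated by $\Lambda^{(r-1)}$ and the level-$r$ operators $\vpi^{kp^r}\der_l^{[p^r]}$ ($1\le l\le M$), the completion being taken so that $\Lambda^{(r)}$ still embeds into $\hsD^{(k,m)}_\frX(\frU)$; the two end terms come out as claimed because $\sD^{(k',m)}_\frX(\frU)$ (resp. $\sD^{(k,m)}_\frX(\frU)$) is generated over $B$ by the $\vpi^{k'p^s}\der_l^{[p^s]}$ with $0\le s\le m$ (resp. by the $\vpi^{kp^s}\der_l^{[p^s]}$ with $0\le s\le m$). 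The key commutator estimate, established like the Assertion in the proof of \ref{key-lem1} together with a base-$p$ expansion, is that for $b\in B$ and $0\le r\le m$,
\[
\vpi^{kp^r}\bigl[\der_l^{[p^r]},\,b\bigr]\ \in\ B\bigl[\,\vpi^{kp^s}\der_l^{[p^s]}\ :\ 0\le s\le r-1\,\bigr]\ \subseteq\ \Lambda^{(r-1)}\,;
\]
indeed $[\der_l^{[p^r]},b]$ has order $<p^r$ by the Leibniz formula \cite[2.3.4]{BerthelotDI} and has coefficients in $B$, and for an exponent $j<p^r$ written in base $p$ a computation with Legendre's formula shows that $\der_l^{[j]}$ is a unit multiple of the corresponding monomial in the $\der_l^{[p^s]}$, $s<r$, whereupon the powers of $\vpi$ match up. Since moreover the $\der_l^{[a]}$ commute with one another, the level-$r$ operators commute among themselves and with every generator of $\Lambda^{(r-1)}$ except those in $B$, so completeness of $\Lambda^{(r-1)}$ together with continuity of ${\rm ad}(\vpi^{kp^r}\der_l^{[p^r]})$ shows that each $\vpi^{kp^r}\der_l^{[p^r]}$ is a quasi-normalising element over $\Lambda^{(r-1)}$ in the sense of \cite[3.5.3]{BerthelotDI}.

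Granting this, one argues by induction on $r$. The base ring $\Lambda^{(-1)}=\hsD^{(k',m)}_\frX(\frU)$ is two-sided noetherian by \ref{finite_tDm}(v), which applies since $k'\ge k\ge k_\frX$. Assuming $\Lambda^{(r-1)}$ noetherian, one adjoins the quasi-normalising operators $\vpi^{kp^r}\der_l^{[p^r]}$ one at a time; the commutator identity $[Q^\nu,P]=[Q^{\nu-1},P]Q+Q^{\nu-1}[Q,P]$ yields the key formula that $[(\vpi^{kp^r}\der_l^{[p^r]})^\nu,P]$ lies in $\sum_{\mu<\nu}(\vpi^{kp^r}\der_l^{[p^r]})^\mu$ times the ring generated so far, and then Berthelot's noncommutative Hilbert-basis argument (followed word for word, with the $\vpi$-powers inserted) shows that the intermediate ring $\Lambda^{(r-1)}[\vpi^{kp^r}\der_l^{[p^r]}:1\le l\le M]$ is two-sided noetherian; as it is a skew-polynomial-type extension of $\Lambda^{(r-1)}$ by finitely many commuting quasi-normalising elements it is also left and right flat over $\Lambda^{(r-1)}$ (one checks freeness on the monomial basis, e.g. after reducing modulo $\vpi$, where only finite-order operators survive and the order filtration has finite graded pieces). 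Its $\vpi$-adic completion $\Lambda^{(r)}$ is then again two-sided noetherian and flat over it by \cite[3.2.3]{BerthelotDI}, so $\Lambda^{(r)}$ is left and right flat over $\Lambda^{(r-1)}$. Composing these over $r=0,\dots,m$ gives that $\hsD^{(k,m)}_\frX(\frU)$ is left and right flat over $\hsD^{(k',m)}_\frX(\frU)$; tensoring with $\bbQ$ and letting $\frU$ run through $\sB$ proves the proposition.

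The main obstacle is the bookkeeping forced by the interleaved completions. Because decreasing the congruence level enlarges the operator ring simultaneously in every order range, one cannot simply adjoin all the operators $\vpi^{kp^r}\der_l^{[p^r]}$ at once to $\hsD^{(k',m)}_\frX(\frU)$: the commutator of $\vpi^{kp^r}\der_l^{[p^r]}$ with an infinite $\vpi$-adically convergent series in $\hsD^{(k',m)}_\frX(\frU)$ need not land in the finitely generated extension. One is therefore obliged to complete after each level $r$, and the delicate part is to set up these intermediate completions $\Lambda^{(r)}$ so that they remain noetherian subrings of $\hsD^{(k,m)}_\frX(\frU)$, with $\Lambda^{(-1)}=\hsD^{(k',m)}_\frX(\frU)$, $\Lambda^{(m)}=\hsD^{(k,m)}_\frX(\frU)$ and each transition flat; verifying this (the analogue here of the two auxiliary results in the proof of \ref{prop-coherence_D-dagger2}(i)) together with the commutator estimate above is the technical heart of the proof.
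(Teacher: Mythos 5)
The noetherianity half of your plan (the commutator estimates, adjoining the operators $\vpi^{kp^r}\der_l^{[p^r]}$ one at a time, and the noncommutative Hilbert-basis induction) is essentially the double induction the paper runs, and that part is fine. The genuine gap is in your flatness mechanism. You assert (a) that each uncompleted extension $\Lambda^{(r-1)}\bigl[\vpi^{kp^r}\der_l^{[p^r]}:1\le l\le M\bigr]$ is free, hence flat, over $\Lambda^{(r-1)}$ ``on the monomial basis'', and (b) that composing the steps yields flatness of $\hsD^{(k,m)}_{\frX}(\frU)$ over $\hsD^{(k',m)}_{\frX}(\frU)$ integrally, the rational statement following by localisation. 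Both fail. Since $r\le m$ and $k'>k$, the base ring $\Lambda^{(r-1)}\supseteq\hsD^{(k',m)}_{\frX}(\frU)$ already contains $\vpi^{k'p^r}\der_l^{[p^r]}=\vpi^{(k'-k)p^r}\cdot\bigl(\vpi^{kp^r}\der_l^{[p^r]}\bigr)$, i.e.\ a $\vpi$-power multiple of each new generator; hence the monomials in the new generators are not a basis (reducing mod $\vpi$ does not help, since the relation lives integrally), and flatness of such an extension genuinely fails. The commutative model already shows this: for $R=\fro[\vpi x]\subset S=\fro[x]$ the Koszul syzygy of the ideal $(\vpi,\vpi x)\subset R$ makes $x\ot\vpi-1\ot\vpi x$ a nonzero element of the kernel of $(\vpi,\vpi x)\ot_R S\to S$, so $S$ is not flat over $R$; the same syzygy, with $\vpi\der_l$ in place of $\vpi x$, sits inside $\hsD^{(k+1,m)}_{\frX}(\frU)$, and indeed the integral flatness you aim at fails already for $\frX=\frX_0=\Spf\,\fro\{x\}$, $m=0$, $(k,k')=(0,1)$. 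Flatness here only appears after inverting $p$, which is exactly (and only) what the proposition claims.

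The paper's proof is built to avoid precisely this trap: no flatness of any uncompleted extension over a completed ring is ever claimed. After reducing (by induction) to $k'=k+1$ and to $\frU\in\sB$, one forms the single intermediate ring $E=\hD_{k+1}+D_{k}$ inside $\hD_{k,\Q}$, proves it is a ring and that $D_k/\vpi^iD_k\to E/\vpi^iE$ is bijective, so that $\hE\simeq\hD_k$; one then proves $E$ noetherian by the double induction you describe, but with base the complete ring $\hD_{k+1}$: the key point is that $[\vpi^{kp^s}\der_j^{[p^s]},P]$ for an arbitrary convergent series $P\in\hD_{k+1}$ lands in the finite-type, hence $p$-adically complete, $\hD_{k+1}$-module $\sum_{i<p^s}\hD_{k+1}\vpi^{ki}\der_j^{[i]}$, so no intermediate completions are needed at all. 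The only flatness input is then that the $\vpi$-adic completion $\hE$ of the noetherian ring $E$ is flat over $E$, and since $E_\Q=\hD_{k+1,\Q}$ (because $D_{k,\Q}=D_{k+1,\Q}$) while $\hE_\Q=\hD_{k,\Q}$, this transfers to the desired map exactly after $\ot\Q$. Your chain of $\Lambda^{(r)}$'s could only be salvaged by dropping all integral claims and arguing in this Berthelot style at each stage (noetherianity of the uncompleted $F^{(r)}$, flatness of its completion, and the identification $F^{(r)}_\Q=\Lambda^{(r-1)}_\Q$, plus a verification that the top completion really is $\hsD^{(k,m)}_{\frX}(\frU)$) --- at which point you have reproduced the paper's argument with superfluous intermediate completions.
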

\begin{proof} By induction, it is enough to prove that the morphism $\hsD^{(k+1,m)}_{\frX,\Q}\hra \hsD^{(k,m)}_{\frX,\Q}$ is
		left and right flat.
		It is also enough to prove the following statement : if $\frU$ is an affine open of the basis
		of open sets $\sB$ from \ref{finite_tDm}, then
the map $\hsD^{(k+1,m)}_{\frX,\bbQ}(\frU)\hra  \hsD^{(k,m)}_{\frX,\bbQ}(\frU)$ is left and right flat.
Denote $D_{k}=\Ga(\frU,\sD^{(k,m)}_{\frX})$
		and $\hD_{k}=\Ga(\frU,\hsD^{(k,m)}_{\frX})$ (resp. with $k+1$).
In our situation, we have the following explicit description (resp. with $k+1$), where $B=\Ga(\frU,\cO_{\frX})$,
		assuming that $\pr^*\sT_{\frX_0}$ is free
        restricted to $\frU$, with basis $\der_1,\ldots,\der_M$ as in~\ref{ddagkX},

				$$D_{k} =\left\{\sum^{<\infty}_{\unu}\vpi^{k|\unu|}b_{\unu}\uder^{\lan \unu \ran}\,|\,b_{\unu}\in
		B\right\}\hskip10pt {\rm and}\hskip10pt \hD_{k}=\left\{\sum_{\unu}\vpi^{k|\unu|}b_{\unu}\uder^{\lan \unu
				\ran}\,|\,b_{\unu}\in B, b_{\unu}\ra 0  \right\} \;.$$
  Here, convergence is with respect to the $p$-adic topology on $B$. Moreover we have the inclusion $D_k \subset \hD_k$.
		As in the proof of the proposition \ref{prop-coherence_D-dagger2} above, we will use Berthelot's method \cite[3.5.3]{BerthelotDI}, and introduce
		$$ E=\hD_{k+1}+ D_{k}\;.$$ Then, since $D_{k,\Q}=D_{k+1,\Q}$, it is clear that
		$ E_{\Q}=\hD_{k+1,\Q}.$ Moreover, we have the following
		\begin{aux} The $B$-module $E$ is a $B$-algebra and its $p$-adic completion $\hE$ is canonically isomorphic
				to $\hD_{k}$.
		\end{aux}
		\begin{proof}
Let us first prove that $E$ is a ring. Any element $P$ of $E$ can be written as
		$$ P =Q+R,\; \textrm{ with  } Q\in \hD_{k+1} \textrm{ and } R\in D_{k}\;.$$ To prove that $E$ is a ring, it is enough
		to prove that
		$$ \forall \unu , \, \varpi^{k|\unu|}\uder^{\lan \unu \ran}\cdot \hD_{k+1}\subset E,\; \textrm{ and } \hD_{k+1} \cdot
		\varpi^{k|\unu|}\uder^{\lan \unu \ran} \subset E\;.$$
		Fix $\unu$, and take $Q\in \hD_{k+1}$. We can then write $Q=Q_1+\varpi^{|\unu|}Q_2$, with $Q_1 \in D_{k+1}$ and
		$Q_2\in \hD_{k+1}$. Since $\vpi^{k|\unu|}\uder^{\lan \unu \ran}\cdot Q_1 \in D_{k}$, and
		$Q_1 \cdot \vpi^{k|\unu|}\uder^{\lan \unu \ran}\in D_{k}$, it remains to prove that $\vpi^{k|\unu|}\uder^{\lan \unu \ran}\cdot
		\varpi^{|\unu|} Q_2 \in E$ (resp. $\varpi^{|\unu|}Q_2 \cdot \vpi^{k|\unu|}\uder^{\lan \unu \ran}\in E$). Let us write
		$$Q_2= \sum_{\unu'}\vpi^{(k+1)|\unu'|}b_{\unu'}\uder^{\lan \unu' \ran} \;,$$ with coefficients $b_{\unu'}\in B$ tending $p$-adically to zero in $B$. Besides the coefficients appearing in ~\ref{notacc}, we need other modified binomial coefficients \cite[(1.1.2.2)]{BerthelotDI}
		$$\crofrac{\unu}{\unu'} = \parfrac{\unu}{\unu'}\accfrac{\unu}{\unu'}^{-1}\, \in \Zp \;.$$
				Then, following \cite[(2.2.4)]{BerthelotDI} we have the following formulas
				
$$\begin{array}{rl} & \varpi^{k|\unu|}\uder^{\lan \unu \ran} \cdot \vpi^{((k+1)|\unu'|+|\unu|)}b_{\unu'}\uder^{\lan \unu' \ran} \\
& \\
= & \varpi^{(k+1)(|\unu|+|\unu'|)}\sum_{\umu \leq \unu} \accfrac{\unu}{\umu}\crofrac{\unu+\unu'-\umu}{\unu'}\uder^{\lan \umu \ran}(b_{\unu'})
			 \uder^{\lan \unu+\unu'-\umu \ran} \, \in \hD_{k+1}  \;.
\end{array}$$

Passing to the limit in the complete ring $\hD_{k+1}$ we see that $ \vpi^{k|\unu|}\uder^{\lan \unu \ran}\cdot
		\varpi^{|\unu|} Q_2 \in \hD_{k+1}$. Similarly, we have
		$$\vpi^{((k+1)|\unu'|+|\unu|)}b_{\unu'}\uder^{\lan \unu' \ran}\cdot \varpi^{k|\unu|}\uder^{\lan \unu \ran}=
		b_{\unu'}\vpi^{((k+1)(|\unu'|+|\unu|)}\crofrac{\unu+\unu'}{\unu}\uder^{\lan \unu+\unu'\ran}\;,$$
		which proves that $\varpi^{|\unu|}Q_2 \cdot \vpi^{k|\unu|}\uder^{\lan \unu \ran}\in E$ and that $E$ is a ring.

		Let $i \geq 0$ be an integer and consider now the canonical map $$\lam : D_{k}/\vpi^iD_{k}\ra E/\vpi^i E \;.$$
		Let $P\in E$, and $Q\in \hD_{k+1}, R\in D_{k}$ such that $P=Q+R$. There exist $Q_1\in D_{k+1}$ and $Q_2\in \hD_{k+1}$ such
		that $Q=Q_1+\vpi^i Q_2$. Then $$Q=\lam(\overline{Q_1}+\overline{R})\, {\rm mod} \, p^i E \;,$$ where $\overline{Q_1}$ and $\overline{R}$
		are the class of $Q_1$ and $R$ in the quotient $D_{k}/\vpi^iD_{k}$. This proves that the map $\lam$ is
		surjective. Suppose now that $\lam(\ovP)=0 $ for some $P\in D_{k}$, then there exist $Q\in \hD_{k+1}$ and
		$R\in D_{k}$ such that $P=\vpi^i(Q+R)$. As $$\vpi^iQ=P-\vpi^iR\in D_{k} \;,$$ we see from the explicit description of $D_{k+1}$ above
		that $Q\in D_{k+1}$
		and finally that $P\in \vpi^i D_{k}$, which proves that $\lambda $ is injective and thus an isomorphism. This completes the proof of the auxiliary result.
		\end{proof}

Now, to prove the proposition, it is enough to prove that $E$ is noetherian. If this is the case, then $\hE$ is flat
		over $E$, thus $\hE_{\Q}$ is flat over $E_{\Q}$ and $\hD_{k,\Q}$ is flat over $\hD_{k+1,\Q}$.
	
		Recall ~\cite[(2.2.5)]{BerthelotDI} that the ring $D_{k}$ is generated by the algebra $B$ and the elements
		$\vpi^{kp^a}\der_i^{\lan p^a \ran}=\vpi^{kp^a}\der_i^{[ p^a ]}$ with
          $1 \leq i \leq M$, $1\leq a \leq m$.
		Let us define the following algebras: let $E_0=\hD_{k+1}$, and for $j \geq 1$
		 $E_j$ be the $B$-algebra of $E $ generated by $\hD_{k+1}$ and the
		$\vpi^{k\nu_i}\der_i^{\lan \nu_i \ran}$ with $1 \leq i \leq j  $ and $\nu_i \in \mathbb{N}$.
		We also introduce for each $j$ and $s$ an integer satisfying $1 \leq s \leq m$
		the subalgebra $E_{j,s}$ of $E_j$ generated by  $E_{j-1}$ and
		the $\vpi^{k p^a}\der_j^{[ p^a ]}$ with $1 \leq a \leq s $. We define $E_{j,0}=E_{j-1}$ for $j \geq 1$.
		By definition, we have $E_{j,m}=E_j$. Now we use the

\begin{aux} For each $j\leq M$, $s\leq m$, the algebra $E_{j,s}$ is two-sided noetherian. The algebra $E$ is two-sided noetherian.
\end{aux}
\begin{proof}
We will prove the result by induction on both $j$ and $s$.
 Note that $E_0=\hD_{k+1}$ is noetherian by~\ref{finite_tDm}. By definition, $E_{0,0}=E_0$ and is thus noetherian.

Next, let us prove that if $E_{j,s-1}$ is noetherian, then $E_{j,s}$ is noetherian if $1 \leq s\leq m$ and
$1 \leq j \leq M$. For this, note that, if $b\in B$, we have as in \cite[3.5.3.2]{BerthelotDI},
$$ [\vpi^{kp^s}\der_j^{[p^s]},b] = \sum_{i<p^s} \parfrac{p^s}{i} \der_j^{[p^s-i]}(b)\vpi^{kp^s}\der_j^{[i]}\;,$$
thus noticing that if $i <p^s$, $\vpi^{ki}\der_j^{[i]}\in E_{j,s-1}$, we have
\begin{gather*} [\vpi^{kp^s}\der_j^{[p^s]},b]\in \sum_{i<p^s}\hD^{(k+1)}\vpi^{kp^i}\der_j^{[i]}\, \subset E_{j,s-1} \;.
\end{gather*}
Consider the finite type $\hD^{(k+1)}$-module $F:=\sum_{i<p^s}\hD^{(k+1)}\vpi^{kp^i}\der_j^{[i]}$. Then
for each finite sum $P=\sum_{\unu}b_{\unu}\vpi^{(k+1)|\unu|}\uder^{\lan \unu \ran}$, we observe that
$[\vpi^{kp^s}\der_j^{[p^s]},P]\in F$. Since $F$ is a finite type $\hD^{(k+1)}$-module, it is $p$-adically complete,
and thus also for each infinite sum $P\in \hD^{(k+1)}$, we have $[\vpi^{kp^s}\der_j^{[p^s]},P]\in F \subset E_{j,s-1}$.
Moreover, $[\vpi^{kp^s}\der_j^{[p^s]},\vpi^{kp^a}\der_j^{[p^a]}]=0$ for $a \leq s-1$. As $E_{j,s-1}$ is an
algebra and as we have the formula for $P,Q\in E_j$
$$ [\vpi^{kp^s}\der_j^{[p^s]}, PQ]=[\vpi^{kp^s}\der_j^{[p^s]}, P]Q+ P[\vpi^{kp^s}\der_j^{[p^s]}, Q]\;,$$
we see that $\forall P \in E_{j,s-1}$
\begin{gather*} [\vpi^{kp^s}\der_j^{[p^s]},P]\in  E_{j,s-1} \;.
\end{gather*}
Then, by induction on $l$, we deduce that
\begin{numequation} \label{formcle}\forall l\geq 1, \forall P\in E_{j,s-1}, \,[(\vpi^{kp^s}\der_j^{[p^s]})^l,P]\in
		\sum_{i\leq l-1}E_{j,s-1}(\vpi^{kp^s}\der_j^{[p^s]})^i.
\end{numequation}
Define $\Delta:=\vpi^{kp^s}\der_j^{[p^s]}$ for the rest of the proof. We follow now
Berthelot's argument in the proof of~\cite[3.5.3]{BerthelotDI}. We do the proof for 'left noetherian'
(the right version is similar). Let $I$ be a left ideal of $E_{j,s}$ and  $J$ be the set of elements $R$ of $E_{j,s-1}$
such that there exists $P\in I$
that can be written $$P=R \Delta^l+\sum_{i \leq l-1} R_i \Delta^i\;,$$
with $R_i \in E_{j,s-1}$.
 If $R_1$ and $R_2$ are in $J$, write
$$P_1=R_1 \Delta^{l_1}+\sum_{i \leq l_1-1} R_i \Delta^i, \textrm{ and }
P_2=R_2 \Delta^{l_2}+\sum_{i \leq l_2-1} R'_i \Delta^i\;,$$
with $R_i$ and $R'_i$ elements of $E_{j,s-1}$.
Assume $l_1 \geq l_2$, then, using ~\eqref{formcle}, we can write
$$P_1+\Delta^{l_1-l_2}P_2=(R_1+R_2)\Delta^{l_1}+\sum_{i\leq l_1-1} R''_i \Delta^i\, \in I\;,$$
with elements $R''_i\in E_{j,s-1}$. In particular, we deduce from this that $J$ is a left ideal of $E_{j,s-1}$
generated by a finite set of elements $R_1,\ldots , R_a$. Moreover $I \bigcap E_{j,s-1} $ is a left ideal of $E_{j,s-1}$
generated by a finite set of elements $Q_1,\ldots, Q_b$. We see easily then that $I$ is generated by
the elements $R_1, \ldots, R_a, Q_1, \ldots, Q_b$. This proves that $E_{j,s}$ is noetherian and ends
the proof of the auxiliary result. \end{proof}

As we have remarked above, the proof of the proposition \ref{flatFS} is now complete.
\end{proof}

\begin{cor}\label{flatFSaffine} Let $\frU\subset \frX$ be an open affine formal subscheme of $\frX$, $k,k'\geq k_{\frX}$, such
		that $k' \geq k$, then
		$\Ga(\frU,\hsD^{(k,m)}_{\frX,\Q})$ is left and right flat over $\Ga(\frU,\hsD^{(k',m)}_{\frX,\Q})$.
\end{cor}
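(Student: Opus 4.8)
The plan is to follow, almost verbatim, the proof of part (iii) of~\ref{prop-coherence_D-dagger2}, with~\ref{flatFS} taking over the role that part (i) of~\ref{prop-coherence_D-dagger2} played there. Write $\wD^{(m)}_{k,\Q}=\Ga(\frU,\hsD^{(k,m)}_{\frX,\Q})$ and, for the congruence level $k'$, $\wD^{(m)}_{k',\Q}=\Ga(\frU,\hsD^{(k',m)}_{\frX,\Q})$; by~\ref{finite_tDm} both rings are noetherian, so flatness may be tested on injections between finitely generated, equivalently coherent, modules. Accordingly I would start from an injection $\alpha\colon M\hra N$ of coherent $\wD^{(m)}_{k',\Q}$-modules and try to show that $\wD^{(m)}_{k,\Q}\ot_{\wD^{(m)}_{k',\Q}}M\to \wD^{(m)}_{k,\Q}\ot_{\wD^{(m)}_{k',\Q}}N$ remains injective.

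The first step is to move from rings to sheaves. By~\ref{easy_thmAB} (together with the remark following it), applied with congruence level $k'$, the sheaf $\hsD^{(k',m)}_{\frU,\Q}$ is flat over $\wD^{(m)}_{k',\Q}$, so $\alpha$ induces an injection $\hsD^{(k',m)}_{\frU,\Q}\ot_{\wD^{(m)}_{k',\Q}}M\hra\hsD^{(k',m)}_{\frU,\Q}\ot_{\wD^{(m)}_{k',\Q}}N$ of coherent $\hsD^{(k',m)}_{\frU,\Q}$-modules. The second step invokes~\ref{flatFS}: since $\hsD^{(k,m)}_{\frU,\Q}$ is flat over $\hsD^{(k',m)}_{\frU,\Q}$, tensoring the last injection over $\hsD^{(k',m)}_{\frU,\Q}$ with $\hsD^{(k,m)}_{\frU,\Q}$ keeps it injective; rewriting the result with the associativity of $\ot$ along $\wD^{(m)}_{k',\Q}\to\wD^{(m)}_{k,\Q}\to\hsD^{(k,m)}_{\frU,\Q}$ turns it into an injection of coherent $\hsD^{(k,m)}_{\frU,\Q}$-modules between $\hsD^{(k,m)}_{\frU,\Q}\ot_{\wD^{(m)}_{k,\Q}}\bigl(\wD^{(m)}_{k,\Q}\ot_{\wD^{(m)}_{k',\Q}}M\bigr)$ and the analogous module built from $N$ (here $\wD^{(m)}_{k,\Q}\ot_{\wD^{(m)}_{k',\Q}}M$ is finitely generated, hence coherent, over the noetherian ring $\wD^{(m)}_{k,\Q}$).

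The final step descends back to rings: applying $\Ga(\frU,-)$, which by~\ref{easy_thmAB} is exact on coherent $\hsD^{(k,m)}_{\frU,\Q}$-modules and sends $\hsD^{(k,m)}_{\frU,\Q}\ot_{\wD^{(m)}_{k,\Q}}P$ back to $P$ for coherent $P$, produces the desired injection $\wD^{(m)}_{k,\Q}\ot_{\wD^{(m)}_{k',\Q}}M\hra\wD^{(m)}_{k,\Q}\ot_{\wD^{(m)}_{k',\Q}}N$, proving left flatness; the right-flat statement is obtained by the same argument. I do not expect a real obstacle here, since every ingredient has already been established; the only point that needs genuine care is the bookkeeping — keeping straight over which of the four rings $\wD^{(m)}_{k',\Q}$, $\wD^{(m)}_{k,\Q}$, $\hsD^{(k',m)}_{\frU,\Q}$, $\hsD^{(k,m)}_{\frU,\Q}$ each module is considered, and verifying at every stage that coherence is preserved so that Theorems A and B in the form of~\ref{easy_thmAB} keep applying.
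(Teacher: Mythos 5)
Your proposal is correct and follows essentially the same route as the paper's own proof: sheafify the injection via Theorem A/B at level $k'$, apply the sheaf-level flatness of \ref{flatFS}, then descend by taking global sections and the associativity identification $\hsD^{(k,m)}_{\frU,\Q}\ot_{\wD^{(m)}_{k',\Q}}M\simeq \hsD^{(k,m)}_{\frU,\Q}\ot_{\wD^{(m)}_{k,\Q}}\bigl(\wD^{(m)}_{k,\Q}\ot_{\wD^{(m)}_{k',\Q}}M\bigr)$. No gaps to report.
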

\begin{proof} Denote $\hD_{k,\Q}= \Ga(\frU,\hsD^{(k,m)}_{\frX,\Q})$ and similarly for $k'$.
		Let $ M \hra N $ be an injection of finite type $\hD_{k',\Q}$-modules and put
		$$\sM=\hsD^{(k',m)}_{\frU,\Q}\ot_{\hD_{k',\Q}} M, \ \textrm{ resp. }
		\sN= \hsD^{(k',m)}_{\frU,\Q}\ot_{\hD_{k',\Q}} N \;.$$
		Using the equivalence of categories and the exactness in~\ref{easy_thmAB}, we have an injection of coherent
		$\hsD^{(k',m)}_{\frU,\Q}$-modules $ \sM \hra \sN$. Using the previous flatness result~\ref{flatFS},
		we have an injection of coherent $\hsD^{(k,m)}_{\frU,\Q}$-modules
		\begin{numequation}\label{injection2}
		\hsD^{(k,m)}_{\frU,\Q} \ot_{\hsD^{(k',m)}_{\frU,\Q}}\sM \hra \hsD^{(k,m)}_{\frU,\Q}
		\ot_{\hsD^{(k',m)}_{\frU,\Q}}\sN \;.
		\end{numequation} Taking global sections and using again~\ref{easy_thmAB},
		we observe that
		\begin{align*}\Ga(\frU,\hsD^{(k,m)}_{\frU,\Q} \ot_{\hsD^{(k',m)}_{\frU,\Q}}\sM)&\simeq
				\Ga(\frU,\hsD^{(k,m)}_{\frU,\Q} \ot_{\hD_{k'}} M)\\
				& \simeq
		\Ga(\frU, \hsD^{(k,m)}_{\frU,\Q}\ot_{\hD_{k}}\hD_{k}\ot_{\hD_{k'}} M ) \\
               & \simeq \hD_{k,\Q}\ot_{\hD_{k',\Q}} M
		\end{align*} and similarly for $\sN$. Finally, with~formula \eqref{injection2}
		we arrive at an injection
		$$ \hD_{k,\Q}\ot_{\hD_{k',\Q}}M \hra \hD_{k,\Q}\ot_{\hD_{k',\Q}} N\;,$$
		which proves the corollary.
\end{proof}

We compare now rings with different levels $k$. Let $e=e(L/\Qp)$ be the ramification index of the extension $\Qp\subseteq L$.

\begin{prop}\label{comp_level} Let $e' \in \Ne$ such that $e' \geq \frac{e}{p-1}$.
\begin{enumerate} \item
 If $k \geq k_{\frX}$, and $k'\geq k+e'$, then we have the following inclusions of sheaves of rings
$$ \sD^{\dagger}_{\frX,k'} \hra \sD^{\dagger}_{\frX,k+e'}  \hra \hsD^{(k,0)}_{\frX,\bbQ}
\hra \hsD^{(k,m)}_{\frX,\bbQ} \hra \sD^{\dagger}_{\frX,k}\;.$$
\item Suppose $e \leq p-1$. If $k \geq k_{\frX}$, and $k' \geq k+1$, then we have the following inclusions of sheaves of rings
$$ \sD^{\dagger}_{\frX,k'} \hra \sD^{\dagger}_{\frX,k+1}  \hra \hsD^{(k,0)}_{\frX,\bbQ}
\hra \hsD^{(k,m)}_{\frX,\bbQ} \hra \sD^{\dagger}_{\frX,k}\;.$$
\end{enumerate}
\end{prop}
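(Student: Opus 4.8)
The plan is to reduce the statement to a computation over a basis of affine opens and to isolate the single arithmetic input, Legendre's bound $\ord_p(\nu!)\le \nu/(p-1)$. First I would observe that (ii) is the special case $e'=1$ of (i): the hypothesis $e\le p-1$ says exactly that $1\ge \frac{e}{p-1}$, so $e'=1$ is admissible, and the chain in (i) for $e'=1$ is verbatim the chain in (ii). So it suffices to prove (i). Since all four maps in the displayed chain are the tautological inclusions of differential operators and are compatible with restriction to opens, it is enough to construct them and verify their injectivity over each member $\frU\subseteq\frX$ of the basis $\sB$ of \ref{finite_tDm} that lies over an affine open $\frU_0\subseteq\frX_0$ carrying coordinates $x_1,\ldots,x_M$ and derivations $\der_1,\ldots,\der_M$. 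Over such a $\frU$, writing $B=\cO_\frX(\frU)$, I would use for every level $m$ and every congruence level $k''\ge k_\frX$ the free-module description $\hsD^{(k'',m)}_{\frX}(\frU)=\{\textstyle\sum_{\unu}b_{\unu}\vpi^{k''|\unu|}\uder^{\lan\unu\ran_{(m)}}\mid b_{\unu}\in B,\ b_{\unu}\to 0\ (\vpi\text{-adically})\}$ and, since $\frU$ is quasi-compact, $\sD^{\dagger}_{\frX,k''}(\frU)=\varinjlim_m\hsD^{(k'',m)}_{\frX,\bbQ}(\frU)$; recall here that $\uder^{\lan\unu\ran_{(0)}}=\uder^{\unu}$ and $\uder^{\lan\unu\ran_{(m)}}=\bigl(\prod_{l}\frac{q_{\nu_l}!}{\nu_l!}\bigr)\uder^{\unu}$ with $q_{\nu_l}=\lfloor\nu_l/p^m\rfloor$. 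The idea is then to re-expand every operator in the single monomial basis $\{\uder^{\unu}\}$ and to compare the resulting conditions on the coefficients; because each of the four maps is the identity on operators (the change of basis being diagonal with rational scalar entries), once the coefficient conditions are seen to be nested, both injectivity and compatibility with the ring structures are automatic.

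For the three outer inclusions this should be immediate. For $\sD^{\dagger}_{\frX,k'}\hra\sD^{\dagger}_{\frX,k+e'}$ (valid since $k'\ge k+e'$) I would factor $\vpi^{k'|\unu|}\uder^{\lan\unu\ran_{(m)}}=\vpi^{(k'-k-e')|\unu|}\cdot\vpi^{(k+e')|\unu|}\uder^{\lan\unu\ran_{(m)}}$, with $\vpi^{(k'-k-e')|\unu|}\in B$, to get $\hsD^{(k',m)}_{\frX}(\frU)\subseteq\hsD^{(k+e',m)}_{\frX}(\frU)$ for all $m$, hence the inclusion after $\ot\bbQ$ and $\varinjlim_m$. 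For $\hsD^{(k,0)}_{\frX,\bbQ}\hra\hsD^{(k,m)}_{\frX,\bbQ}$ I would use the iterate of the transition map \eqref{formula_transition}, whose injectivity is recorded in the proof of \ref{prop-coherence_D-dagger2}. For $\hsD^{(k,m)}_{\frX,\bbQ}\hra\sD^{\dagger}_{\frX,k}=\varinjlim_{m'}\hsD^{(k,m')}_{\frX,\bbQ}$ I would simply note it is the structure map into a filtered colimit of injective maps.

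The substance is the middle inclusion $\sD^{\dagger}_{\frX,k+e'}\hra\hsD^{(k,0)}_{\frX,\bbQ}$, and because both sides are colimits over $m$ it suffices to prove $\hsD^{(k+e',m)}_{\frX}(\frU)\subseteq\hsD^{(k,0)}_{\frX}(\frU)$ for each $m$ — in fact an inclusion of the integral sheaves. Given an operator $\sum_{\unu}b_{\unu}\vpi^{(k+e')|\unu|}\uder^{\lan\unu\ran_{(m)}}$ with $b_{\unu}\in B$ and $b_{\unu}\to 0$, rewriting it as $\sum_{\unu}c_{\unu}\vpi^{k|\unu|}\uder^{\unu}$ forces $c_{\unu}=b_{\unu}\cdot\vpi^{e'|\unu|}\prod_{l}\frac{q_{\nu_l}!}{\nu_l!}$. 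The key point to check is that the scalar $\vpi^{e'|\unu|}\prod_{l}\frac{q_{\nu_l}!}{\nu_l!}$ lies in $\fro$: using $\ord_p(\vpi)=1/e$, $\ord_p(q_{\nu_l}!)\ge 0$, and $\ord_p(\nu_l!)\le\frac{\nu_l}{p-1}$ (Legendre), its $p$-adic valuation is at least $\sum_{l}\bigl(\frac{e'\nu_l}{e}-\frac{\nu_l}{p-1}\bigr)=|\unu|\bigl(\frac{e'}{e}-\frac{1}{p-1}\bigr)$, which is $\ge 0$ precisely because $e'\ge\frac{e}{p-1}$. Hence $c_{\unu}\in B$, and $c_{\unu}\to 0$ $\vpi$-adically since the $b_{\unu}$ do and the scalars lie in $\fro$; so the operator belongs to $\hsD^{(k,0)}_{\frX}(\frU)$, as needed.

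Finally I would assemble: the four inclusions constructed over the various $\frU\in\sB$ are compatible with restriction, hence glue to the asserted chain of inclusions of sheaves of rings in (i); (ii) then follows from the reduction made at the outset. I do not expect a serious obstacle: the only non-formal ingredient is the elementary estimate $\ord_p\bigl(\vpi^{e'\nu}q_\nu!/\nu!\bigr)\ge\nu\bigl(\frac{e'}{e}-\frac{1}{p-1}\bigr)$, which is exactly where the numerical hypothesis $e'\ge e/(p-1)$ is used; the one bookkeeping point to keep in mind is that the explicit free-module descriptions above are only valid where $\pr^*\sT_{\frX_0}$ is free, which is why one restricts to the basis $\sB$.
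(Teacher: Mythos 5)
Your proposal is correct and takes essentially the same route as the paper: reduce everything to the one nontrivial inclusion $\sD^{\dagger}_{\frX,k+e'}\hra\hsD^{(k,0)}_{\frX,\bbQ}$, compute locally on the basis $\sB$ where the free-module descriptions hold, and use the estimate $v_p(\nu!)\le \nu/(p-1)$ so that the rescaling factor $\vpi^{e'|\unu|}\prod_l q_{\nu_l}!/\nu_l!$ has nonnegative valuation precisely because $e'\ge e/(p-1)$. The only cosmetic difference is that you check the inclusion level by level on the integral sheaves $\hsD^{(k+e',m)}_{\frX}(\frU)\subseteq\hsD^{(k,0)}_{\frX}(\frU)$, whereas the paper argues once with the growth-condition description of $\sD^{\dagger}_{\frX,k+e'}(\frU)$; the arithmetic input is identical.
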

\begin{proof} It is enough to prove (i). The only non trivial inclusion is $$ \sD^{\dagger}_{\frX,k+e'}
\hra \hsD^{(k,0)}_{\frX,\bbQ}\;,$$ which we may prove locally over some affine open $\frU \subset \frX$
of the basis of open sets $\sB$ from \ref{finite_tDm}. We use the following notations
$$B=\Ga(\frU,\cO_{\frX}), \hskip5pt \hD_k=\Ga(\frU,\hsD^{(k,0)}_{\frX}), \hskip5pt
D^{\dagger}_k=\Ga(\frU,\sD^{\dagger}_{\frX,k}) \;.$$ As before ~\eqref{ddagkX}, we have then the following descriptions,
 assuming that $\pr^*\sT_{\frX_0}$ is free
        restricted to $\frU$, with basis $\der_1,\ldots,\der_M$,
\begin{gather*}
	 \hD_{k}=\left\{\sum_{\unu}\vpi^{k|\unu|}b_{\unu}\uder^{\unu}\,|\,b_{\unu}\in B, b_{\unu}\ra
0 \right\} ,\\
   D^{\dagger}_k=\left\{\sum_{\unu}\vpi^{k|\unu|}b_{\unu}\uder^{[ \unu ]}\,|\, b_{\unu}\in B_{\Q}
\textrm{ and } \exists C>0, \eta<1 \, | \, \|b_{\unu}\|< C \eta^{|\unu|} \right\},
\end{gather*}
where $\|\cdot \|$ is any Banach algebra norm on $B_{\Q}$. For the rest of the proof we endow $B_{\Q}$ with the gauge norm $\|\cdot\|$ associated with the lattice $B \subset B_{\Q}$.
We need the following

\begin{aux}\label{lem_ineq} Let $\unu=(\nu_1,\ldots,\nu_M)$, then we have
         $$ \frac{|\unu|}{p-1}-M \log_p(|\unu|+1)-M \leq v_p(\unu!) \leq \frac{|\unu|}{p-1} \;.$$
\end{aux}

\Pf \cite[1.1.2]{NootHuyghe07}. \qed

\vskip8pt

Let $P=\sum_{\unu}\vpi^{(k+e')|\unu|}b_{\unu}\uder^{[ \unu ]}\in D^{\dagger}_{k+e'}$, then there exist $R>0$ and $S>0$ such that

$$\log_p \|b_{\unu}\| > R|\unu| - S \;.$$

We can write

$$ P=\sum_{\unu} \vpi^{k|\unu|}c_{\unu} \uder^{\unu}\, \quad \textrm{ with }
c_{\unu}=\frac{\vpi^{e'|\unu|}}{\unu!}b_{\unu} \;.$$

Then the following inequality holds

$$\log_p \|c_{\unu}\| > \left(\frac{e'}{e}-\frac{1}{p-1}+R \right)|\unu|-S \;.$$

Under the conditions of the statement, $\log_p\|c_{\unu}\| \ra +\infty$ if $|\unu|\ra +\infty$, which
proves that $P\in \hD_{k,\Q}$. This ends the proof of the proposition \ref{comp_level}.
\end{proof}

We now complete these results by additional flatness results when $k$ varies.
\begin{prop}\label{flatFSdag} Let $k'\geq k$.
 \begin{enumerate}         \item The morphism
				of sheaves of rings $\sD^{\dagger}_{\frX,k'}\hra \sD^{\dagger}_{\frX,k}$ is left and right flat.
		\item Let $\frU\subset \frX$ be an open affine. Then
		$\Ga(\frU,\sD^{\dagger}_{\frX,k})$ is left and right flat over $\Ga(\frU,\sD^{\dagger}_{\frX,k'})$.
        \end{enumerate}
\end{prop}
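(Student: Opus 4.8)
The plan is to reduce, for each fixed level $m$, to the flatness of $\sD^{\dagger}_{\frX,k}$ over $\hsD^{(k,m)}_{\frX,\Q}$ combined with that of $\hsD^{(k,m)}_{\frX,\Q}$ over $\hsD^{(k',m)}_{\frX,\Q}$, and then to pass to the inductive limit over $m$ by means of a general colimit argument. The general fact I would use is the following elementary statement: if $R=\varinjlim_m R_m$ is the colimit of a chain of rings whose transition maps $R_m\to R_{m+1}$ are left and right flat, and if $N$ is a left (resp. right) $R$-module which is flat over every $R_m$, then $N$ is flat over $R$. To prove it I would apply the finitely generated ideal criterion: a finitely generated right ideal $\frb$ of $R$ is generated by elements already lying in some $R_{m_0}$, so, writing $\frb_{m_0}$ for the right ideal of $R_{m_0}$ they generate, one has $R/\frb\simeq (R_{m_0}/\frb_{m_0})\otimes_{R_{m_0}}R$ because $R$ is flat as a left $R_{m_0}$-module (a filtered colimit of modules flat over the fixed ring $R_{m_0}$ being flat over $R_{m_0}$); flat base change for ${\rm Tor}$ then gives ${\rm Tor}^R_1(R/\frb,N)\simeq {\rm Tor}^{R_{m_0}}_1(R_{m_0}/\frb_{m_0},N)=0$, whence $N$ is left flat over $R$, and symmetrically on the right.

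Next I would feed this machine. For every $m$, the sheaf $\sD^{\dagger}_{\frX,k}=\varinjlim_{m'\geq m}\hsD^{(k,m')}_{\frX,\Q}$ is left and right flat over $\hsD^{(k,m)}_{\frX,\Q}$, being a filtered colimit of the sheaves $\hsD^{(k,m')}_{\frX,\Q}$, whose transition maps are flat by~\ref{prop-coherence_D-dagger2}(i) (cf.~\cite[3.6.1]{BerthelotDI}). Composing with the left and right flat inclusion $\hsD^{(k',m)}_{\frX,\Q}\hra\hsD^{(k,m)}_{\frX,\Q}$ of~\ref{flatFS} shows that $\sD^{\dagger}_{\frX,k}$ is left and right flat over $\hsD^{(k',m)}_{\frX,\Q}$ for every $m$; similarly, composing~\ref{flatFSaffine} (flatness of $\Ga(\frU,\hsD^{(k,m)}_{\frX,\Q})$ over $\Ga(\frU,\hsD^{(k',m)}_{\frX,\Q})$) with~\ref{prop-coherence_D-dagger2}(iv) (flatness of $\Ga(\frU,\sD^{\dagger}_{\frX,k})$ over $\Ga(\frU,\hsD^{(k,m)}_{\frX,\Q})$) shows that $\Ga(\frU,\sD^{\dagger}_{\frX,k})$ is left and right flat over $\Ga(\frU,\hsD^{(k',m)}_{\frX,\Q})$ for every $m$.

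Then (ii) will follow from the general fact: since $\frU$ is quasi-compact, $\Ga(\frU,\sD^{\dagger}_{\frX,k'})=\varinjlim_m\Ga(\frU,\hsD^{(k',m)}_{\frX,\Q})$, the transition maps being flat by~\ref{prop-coherence_D-dagger2}(iii), and $\Ga(\frU,\sD^{\dagger}_{\frX,k})$ is flat over each term. For (i), since flatness of a morphism of sheaves of rings may be tested on stalks, it suffices to observe that $(\sD^{\dagger}_{\frX,k'})_x=\varinjlim_m(\hsD^{(k',m)}_{\frX,\Q})_x$ with flat transition maps and that $(\sD^{\dagger}_{\frX,k})_x$ is flat over each $(\hsD^{(k',m)}_{\frX,\Q})_x$ (the stalk of a flat morphism of sheaves of rings being flat), and then to apply the general fact at every point $x\in\frX$.

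Throughout one is tacitly in the range $k_{\frX}\leq k\leq k'$ (the sheaves $\sD^{\dagger}_{\frX,k}$ being defined only then), which is exactly what~\ref{flatFS}, \ref{flatFSaffine} and~\ref{prop-coherence_D-dagger2}(iv) require. The only step where I expect to have to be careful is the non-commutative bookkeeping in the general colimit fact: left-flatness of $N$ must be tested on finitely generated \emph{right} ideals, and the base-change isomorphism for ${\rm Tor}$ uses that $R$ is flat as a \emph{left} $R_{m_0}$-module --- once the sides are matched correctly the computation is routine. One could alternatively avoid the colimit fact by interleaving the $\sD^{\dagger}_{\frX,k}$ with the $\hsD^{(k,m)}_{\frX,\Q}$ via~\ref{comp_level} and composing flatness statements along the resulting ladder, but the direct argument above seems cleaner.
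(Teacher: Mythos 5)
Your argument is correct, and it organizes the passage to the limit over $m$ differently from the paper. The paper first declares that it suffices to prove (ii), and then proves (ii) by a descent-of-presentations argument: given an injection $u\colon M\hra N$ of coherent $\Ga(\frU,\sD^{\dagger}_{\frX,k'})$-modules, it chooses finite presentations whose matrices have entries in $\hD^{(m)}_{k',\Q}=\Ga(\frU,\hsD^{(k',m)}_{\frX,\Q})$ for some finite $m$, and, using the flatness of $\Ga(\frU,\sD^{\dagger}_{\frX,k'})$ over $\hD^{(m)}_{k',\Q}$ from \ref{prop-coherence_D-dagger2} to kill the kernel of the induced map, realizes $u$ as the base change of an injection of coherent $\hD^{(m)}_{k',\Q}$-modules; it then tensors up through \ref{flatFSaffine} and \ref{prop-coherence_D-dagger2}(iv) and takes global sections. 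You consume exactly the same inputs (\ref{flatFS}, \ref{flatFSaffine}, \ref{prop-coherence_D-dagger2}), but you replace the descent of coherent-module injections by a general ring-theoretic lemma: flatness of a module over a filtered colimit $R=\varinjlim_m R_m$ with flat transition maps follows from flatness over each $R_m$, proved via the finitely generated one-sided ideal criterion together with flat base change for ${\rm Tor}$ (your bookkeeping of sides --- right ideals for left flatness, $R$ flat as a left $R_{m_0}$-module --- is the delicate point and you handle it correctly). What each route buys: yours is more abstract and reusable, avoids manipulating presentations, and gives part (i) directly by a stalkwise application of the same lemma (arguably more complete than the paper's one-line reduction of (i) to (ii)); the paper's route stays entirely inside the coherent-module formalism already set up (testing flatness on injections of coherent modules, which is legitimate because the rings in question are coherent) and needs no homological algebra beyond exactness of tensoring against flat modules. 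One small point of hygiene, which you already flag: all of this takes place for $k'\geq k\geq k_{\frX}$, as required by \ref{flatFS}, \ref{flatFSaffine} and \ref{prop-coherence_D-dagger2}.
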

\begin{proof} It is enough to prove (ii). Denote $D^{\dagger}_k=\Ga(\frU,\sD^{\dagger}_{\frX,k})$ (resp. for $k'$),
and for any integer $m \geq 0$, $\hD^{(m)}_{k,\Q}=\Ga(\frU,\sD^{(k,m)}_{\frX,\Q})$ (resp. for $k'$).
Since $\frU$ is quasi-compact, we know that
$$ D^{\dagger}_k=\varinjlim_m \hD^{(m)}_{k,\Q}\;.$$
Let $u$ : $M \hra N$ be an injection of coherent $D^{\dagger}_{k'}$-modules, then we have the
\begin{aux} There exist an integer $m \geq 0$ and an injection $u^{(m)}$ of coherent $\hD^{(m)}_{k',\Q}$-modules
$u^{(m)}$ : $M^{(m)} \hra N^{(m)}$ such that following properties are satisfied.
\begin{enumerate}
\item There are canonical isomorphisms
$$ D^{\dagger}_{k'}\ot_{\hD^{(m)}_{k',\Q}}M^{(m)}\sta{\alpha}{\simeq} M \;, \textrm{ and } D^{\dagger}_{k'}\ot_{\hD^{(m)}_{k',\Q}}N^{(m)}\sta{\beta}{\simeq} N  \;.$$
\item There is a commutative diagram of coherent $D^{\dagger}_{k'}$-modules
	$$ \xymatrix@R=2pc { D^{\dagger}_{k'}\ot_{\hD^{(m)}_{k',\Q}}M^{(m)}\ar@{->}[d]^{\alpha} \ar@{^{(}->}[r]^{1 \ot u^{(m)}}&
 D^{\dagger}_{k'}\ot_{\hD^{(m)}_{k',\Q}} \ar@{->}[d]^{\beta} N^{(m)} \\
                        M\ar@{^{(}->}[r]^{u} & N  }$$
		\end{enumerate}
\end{aux}
\begin{proof}
In order to prove the auxiliary result, we first remark that the morphism $u$ can be extended to any finite presentation of $M$ and $N$ as
$D^{\dagger}_{k'}$-modules. Thus, there exists a diagram of presentations of $D^{\dagger}_{k'}$-modules
 $$ \xymatrix {   (D^{\dagger}_{k'})^a \ar@{->}[r] \ar@{->}[d]^A & (D^{\dagger}_{k'})^b \ar@{->}[d]^B\ar@{->}[r]
& M \ar@{->}[r]\ar@{->}[d]^{u} & 0 \\
 (D^{\dagger}_{k'})^c \ar@{->}[r] & (D^{\dagger}_{k'})^d \ar@{->}[r] & N \ar@{->}[r]& 0}$$
Furthermore, there exists $m$ such that the matrices of the maps $A$ and $B$ have coefficients in $\hD^{(m)}_{k',\Q}$
and define maps  $$ A \,\colon\, (\hD^{(m)}_{k',\Q})^a \ra (\hD^{(m)}_{k',\Q})^b \textrm{ (resp. for } B\textrm{)} \;,$$
whose cokernel is a coherent $\hD^{(m)}_{k',\Q}$-module denoted by $M^{(m)}$ (resp. $N^{(m)}$ is the cokernel of $B$).
We finally get from this the following commutative diagram of exact sequences
$$ \xymatrix {   (\hD^{(m)}_{k',\Q})^a \ar@{->}[r] \ar@{->}[d]^A & (\hD^{(m)}_{k',\Q})^b \ar@{->}[d]^B\ar@{->}[r]
& M^{(m)} \ar@{->}[r]\ar@{->}[d]^{u^{(m)}} & 0 \\
 (\hD^{(m)}_{k',\Q})^c \ar@{->}[r] & (\hD^{(m)}_{k',\Q})^d \ar@{->}[r] & N^{(m)} \ar@{->}[r] & 0}$$
where by definition $u^{(m)}$ is the induced map by $u$ between $M^{(m)}$ and $N^{(m)}$. By construction
there are canonical isomorphism $$D^{\dagger}_{k'}\ot_{\hD^{(m)}_{k',\Q}}M^{(m)}\simeq M \textrm{ (resp. for
}N\textrm{)} \;.$$
Define now $ K$ to be the kernel of the map $u^{(m)}$ : $M^{(m)}\ra N^{(m)}$, then as $D^{\dagger}_{k'}$ is flat
over $\hD^{(m)}_{k',\Q}$ by ~\ref{prop-coherence_D-dagger2}, we have an exact sequence of $D^{\dagger}_{k'}$-modules
$$ 0 \ra D^{\dagger}_{k'}\ot_{\hD^{(m)}_{k',\Q}}K \ra M \ra N\;,$$
showing that $$D^{\dagger}_{k'}\ot_{\hD^{(m)}_{k',\Q}}K=0 \;,$$
and again, as  $D^{\dagger}_{k'}$ is flat over $\hD^{(m)}_{k',\Q}$, that
$$ D^{\dagger}_{k'}\ot_{\hD^{(m)}_{k',\Q}}\left(M^{(m)}/K\right)\simeq D^{\dagger}_{k'}\ot_{\hD^{(m)}_{k',\Q}}M^{(m)}\simeq  M\;.$$
Finally, the $\hD^{(m)}_{k',\Q}$-coherent modules
$M^{(m)}/K$ and $N^{(m)}$ satisfy the required properties. This ends the proof of the auxiliary result.
\end{proof}

Take $M^{(m)}$, $N^{(m)}$ and $u^{(m)}$ : $M^{(m)}\hra N^{(m)}$ as given by the auxiliary result. As $\hD^{(m)}_{k,\Q}$ is flat
over $\hD^{(m)}_{k',\Q}$ by ~\ref{flatFSaffine}, we have an injection of coherent $\hD^{(m)}_{k,\Q}$-modules
$$ \hD^{(m)}_{k,\Q}\ot_{\hD^{(m)}_{k',\Q}}M^{(m)} \hra \hD^{(m)}_{k,\Q}\ot_{\hD^{(m)}_{k',\Q}}N^{(m)}\;.$$
We can tensor this map by $D^{\dagger}_k$ which is a flat $\hD^{(m)}_{k,\Q}$-module~\ref{prop-coherence_D-dagger2},
 and use the properties
of $M^{(m)}$ and $N^{(m)}$ to get an injection
$$ D^{\dagger}_k\ot_{D^{\dagger}_{k'}} M \hra  D^{\dagger}_k\ot_{D^{\dagger}_{k'}} N\;,$$
and this proves the proposition.
\end{proof}

\vskip5pt

We end this part by some global properties of coherent sheaves over $\frX$ (resp. coherent $D^{\dagger}_k$-modules) when the base $\frX_0$ is affine.
In general, the formal scheme $\frX$ is projective over $\frX_0$ and we can consider the Serre twist $\cO_{\frX}(1)$. It is
a locally free sheaf of rank $1$ over $\frX$. If $\frU_{0}$ is an open affine formal subscheme of $\frX_{0}$,
and if $t\in \cI(\frU_0)$, then the restriction of the sheaf $\cO_{\frX}(1)$ to the open subset $D_+(t)$ is generated by $t$. As usual, $\cI$ denotes the ideal sheaf on
$\frX_0$ which gives rise to the blow-up $\frX$.

If $\cM$ is a sheaf of $\cO_{\frX}$-modules over $\frX$ and if $r\in \Z$, then we let $\cM(r)$ denote the twisted sheaf

$$\cM(r)=\cM\ot_{\frX}\frX(r) \;.$$

\begin{lemma}\label{twist} Let $\frX_0$ be a noetherian affine formal scheme. For all integers $r$, there is an
		isomorphism $\cO_{\frX,\Q}(-r)\simeq \cO_{\frX,\Q}(-1) $.
\end{lemma}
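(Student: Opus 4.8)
The plan is to show that every Serre twist on $\frX$ becomes free of rank one after inverting $\vpi$; the assertion then follows at once. The one point that makes this work is that the ideal defining an admissible blow-up contains a power of $\vpi$, which is a unit in $\cO_{\frX,\Q}$.

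First I would recall two standard facts about the blow-up $\pr:\frX\ra\frX_0$. Fix a coherent open ideal $\cI\subseteq\cO_{\frX_0}$ with $\frX=\what{\Proj}(B)$, where $B=\bigoplus_{n\geq0}\cI^{n}$ is the Rees algebra. Since $B$ is generated in degree one over $B_0=\cO_{\frX_0}$, the sheaf $\cO_\frX(1)$ is an invertible $\cO_\frX$-module and $\cO_\frX(r)\simeq\cO_\frX(1)^{\ot r}$ for all $r\in\bbZ$ (with the convention $\cO_\frX(-1)=\cO_\frX(1)^{\vee}$). Moreover, matching the local generator $t$ of $\cO_\frX(1)$ over a chart $D_+(t)$ (recalled just before the statement, for $t\in\cI(\frU_0)$) with the generator of the inverse image ideal sheaf $\cI\cO_\frX=\pr^{*}\cI/({\rm torsion})$ over that same chart, one obtains a canonical isomorphism $\cO_\frX(1)\simeq\cI\cO_\frX$ of $\cO_\frX$-modules, i.e. $\cO_\frX(1)$ is the invertible ideal sheaf of the exceptional divisor of the blow-up.

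Next I would use that $\frX_0$ is affine and noetherian, so that $\cI$ is coherent and, being open, contains $\vpi^{k_{\cI}}\cO_{\frX_0}$ by definition of $k_{\cI}$ (see~\eqref{defkX}). Hence inside $\cO_\frX$ one has the inclusions $\vpi^{k_{\cI}}\cO_\frX\subseteq\cI\cO_\frX\subseteq\cO_\frX$. Tensoring with $\Q$ makes $\vpi$ invertible, so $\vpi^{k_{\cI}}\cO_{\frX,\Q}=\cO_{\frX,\Q}$, and the sandwich forces $(\cI\cO_\frX)_{\Q}=\cO_{\frX,\Q}$. Combining this with the previous paragraph, $\cO_{\frX,\Q}(1)\simeq(\cI\cO_\frX)_{\Q}=\cO_{\frX,\Q}$, and therefore $\cO_{\frX,\Q}(r)\simeq\cO_{\frX,\Q}(1)^{\ot r}\simeq\cO_{\frX,\Q}$ for every $r\in\bbZ$ (using also $\cO_{\frX,\Q}^{\vee}\simeq\cO_{\frX,\Q}$ to cover the case $r<0$). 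In particular $\cO_{\frX,\Q}(-r)\simeq\cO_{\frX,\Q}\simeq\cO_{\frX,\Q}(-1)$, which is the claim.

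I do not expect a genuine obstacle here. The only points needing (routine) care are the identification $\cO_\frX(1)\simeq\cI\cO_\frX$ at the level of the formal scheme — i.e. performing it over the charts $D_+(t)=\Spf\what{C_t}$ and passing to $\vpi$-adic completions — and the remark that $-\ot\Q$ preserves invertibility of $\cO_\frX$-modules; both are immediate from the explicit local descriptions already set up before the statement.
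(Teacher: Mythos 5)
Your argument is correct, and it reaches the conclusion by a somewhat different route than the paper. The paper works entirely in the explicit chart cover coming from a choice of generators $(\vpi^{k_{\frX}},f_1,\ldots,f_r)$ of $\cI$ (this is where the affineness of $\frX_0$ enters), and exhibits a concrete global basis $\alpha=\vpi^{k_{\frX}}/u_0$ of $\cO_{\frX,\Q}(-1)$ by checking on each $D_+(f_i)$ that $\alpha$ differs from the local generator $1/u_i$ by a unit of $\cO_{\frX,\Q}(D_+(f_i))$; it then concludes, exactly as you do, that all twists are free of rank one after inverting $\vpi$. You instead invoke the canonical identification $\cO_\frX(1)\simeq\cI\cO_\frX$ (which indeed follows from the paper's remark that $\cO_\frX(1)|_{D_+(t)}$ is generated by $t$ for $t\in\cI(\frU_0)$, the transition data on both sides being $t/t'$, or alternatively from the invertibility of the inverse image ideal on an admissible blow-up as in Bosch's Lectures), and then trivialize it via the sandwich $\vpi^{k_\cI}\cO_\frX\subseteq\cI\cO_\frX\subseteq\cO_\frX$ together with flatness of $-\ot\Q$. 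The two proofs exploit the same key fact, namely $\vpi^{k}\in\cI$ with $\vpi$ a unit in $\cO_{\frX,\Q}$, but yours is coordinate-free, and it makes visible that the affineness of $\frX_0$ is not really needed beyond guaranteeing a single global $k$ with $\vpi^{k}\in\cI$; the paper's version buys an explicit trivializing section, which is sometimes convenient in later computations. The only point you should spell out slightly more is the gluing in the identification $\cO_\frX(1)\simeq\cI\cO_\frX$ on the formal scheme (completion of the corresponding isomorphism on ${\rm Proj}$ of the Rees algebra), but this is routine, as you say.
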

\begin{proof} We can assume as in~\ref{subsec-const}, that $\frX_0=\Spf(A)$, and that the ideal $\cI$ is generated by the elements $(\varpi^{k_{\frX}},f_1,\ldots,f_r)$. The formal scheme $\frX$ is then covered by
		the open formal subschemes $$D_+(f_i)=\Spf {A\left\{\frac{f_j}{f_i}\right\}} \;,$$ and
			  $$\cO_{\frX}(-1)(D_+(f_i))\simeq {A\left\{\frac{f_j}{f_i}\right\}}\cdot \frac{1}{u_i} \;,$$
         where $u_i$ is a generator of $\cO_{\frX}(1)_{|D_+(f_i)}$.
			  By definition, we have a section
			  $$\alpha=\frac{\varpi^{k_{\frX}}}{u_0}\in \cO_{\frX}(-1)(D_+(\varpi^{k_{\frX}})) \;.$$
			  Moreover the following equations hold
			  $$\frac{\varpi^{k_{\frX}}}{u_0} =\frac{f_i}{u_i}\, \in \cO_{\frX}(-1)(D_+(f_i)) \;,$$
			  $$ \frac{\varpi^{k_{\frX}}}{u_i}=\frac{\varpi^{k_{\frX}}}{f_i}
			  \cdot \frac{\varpi^{k_{\frX}}}{u_0}\, \in \cO_{\frX}(D_+(f_i))\cdot \frac{\varpi^{k_{\frX}}}{u_0} \;.$$
			  This proves that $\cO_{\frX,\Q}(-1)$ is free with basis $\alpha=\varpi^{k_{\frX}}/u_0$.
               If $r\geq 0$,  $\cO_{\frX,\Q}(-r)\simeq  \cO_{\frX,\Q}(-1)^{\otimes r}$ is thus a free
            $\cO_{\frX,\Q}$-module of rank one as well. Since $\cO_{\frX,\Q}(r)$ is the dual of $\cO_{\frX,\Q}(-r)$,
              it is then also a free $\cO_{\frX,\Q}$-module of rank one.
             \end{proof}
We assume from now on until the rest of this subsection 2.2 that the formal scheme $\frX_0$ is affine.
Let $k\geq k_{\frX}$. To simplify the notation, we write $\sD:= \sD^{(k,m)}_{\frX} $ and we denote by
$\hsD$ the $p$-adic completion of this sheaf. We also let $\sD^{\dagger}=\varinjlim_m \hsD^{(k,m)}_{\frX,\Q}$.

\vskip5pt

Let us first consider the reduction $\sD_i=\hsD/\varpi^{i+1}\hsD$. This is a coherent sheaf thanks to~\ref{aux1} and a
		quasi-coherent sheaf of $\cO_{X_i}$-modules. Let $\cM$ be a coherent $\sD_i$-module.
		
\begin{lemma} There exist $a,r\in \Ne$
						such that there is a surjection of coherent $\sD_i$-modules
						$$ \left(\sD_i(-a)\right)^r \tra \cM \;.$$
\end{lemma}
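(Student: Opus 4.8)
The plan is to mimic the standard argument for generating a coherent sheaf on a projective scheme by twists of the structure sheaf (Serre), transported to the non-smooth formal scheme $\frX$ via the affine base $\frX_0$. First I would recall that since $\frX_0 = \Spf(A)$ is affine and $\frX = \what{\Proj}(B)$ with $B = \bigoplus_n I^n$, the scheme $\frX$ is projective over $\frX_0$, so the Serre twists $\cO_{\frX}(a)$ are ample in the appropriate sense. Because $\sD_i$ is a quasi-coherent $\cO_{X_i}$-module (by \ref{finite_tDm}(i)) which is moreover a \emph{coherent} sheaf of rings on the noetherian scheme $X_i$ (by \ref{aux1}, using \ref{finite_tDm}(iii)), the $\sD_i$-module $\cM$ is in particular a coherent, hence quasi-coherent and finite type, $\cO_{X_i}$-module on the projective $X_i$-scheme obtained by reduction mod $\varpi^{i+1}$.

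Next I would invoke the Serre vanishing/generation theorem for the projective scheme $X_i$ over the noetherian affine ring $A/\varpi^{i+1}$: there exists $a \geq 0$ such that the twist $\cM(a)$ is generated by its global sections as an $\cO_{X_i}$-module, i.e.\ there is a surjection $(\cO_{X_i})^r \tra \cM(a)$ of $\cO_{X_i}$-modules for suitable $r$. Untwisting (tensoring with $\cO_{X_i}(-a)$, which is invertible) yields a surjection $(\cO_{X_i}(-a))^r \tra \cM$ of $\cO_{X_i}$-modules. To upgrade this to a surjection of $\sD_i$-\emph{modules}, I would apply the functor $\sD_i \ot_{\cO_{X_i}} (-)$, which is right exact: this produces a surjection $(\sD_i \ot_{\cO_{X_i}} \cO_{X_i}(-a))^r = (\sD_i(-a))^r \tra \sD_i \ot_{\cO_{X_i}} \cM$, and then I would compose with the canonical surjective $\sD_i$-module map $\sD_i \ot_{\cO_{X_i}} \cM \tra \cM$ coming from the $\sD_i$-module structure on $\cM$ (the action map, which is surjective since $1 \ot m \mapsto m$). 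The composite $(\sD_i(-a))^r \tra \cM$ is the desired surjection of coherent $\sD_i$-modules; it is a morphism of $\sD_i$-modules because both constituent maps are, and the source is coherent since $\sD_i(-a)$ is a coherent $\sD_i$-module (it is $\sD_i \ot_{\cO_{X_i}}$ of the invertible sheaf $\cO_{X_i}(-a)$, hence locally isomorphic to $\sD_i$).

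The main obstacle I anticipate is purely bookkeeping: making sure the twist $\cO_{X_i}(-a)$ and the notation $\sD_i(-a) := \sD_i \ot_{\cO_{X_i}} \cO_{X_i}(-a)$ are the ones used elsewhere in the paper, and checking that $\sD_i \ot_{\cO_{X_i}} \cO_{X_i}(-a)$ really is coherent as a sheaf of $\sD_i$-modules — this follows because $\cO_{X_i}(-a)$ is locally free of rank one, so $\sD_i(-a)$ is Zariski-locally isomorphic to $\sD_i$ as a $\sD_i$-module, hence coherent by \ref{aux1}. One should also record that the action map $\sD_i \ot_{\cO_{X_i}} \cM \to \cM$ is genuinely $\sD_i$-linear and surjective; this is formal. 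Since everything takes place on the noetherian scheme $X_i$ and $\sD_i$ is a coherent $\cO_{X_i}$-ring, no $p$-adic completeness issues intervene at this stage, so the proof is short. I would write:

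\begin{proof} Since $\frX_0 = \Spf(A)$ is affine, the formal scheme $\frX$ is projective over $\frX_0$, hence the scheme $X_i$ is projective over the noetherian ring $A/\varpi^{i+1}\fro$, and $\cO_{X_i}(1)$ is an ample invertible sheaf on $X_i$. By \ref{finite_tDm} the sheaf $\sD_i = \sD^{(k,m)}_{X_i}$ is a quasi-coherent $\cO_{X_i}$-module and a coherent sheaf of rings on $X_i$ with $\cO_{X_i} \ra \sD_i$ making it an $\cO_{X_i}$-coherent ring. By \ref{aux1}, a coherent $\sD_i$-module $\cM$ is in particular a coherent $\cO_{X_i}$-module. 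By Serre's theorem, there exists $a \geq 0$ such that $\cM(a) = \cM \ot_{\cO_{X_i}} \cO_{X_i}(a)$ is generated by finitely many global sections, i.e.\ there is an integer $r \geq 0$ and an epimorphism of $\cO_{X_i}$-modules $(\cO_{X_i})^r \tra \cM(a)$. Tensoring with the invertible sheaf $\cO_{X_i}(-a)$ we obtain an epimorphism of $\cO_{X_i}$-modules $(\cO_{X_i}(-a))^r \tra \cM$. Applying the right exact functor $\sD_i \ot_{\cO_{X_i}} (-)$ gives an epimorphism of $\sD_i$-modules
$$ \left(\sD_i \ot_{\cO_{X_i}} \cO_{X_i}(-a)\right)^r \tra \sD_i \ot_{\cO_{X_i}} \cM \;. $$
Writing $\sD_i(-a) := \sD_i \ot_{\cO_{X_i}} \cO_{X_i}(-a)$, which is a coherent $\sD_i$-module since $\cO_{X_i}(-a)$ is locally free of rank one, and composing with the canonical surjective $\sD_i$-linear action map $\sD_i \ot_{\cO_{X_i}} \cM \tra \cM$, $P \ot m \mapsto Pm$, we obtain the desired epimorphism of coherent $\sD_i$-modules
$$ \left(\sD_i(-a)\right)^r \tra \cM \;. $$
\end{proof}
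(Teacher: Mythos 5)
There is a genuine gap at the very first substantive step: you claim that, by \ref{aux1}, a coherent $\sD_i$-module $\cM$ is ``in particular a coherent $\cO_{X_i}$-module''. This is false, and \ref{aux1} does not say it. What \ref{aux1}(ii) says is that a coherent $\sD_i$-module is a \emph{quasi-coherent} $\cO_{X_i}$-module whose local sections $\Ga(U,\cM)$ are of finite type over $\Ga(U,\sD_i)$ — not over $\cO_{X_i}(U)$. Indeed $\sD_i$ itself is a coherent $\sD_i$-module but, being locally free over $\cO_{X_i}$ on the infinite basis $\vpi^{k|\unu|}\uder^{\lan \unu \ran}$, it is nowhere of finite type over $\cO_{X_i}$; your argument applied to $\cM=\sD_i$ would require a surjection $(\cO_{X_i}(-a))^r \tra \sD_i$, which cannot exist. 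Consequently Serre's generation theorem cannot be applied to $\cM$ directly, and the surjection $(\cO_{X_i})^r \tra \cM(a)$ you build the whole proof on need not exist.

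The missing idea — and the actual content of the paper's proof — is to first replace $\cM$ by a coherent $\cO_{X_i}$-\emph{submodule} that generates it over $\sD_i$. Since $\cM$ is quasi-coherent on the noetherian scheme $X_i$, it is the filtered union of its coherent $\cO_{X_i}$-submodules $\cM^{(n)}$ (compatibly with the exhaustion of $\sD_i$ by the order filtration). On an affine open $U$, the ring $\sD_i(U)$ is noetherian and $\cM(U)$ is finitely generated over it, so some $\cM^{(N)}(U)$ already generates $\cM(U)$ as a $\sD_i(U)$-module; by quasi-compactness of $X_i$ one finds a single $N'$ with a surjection $\sD_i\ot_{\cO_{X_i}}\cM^{(N')}\tra\cM$. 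Only then does one apply Serre's theorem — to the genuinely coherent $\cO_{X_i}$-module $\cM^{(N')}$ — to get $(\cO_{X_i}(-a))^r\tra\cM^{(N')}$, and compose as you do at the end (that last tensoring-and-composing step of yours is fine). Without this intermediate reduction, the proof does not go through.
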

\begin{proof} As the sheaf $\cM$ is quasi-coherent over the noetherian scheme $X_i$, it is an
inductive limit of its $\cO_{X_i}$-coherent subsheaves. Moreover $\sD_i$ is an inductive limit
of the coherent sheaves $\sD_{i,n}$ of differential operators of order less than $n$.
We can thus write
$$ \cM=\varinjlim_{n\in \Ne} \cM^{(n)} \;,$$
where $\cM^{(n)}$ is a coherent $\cO_{X_i}$-module.
				   Take an open affine subscheme $U \subset X_i$. Then
				 $\sD_i(U)$ is noetherian and $\cM(U)$ is a $\sD_i(U)$-module of finite type. Hence,
				   there exists $N>0$ such that $$ \varinjlim_n \sD_i(U)\cdot \cM^{(n)}(U)
				   =\sD_i(U)\cdot \cM^{(N)}(U)\,\subset \cM(U) \;.$$ Since
				   $$ \cM(U)=\varinjlim_n \sD_i(U)\cdot \cM^{(n)}(U) \;,$$
				   we see that $\sD_i(U)\cdot \cM^{(N)}(U)=\cM(U)$ and using (iii) of
				   \ref{aux1}, we find a surjection
				   $ \sD_{U}\ot \cM^{(N)}_{|U} \tra \cM_{|U}.$
				   As $X_i$ is quasi-compact, $X_i$ can be covered by a finite number of affine open
				   subsets. There exists therefore $N'$ and a surjection
                   $ \sD_i\ot \cM^{(N')} \tra \cM.$ As $\cM^{(N')}$ is a coherent
				   $\cO_{X_i}$-module, there exist $r\in \Ne$, $a\in \Ne$ and a surjection
					 $ \left(\cO_{X_i}(-a)\right)^r \tra \cM^{(N')}$ and this proves the lemma.
            \end{proof}
			\begin{lemma} \begin{enumerate} \item There exists $a \geq 0$ such that
                       $$ \forall b\geq a,\forall l>0,\, H^l(X_i,\sD_i(b))=0 \;.$$
                        \item Let $\cM$ be a coherent $\sD_i$-module, then there exists $a\geq 0$ such that
                        $$ \forall b\geq a,\forall l>0,\, H^l(X_i,\cM(b))=0 \;.$$
                         \end{enumerate}
             	\end{lemma}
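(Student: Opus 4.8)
The plan is to prove (i) first and then deduce (ii) by dévissage. Recall that, $\frX_0 = \Spf(A)$ being affine, the morphism $\pr\colon\frX\to\frX_0$ is projective with $\pr$-ample twisting sheaf $\cO_{\frX}(1)$; reducing modulo $\varpi^{i+1}$ we obtain a projective morphism $\pr\colon X_i\to X_{0,i}=\Spec(A/\varpi^{i+1})$ over a noetherian affine scheme, with relatively ample $\cO_{X_i}(1)=\cO_{\frX}(1)/\varpi^{i+1}$, and $X_i$ has finite Krull dimension, say $d$.

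\emph{Proof of (i).} The subtle point — which I expect to be the crux of the whole lemma — is that $\sD_i:=\sD^{(k,m)}_{X_i}$ is \emph{not} $\cO_{X_i}$-coherent, so Serre vanishing does not apply directly and one must produce one integer $a$ valid along the whole order filtration at once. By~\ref{finite_tDm}(i), $\sD_i$ is the filtered union of the $\cO_{X_i}$-coherent subsheaves $\sD_{i,n}$ of operators of order $\leq n$, with $\sD_{i,0}=\cO_{X_i}$, and (reducing the exact sequence from the proof of~\ref{finite_tDm}(i) modulo $\varpi^{i+1}$, which stays exact because the graded piece is locally free) one has short exact sequences
$$0\lra \sD_{i,n-1}\lra \sD_{i,n}\lra \Sym^{(m)}_n(\sT_{X_i,k})\lra 0\;.$$
Now $\Sym^{(m)}_n(\sT_{X_i,k})=\pr^*\cF_n$, where $\cF_n:=\Sym^{(m)}_n(\sT_{X_{0,i},k})$ is a locally free $\cO_{X_{0,i}}$-module of finite rank, hence flat. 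By relative Serre vanishing applied to the coherent sheaf $\cO_{X_i}$ there is an integer $b_0$ with $R^l\pr_*\cO_{X_i}(b)=0$ for all $l>0$ and all $b\geq b_0$; the projection formula — legitimate since $\cF_n$ is flat — then gives
$$R^l\pr_*\!\big(\Sym^{(m)}_n(\sT_{X_i,k})(b)\big)\;\simeq\;\cF_n\ot_{\cO_{X_{0,i}}}R^l\pr_*\cO_{X_i}(b)\;=\;0\qquad(l>0,\ b\geq b_0)$$
for \emph{every} $n$ at once. Since $X_{0,i}$ is affine, the Leray spectral sequence collapses and yields $H^l(X_i,\Sym^{(m)}_n(\sT_{X_i,k})(b))=0$, and also $H^l(X_i,\cO_{X_i}(b))=0$, for all $l>0$, $b\geq b_0$, $n\geq 0$. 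Twisting the exact sequences above by $\cO_{X_i}(b)$ and inducting on $n$ via the long exact cohomology sequence gives $H^l(X_i,\sD_{i,n}(b))=0$ for all $l>0$, $b\geq b_0$, $n\geq 0$. Finally $\sD_i(b)=\varinjlim_n\sD_{i,n}(b)$ and, $X_i$ being noetherian, cohomology commutes with this filtered colimit, whence $H^l(X_i,\sD_i(b))=0$ for $l>0$ and $b\geq b_0$; take $a:=b_0$.

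\emph{Proof of (ii).} Here $\sD_i$ is a \emph{coherent} sheaf of rings by~\ref{finite_tDm}(iv). Given a coherent $\sD_i$-module $\cM$, the preceding lemma provides a short exact sequence of coherent $\sD_i$-modules $0\to\cN\to(\sD_i(-a'))^r\to\cM\to 0$ for suitable $a',r\geq 0$; here $\cN$ is $\sD_i$-coherent because $\sD_i$ is a coherent ring and $(\sD_i(-a'))^r$ is, locally on $X_i$, a finite free $\sD_i$-module. Twisting by $\cO_{X_i}(b)$ and using part (i), the long exact cohomology sequence produces injections $H^l(X_i,\cM(b))\hookrightarrow H^{l+1}(X_i,\cN(b))$ for all $l\geq 1$ and all $b\geq a+a'$. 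I would then argue by descending induction on $l_0$: for $l_0=d+1$ the claim ``there is $a(\cM)$ with $H^l(X_i,\cM(b))=0$ for all $l\geq l_0$ and $b\geq a(\cM)$'' holds vacuously by Grothendieck vanishing; for the step from $l_0+1$ to $l_0$, apply the inductive hypothesis to the coherent $\sD_i$-module $\cN$ to kill $H^{l_0+1}(X_i,\cN(b))$ for $b$ large, and to $\cM$ itself to handle the degrees $\geq l_0+1$, so that $H^l(X_i,\cM(b))=0$ for all $l\geq l_0$ once $b$ exceeds a suitable bound. After finitely many ($d$) steps one reaches $l_0=1$, which is the assertion of (ii).
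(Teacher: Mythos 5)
Your proof is correct. Part (ii) is essentially the paper's own argument: the same dévissage via the surjection from the previous lemma and the same descending induction on the cohomological degree, terminated by Grothendieck vanishing in degrees above $\dim X_i$; the only cosmetic difference is that you keep the twist on the free module rather than twisting $\cM$, which changes nothing. Part (i), however, goes by a genuinely different mechanism for the crucial uniformity. The paper shows $R^l\pr_*\sD_i(b)=0$ by restricting over a coordinate affine $\frU_0\subset\frX_0$, observing that there $\sD_i$ is an (infinite-rank) free $\cO_U$-module, so that Serre vanishing for $\cO_U(b)$ alone gives a constant $c$ for that chart, and then taking the maximum of these constants over a finite coordinate cover before invoking the affine base $X_{0,i}$. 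You instead exploit the order filtration $\sD_{i,n}$ with graded pieces $\Sym^{(m)}_n(\sT_{X_i,k})=\pr^*\cF_n$ pulled back from $X_{0,i}$, apply relative Serre vanishing only to $\cO_{X_i}$, and use the projection formula to kill $R^l\pr_*$ of all graded pieces simultaneously with one bound $b_0$, after which the long exact sequences and passage to the filtered colimit finish the argument. Your route is global (no coordinate cover and no appeal to freeness of $\sD_i$ itself, only to the locally free graded pieces coming from the base), which makes the uniformity in the order completely transparent; the paper's route is shorter once one accepts the local freeness of $\sD_i$ over coordinate charts and the standard identification of $R^l\pr_*$ over an affine base. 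Both are valid, and your intermediate vanishing for the $\sD_{i,n}(b)$ is a mild strengthening that the paper does not record.
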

					\begin{proof} We have $R\Ga(X_i,\sD_i(b))=R\Ga(X_{0,i},\cdot)\circ R\pr_*\sD_i(b)$. As $X_{0,i}$
							is affine, it is enough to prove that $R^l\pr_*\sD_i(b)=0$ for $l \geq 1$ and
							for $b\geq a$. Take $\frU_0\subset \frX_0$ affine, endowed with coordinates
							$x_1,\ldots,x_M$ and let $\frU=\pr^{-1}(\frU_0)$ and
							$U=\frU \times \Spec (\fro/\varpi^{i+1}\fro)$. Denote by $\cO_{\frU}(1)$ the restriction to
							$\frU$ of the Serre twist over $\frX$. Then $\sD_{i|U}$ is a free
							$\cO_{U}$-module, so that there exists $c$ such that
                           $$ \forall b\geq c,\forall l>0,\, H^l(U,\sD_i(b))=0 \;.$$
							By taking the maximum $a$ of the constants $c$ for each affine open $\frU_0$ of a finite
							cover of $\frX_0$ by such affine subschemes $\frU_0$, we get that
                            $R^l\pr_*\sD_i(b)=0$ for $l>0$ and $b\geq a$ and this proves part (i) of the lemma. For the
                            second assertion, we will prove the following statement by decreasing induction on $K$ :

                            \vskip5pt

                        For all coherent   $\sD_i$-modules  $\cN$,  there exists $a\geq0$,
                         such that  $$\forall L \geq K, \forall b\geq a, H^L(X_i,\cN(b))=0 \;.$$

                         \vskip5pt

                        If $K\geq M+1$, the result is clear, since $\cN$ is a quasi-coherent sheaf on a scheme
                    of dimension $M$. Suppose now that the result is true for $K\geq 2$, and consider
                a coherent $\sD_i$-module $\cN$. By the previous lemma, there exist $a$ and $r$ and a exact sequence of coherent $\sD_i$-modules
                       $$ 0 \ra \cM \ra \sD_i^r \ra \cN(a) \ra 0 \;.$$
                       Tensoring this sequence by $\cO_{X_i}(c_1+c_2)$ where $c_1$, $c_2$ are non negative integers and
                 looking at the cohomology long exact sequence, we get exact sequences for all $L$
                         $$ H^L(X_i,\sD_i^r(c_1+c_2))\ra  H^L(X_i,\cN(a+c_1+c_2))\ra H^{L+1}(X_i,\cM(c_1+c_2)) \;.$$
                    By the induction hypothesis, there exists
                    $c_1$ such that $\forall L \geq K, \forall d \geq c_1, H^{L}(X_i,\cM(d))=0$, and
                      by (i), there exists
                    $c_2$ such that $\forall L \geq 1, \forall d \geq c_2, H^{L}(X_i,\sD_i(d))=0$.
                     Finally, if $d\geq a+c_1+c_2$, for every $L\geq K-1\geq 1$, we get that
                            $H^L(X_i,\cN(d))=0$. This proves (ii) by induction as claimed.
					\end{proof}

\begin{prop}\label{Xaf}\begin{enumerate} \item For any coherent $\hsD$-module $\cM$, there exist $a,r\in \Ne$
						and a surjection of coherent $\hsD$-modules
						$$ \left(\hsD(-a)\right)^r \tra \cM \;.$$
				\item For any coherent $\hsD_{\Q}$-module $\cM$, there exist $r\in \Ne$
						and a surjection of coherent $\hsD_\Q$-modules
						$$ \left(\hsD_{\Q}\right)^r \tra \cM \;.$$
                 \item For any coherent $\sD^{\dagger}$-module $\cM$, there exist $r\in \Ne$
						and a surjection of coherent $\sD^\dagger$-modules
						$$ \left(\sD^{\dagger}\right)^r \tra \cM \;.$$
			\end{enumerate}
\end{prop}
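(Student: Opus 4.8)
The plan is to prove the three parts in order, reducing each new case to the previous one via a completion/limit argument, exactly as one reduces $\hsD$-statements to $\sD_i$-statements in Berthelot's framework. For part (i), I would start with a coherent $\hsD$-module $\cM$. By auxiliary result \ref{aux2} (applied to $\frX'=\frX$, $\sD=\sD^{(k,m)}_{\frX}$, whose hypotheses we verified in the proof of \ref{easy_thmAB}), $\cM$ is the projective limit of the coherent $\sD_i$-modules $\cM_i := \cM/\varpi^{i+1}\cM$, with transition maps inducing isomorphisms $\cM_i/\varpi^i\cM_i \simeq \cM_{i-1}$. Apply the second of the two lemmas just proved (the one producing, for a coherent $\sD_0$-module, a surjection $(\sD_0(-a))^r \tra \cM_0$) to the module $\cM_0$, obtaining integers $a,r$ and a surjection $u_0 : (\sD_0(-a))^r \tra \cM_0$. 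The module $(\hsD(-a))^r$ is a coherent $\hsD$-module (its reduction mod $\varpi$ is $(\sD_0(-a))^r$), and one lifts $u_0$ to a morphism $u : (\hsD(-a))^r \to \cM$ of coherent $\hsD$-modules: since both source and target are $\varpi$-adically complete with coherent reductions, $u$ exists by successive approximation (lift $u_0$ to $u_1 \bmod \varpi^2$, then $u_2 \bmod \varpi^3$, etc., using right-exactness of reduction and completeness), and it is surjective because it is surjective mod $\varpi$ and both sides are $\varpi$-adically complete (topological Nakayama). This gives (i).

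For part (ii), let $\cM$ be a coherent $\hsD_{\Q}$-module. By the proof of \cite[3.4.5]{BerthelotDI}, $\cM$ arises as $\cN_{\Q}$ for some coherent $\hsD$-module $\cN$ (clear denominators in a finite presentation). Apply part (i) to $\cN$: we get a surjection $(\hsD(-a))^r \tra \cN$, hence after tensoring with $\bbQ$ a surjection $(\hsD_{\Q}(-a))^r \tra \cM$ of coherent $\hsD_{\Q}$-modules. Now invoke Lemma \ref{twist}: since $\frX_0$ is affine (our standing hypothesis in this subsection), $\cO_{\frX,\Q}(-a) \simeq \cO_{\frX,\Q}(-1)$ for $a \geq 1$, and more to the point $\cO_{\frX,\Q}(-a)$ is a \emph{free} $\cO_{\frX,\Q}$-module of rank one (this is what the proof of \ref{twist} actually establishes). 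Therefore $\hsD_{\Q}(-a) = \hsD_{\Q}\ot_{\cO_{\frX,\Q}} \cO_{\frX,\Q}(-a) \simeq \hsD_{\Q}$ as a left $\hsD_{\Q}$-module, and the surjection becomes $(\hsD_{\Q})^r \tra \cM$, proving (ii).

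For part (iii), let $\cM$ be a coherent $\sD^{\dagger}$-module. By the proof of \cite[3.6.2]{BerthelotDI} (already used in the proof of \ref{easy_thmAB2}), there is $m_0 \geq 0$ and a coherent $\hsD^{(k,m_0)}_{\frX,\Q}$-module $\sN$ with $\cM \simeq \sD^{\dagger}\ot_{\hsD^{(k,m_0)}_{\frX,\Q}} \sN$. Apply part (ii) to $\sN$ (over the sheaf $\hsD = \hsD^{(k,m_0)}_{\frX}$), getting a surjection $(\hsD^{(k,m_0)}_{\frX,\Q})^r \tra \sN$. Since $\sD^{\dagger}$ is flat over $\hsD^{(k,m_0)}_{\frX,\Q}$ by \ref{prop-coherence_D-dagger2}(i), tensoring with $\sD^{\dagger}$ preserves surjectivity and yields $(\sD^{\dagger})^r \tra \sD^{\dagger}\ot_{\hsD^{(k,m_0)}_{\frX,\Q}}\sN \simeq \cM$, which is the desired surjection of coherent $\sD^{\dagger}$-modules. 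The main obstacle is the lifting/surjectivity step in part (i): one must check that a morphism of coherent $\hsD$-modules can be built from its reduction mod $\varpi$ and that surjectivity mod $\varpi$ propagates to the completion — this is the standard but slightly delicate topological Nakayama argument, and everything downstream is a formal consequence of it together with the freeness statement in Lemma \ref{twist} and the flatness in \ref{prop-coherence_D-dagger2}.
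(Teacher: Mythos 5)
Your parts (ii) and (iii) are essentially the paper's argument (clear denominators to reduce to a coherent $\hsD$-module, untwist via Lemma \ref{twist}, then pass to $\sD^{\dagger}$ by tensoring over $\hsD^{(k,m_0)}_{\frX,\Q}$), and your final surjectivity observation in (i) — surjective mod $\varpi$ implies surjective, by Nakayama on affine opens over the complete noetherian rings $\hsD(\frU)$ — is also correct. The gap is in the construction of the map $u$ in part (i). A morphism $(\hsD(-a))^r \ra \cM$ is the same thing as $r$ global sections of $\cM(a)$, so ``lifting $u_0$ by successive approximation'' means lifting sections along the tower $\Ga(\frX,\cM/\varpi^{i+1}\cM(a)) \ra \Ga(\frX,\cM/\varpi^{i}\cM(a))$. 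These maps are \emph{not} automatically surjective: $\frX$ is an admissible blow-up of the affine $\frX_0$, hence projective over it but not affine, and the obstruction to lifting a section is a class in $H^1$ of the twisted kernel sheaf $\varpi^i\cM/\varpi^{i+1}\cM(a)$, a coherent $\sD_0$-module. ``Right-exactness of reduction and completeness'' give you nothing here; this is precisely the cohomological crux of the proposition, not a routine topological Nakayama step.

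The paper handles it as follows, and your proposal omits all of it: since $\hsD(\frU)$ is noetherian and $\frX$ is quasi-compact, the torsion part $\cM_t$ is coherent and killed by some $\varpi^L$; for $i\geq L$ the kernel of $\cM/\varpi^{i+1}\cM \ra \cM/\varpi^{i}\cM$ is identified with the \emph{fixed} coherent $\sD_0$-module $\cG_0=\cM/(\varpi\cM+\cM_t)$; part (ii) of the second preparatory lemma then supplies a single $a_1$ with $H^1(X_0,\cG_0(b))=0$ for all $b\geq a_1$, which makes every transition map on global sections surjective simultaneously. One then generates $\cM/\varpi^{L}\cM(a)$ (not $\cM/\varpi\cM(a)$ — the exact sequences only start at level $L$) by finitely many global sections for some $a\geq a_1$, lifts them up the tower, and uses $\cM\simeq\varprojlim_i\cM/\varpi^{i+1}\cM$ to get sections of $\cM(a)$. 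Without the torsion bound you cannot choose the twist $a$ uniformly in $i$, and without the $H^1$-vanishing of the twists you cannot lift at all; so as written your part (i) does not go through, even though the surrounding reductions are sound.
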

\begin{proof} Let $\cM$ be a coherent $\hsD$-module. For part (i), we need to show that there
exists a non negative integer $a$ such that the twist $\cM(a)$ is generated by a finite number of global sections. Let $\cM_t$
be the torsion part of $\cM$, which is a coherent submodule of $\cM$, since $\hsD(\frU)$ is
		noetherian for every affine open $\frU$. Over an affine open $\frU$ of $\frX$, the module $\cM_t(\frU)$ is a finite
		type module over $\hsD(\frU)$ and there exists a constant $c$ such that $\varpi^{c}\cM_t(\frU)=0$. Since $\frX$ can be
		covered by a finite number of open affine formal subschemes, there exists $L$ such that $\varpi^L \cM_t=0$. Then
       for $i \geq L$, and denoting $\cG_0=\cM/\left(\varpi \cM + \cM_t \right)$ we have exact sequences
        $$ 0 \ra \cG_0 \sta{\varpi^i}{\ra} \cM/\varpi^{i+1}\cM \ra \cM/\varpi^{i}\cM \ra 0 \;.$$
        Since $\cG_0$ is a coherent $\sD_0$-module, there exists $a_1\geq 0 $ so that $H^1(X_0,\cG_0(b))=0$
          for every $b \geq a_1$. As a consequence, if $b \geq a_1$, for all $i \geq L$ we have surjections
		  $$ \Ga(\frX,\cM/\varpi^{i+1}\cM(b))\tra \Ga(\frX,\cM/\varpi^{i}\cM(b)) \;.$$
		  Since $\cM/\varpi^{L}\cM$ is a coherent $\sD_{L-1}$-module, there exists
   $a \geq a_1$ and a surjection $s$ : $\sD_{L-1}^r \tra \cM/\varpi^{L}\cM(a)$ defined
   by global sections $ e_1,\ldots,e_r\in \Ga(X_{L-1},\cM/\varpi^{L}\cM(a))$. Finally we see by induction
   on $i$ that these sections $e_1,\ldots,e_r$ can be lifted to global sections of
   $\Ga(\frX,\cM/\varpi^{i}\cM(a))$ for every $i$, and thus to global sections of
   $\Ga(\frX,\cM(a))$. These sections define a map $\hsD^r \ra \cM(a)$ that is surjective since it
   is surjective mod $\varpi$. This proves the part (i).

			Assertion (ii) follows from (i) and lemma ~\ref{twist}. For (iii), we remark that, if $\cM$ is a coherent
			$\sD^{\dagger}$-module, there exists a coherent $\hsD$-module $\cN$ such that
			$$\cM\simeq \sD^{\dagger}\ot_{\hsD}\cN \;.$$
			We can then apply (ii) to $\cN$ and this proves (iii).
\end{proof}

\subsection{An invariance theorem for admissible blow-ups}
We keep here the hypothesis from the previous section. In particular, $\frX_0$ denotes a smooth formal $\frS$-scheme and
$$\pr: \frX \ra \frX_0$$ denotes an admissible blow-up. Let $\pr':  \frX' \rightarrow \frX_0$ be another admissible formal
blow-up and let $\pi: \frX' \rightarrow \frX$ be a morphism over $\frX_0$, inducing an isomorphism between the
associated rigid-analytic spaces $\frX_{\Q}$ and $\frX'_\Q$ (which are both canonically isomorphic to the rigid-analytic
space $\frX_{0,\Q}$ associated to $\frX_0$). Then we have the following invariance property. This does not make use of
the smoothness assumption for $\frX_0$.
\begin{prop}\label{eq_catO}
The functors $\pi_*$ (resp. $\pi^*$) are exact on the category of coherent $\cO_{{\frX',\Q}}$-modules
(resp. coherent $\cO_{{\frX,\Q}}$-modules) and induce an equivalence of categories
between coherent $\cO_{{\frX',\Q}}$-modules
and coherent $\cO_{{\frX,\Q}}$-modules.
\end{prop}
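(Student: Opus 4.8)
The plan is to recognize $\pi$ as an admissible formal blow-up of $\frX$, reduce the statement to an affine base, transfer the cohomological input to algebraic geometry via formal–analytic comparison, and finally obtain the equivalence by the usual free-resolution argument. By Raynaud's theory of formal models (see \cite{BoschLectures}), a morphism of (noetherian, hence quasi-compact) admissible formal $\frS$-schemes which induces an isomorphism of associated rigid spaces is itself an admissible formal blow-up; so $\pi$ is projective, in particular proper. Since $\pi_*$, $\pi^*$ and the unit and counit of the adjunction $(\pi^*,\pi_*)$ are local on the target $\frX$, I may assume $\frX=\Spf C$ is affine. Then $\frX'$ is the $\varpi$-adic completion of the algebraic blow-up $\rho\colon Y=\Proj\left(\bigoplus_{n\ge0}\cJ^n\right)\ra\Spec C$ of an open ideal $\cJ\subseteq C$ with $\varpi^N\in\cJ$ for some $N$; the key geometric fact is that, because $\cJ$ contains a power of $\varpi$, the morphism $\rho$ becomes an isomorphism after inverting $\varpi$, i.e.\ $Y[1/\varpi]\car\Spec C[1/\varpi]=\Spec C_\Q$.

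For the cohomological vanishing I would argue as follows. For a coherent $\cO_Y$-module $\cN$ the $C$-module $\Gamma(Y,\cN)$ and all $H^q(Y,\cN)$ are finitely generated (properness of $\rho$), and flat base change along $C\ra C_\Q$ gives $H^q(Y,\cN)\otimes_C C_\Q\simeq H^q(\Spec C_\Q,\cN_\Q)$, which vanishes for $q>0$ (affine target) and equals $\Gamma(\Spec C_\Q,\cN_\Q)$ for $q=0$; hence $H^q(Y,\cN)$ is $\varpi$-power torsion for $q>0$. Grothendieck's theorem on formal functions, combined with the algebraization theorem to write an arbitrary coherent $\cO_{\frX'}$-module as the completion $\what{\cN}$ of a coherent $\cO_Y$-module, then yields $H^q(\frX',\cM_0)\otimes\Q=0$ for all $q>0$ and every coherent $\cO_{\frX'}$-module $\cM_0$, while $\Gamma(\frX',\cO_{\frX'})\otimes\Q=C_\Q$. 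Since a coherent $\cO_{\frX',\Q}$-module $\cM$ is locally of the form $\cM_0\otimes\Q$ and cohomology on the noetherian space $\frX'$ commutes with the filtered colimit $-\otimes\Q$, I conclude that $R^q\pi_*\cM=0$ for all $q>0$, that $\pi_*\cM$ is a coherent $\cO_{\frX,\Q}$-module, and that $\pi_*\cO_{\frX',\Q}=\cO_{\frX,\Q}$. In particular $\pi_*$ is exact on coherent $\cO_{\frX',\Q}$-modules. (If one prefers to avoid the algebraization theorem, one can establish the vanishing only for $\cN=\cO_Y$ and then bootstrap to arbitrary coherent $\cM$ by a local free presentation and descending induction, using Grothendieck vanishing $R^q\pi_*=0$ for $q$ above the relative dimension of $\pi$.)

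Next comes the equivalence. The unit $\cN\ra\pi_*\pi^*\cN$ of a coherent $\cO_{\frX,\Q}$-module $\cN$ is an isomorphism: choosing locally a presentation $\cO_{\frX,\Q}^a\ra\cO_{\frX,\Q}^b\ra\cN\ra0$, applying the right-exact functor $\pi^*$ and then the exact functor $\pi_*$, and using $\pi_*\pi^*\cO_{\frX,\Q}^r=\cO_{\frX,\Q}^r$ together with naturality of the unit, identifies $\pi_*\pi^*\cN$ with $\cN$. Hence $\pi^*$ is fully faithful. It remains to see that the counit $\pi^*\pi_*\cM\ra\cM$ is an isomorphism for coherent $\cM$, and this is where the hypothesis that $\pi$ is an isomorphism on generic fibres — not merely proper — is genuinely used. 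I would pass to the rigid generic fibre: the functor $\sp^*$ identifies coherent $\cO_{\frX',\Q}$-modules (resp.\ $\cO_{\frX,\Q}$-modules) fully faithfully and exactly with coherent sheaves on $\frX'_\Q$ (resp.\ $\frX_\Q$), and for the proper morphism $\pi$ there is a base-change isomorphism $\sp^*\pi_*\cM\simeq(\pi_\Q)_*\sp^*\cM$, where $\pi_\Q$ is the induced morphism of rigid spaces. As $\pi_\Q$ is an isomorphism, $\pi_\Q^*(\pi_\Q)_*={\rm id}$, so $\sp^*$ of the counit is an isomorphism, and therefore the counit itself is an isomorphism since $\sp^*$ is exact and faithful, hence conservative. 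Thus $\pi^*$ and $\pi_*$ are mutually quasi-inverse equivalences between coherent $\cO_{\frX,\Q}$-modules and coherent $\cO_{\frX',\Q}$-modules, and being equivalences of abelian categories they are exact.

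The main obstacle is precisely this last paragraph: both the coherence of $\pi_*\cM$ and the bijectivity of the counit encode the assertion that every coherent module on the common generic fibre already descends to $\frX$ via $\pi_*$, which rests on the formal/rigid comparison (formal $\mathrm{GAGA}$ over the affine base, together with Raynaud's dictionary between coherent $\cO_{\frX,\Q}$-modules and coherent sheaves on $\frX_\Q$). By contrast, once $\pi$ has been recognized as a blow-up, the cohomological vanishing is a routine flat-base-change computation and the unit isomorphism is a formal consequence of $\pi_*\cO_{\frX',\Q}=\cO_{\frX,\Q}$ and the exactness of $\pi_*$.
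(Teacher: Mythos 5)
Your strategy is sound, and for the first half it is genuinely different from the paper's. The paper never uses any structure of $\pi$ beyond properness and the fact that $\tpi$ is an isomorphism of rigid spaces: it writes a coherent $\cO_{\frX',\Q}$-module as $\sp'_*\tcF$ for a coherent rigid sheaf $\tcF$, computes $R\pi_*\cF\simeq R\sp_*R\tpi_*\tcF$ using the vanishing of $R^q\sp_*$ on coherent rigid sheaves (Tate acyclicity, \cite[4.1.3]{BerthelotDI}), and then handles unit and counit by reduction to the structure sheaf. You instead algebraize: over an affine $\frX=\Spf C$ you realize $\pi$ as the completion of an algebraic blow-up $Y\ra\Spec C$ of an open ideal, and obtain the vanishing, the coherence of $\pi_*\cM$, and $\pi_*\cO_{\frX',\Q}=\cO_{\frX,\Q}$ from flat base change to $C_\Q$ together with the EGA III comparison and existence theorems. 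This is a legitimate alternative (it is essentially how the rigid-analytic comparison results are themselves proved), and it buys you integral information before inverting $p$; the price is that you need $\pi$ to be, locally over $\frX$, an algebraizable projective morphism, which the paper's argument never requires.

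Two points need repair. First, the opening assertion attributed to Raynaud's theory --- that any morphism of quasi-compact admissible formal schemes inducing an isomorphism of rigid fibres is itself an admissible blow-up --- is not what that theory gives; the standard consequence is only that such a morphism becomes an admissible blow-up after a further admissible blow-up of the source. What rescues your reduction is the hypothesis, which you never invoke, that $\pi$ is a morphism over $\frX_0$ between admissible blow-ups of $\frX_0$: by the universal property of blowing up, $\frX'$ is then the blow-up of $\cI'\cdot\cO_{\frX}$, where $\cI'$ is an ideal defining $\frX'\ra\frX_0$, so $\pi$ is indeed an admissible blow-up of $\frX$ (this is exactly what the paper records at the start of Section 3, with reference to \cite[ch. 8, 1.24]{LiuBook}); your affine-local algebraization is justified only after this correction. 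Second, in the counit step you invoke a base-change isomorphism $\sp^*\pi_*\cM\simeq\tpi_*\sp^*\cM$ without proof or reference. This is precisely the formal/rigid comparison of \cite[4.1.3]{BerthelotDI} (the equivalence given by $\sp_*$ and $\sp^*$ together with $R^q\sp_*=0$) on which the paper's whole proof rests, so you must either cite it --- in which case your last paragraph collapses onto the paper's argument --- or derive it from your own affine computation (over an affine $\frU\subseteq\frX$ both sides are the coherent sheaf on $\frU_\Q$ attached to the finite $C_\Q$-module $\Gamma(Y,\cN)\otimes\Q$). With these two repairs the proof is complete.
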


\begin{proof} Let $\sp$ (resp. $\sp'$) be the specialization map $\frX_\Q\rightarrow \frX$ (resp. $\frX'_\Q \ra \frX'$). Then by
Tate's acyclicity theorem
one knows that $\sp_*$ is exact over the category of coherent $\cO_{{\frX}_\Q}$-modules. Moreover,
via specialization,
the category of coherent $\cO_{\frX_\Q}$-modules over the rigid space $\frX_\Q$
is equivalent to the category of  coherent
$\cO_{\frX,\Q}$-modules over the formal scheme $\frX$ \cite[discussion after (4.1.3.1)]{BerthelotDI} and similarly for $\frX'$.
Let $\tpi$ be the induced map by $\pi$
between the analytic spaces $\frX'_\Q$ and $\frX_\Q$, which is an isomorphism by assumption. One has the following
commutative
diagram
$$\xymatrix {\frX'_\Q \ar@{->}[r]^{\sim}_{\tpi}\ar@{->}[d]^{\sp'}  & \frX_\Q\ar@{->}[d]^{\sp}\\
          \frX' \ar@{->}[r]_{\pi}& \frX, }$$

from which we can deduce the following
		\begin{lemma}\label{lemma_int} With the previous notations,
there is an isomorphism $$\cO_{\frX,\Q}{\simeq}\pi_*\cO_{\frX',\Q} \;.$$
        \end{lemma}
\begin{proof} Let us first note that $R\tpi_*\cO_{{\frX'}_\Q}\simeq \cO_{{\frX}_\Q}$ because $\tpi$ is an isomorphism of
analytic spaces.
            Since $R\sp'_*\cO_{{\frX'}_{\Q}}=\cO_{{\frX',\Q}}$ (and the same with $\frX$), we can compute using the
previous diagram
           $R\pi_*\cO_{{\frX',\Q}} \simeq R\sp_*R\tpi_* \cO_{{\frX'}_{\Q}} \simeq \cO_{{\frX,\Q}}.$ This proves the
lemma.
\end{proof}
Let us now prove the proposition. Let $\cF$ be a coherent $\cO_{{\frX',\Q}}$-module, then by \cite[4.1.3]{BerthelotDI}
there exists a coherent
$\cO_{{\frX'}_\Q}$-module $\tcF$ such that $\cF=\sp'_*\tcF$. Considering again the previous diagram, we compute
$$ R\pi_*\cF \simeq R\sp_* R\tpi_* \tcF\;.$$ As $\tpi$ is an isomorphism,  $R^i\tpi_* \tcF=0$ if $i \geq 1$ and
$\tpi_* \tcF$ is a coherent $\cO_{{\frX}_\Q}$-module. Finally, $R\sp_*$ is reduced to $\sp_*$ and the spectral sequence
of the composite functors degenerates, giving us that $R^i\pi_* \cF=0$ if $i \geq 1$ and $\pi_* \cF$ is a coherent
$\cO_{{\frX,\Q}}$-module. It is moreover clear that $\pi^*$ preserves coherence. Consider the map of
coherent $\cO_{\frX',\Q}$-modules $\pi^*\pi_*\cF \rightarrow \cF$. To prove that this is an isomorphism is local
on $\frX$, which we can assume to be affine. In this case, it is enough to prove the statement for $\cF=\cO_{\frX',\Q}$,
since
$\pi_*$ is exact. But $\pi^* \pi_*\cO_{\frX',\Q}\simeq \pi^*\cO_{\frX,\Q}$ because of the lemma and thus
$\pi^* \pi_*\cO_{\frX',\Q}\simeq \cO_{\frX',\Q}$.

Let $\cE$ be a coherent $\cO_{{\frX,\Q}}$-module and consider the canonical map $\cE \rightarrow \pi_*\pi^*\cE$. Again,
since $\pi_*$ is exact, we are reduced to the case where $\frX$ is affine and $\cE=\cO_{\frX,\Q}$ to prove that this map
is an
isomorphism. In this case, the isomorphism follows again from the lemma.

Since $\pi_*$ and $\pi^*$ are quasi-inverse to each other, and $\pi_*$ is an exact functor, $\pi^*$ is exact as well.
This finishes the proof of the proposition.
\end{proof}

In the sequel, we will give a version of the invariance property~\ref{eq_catO} for $\sD$-modules. Recall that we have
the sheaves of differential operators
$\sD^{\dagger}_{\frX,k}, \hsD^{(k,m)}_{\frX,\Q}$ etc. for $k \geq k_{\frX}$, cf. previous subsection, at our disposal
(similarly for $\frX'$). In the following we {\it fix} a congruence level $k\geq {\rm max}\{k_{\frX},k_{\frX'}\}$.
\vskip5pt

For a $\sD^{\dagger}_{\frX',k}$-module $\sM$, we let, as usual, $\pi_*\sM$ denote
 the push-forward of $\sM$ in the sense of abelian sheaves (and analogously in the case of
$\hsD^{(k,m)}_{\frX',\Q}$-modules).
Conversely, there is a functor $\pi^{!}$ in the other direction constructed as usual using the formalism of inverse
images of $\sD$-modules:
first of all, by definition of the sheaves
 $\sD^{(k,m)}_{X'_i}$ and $\sD^{(k,m)}_{X_i}$, cf. (\ref{def1D}), and the fact that $(\pr')^*=\pi^*\circ\pr^*$ we have
$$\sD^{(k,m)}_{X'_i}\simeq \pi_i^*\sD^{(k,m)}_{X_i}\;,$$ and the sheaf
$\sD^{(k,m)}_{X'_i}$ can be uniquely endowed with a structure of right $\pi^{-1}\sD^{(k,m)}_{X_i}$-module.
Passing to the $p$-adic completion, we see that the sheaf $\hsD^{(k,m)}_{\frX'}$ is a sheaf of right
$\pi^{-1}\hsD^{(k,m)}_{\frX}$-modules.
Then, passing to the inductive limit over $m$ implies that $\sD^{\dagger}_{\frX',k}$ is a right
$\pi^{-1}\sD^{\dagger}_{\frX,k}$-module. For a $\sD^{\dagger}_{\frX,k}$-module $\sM$, we then define
\begin{numequation}\label{inversefunctor}\pi^!\sM :=\sD^{\dagger}_{\frX',k}\otimes_{\pi^{-1}\sD^{\dagger}_{\frX,k}}\pi^{-1}\sM \;,\end{numequation}
and we make the analogous definition in the case of
$\hsD^{(k,m)}_{\frX,\Q}$-modules.\footnote{Since $\sD^{(k,m)}_{\frX'}=\pi^*\sD^{(k,m)}_{\frX}$, the functor $\pi^{!}$ is a version of the usual $D$-module pullback functor \cite{Hotta}, whence our notation.}

\vskip8pt

Before stating the next theorem, we need the following lemmas. Denote by $\cA$ the abelian category of projective systems $K_\bullet = (K_i)_{i \in \bbN} = (K_0 \leftarrow K_1 \leftarrow \ldots)$ of $\cO_\frX$-modules $K_i$, where $K_i$ is annihilated by multiplication by $\vpi^{i+1}$ for every $i \ge 0$. Put $\cP^b = D^b(\cA)$. Note that for a complex $(K^n)_{n \in \Z}$ in $\cP^b$, where each $K^n = (K^n_i)_{i \in \bbN}$,  there exists $J>0$ so that $\cH^n(K^\bullet)=0$ if $|n|>J$ ($n\in \Z$). The functor $\varprojlim: \cA \ra \Mod(\cO_\frX)$ extends to a derived functor $R\varprojlim$ from the derived category $\cP^b$ to the bounded derived category $D^b(\cO_\frX) := D^b(\Mod(\cO_\frX))$ because $R\varprojlim$ has cohomological dimension $1$, cf. \cite[4.1]{HartshorneO}.

\begin{lemma}\label{lemprojlim} Let $N \in \Ne$, and let
$K_\bullet$ be an object of $\cA$ such that $\varpi^N K_\bullet = 0$, then the complex $\Q \ot_{\Z} R\varprojlim K_\bullet$ is quasi-isomorphic to 0 in $D^b(\cO_{\frX,\Q})$.
\end{lemma}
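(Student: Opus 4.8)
The plan is to reduce the whole statement to the single fact that $\varpi$ becomes invertible after applying $-\ot_{\Z}\Q$, so that any $\cO_{\frX}$-module annihilated by a fixed power of $\varpi$ is killed by this functor. First I would use that $R\varprojlim$ has cohomological dimension $\le 1$ (as recalled just before the statement, following \cite[4.1]{HartshorneO}) to represent $R\varprojlim K_\bullet$ by an explicit bounded complex of $\cO_{\frX}$-modules. Concretely, for a single projective system this is the two-term Roos complex $\prod_{i\in\bbN} K_i \to \prod_{i\in\bbN} K_i$, given by the difference of the identity and the shift map and placed in degrees $0$ and $1$; its cohomology sheaves are $\varprojlim_i K_i$ in degree $0$ and $\varprojlim^1_i K_i$ in degree $1$, and both are subquotients of $\prod_i K_i$.

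Next I would observe that since $\varpi^N K_\bullet = 0$, every term of this complex, being a product of copies of the $K_i$, is annihilated by $\varpi^N$. Because $e = e(L/\Qp)$ satisfies $\varpi^e = pu$ for some unit $u\in\fro^\times$, an $\cO_{\frX}$-module killed by $\varpi^N$ is also killed by $p^N$ (using $\varpi^N \mid \varpi^{eN} = p^N u^N$), hence is $p$-power torsion. The functor $-\ot_{\Z}\Q$ is exact and annihilates every $p$-power torsion module, so applying it termwise to the Roos complex yields the zero complex; therefore $\Q\ot_{\Z} R\varprojlim K_\bullet$ is quasi-isomorphic to $0$ in $D^b(\cO_{\frX,\Q})$, as desired. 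Equivalently, one can argue directly on cohomology: $\Q\ot_{\Z}\varprojlim_i K_i$ and $\Q\ot_{\Z}\varprojlim^1_i K_i$ both vanish since these sheaves are $\varpi^N$-torsion and $\Q\ot_{\Z}-$ is exact.

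There is essentially no serious obstacle here; the only point requiring a little care is the bookkeeping that ensures $R\varprojlim K_\bullet$ is represented by a complex whose individual terms, not merely its cohomology sheaves, are $\varpi^N$-torsion — which is precisely what the cohomological-dimension-$\le 1$ statement together with the Roos resolution provides. Once that is in place, the conclusion is immediate from exactness of $-\ot_{\Z}\Q$.
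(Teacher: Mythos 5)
Your overall strategy is the same as the paper's in substance (everything in sight is $\varpi^N$-torsion, hence dies after applying the exact functor $\Q\ot_{\Z}-$), and your final step, including the observation that $\varpi^N$-torsion implies $p^N$-torsion via $\varpi^{eN}=p^N\cdot(\mathrm{unit})$, is fine. The genuine weak point is the technical device you lean on: the claim that $R\varprojlim K_\bullet$ is represented by the termwise Roos complex $\prod_i K_i \to \prod_i K_i$ of sheaf products, with cohomology $\varprojlim_i K_i$ and $\varprojlim^{(1)}_i K_i$. This is standard in an abelian category with exact countable products (abelian groups, modules over a ring), but $\Mod(\cO_{\frX})$ is a sheaf category, where countable products are only left exact; in general the two-term complex of actual sheaf products computes the homotopy limit only after the products themselves are derived, and the fact that $R\varprojlim$ has cohomological dimension $\le 1$ (the Hartshorne input quoted before the lemma) does not by itself hand you this explicit representative, nor the statement that $R^0\varprojlim$ and $R^1\varprojlim$ are subquotients of $\prod_i K_i$. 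So as written, the central step of your main route, and also the justification of torsionness in your ``equivalently'' variant, rests on an assertion that is not automatic in this setting.

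The repair is short and is exactly what the paper does: no representative of $R\varprojlim K_\bullet$ is needed at all. Since $\varpi^N K_\bullet=0$, the endomorphism ``multiplication by $\varpi^N$'' of $K_\bullet$ factors through the zero object $K_\bullet \ra 0 \ra K_\bullet$; applying the additive functors $R^k\varprojlim = \cH^k\circ R\varprojlim$ shows that multiplication by $\varpi^N$ on each cohomology sheaf $R^k\varprojlim K_\bullet$ factors through $0$, i.e. $\varpi^N R^k\varprojlim K_\bullet=0$ for every $k$. Then, because $\Q$ is flat over $\Z$, the $k$-th cohomology sheaf of $\Q\ot_{\Z} R\varprojlim K_\bullet$ is $\Q\ot_{\Z} R^k\varprojlim K_\bullet$, which vanishes since that sheaf is $p$-power torsion. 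If you replace your Roos-complex justification by this functoriality argument, your proof coincides with the paper's; as it stands, the Roos step is a real gap (or at least requires an additional argument about products of sheaves in this particular situation) rather than a harmless shortcut.
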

\begin{proof} In the following we consider $K_\bullet$ as an object in $\cP^b$ concentrated in degree zero. By hypothesis, the map $\varpi^N \, \cdot: K_\bullet \ra K_\bullet$ factorizes through the zero complex
                             $$ K_\bullet \ra 0  \ra K_\bullet \;.$$
              After applying  $R^k \varprojlim = \cH^k \circ R\varprojlim$, for $k \in \Z$,
             we find that multiplication with $\varpi^N$ factorizes
                            $$ R^k\varprojlim K_\bullet \ra 0  \ra R^k\varprojlim K_\bullet \;,$$
               meaning that for every $k\in \Z$, $\varpi^N  R^k\varprojlim K_\bullet=0$ and thus
                   $$ \Q \ot_{\Z} R^k\varprojlim K_\bullet = 0 \;.$$
 This proves the lemma, as $\Q$ is flat over $\Z$ and as  this module is the $k$-th cohomology sheaf
of the complex $$ \Q \ot_{\Z} R\varprojlim K_\bullet \;.$$
\end{proof}

\begin{lemma}\label{projlim} Let $N \in \Ne$, and let $\cE^\bullet$, $\cF^\bullet$ two objects of $\cP^b$, and $h: \cE^\bullet \ra \cF^\bullet$ a morphism in
$\cP^b$ so that the mapping cone $\cC^\bullet$ of $h$ (defined up to a quasi-isomorphism) satisfies
                 $$\forall j \in \Z \,: \; \varpi^N\cdot \cH^j (\cC^\bullet) = 0 \;.$$
           Then the map $h$ induces a quasi-isomorphism
                          $$ \Q \ot_{\Z} R\varprojlim (\cE^\bullet) \simeq  \Q \ot_{\Z} R\varprojlim (\cF^\bullet) \;.$$
\end{lemma}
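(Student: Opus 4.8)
The plan is to deduce the statement from Lemma \ref{lemprojlim} by applying the triangulated functor $R\varprojlim$ to the mapping cone triangle and then performing a d\'evissage along the cohomology objects of the cone. First I would record that, by definition of the mapping cone, there is a distinguished triangle $\cE^\bullet \stackrel{h}{\lra} \cF^\bullet \lra \cC^\bullet \lra \cE^\bullet[1]$ in $\cP^b$. Since $R\varprojlim\colon \cP^b \ra D^b(\cO_\frX)$ is triangulated (it is the derived functor of $\varprojlim$, of cohomological dimension $1$, as recalled before the statement) and $\Q \ot_\Z -\colon D^b(\cO_\frX)\ra D^b(\cO_{\frX,\Q})$ is exact, applying both functors yields a distinguished triangle
$$ \Q \ot_\Z R\varprojlim \cE^\bullet \lra \Q \ot_\Z R\varprojlim \cF^\bullet \lra \Q \ot_\Z R\varprojlim \cC^\bullet \lra \left(\Q \ot_\Z R\varprojlim \cE^\bullet\right)[1] $$
in $D^b(\cO_{\frX,\Q})$. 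Hence it is enough to prove that $\Q \ot_\Z R\varprojlim \cC^\bullet$ is quasi-isomorphic to $0$; the asserted quasi-isomorphism $\Q \ot_\Z R\varprojlim \cE^\bullet \simeq \Q \ot_\Z R\varprojlim \cF^\bullet$ then follows from the long exact cohomology sequence of this triangle.

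For the vanishing of $\Q \ot_\Z R\varprojlim \cC^\bullet$ I would argue by d\'evissage. Since $\cC^\bullet$ is bounded in $\cP^b = D^b(\cA)$, there are integers $a \le b$ with $\cH^j(\cC^\bullet)=0$ for $j \notin [a,b]$, and the canonical truncation functors on $D^b(\cA)$ provide, for each $n$, a distinguished triangle $\tau_{\le n-1}\cC^\bullet \lra \tau_{\le n}\cC^\bullet \lra \cH^n(\cC^\bullet)[-n] \lra (\tau_{\le n-1}\cC^\bullet)[1]$ in $\cP^b$, with $\tau_{\le a-1}\cC^\bullet = 0$ and $\tau_{\le b}\cC^\bullet = \cC^\bullet$. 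Each $\cH^n(\cC^\bullet)$ is, by hypothesis, an object of $\cA$ annihilated by $\varpi^N$, so Lemma \ref{lemprojlim} gives $\Q \ot_\Z R\varprojlim \cH^n(\cC^\bullet) \simeq 0$, hence also $\Q \ot_\Z R\varprojlim\bigl(\cH^n(\cC^\bullet)[-n]\bigr)\simeq 0$. Applying $R\varprojlim$ followed by $\Q\ot_\Z-$ to the truncation triangles and inducting on $n$ (the base case $\tau_{\le a-1}\cC^\bullet = 0$ being trivial) shows $\Q \ot_\Z R\varprojlim(\tau_{\le n}\cC^\bullet)\simeq 0$ for all $n$; taking $n=b$ gives the claim.

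I do not expect a genuine obstacle here: the argument is formal once the mapping cone triangle and the fact that $R\varprojlim$ is triangulated of finite cohomological dimension are in place. The one point requiring a little care is the legitimacy of the d\'evissage inside the triangulated category $\cP^b$ — namely that the canonical truncations of an object of $\cP^b$ again lie in $\cP^b$ (true, as $\cA$ is abelian) and that $R\varprojlim$ carries the truncation triangles to distinguished triangles (true, being triangulated). Alternatively, one can avoid truncations altogether by invoking the hyper-$\varprojlim$ spectral sequence $R^p\varprojlim \cH^q(\cC^\bullet) \Rightarrow \cH^{p+q}(R\varprojlim\cC^\bullet)$, which by cohomological dimension $1$ has only the columns $p\in\{0,1\}$: each $\cH^j(\cC^\bullet)$ is killed by $\varpi^N$, so each $R^p\varprojlim\cH^j(\cC^\bullet)$ is killed by $\varpi^N$, and the two-step filtration forces $\varpi^{2N}\cdot\cH^n(R\varprojlim\cC^\bullet)=0$ for all $n$, whence $\Q\ot_\Z\cH^n(R\varprojlim\cC^\bullet)=0$ and $\Q\ot_\Z R\varprojlim\cC^\bullet\simeq 0$. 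Either route completes the proof.
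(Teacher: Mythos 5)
Your proposal is correct and follows essentially the same route as the paper's proof: reduce via the cone triangle to showing $\Q\ot_{\Z} R\varprojlim \cC^\bullet\simeq 0$, and prove that vanishing by a truncation d\'evissage on the cohomology of $\cC^\bullet$ using Lemma \ref{lemprojlim}. The only differences are cosmetic — the paper inducts with the truncations $\sigma_{\geq n}$ in decreasing order rather than $\tau_{\le n}$ in increasing order, and your spectral-sequence variant is an optional shortcut not used in the paper.
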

\begin{proof} First note that since $\cE^\bullet$ and $\cF^\bullet$ have bounded cohomology, this is also the case
of $\cC^\bullet$, so that the condition of the lemma involves only a finite number of $j\in\Z$. More precisely, there
exists $J \in \Ne$, such that the sheaves $\cH^j(\cC^\bullet)$ are zero for all $j$ satisfying the
condition $|j|> J$. Using the
cohomological truncations functors $\sigma_{\geq n}$ as defined in ~\cite[I, 7]{HartshorneD},
and denoting $\sigma_{> n}$ of loc.
cit. by  $\sigma_{\geq n+1}$, we have for each $n$ a triangle in $\cP^b$ (\cite[I, 7.2]{HartshorneD})
$$ \cH^n (\cC^\bullet) \ra  \sigma_{\geq n}(\cC^\bullet)  \ra \sigma_{\geq n+1}(\cC^\bullet) \stackrel{+1}{\lra} \;.$$
We will prove by decreasing induction on $n$ that $\Q \ot R\varprojlim \sigma_{\geq n}(\cC^\bullet)$ is quasi-isomorphic to
$0$. This is true if $n=J+1$. Assume that this is true for $n+1$, then after applying $\Q \ot R\varprojlim$
to the previous triangle, we get a triangle
 $$  \Q \ot R\varprojlim \cH^n (\cC^\bullet) \ra \Q \ot R\varprojlim \sigma_{\geq n}(\cC^\bullet) \ra
 \Q \ot R\varprojlim \sigma_{\geq n+1}(\cC^\bullet) \sta{+1}{\lra} \;.$$
As by hypothesis $\varpi^N \cH^n (\cC^\bullet)=0$, we see by applying the previous lemma ~\ref{lemprojlim} to the projective
system $\cH^n (\cC^\bullet)$ that
$\Q \ot R\varprojlim \cH^n (\cC^\bullet)$ is quasi-isomorphic to $0$. Therefore, we have a quasi-isomorphism
$$ \Q \ot R\varprojlim \sigma_{\geq n}(\cC^\bullet) \simeq  \Q \ot R\varprojlim \sigma_{\geq n+1}(\cC^\bullet) \;,$$
and the complex $\Q \ot R\varprojlim \sigma_{\geq n}(\cC^\bullet)$ is hence quasi-isomorphic to $0$. Thus,
we conclude that for all $n$, $\Q \ot R\varprojlim \sigma_{\geq n}(\cC^\bullet)$ is quasi-isomorphic to $0$. Since  $\cC^\bullet$ has bounded cohomology, it is quasi-isomorphic
 to $\sigma_{\geq n}(\cC^\bullet)$ for $n$
small enough and this finally proves that $\Q \ot R\varprojlim \cC^\bullet$ is quasi-isomorphic to $0$. Now we consider the triangle in $\cP^b$
$$ \cE^\bullet \ra \cF^\bullet \ra \cC^\bullet \sta{+1}{\ra} \;,$$
and apply $\Q \ot R\varprojlim$. In this way we obtain a triangle
$$ \Q \ot R\varprojlim \cE^\bullet \ra \Q \ot R\varprojlim \cF^\bullet \ra \Q \ot R\varprojlim \cC^\bullet
 \sta{+1}{\ra} \;,$$
and since $\Q \ot R\varprojlim \cC^\bullet$ is quasi-isomorphic to $0$, we see that
the first map of the latter triangle is a quasi-isomorphism as claimed.
\end{proof}

As before, we denote the associated rigid analytic space of $\frX$ by $\frX_\Q$. From ~\ref{eq_catO} and the
lemma ~\ref{lemma_int} $R^j\pi_*\cO_{\frX',\Q}=0$ for $j>0$ and $\pi_*\cO_{\frX',\Q}=\cO_{\frX,\Q}$. As the map
$\pi$ is proper, the sheaves $R^j\pi_*\cO_{\frX'}$ are coherent $\cO_{\frX}$-modules and
there is $N\geq 0$ such that
 $\vpi^NR^j\pi_*\cO_{\frX'}=0$ for all $j>0$ and such that the kernel and cokernel of the natural map
 $\cO_{\frX}\rightarrow\pi_*\cO_{\frX'}$ are killed by $\vpi^N$ as well. For any $i\geq 0$, let as usual $X_i$
be the reduction of $\frX$
 mod $\vpi^{i+1}$ and similarly for $\frX'$ and denote by $\pi_i: X'_i\rightarrow X_i$ the morphism induced by $\pi$.
We will need the following
\begin{aux}\label{lemma_int2} \hskip0pt Let  $  i \geq 0$.
\begin{enumerate} \item  Kernel and cokernel
of the canonical map
$ \cO_{X_i}\ra \pi_{i*} \cO_{X'_i}$ are annihilated by $\varpi^{2N}$.
                            \item For all $j \geq 1$, one has $\varpi^{2N}R^j\pi_{i*}\cO_{X'_i}=0.$
              \end{enumerate}
\end{aux}
\begin{proof} As the formal scheme $\frX'$ is flat, there are exact sequences
$$ 0 \lra \cO_{\frX'}\sta{\varpi^{i+1}}{\lra} \cO_{\frX'} \lra \cO_{X'_i} \lra 0 \;.$$ Applying $R^j\pi_{*}$, we get
exact sequences for any $i$ and $j \geq 1$,
		$$R^j\pi_{*}\cO_{\frX'} \ra R^j\pi_{i*}\cO_{X'_i} \ra R^{j+1}\pi_{*}\cO_{\frX'} \;,$$
that prove that $\varpi^{2N}R^j\pi_{i*}\cO_{X'_i}=0$. Moreover we can consider the following commutative diagram
of exact sequences
		$$\xymatrix {  0 \ar@{->}[r] & \cO_{\frX}\ar@{->}[d]\ar@{->}[r]^{\varpi^{i+1}} &
 \cO_{\frX}\ar@{->}[r] \ar@{->}[d]& \cO_{X_i}\ar@{->}[d] \ar@{->}[r] & 0 \\
		0 \ar@{->}[r] & \pi_*\cO_{\frX'} \ar@{->}[r]^{\varpi^{i+1}} &  \pi_*\cO_{\frX'} \ar@{->}[r] &
 \pi_{i*} \cO_{X'_i}\ar@{->}[r] & R^1\pi_*\cO_{\frX'} .
}$$
By the snake lemma the kernel of the canonical map $ \cO_{X_i}\ra \pi_{i*} \cO_{X'_i}$ is killed by
$\varpi^{2N}$. By chasing the diagram we also see that the cokernel of this map is also killed by
$\varpi^{2N}$ for all $i$. This proves the auxiliary result.
\end{proof}

\begin{lemma} \label{prop-exactdirectimage-forD}
Let $\pi:\frX'\rightarrow\frX$ be a morphism over $\frX_0$ between admissible formal blow-ups of the smooth formal
scheme $\frX_0$. Let $k \geq \max\{k_{\frX},k_{\frX'}\}$.

\begin{enumerate}
\item Then we have: $R^j\pi_*\sD^{\dagger}_{\frX',k}=0$ for $j>0$.
				Moreover, $\pi_*\sD^{\dagger}_{\frX',k} = \sD^{\dagger}_{\frX,k}$.
\item There is a canonical isomorphism $\sD^{\dagger}_{\frX',k}\simeq \pi^!\sD^{\dagger}_{\frX,k} $.
\end{enumerate}

\begin{proof}
 Since $R^j\pi_*$ commutes with inductive limits,
 it suffices to prove the claim
 for $\hsD^{(k,m)}_{\frX',\Q}$. Abbreviate $\hsD_{\frX'}=\hsD^{(k,m)}_{\frX'}$ (and similarly for $\frX$), and
$\sD_{X'_i}=\hsD^{(m)}_{\frX',k}/\vpi^{i+1}\hsD^{(m)}_{\frX',k}$ (and similarly for $X_i$).
We need to compute $R\pi_* \hsD_{\frX'}$. Note that by ~\cite[Lemma 20.32.4]{stacks-project}
$R\varprojlim_i \sD_{X'_i}\simeq \varprojlim_i \sD_{X'_i}$, so that
$$R\pi_* \hsD_{\frX'} \simeq R\pi_*R\varprojlim_i \sD_{X'_i}
                                     \simeq R\varprojlim_i  R\pi_{i*}\sD_{X'_i} \;,$$

by ~\cite[Lemma 20.32.2]{stacks-project}. As the sheaf $\sD_{X_i}$ is a flat $\cO_{X_i}$-module, the projection formula
gives a canonical isomorphism
$$R\pi_{i*}\sD_{X'_i}\simeq R\pi_{i*}\cO_{X'_i} \otimes_{\cO_{X_i}}\sD_{X_i}\;,$$
so that the canonical map $\cO_{X_i}\ra  R\pi_{i*}\cO_{X'_i}$ induces a map of projective systems of
complexes $h: (\sD_{X_i})_i \ra (R\pi_{i*}\sD_{X'_i})_i$. We consider these projective systems as objects of $\cP^b$. By applying $R\varprojlim_i$ to $h$, we get the canonical map $\widehat{h}: \hsD_{\frX} \ra R\pi_* \hsD_{\frX'}$.
Moreover, we have
$$ \forall j \geq 0 \; \forall i \;\; R^j\pi_{i*}\sD_{X'_i}\simeq R^j\pi_{i*}\cO_{X'_i}\ot_{\cO_{X_i}}\sD_{X_i} \;.$$
By flat base change from $\cO_{X_i}$ to $\sD_{X_i}$, the previous auxiliary result~\ref{lemma_int2} (i) implies that the kernel and the cokernel of the map
$(\sD_{X_i})_i \ra (\pi_{i*}\sD_{X'_i})_i$ of projective systems are annihilated by $\varpi^{2N}$. Similarly, by  \ref{lemma_int2} (ii) the projective systems $(R^j\pi_{i *}\sD_{X'_i})_i$ for $j\geq 1$ are annihilated by $\varpi^{2N}$. Let $\cC^\bullet$ be the cone of
$h$, then, as the functor $\cH^0$ is a cohomological functor \cite[definition, p.27]{HartshorneD} we have the
following exact cohomology sequence of projective systems of sheaves
$$ 0 \ra (\cH^{-1}(\cC^\bullet)) \ra (\sD_{X_i})_i \ra (\pi_{i*}\sD_{X'_i})_i \ra (\cH^{0}(\cC^\bullet)) \ra 0, $$
and $\forall j \geq 1$
$$ (R^j\pi_{i*}(\sD_{X_i}))_i \simeq \cH^j(\cC^\bullet) \;.$$
We thus see that the cohomology of $\cC^\bullet$ is annihilated by $\varpi^{2N}$, so that we can apply
lemma~\ref{projlim} and obtain a quasi-isomorphism $\widehat{h} \otimes \Q: \hsD_{\frX,\Q} \car R\pi_* \hsD_{\frX',\Q}$.
By passing to the cohomology sheaves (and to the inductive limit over all $m$), this proves (i). The part (ii) follows from the definition of the functor $\pi^{!}$, cf. \ref{inversefunctor}.
\end{proof}

\end{lemma}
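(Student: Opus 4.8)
The plan is to reduce everything to level $m$ and to a $p$-adic computation via $R\varprojlim$, following the pattern Berthelot uses for the rigidification of $\hsD^{(m)}$. First I would observe that $R^j\pi_*$ commutes with filtered inductive limits over $m$, so it suffices to prove the analogous statement for $\hsD^{(k,m)}_{\frX',\Q}$ for each fixed $m$; then one passes to the colimit. Writing $\sD_{X'_i} = \hsD^{(k,m)}_{\frX'}/\vpi^{i+1}$ and similarly for $X_i$, the key identifications are $\hsD^{(k,m)}_{\frX'} \simeq R\varprojlim_i \sD_{X'_i}$ (since $R\varprojlim$ on this tower is just $\varprojlim$, the tower being Mittag--Leffler, cf.\ \cite[Lemma 20.32.4]{stacks-project}), and then the commutation $R\pi_* R\varprojlim_i \simeq R\varprojlim_i R\pi_{i*}$ \cite[Lemma 20.32.2]{stacks-project}. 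So $R\pi_*\hsD^{(k,m)}_{\frX'} \simeq R\varprojlim_i R\pi_{i*}\sD_{X'_i}$, and the problem is moved to the level of projective systems of complexes, i.e.\ to the category $\cP^b$ and the derived functor $R\varprojlim$ discussed just before Lemma \ref{lemprojlim}.

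Next I would use that $\sD_{X_i}$ is a \emph{flat} (even locally free) $\cO_{X_i}$-module, together with $\sD^{(k,m)}_{X'_i}\simeq \pi_i^*\sD^{(k,m)}_{X_i}$ from $(\pr')^*=\pi^*\circ\pr^*$, to invoke the projection formula: $R\pi_{i*}\sD_{X'_i} \simeq R\pi_{i*}\cO_{X'_i}\otimes_{\cO_{X_i}}\sD_{X_i}$. The canonical map $\cO_{X_i}\to R\pi_{i*}\cO_{X'_i}$ then induces a morphism of projective systems of complexes $h\colon (\sD_{X_i})_i \to (R\pi_{i*}\sD_{X'_i})_i$ in $\cP^b$, whose derived limit is the canonical map $\widehat h\colon \hsD^{(k,m)}_{\frX}\to R\pi_*\hsD^{(k,m)}_{\frX'}$. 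Here is where the already-established input on $\cO$-coherent direct images enters: by Proposition \ref{eq_catO} and Lemma \ref{lemma_int}, $\pi_*\cO_{\frX',\Q}=\cO_{\frX,\Q}$ and $R^j\pi_*\cO_{\frX',\Q}=0$ for $j>0$, so the coherent $\cO_\frX$-modules $R^j\pi_*\cO_{\frX'}$ are $\vpi^N$-torsion (some fixed $N$), and likewise the kernel and cokernel of $\cO_\frX\to\pi_*\cO_{\frX'}$; the Auxiliary result \ref{lemma_int2} then propagates this to the reductions mod $\vpi^{i+1}$, uniformly in $i$: kernel and cokernel of $\cO_{X_i}\to\pi_{i*}\cO_{X'_i}$ and all $R^j\pi_{i*}\cO_{X'_i}$ ($j\ge1$) are killed by $\vpi^{2N}$. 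Applying flat base change along $\cO_{X_i}\to\sD_{X_i}$, the cone $\cC^\bullet$ of $h$ has all cohomology sheaves $\cH^j(\cC^\bullet)$ annihilated by $\vpi^{2N}$, uniformly in $i$.

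With this in hand, Lemma \ref{projlim} applies verbatim: since $\vpi^{2N}\cdot\cH^j(\cC^\bullet)=0$ for all $j$, the morphism $h$ induces a quasi-isomorphism $\Q\otimes_\Z R\varprojlim(\sD_{X_i})_i \simeq \Q\otimes_\Z R\varprojlim(R\pi_{i*}\sD_{X'_i})_i$, i.e.\ $\widehat h\otimes\Q\colon \hsD^{(k,m)}_{\frX,\Q}\xrightarrow{\ \sim\ } R\pi_*\hsD^{(k,m)}_{\frX',\Q}$. Taking cohomology sheaves gives $\pi_*\hsD^{(k,m)}_{\frX',\Q}=\hsD^{(k,m)}_{\frX,\Q}$ and $R^j\pi_*\hsD^{(k,m)}_{\frX',\Q}=0$ for $j>0$; passing to the inductive limit over $m$ yields part (i). Part (ii) is then formal from the definition \eqref{inversefunctor} of $\pi^!$ together with $\sD^{(k,m)}_{\frX'}=\pi^*\sD^{(k,m)}_{\frX}$, which already exhibits $\sD^\dagger_{\frX',k}$ as the pullback $\pi^!\sD^\dagger_{\frX,k}$. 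The main obstacle is organizational rather than conceptual: one must be careful that all the $\vpi$-torsion bounds are \emph{uniform in $i$} so that they survive $R\varprojlim$ — this is precisely what Lemma \ref{lemprojlim}, Lemma \ref{projlim} and Auxiliary result \ref{lemma_int2} are designed to guarantee — and that the projection-formula and base-change steps are legitimately applied at the level of the flat sheaves $\sD_{X_i}$ rather than $\cO_{X_i}$.
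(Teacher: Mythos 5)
Your proposal is correct and follows essentially the same route as the paper's own proof: reduction to fixed level $m$, the identifications via \cite[Lemma 20.32.4, Lemma 20.32.2]{stacks-project}, the projection formula using flatness of $\sD_{X_i}$ over $\cO_{X_i}$, the uniform $\vpi^{2N}$-torsion bounds from \ref{lemma_int2}, and the conclusion via Lemma \ref{projlim}, with (ii) formal from the definition \ref{inversefunctor}. No gaps to report.
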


We can now state the
\begin{thm}\label{prop-exactdirectimage}
Let $\pi:\frX'\rightarrow\frX$ be a morphism over $\frX_0$ between admissible formal blow-ups of the smooth formal
scheme $\frX_0$. Let $k \geq \max\{k_{\frX},k_{\frX'}\}$.

\begin{enumerate}
\item
If $\sM$ is a coherent $\sD^{\dagger}_{\frX',k}$-module, then $R^j\pi_*\sM=0$ for $j>0$.
				Moreover, $\pi_*\sD^{\dagger}_{\frX',k} = \sD^{\dagger}_{\frX,k}$,
				so that $\pi_*$ induces an exact functor between coherent modules over
$\sD^{\dagger}_{\frX',k}$ and $\sD^{\dagger}_{\frX,k}$ respectively.

\item The formation $\pi^!$ is an exact functor from the category of coherent $\sD^{\dagger}_{\frX,k}$-modules
to the category of coherent $\sD^{\dagger}_{\frX',k}$-modules, and $\pi^!$ and $\pi_*$ are quasi-inverse
 equivalences between these categories.
\end{enumerate}

The same statement holds for coherent modules over $\hsD^{(k,m)}_{\frX,\Q}$ and $\hsD^{(k,m)}_{\frX',\Q}$ respectively.
\end{thm}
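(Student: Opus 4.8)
The plan is to reduce everything at once to the case where the base $\frX_0$ is affine, and then to bootstrap from two facts already established: the cohomological input of Lemma \ref{prop-exactdirectimage-forD} (namely $R^j\pi_*\sD^{\dagger}_{\frX',k}=0$ for $j>0$, $\pi_*\sD^{\dagger}_{\frX',k}=\sD^{\dagger}_{\frX,k}$, and $\sD^{\dagger}_{\frX',k}\simeq\pi^!\sD^{\dagger}_{\frX,k}$) and the presentation statement of Proposition \ref{Xaf}(iii) (over an affine base every coherent module over $\sD^{\dagger}_{\frX',k}$ or $\sD^{\dagger}_{\frX,k}$ is a quotient of a finite free one). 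Since $\pi_*$, $R^j\pi_*$ and $\pi^!$ are local on $\frX_0$ and coherence is a local condition, I would first replace $\frX_0$ by an affine open $\frU_0$ and $\frX,\frX'$ by $\pr^{-1}(\frU_0),(\pr')^{-1}(\frU_0)$ — which are again admissible blow-ups with congruence thresholds bounded by $k_{\frX},k_{\frX'}$ — so that from now on $\frX_0=\Spf A$ is affine.

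For part (1), the first step is the vanishing $R^j\pi_*\sM=0$ ($j>0$) for $\sM$ coherent over $\sD^{\dagger}_{\frX',k}$, which I would prove by descending induction on $j$. Since $\pi$ is projective (a morphism of admissible blow-ups of $\frX_0$ over $\frX_0$; in fact $\frX'$ is an admissible blow-up of $\frX$), the functors $R^j\pi_*$ vanish above some fixed degree $d$. Given coherent $\sM$, choose by \ref{Xaf}(iii) a surjection $(\sD^{\dagger}_{\frX',k})^r\tra\sM$ with kernel $\sN$, which is coherent because $\sD^{\dagger}_{\frX',k}$ is a coherent sheaf of rings (\ref{easy_thmAB2}); the long exact sequence attached to $0\to\sN\to(\sD^{\dagger}_{\frX',k})^r\to\sM\to 0$, together with $R^j\pi_*(\sD^{\dagger}_{\frX',k})^r=0$ for $j\ge 1$ from \ref{prop-exactdirectimage-forD}, gives $R^j\pi_*\sM\simeq R^{j+1}\pi_*\sN$ for all $j\ge 1$, and iterating along successive syzygies forces everything above degree $0$ to vanish. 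In particular $R^1\pi_*=0$ on coherent modules, so $\pi_*$ is exact there; and applying the exact functor $\pi_*$ to a finite presentation $(\sD^{\dagger}_{\frX',k})^a\to(\sD^{\dagger}_{\frX',k})^b\to\sM\to 0$, combined with $\pi_*\sD^{\dagger}_{\frX',k}=\sD^{\dagger}_{\frX,k}$ and coherence of $\sD^{\dagger}_{\frX,k}$, exhibits $\pi_*\sM$ as a coherent $\sD^{\dagger}_{\frX,k}$-module. This completes (1).

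For part (2), I would first record that $\pi^!$ is right exact (it is the exact functor $\pi^{-1}$ followed by $\sD^{\dagger}_{\frX',k}\otimes_{\pi^{-1}\sD^{\dagger}_{\frX,k}}(-)$) and that it preserves coherence: apply it to a finite presentation and use $\pi^!\sD^{\dagger}_{\frX,k}\simeq\sD^{\dagger}_{\frX',k}$ from \ref{prop-exactdirectimage-forD}. Next I would write down the unit $\eta\colon {\rm id}\to\pi_*\pi^!$ and counit $\varepsilon\colon\pi^!\pi_*\to {\rm id}$, built from the $\pi^{-1}$–$\pi_*$ adjunction and the ring homomorphism $\pi^{-1}\sD^{\dagger}_{\frX,k}\to\sD^{\dagger}_{\frX',k}$, and check that on finite free modules they are isomorphisms (indeed the identity): $\pi_*\pi^!(\sD^{\dagger}_{\frX,k})^b=\pi_*(\sD^{\dagger}_{\frX',k})^b=(\sD^{\dagger}_{\frX,k})^b$ and $\pi^!\pi_*(\sD^{\dagger}_{\frX',k})^b=\pi^!(\sD^{\dagger}_{\frX,k})^b=(\sD^{\dagger}_{\frX',k})^b$ by \ref{prop-exactdirectimage-forD}. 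Since $\pi_*\pi^!$ and $\pi^!\pi_*$ are right exact (composites of the right exact $\pi^!$ with the exact $\pi_*$), comparing a finite presentation of an arbitrary coherent module with its image under these functors and invoking the four/five lemma shows $\eta$ and $\varepsilon$ are isomorphisms on all coherent modules; hence $\pi_*$ and $\pi^!$ are quasi-inverse equivalences between coherent $\sD^{\dagger}_{\frX,k}$- and $\sD^{\dagger}_{\frX',k}$-modules, and exactness of $\pi^!$ follows, being quasi-inverse to the exact $\pi_*$. The statement for $\hsD^{(k,m)}_{\frX,\Q}$ and $\hsD^{(k,m)}_{\frX',\Q}$ is then proved verbatim, with \ref{prop-exactdirectimage-forD} and \ref{Xaf}(iii) replaced by their $\hsD^{(k,m)}_{\cdot,\Q}$-analogues available from the same references.

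I expect the main obstacle to be the bookkeeping in part (2) rather than any new idea: one must confirm that $\pi^!$ is right exact and coherence-preserving, produce $\eta$ and $\varepsilon$ as genuine natural transformations, and verify they restrict to identities on finite free modules, so that the uniform diagram chase over presentations goes through. The truly substantive points — the cohomological vanishing on the structure sheaf and the identification $\pi^!\sD^{\dagger}_{\frX,k}\simeq\sD^{\dagger}_{\frX',k}$ — are already packaged in Lemma \ref{prop-exactdirectimage-forD}, so the remainder is formal; a minor additional point to pin down is the finiteness of the cohomological dimension of $\pi$, used to terminate the descending induction in part (1).
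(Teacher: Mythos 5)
Your proposal is correct and follows essentially the same route as the paper: reduce to an affine base $\frX_0$, use Lemma \ref{prop-exactdirectimage-forD} together with the global presentation result \ref{Xaf} and a dimension-shifting descending induction (the paper bounds the range by $\dim \frX' \le M+1$, you by the finite cohomological dimension of the proper map $\pi$) to get vanishing and exactness of $\pi_*$, and then compare finite presentations via the unit and counit to obtain the quasi-inverse equivalence, with the $\hsD^{(k,m)}_{\frX,\Q}$ case handled identically. No gaps.
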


\begin{proof}
 The first assertion of part (i) is true for  $\sD^{\dagger}_{\frX',k}$ by the previous
lemma ~\ref{prop-exactdirectimage-forD}. Now there is a basis of the topology
of $\frX$ consisting of affine opens $\frV$ such that $\pr(\frV)$ is contained in some affine open of
$\frX_0$. For this reason, if some statement is local on $\frX$, then we can
assume that $\frX_0$ is affine.
To prove (i) of the theorem for coherent $\sD^{\dagger}_{\frX',k}$-modules,
 we can thus assume (and we do assume) that $\frX_0$ is affine.
Then $\frX$ and $\frX'$ are admissible blow-ups of a smooth affine formal scheme $\frX_0$ and we still
call $\pi$ the map $\frX' \ra \frX$.
We consider now the following assertion depending on $j$:
$$\textrm{ For any coherent }\sD^{\dagger}_{\frX',k}\textrm{-module }\sM,\textrm{ and for any }l\geq j,
            \textrm{ one has }   R^l\pi_*\sM=0 \;.$$
We will prove this assertion for any $j\geq 1$ by decreasing induction on $j$. The statement for $j=1$ establishes then the first assertion of part (i) in general.
Since $\frX'$ has dimension $\leq M+1$, the assertion is true for $j=M+2$. Assume that the assertion is true
for $j+1$ and take a coherent $\sD^{\dagger}_{\frX',k}$-module $\sM$. Since $\frX_0$ is affine, we can apply ~\ref{Xaf} to find a
non negative integer $r$ and an exact sequence of coherent $\sD^{\dagger}_{\frX',k}$-modules
$$ 0 \ra \sN \ra (\sD^{\dagger}_{\frX',k})^r \ra \sM \ra 0 \;.$$
Since $j \geq 1$ and since $R^{j}\pi_* \sD^{\dagger}_{\frX',k}=0$ by
lemma ~\ref{prop-exactdirectimage-forD}, the long exact sequence for $\pi_*$ gives us an isomorphism
$$ R^{j}\pi_* \sM \simeq R^{j+1}\pi_* \sN \;.$$
But the right-hand side is zero by the induction hypothesis applied to $\sN$. This establishes the assertion for $j$ and completes the induction step. This ends the proof of the first assertion of part (i).

What remains to prove for part (i) is that $\pi_*\cM$ is coherent over $\sD^{\dagger}_{\frX,k}$ if $\cM$ is coherent over $\sD^{\dagger}_{\frX',k}$. To show this, we continue to assume that $\frX_0$ is affine. By \ref{Xaf}, there is a finite presentation
$$(\sD^{\dagger}_{\frX',k})^s \ra (\sD^{\dagger}_{\frX',k})^r \ra \sM \ra 0 \;.$$
Applying $\pi_*$ and using that $\pi_*$ is exact, we obtain a finite presentation for $\pi_* \cM$, which implies that the latter is coherent.

\vskip8pt

Let us prove part (ii) in the case of
coherent $\sD^{\dagger}_{\frX,k}$-modules, the case of coherent $\hsD^{(k,m)}_{\frX,\Q}$-modules can be treated analogously. By definition of the functor $\pi^{!}$, cf. \ref{inversefunctor}, we have $\pi^! \sD^{\dagger}_{\frX,k}=\sD^{\dagger}_{\frX',k}$.
 To prove that $\pi^!$ preserves coherence is local on $\frX$, so that we can (and will) again assume
that $\frX_0$ is affine. Let $\sM$ be a coherent $\sD^{\dagger}_{\frX,k}$-module. We can apply proposition ~\ref{Xaf} to $\frX$ and obtain a finite presentation
$$  \left(\sD^{\dagger}_{\frX,k}\right)^s \ra \left(\sD^{\dagger}_{\frX,k}\right)^r \ra \sM \ra 0 \;.$$
 Since the tensor product is right exact, we get a finite presentation of $\pi^!\sM$
$$  \left(\sD^{\dagger}_{\frX',k}\right)^s \ra \left(\sD^{\dagger}_{\frX',k}\right)^r \ra \pi^!\sM \ra 0 \;,$$
which implies that $\pi^!\sM$ is a coherent $\sD^{\dagger}_{\frX',k}$-module. In particular, the functor $\pi^!$ preserves coherence. The map $\pi^{-1}\sM\rightarrow \pi^!\sM$ sending $x $ to $1 \otimes x$ induces a morphism $$can_\sM: \sM \rightarrow \pi_*\pi^!\sM \;,$$
which is natural in $\sM$. Whether $can_\sM$ is an isomorphism can be decided locally on $\frX$, and we can again assume that
$\frX_0$ is affine. $\frX$ and $\frX'$ are admissible blow-ups of $\frX_0$, and by \ref{Xaf} there a finite presentation
\begin{numequation}\label{1stpres} \left(\sD^{\dagger}_{\frX,k}\right)^s \ra \left(\sD^{\dagger}_{\frX,k}\right)^r \ra \sM \ra 0 \;,
\end{numequation}
and so $\pi^!\sM$ admits a finite presentation
$$  \left(\sD^{\dagger}_{\frX',k}\right)^s \ra \left(\sD^{\dagger}_{\frX',k}\right)^r \ra \pi^!\sM \ra 0 \;.$$
We apply $\pi_*$ to this latter sequence and use that $\pi_*$ is exact (by (i)), together with \ref{prop-exactdirectimage-forD} to obtain the finite presentation
\begin{numequation}\label{2ndpres} \left(\sD^{\dagger}_{\frX,k}\right)^s \ra \left(\sD^{\dagger}_{\frX,k}\right)^r \ra \pi_* \pi^!\sM \ra 0 \;.
\end{numequation}
The natural transformation $can$ induces a morphism from \ref{1stpres} to \ref{2ndpres}, and because $can_{\sD^{\dagger}_{\frX,k}}$ is an isomorphism, so is $can_\sM$.

In the reverse direction, let $\sM'$ be a coherent $\sD^{\dagger}_{\frX',k}$-module.
 There is a map $can'_{\sM'}: \pi^!\pi_*\sM'\rightarrow \sM'$, sending $P\otimes x$ to $Px$, which is natural in $\sM'$. Whether this map is bijective can be decided locally on $\frX$ and we may assume that $\frX_0$ is affine, and $\frX'$ and $\frX$ are admissible blow-ups
of $\frX_0$. Since
$\pi_*$ is exact, and using ~\ref{Xaf} over $\frX'$, we are reduced
to the case where $\sM'=\sD^{\dagger}_{\frX',k}$. In this case $\pi^!\pi_*\sD^{\dagger}_{\frX',k}\simeq
\sD^{\dagger}_{\frX',k}$ by (i).
From all of this, we can conclude that $\pi^!$ and $\pi_*$ are quasi-inverse functors. As $\pi_*$ is exact on coherent
$\sD^{\dagger}_{\frX',k}$-modules, $\pi^!$ is exact on coherent $\sD^{\dagger}_{\frX,k}$-modules. \end{proof}

\begin{cor} \label{globcalc} In the situation of the preceding theorem, one has

$$\Ga(\frX,\sD^{\dagger}_{\frX,k})=\Ga(\frX_0,\sD^{\dagger}_{\frX_0,k})
		                          =\Ga(\frX',\sD^{\dagger}_{\frX',k}) \;.$$

\end{cor}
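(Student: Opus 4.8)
The plan is to deduce the corollary directly from part (i) of the preceding theorem (equivalently, from Lemma \ref{prop-exactdirectimage-forD}(i)), applied with the \emph{target} blow-up being $\frX_0$ itself. The first point to observe is that $\frX_0$ is an admissible blow-up of $\frX_0$: it is the blow-up of the unit ideal sheaf $\cO_{\frX_0}$, which is an open coherent ideal (it contains $\vpi^0\cO_{\frU}$ over every affine open $\frU$), so that $k_{\frX_0}=0$. Hence the structure morphism $\pr:\frX\to\frX_0$ is a morphism over $\frX_0$ between the admissible blow-ups $\frX$ and $\frX_0$, and the standing hypothesis $k\geq k_{\frX}=\max\{k_{\frX},k_{\frX_0}\}$ is exactly what is needed to invoke \ref{prop-exactdirectimage-forD}(i) for $\pr$.

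Applying that lemma to $\pr$ gives $\pr_*\sD^{\dagger}_{\frX,k}=\sD^{\dagger}_{\frX_0,k}$. Taking global sections over $\frX_0$ and using the tautological identity $\Ga(\frX_0,\pr_*\cF)=\Ga(\frX,\cF)$ for a sheaf $\cF$ on $\frX$ (since $\pr^{-1}(\frX_0)=\frX$), one obtains $\Ga(\frX,\sD^{\dagger}_{\frX,k})=\Ga(\frX_0,\sD^{\dagger}_{\frX_0,k})$. The same argument applied verbatim to the structure morphism $\pr':\frX'\to\frX_0$ — note that $k\geq k_{\frX'}=\max\{k_{\frX'},k_{\frX_0}\}$ holds as well — yields $\Ga(\frX',\sD^{\dagger}_{\frX',k})=\Ga(\frX_0,\sD^{\dagger}_{\frX_0,k})$, and chaining the two equalities proves the corollary.

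There is essentially no genuine obstacle: once one records that $\frX_0$ is itself an admissible blow-up, the statement is an immediate consequence of the invariance theorem. (Alternatively, one may apply \ref{prop-exactdirectimage-forD}(i) to the given morphism $\pi:\frX'\to\frX$ to get $\Ga(\frX',\sD^{\dagger}_{\frX',k})=\Ga(\frX,\pi_*\sD^{\dagger}_{\frX',k})=\Ga(\frX,\sD^{\dagger}_{\frX,k})$, and handle $\frX\to\frX_0$ as above; the three equalities are equivalent.)
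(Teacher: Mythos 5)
Your argument is correct and is exactly the deduction the paper intends (the corollary is stated without proof as an immediate consequence of Theorem \ref{prop-exactdirectimage} and Lemma \ref{prop-exactdirectimage-forD}(i)): since $\frX_0$ is itself an admissible blow-up of $\frX_0$ (blow-up of the unit ideal, so $k_{\frX_0}=0$), the identities $\pr_*\sD^{\dagger}_{\frX,k}=\sD^{\dagger}_{\frX_0,k}$ and $\pr'_*\sD^{\dagger}_{\frX',k}=\sD^{\dagger}_{\frX_0,k}$ apply, and taking global sections via $\Ga(\frX_0,\pr_*\cF)=\Ga(\frX,\cF)$ gives the three equalities. No gaps.
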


\vskip8pt

As an application of the invariance theorem we can extend the local theorems A and B \ref{easy_thmAB} and \ref{easy_thmAB2} to  global statements, provided that the base $\frX_0$ is affine.\footnote{Note that this is not covered by \ref{easy_thmAB} and \ref{easy_thmAB2}, since an admissible blow-up $\frX$ of an affine $\frX_0$ is in general not affine.}.

\begin{thm}\label{thmA}\label{thmB} {\rm (Global theorem A and B over an affine base)} Let $\frX_0$ be affine.

\begin{enumerate}

 \item
    For any coherent $\hsD^{(k,m)}_{\frX,\Q}$-module $\sM$ and for all $q>0$ one has $H^q(\frX,\sM)=0$.

 \item The functor $\Ga(\frX,-)$ is an equivalence between the category of coherent $\hsD^{(k,m)}_{{\frX,\Q}}$-modules and the category of coherent $\Ga(\frX,\hsD^{(k,m)}_{\frX,\Q})$-modules.
     \end{enumerate}

      The same statement holds for coherent modules over $\sD^{\dagger}_{\frX,k}$ and $\Ga(\frX,\sD^{\dagger}_{\frX,k})$.
\end{thm}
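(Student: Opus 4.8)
The plan is to obtain both assertions as a formal consequence of the invariance theorem \ref{prop-exactdirectimage}, applied to the structure morphism $\pr : \frX \ra \frX_0$ itself, together with the local theorems A and B over the affine base $\frX_0$ and the Leray spectral sequence. The enabling observation is that a smooth formal $\frS$-scheme $\frX_0$ is tautologically the admissible blow-up of the unit ideal $\cO_{\frX_0}$, and that $k_{\frX_0}=0$; hence $\pr : \frX \ra \frX_0$ qualifies as a morphism over $\frX_0$ between admissible blow-ups in the sense of \ref{prop-exactdirectimage} --- with, in the notation there, $\frX' = \frX$ and the base role of the target played by $\frX_0$ --- and the standing hypothesis $k \geq k_\frX = \max\{k_\frX, k_{\frX_0}\}$ is precisely what is required. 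The only genuinely non-automatic point, but one immediate from the definitions in Section \ref{new_sheaves}, is this applicability of \ref{prop-exactdirectimage} to $\pr$.

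First I would invoke \ref{prop-exactdirectimage} for $\pi := \pr$. This yields, for every coherent $\sD^{\dagger}_{\frX,k}$-module $\sM$ (resp. every coherent $\hsD^{(k,m)}_{\frX,\Q}$-module), that $R^j\pr_*\sM = 0$ for $j>0$, that $\pr_*\sM$ is a coherent $\sD^{\dagger}_{\frX_0,k}$-module (resp. coherent $\hsD^{(k,m)}_{\frX_0,\Q}$-module), and that $\pr_*$ and $\pr^!$ are quasi-inverse equivalences between the corresponding categories of coherent modules over $\frX$ and over $\frX_0$. In particular $\pr_*\sD^{\dagger}_{\frX,k} = \sD^{\dagger}_{\frX_0,k}$ and $\pr_*\hsD^{(k,m)}_{\frX,\Q} = \hsD^{(k,m)}_{\frX_0,\Q}$; since $\Ga(\frX,-) = \Ga(\frX_0,-)\circ\pr_*$, this recovers the ring identity $\Ga(\frX,\sD^{\dagger}_{\frX,k}) = \Ga(\frX_0,\sD^{\dagger}_{\frX_0,k})$ of \ref{globcalc} and gives its analogue $\Ga(\frX,\hsD^{(k,m)}_{\frX,\Q}) = \Ga(\frX_0,\hsD^{(k,m)}_{\frX_0,\Q})$.

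Part (i) then follows from the Leray spectral sequence $H^p(\frX_0, R^q\pr_*\sM) \Rightarrow H^{p+q}(\frX, \sM)$: the vanishing $R^q\pr_*\sM = 0$ for $q>0$ collapses it to an isomorphism $H^q(\frX,\sM) \simeq H^q(\frX_0,\pr_*\sM)$ for all $q$, and since $\pr_*\sM$ is a coherent module over $\sD^{\dagger}_{\frX_0,k}$ (resp. $\hsD^{(k,m)}_{\frX_0,\Q}$) on the affine $\frX_0$, the local theorem B --- \ref{easy_thmAB2}(ii) in the $\sD^{\dagger}$-case, and the rational version of \ref{easy_thmAB}(iii), valid by \cite[3.4.6]{BerthelotDI} (cf. the remark following \ref{easy_thmAB}), in the $\hsD^{(k,m)}_{\frX_0,\Q}$-case --- gives $H^q(\frX_0,\pr_*\sM)=0$ for $q>0$. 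For part (ii) one writes again $\Ga(\frX,-) = \Ga(\frX_0,-)\circ\pr_*$ and composes two equivalences: $\pr_*$ is an equivalence from coherent $\sD^{\dagger}_{\frX,k}$-modules onto coherent $\sD^{\dagger}_{\frX_0,k}$-modules by \ref{prop-exactdirectimage}, and $\Ga(\frX_0,-)$ is an equivalence from the latter onto coherent $\Ga(\frX_0,\sD^{\dagger}_{\frX_0,k})$-modules by \ref{easy_thmAB2}(iii); as $\Ga(\frX_0,\sD^{\dagger}_{\frX_0,k}) = \Ga(\frX,\sD^{\dagger}_{\frX,k})$, the composite $\Ga(\frX,-)$ is the asserted equivalence. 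Replacing $\sD^{\dagger}_{\frX,k}$, $\sD^{\dagger}_{\frX_0,k}$ by $\hsD^{(k,m)}_{\frX,\Q}$, $\hsD^{(k,m)}_{\frX_0,\Q}$, and \ref{easy_thmAB2}(iii) by the rational version of \ref{easy_thmAB}(ii) from the remark after \ref{easy_thmAB}, handles the $\hsD^{(k,m)}_{\frX,\Q}$-statement verbatim.

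Since the entire argument is a composition of already established facts, there is no real obstacle internal to this proof; its substance sits upstream, in the invariance theorem \ref{prop-exactdirectimage} (the identification of $\pr_*$ with an equivalence and the acyclicity $R^j\pr_* = 0$) and, behind it, in proposition \ref{Xaf} on the generation of coherent modules by global sections of suitable Serre twists, which is what makes the invariance theorem work over an affine base in the first place.
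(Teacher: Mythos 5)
Your proposal is correct and follows essentially the same route as the paper: the authors also write $\Ga(\frX,-)=\Ga(\frX_0,-)\circ\pr_*$, apply the invariance theorem \ref{prop-exactdirectimage} (together with \ref{globcalc}) to the blow-up morphism $\pr:\frX\ra\frX_0$, viewing $\frX_0$ as the trivial admissible blow-up of itself, and then conclude with the local theorems A and B over the affine base (\ref{easy_thmAB2}, resp.\ the rational version of \ref{easy_thmAB}). Your explicit remarks on $k_{\frX_0}=0$ and on the Leray spectral sequence merely spell out what the paper leaves implicit.
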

\begin{proof}
Denote by $\pi: \frX\rightarrow\frX_0$ the blow-up morphism. The functor $\Ga(\frX,.)$ equals the
composite of the two functors $\pi_*$ and $\Ga(\frX_0,-)$. Hence the theorem follows from \ref{prop-exactdirectimage} and its corollary \ref{globcalc} and \ref{easy_thmAB2}.
\end{proof}

\section{Coadmissible \texorpdfstring{$\sD$}{}-modules on \texorpdfstring{$\frX$}{} and the Zariski-Riemann space}

We continue to denote throughout this section by $\frX_0$ a smooth formal $\frS$-scheme, and we consider an admissible formal blow-up $$\pr:\frX \ra \frX_0 \;.$$
\vskip8pt

The purpose of the first subsection is to study projective systems $(\sM_k)_{k \ge k_\frX}$ of coherent modules $\sM_k$ over $\sD^\dagger_{\frX,k}$, and to pass to their associated projective limits. In the second subsection we will then let $\frX$ vary in the system of all admissible formal blow-ups of $\frX_0$.

\subsection{Coadmissible \texorpdfstring{$\sD$}{}-modules on \texorpdfstring{$\frX$}{}}

We make the general convention that $k$ always denotes an integer which is at least as large as $k_\frX$.
\begin{para}{\it Fr\'echet-Stein algebras.} Let $B$ be a noetherian $K$-Banach algebra. We recall that any finitely generated $B$-module has a canonical structure as $B$-Banach module and any $B$-linear map between two such modules is continuous. The topology can be defined as the quotient topology with respect to any chosen finite presentation of the module \cite[Prop. 2.1]{ST03}.

We recall from \cite[sec. 3]{ST03} that a $K$-Fr\'echet algebra $A$ is called {\it Fr\'echet-Stein } if there is a projective system $(A_k, A_{k+1} \ra A_k)_{k \in \Ne}$ of (left) noetherian $K$-Banach algebras $A_k$ with (right) flat transition maps $A_{k+1} \ra A_k$, and an isomorphism of topological $K$-algebras $A \simeq \varprojlim_k A_k$. For the following definition we fix such an isomorphism. We denote by $\cC_A$ the full abelian subcategory of the category of all (left) $A$-modules consisting of the coadmissible $A$-modules, as introduced in \cite{ST03}. For an $A$-module $M$ to be coadmissible means that there is a projective system $(M_k, M_{k+1} \ra M_k)_{k \in \Ne}$, where each $M_k$ is a finitely generated $A_k$-module, such that

\vskip5pt

\hskip10pt (i) the transition map $M_{k+1} \ra M_k$ is a homomorphism of $A_{k+1}$-modules, and the induced map $A_{k}\otimes_{A_{k+1}} M_{k+1} \ra M_k$ is an isomorphism of $A_k$-modules, and

\vskip5pt

\hskip10pt (ii) $M$ is isomorphic to $\varprojlim_k M_k$ as an $A$-module.

\vskip8pt

(The projective limit is considered as an $A$-module via the fixed isomorphism\footnote{However, the category $\cC_A$ is independent of the isomorphism.} $A \simeq \varprojlim_k A_k$.)
We sometimes call $(M_k)$ an $(A_k)$-sequence for $M$. For $M= \varprojlim_k M_k\in \cC_A$ we have that the image of $M\rightarrow M_k$ is dense with respect to the canonical topology for any $k$, and $\varprojlim^{(1)}_k M_k=0$, cf. \cite[first theorem in sec. 3]{ST03}.

\end{para}

\begin{para}{\it The sheaf $\sD_{\frX,\infty}$.} We denote by

\begin{numequation}\label{D_infty_proj_lim}
\sD_{\frX,\infty}=\varprojlim_k \sD^\dagger_{\frX,k}
\end{numequation}

the projective limit of the system of sheaves $\sD^\dagger_{\frX,k}$. Then $\sD_{\frX,\infty}$ is again a sheaf of rings and
for every open subset $\frV \sub \frX$ we have $\sD_{\frX,\infty}(\frV) = \varprojlim_k \sD^\dagger_{\frX,k}(\frV)$.
\end{para}

\begin{prop}\label{prop_local_description} \hskip0pt \begin{enumerate}

\item The canonical morphism of sheaves $\hsD^{(k,0)}_{\frX,\Q} \ra \sD^\dagger_{\frX,k}$ induces an isomorphism

$$\varprojlim_k \hsD^{(k,0)}_{\frX,\Q} \stackrel{\simeq}{\lra}
\varprojlim_k \sD^\dagger_{\frX,k} = \sD_{\frX,\infty} \;.$$

\vskip8pt

\item For every affine open subset $\frV \sub \frX$ the isomorphism $$\sD_{\frX,\infty}(\frV) = \varprojlim_k \hsD^{(k,0)}_{\frX,\Q}(\frV)$$ induced from (i) gives $\sD_{\frX,\infty}(\frV)$ the structure of a Fr\'echet-Stein algebra.

\vskip8pt

\item Let $\frU \sub \frX_0$ be an open affine subset which can be equipped with a system of \'etale coordinates $x_1, \ldots, x_M$ and $\partial_1, \ldots, \partial_M$ the corresponding derivations. Then, for any affine open $\frV \sub  \pr^{-1}(\frU)$ we have
\begin{numequation}\label{formula_local_description}
\sD_{\frX,\infty}(\frV) = \left\{\sum_{\unu} a_{\unu} \uder^{\unu} \, \Big| \, a_{\unu} \in \cO_{\frX,\Q}(\frV) \, , \; \forall R>0: \lim_{|\unu| \ra \infty} \|a_{\unu}\|R^{|\unu|} = 0 \right\} \;,
\end{numequation}
where $\| \cdot \|$ is any submultiplicative Banach space norm on $\cO_{\frX,\Q}(\frV)$.

\vskip8pt



\item Let $\frX' \ra \frX_0$ be another admissible formal blow-up, and let $\pi: \frX' \ra \frX$ be a morphism over $\frX_0$. Then the canonical
		isomorphisms $\pi_* \sD^\dagger_{\frX',k} = \sD^\dagger_{\frX,k}$, for $k \ge \max\{k_{\frX'},k_\frX\}$, cf. \ref{prop-exactdirectimage},
				give rise to a canonical isomorphism

\begin{numequation}\label{can_iso_D_infty}
            \pi_* \sD_{\frX',\infty} = \sD_{\frX,\infty} \;.
\end{numequation}
\end{enumerate}
\end{prop}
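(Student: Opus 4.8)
The strategy is to prove the four statements in order, each one feeding into the next. For (i), the point is that the system of transition maps $\hsD^{(k',0)}_{\frX,\Q} \hra \hsD^{(k,0)}_{\frX,\Q}$ (for $k'\ge k$) and the inclusions $\hsD^{(k,0)}_{\frX,\Q}\hra \sD^\dagger_{\frX,k}$ from \ref{comp_level} assemble into a commutative diagram of projective systems which is cofinal in both directions: by \ref{comp_level}(i), for every $k\ge k_\frX$ one has inclusions $\sD^{\dagger}_{\frX,k+e'} \hra \hsD^{(k,0)}_{\frX,\Q} \hra \sD^{\dagger}_{\frX,k}$, so the two pro-systems $(\hsD^{(k,0)}_{\frX,\Q})_k$ and $(\sD^\dagger_{\frX,k})_k$ are intertwined and have the same projective limit. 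One must check this is compatible with the ring structures, which is immediate since all the maps involved are ring homomorphisms; then take sections over an open $\frV$ and use that $\varprojlim$ commutes with restriction. For (ii), I would invoke \ref{prop_local_description}(iii) — or rather establish it first — to identify $\sD_{\frX,\infty}(\frV)=\varprojlim_k \hsD^{(k,0)}_{\frX,\Q}(\frV)$, with the $\hsD^{(k,0)}_{\frX,\Q}(\frV)$ being noetherian $K$-Banach algebras by \ref{finite_tDm}(v) (after tensoring with $\Q$) and the transition maps being right flat by \ref{flatFSaffine}. That is exactly the definition of a Fr\'echet–Stein algebra, so (ii) follows.

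For (iii), the local description, I would work over an affine $\frV\subseteq\pr^{-1}(\frU)$ as in \ref{ddagkX} where $\pr^*\sT_{\frX_0}$ is free with basis $\der_1,\dots,\der_M$. By \ref{comp_level}(i) the projective limit can be computed along the subsystem of the $\hsD^{(k,0)}_{\frX,\Q}(\frV)$, and from the explicit description in the proof of \ref{prop-coherence_D-dagger2} (or \ref{comp_level}) we have
$$\hsD^{(k,0)}_{\frX,\Q}(\frV)=\Big\{\textstyle\sum_{\unu}\vpi^{k|\unu|}b_{\unu}\uder^{\unu}\;\Big|\;b_{\unu}\in \cO_{\frX,\Q}(\frV),\ b_{\unu}\to 0\Big\}\;,$$
where the convergence and the relevant norm refer to the gauge norm $\|\cdot\|$ of the lattice $\cO_{\frX}(\frV)\subset\cO_{\frX,\Q}(\frV)$. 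An element of the projective limit is a single formal series $\sum_\unu a_\unu\uder^\unu$ whose image in the $k$-th term is obtained by writing $a_\unu=\vpi^{k|\unu|}b^{(k)}_\unu$; lying in the $k$-th term means $\|a_\unu\|\,p^{k e'|\unu|}\to 0$ for the fixed normalization (up to the ramification factor $e$ relating $\val(\vpi)$ to $p$-adic valuation), hence the condition $\|a_\unu\|R^{|\unu|}\to 0$ for $R=p^{ke'/e}$. Letting $k\to\infty$ gives precisely the stated condition $\forall R>0:\ \|a_\unu\|R^{|\unu|}\to 0$. One then remarks that the definition is independent of the chosen submultiplicative norm since any two are equivalent on an affinoid algebra, and independent of the choice of coordinates since $\sD_{\frX,\infty}$ is a sheaf defined without reference to coordinates. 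Finally, for (iv): by \ref{prop-exactdirectimage} we have $\pi_*\sD^\dagger_{\frX',k}=\sD^\dagger_{\frX,k}$ for each $k\ge\max\{k_{\frX'},k_\frX\}$, and these isomorphisms are compatible with the transition maps in $k$ (which are induced by the inclusions of sheaves of differential operators, hence commute with $\pi_*$). Since $\pi_*$ commutes with the projective limit $\varprojlim_k$ — being a right adjoint, it preserves limits — we obtain $\pi_*\sD_{\frX',\infty}=\varprojlim_k\pi_*\sD^\dagger_{\frX',k}=\varprojlim_k\sD^\dagger_{\frX,k}=\sD_{\frX,\infty}$, where the first and last equalities use the definition \ref{D_infty_proj_lim} and the fact that $\varprojlim$ of sheaves is computed sectionwise.

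\textbf{Main obstacle.} The routine parts are (ii) and (iv), which are formal once one has the right inputs. The genuine work is in (i) and (iii): one has to verify carefully that the two pro-systems $(\hsD^{(k,0)}_{\frX,\Q})_k$ and $(\sD^\dagger_{\frX,k})_k$ are cofinal \emph{as pro-rings}, i.e.\ that the interleaving inclusions of \ref{comp_level} are compatible with the already-fixed transition maps in each system (a diagram chase using that all maps are the natural inclusions), and then in (iii) to get the bookkeeping of $\vpi$-powers versus $p$-adic norms exactly right, keeping track of the ramification index $e$ and the auxiliary bound $e'\ge e/(p-1)$ so that the limiting growth condition comes out as the clean "$\forall R>0$" statement rather than something depending on $e$. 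Neither is deep, but the norm estimates in (iii) require the same care as in the proof of \ref{comp_level}, via Auxiliary result \ref{lem_ineq}.
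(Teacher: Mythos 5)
Your proposal is correct and follows essentially the same route as the paper: (i) via the interleaving inclusions of \ref{comp_level}, (ii) via noetherianity and flatness of the transition maps (\ref{finite_tDm}, \ref{flatFS}/\ref{flatFSaffine}), (iii) via the same $\vpi$-power/norm bookkeeping with the gauge norm of the lattice $\cO_{\frX}(\frV)$, and (iv) via \ref{prop-exactdirectimage} together with the sectionwise computation of $\pi_*\varprojlim_k$. The only slip is the stray $e'$ in your membership criterion for the $k$-th term in (iii) — it should read $\|a_{\unu}\|\,p^{k|\unu|/e}\to 0$, i.e. $R=p^{k/e}$, with $e'$ playing no role outside of \ref{comp_level} — but this does not affect the argument, since these values of $R$ are still cofinal in $(0,\infty)$ as $k\to\infty$.
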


\begin{proof} 
Take an affine open $\frV \subset \frX$.
		We deduce from the proposition ~\ref{comp_level} that the projective systems of $K$-algebras
		 $\sD^{\dagger}_{\frX,k}(\frV)$ and $\hsD^{(k,0)}_{\frX,\bbQ}(\frV)$ are equivalent. This proves (i). For (ii), note that the transition map $ \hsD^{(k+1,0)}_{\frX,\Q}(\frV)\rightarrow \hsD^{(k,0)}_{\frX,\Q}(\frV)$ is a flat homomorphism between noetherian Banach algebras, according to propositions \ref{flatFS} and \ref{finite_tDm}. To prove (iii), we assume additionally $\frV \subset \pr^{-1}\frU$. By part (i), is then enough to show
              $$  \varprojlim_k  \hsD^{(k,0)}_{\frX,\bbQ}(\frV)  = \left\{\sum_{\unu} a_{\unu} \uder^{\unu} \, \Big| \, a_{\unu} \in \cO_{\frX,\Q}(\frV)
			  \, , \; \forall R>0: \lim_{|\unu| \ra \infty} \|a_{\unu}\|R^{|\unu|} = 0 \right\} \;.$$
			  Denote by $E$ the right-hand side of the preceding equality.
    Recall that
		$$ \hsD^{(k,0)}_{\frX,\bbQ}(\frV)= \left\{\sum_{\unu}\vpi^{k|\unu|}b_{\unu}\uder^{\unu}\,|\,b_{\unu}\in
		\cO_{\frX,\Q}(\frV), \|b_{\unu}\|\ra 0\right\}\;.$$
		Let $P=\sum_{\unu}a_{\unu} \uder^{\unu}\,\in E$. Since all Banach algebra norms over $\cO_{\frX,\Q}(\frV)$ are equivalent, we can use
		the $p$-adic norm $|\cdot |_p$ of $\cO_{\frX,\Q}(\frV)$ relatively to the lattice $\cO_{\frX}(\frV)$. Fix $k\in\Ne$ and define
		$$b_{\unu}=\vpi^{-k|\unu|}a_{\unu}\;,$$ then, using the ramification
		index $e$ of the extension $L/\Qp$, we get that
		$$ |b_{\unu}|_p=|a_{\unu}|_p p^{\frac{k|\unu|}{e}} \ra 0 \; \textrm{ if } |\unu|\ra +\infty. $$
		Thus $P=\sum_{\unu}\vpi^{k|\unu|}b_{\unu}\uder^{\unu}\in \hsD^{(k,0)}_{\frX,\bbQ}(\frV)$. Conversely, let
		$P=\sum_{\unu} a_{\unu} \uder^{\unu}\,\in \varprojlim_k  \hsD^{(k,0)}_{\frX,\bbQ}(\frV)$ and $R>0$. Choose $k>0$
		such that $$p^{\frac{k}{e}}>R \textrm{ and define }  b_{\unu}=\vpi^{-k|\unu|}a_{\unu}\;.$$ Since
		$P=\sum_{\unu}\vpi^{k|\unu|}b_{\unu}\uder^{\unu}\,\in
		\hsD^{(k,0)}_{\frX,\bbQ}(\frV)$, $|b_{\unu}|_p\ra 0$, thus
		$$ |a_{\unu}|_p p^{\frac{k|\unu|}{e}} \ra 0, \textrm{ and }|a_{\unu}|_p R^{|\unu|}\ra 0 \;,$$
	proving that $P \in E$, as required for (iii).

		Let us prove (iv). Let $\frV\subset \frX$, then from part (i) of proposition \ref{prop-exactdirectimage}, we know that
		$\sD^{\dagger}_{\frX,k}(\frV)=\sD^{\dagger}_{\frX',k}(\pi^{-1}(\frV))$. We deduce from this the
		equations
$$\pi_*\sD_{\frX',\infty}(\frV) = \sD_{\frX',\infty}(\pi^{-1}(\frV))  = \varprojlim_k \sD^{\dagger}_{\frX',k}(\pi^{-1}(\frV)) = \varprojlim_k \sD^{\dagger}_{\frX,k}(\frV) = \sD_{\frX,\infty}(\frV) \;.$$ \end{proof}

For every affine open subset $\frV \sub \frX$ we have, according to the preceding proposition, the abelian category of coadmissible
 $\sD_{\frX,\infty}(\frV)$-modules $\cC_{\sD_{\frX,\infty}(\frV)}$. We give an alternative description of these modules using the projective system of algebras
$\sD^{\dagger}_{\frX,k}(\frV)$.

\begin{prop}\label{prop_coadmissible} Let $\frV \sub \frX$ be an affine open. A $\sD_{\frX,\infty}(\frV)$-module $M$ is coadmissible if and only if there is a projective system $(M_k, M_{k+1} \ra M_k)$, where each $M_k$ is a finitely presented $\sD^{\dagger}_{\frX,k}(\frV)$-module, such that

\vskip5pt

\hskip10pt (i) the transition map $M_{k+1} \ra M_k$ is $\sD^{\dagger}_{\frX,k+1}(\frV)$-linear and induces an isomorphism $$\sD^{\dagger}_{\frX,k}(\frV)\otimes_{\sD^{\dagger}_{\frX,k+1}(\frV)} M_{k+1} \simeq M_k$$ of $\sD^{\dagger}_{\frX,k}(\frV)$-modules, and

\vskip5pt

\hskip10pt (ii) $M$ is isomorphic to $\varprojlim_k M_k$ as an $\sD_{\frX,\infty}(\frV)$-module.

\end{prop}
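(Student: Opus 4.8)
The plan is to reduce the statement to the general ``Fréchet--Stein'' framework recalled in \ref{prop_local_description}, where $\sD_{\frX,\infty}(\frV) = \varprojlim_k \hsD^{(k,0)}_{\frX,\Q}(\frV)$ is written as a projective limit of noetherian Banach algebras with flat transition maps. By \ref{comp_level} the projective system $\big(\sD^{\dagger}_{\frX,k}(\frV)\big)_{k}$ is \emph{equivalent} (cofinal, interleaved) with the system $\big(\hsD^{(k,0)}_{\frX,\Q}(\frV)\big)_{k}$: after passing to a subsequence, each term of one system sits between two consecutive terms of the other, and all these inclusions are flat by \ref{flatFS}, \ref{flatFSaffine}, and \ref{flatFSdag}. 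So the defining Fréchet--Stein presentation of $\sD_{\frX,\infty}(\frV)$ can equally be taken to be $\varprojlim_k \sD^{\dagger}_{\frX,k}(\frV)$, with (left) noetherian terms (by \ref{easy_thmAB2}(i), $\sD^{\dagger}_{\frX,k}(\frV)$ is coherent; one needs noetherianity here, which follows since $\sD^{\dagger}_{\frX,k}(\frV) = \varinjlim_m \hsD^{(k,m)}_{\frX,\Q}(\frV)$ is a filtered union of the noetherian $\hsD^{(k,m)}_{\frX,\Q}(\frV)$ — wait, a filtered union of noetherian rings need not be noetherian, so instead one argues: $\sD^{\dagger}_{\frX,k}(\frV)$ is coherent, and the notion of coadmissibility only needs coherence plus flat transition maps, cf. the remark below) and flat transition maps $\sD^{\dagger}_{\frX,k+1}(\frV) \ra \sD^{\dagger}_{\frX,k}(\frV)$ by \ref{flatFSdag}(ii).

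Granting that, the equivalence of the two descriptions of coadmissibility is a purely algebraic statement about a Fréchet--Stein algebra $A = \varprojlim_k A_k = \varprojlim_k A'_k$ presented via two equivalent projective systems of (coherent, resp. noetherian) Banach algebras with flat transition maps. First I would invoke the fact from \cite{ST03} that the category $\cC_A$ of coadmissible modules does not depend on the chosen weak Fréchet--Stein structure: given $M \in \cC_A$ with an $(A'_k)$-sequence $(M'_k)$, one obtains an $(A_k)$-sequence by setting $M_k = A_k \otimes_{A'_{\sigma(k)}} M'_{\sigma(k)}$ for a suitable cofinal reindexing $\sigma$, and conversely; the isomorphism conditions (i) and the projective-limit condition (ii) are preserved because $A_k \otimes_{A_{k+1}} (-)$ is exact (flatness) and because $\varprojlim^{(1)}$ vanishes on the relevant systems (dense images / Mittag-Leffler, again from \cite[sec.~3]{ST03}). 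Here ``finitely presented $\sD^\dagger_{\frX,k}(\frV)$-module'' is the correct finiteness notion since $\sD^\dagger_{\frX,k}(\frV)$ is merely known to be coherent, not noetherian, and for coherent rings ``finitely presented'' is the class closed under kernels and cokernels that plays the role of ``finitely generated'' in the noetherian theory; I would note that in \ref{easy_thmAB2} this is exactly the category of modules equivalent to coherent $\sD^\dagger_{\frU,k}$-modules.

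More concretely, for the ``if'' direction: given $(M_k)$ with $M_k$ finitely presented over $\sD^{\dagger}_{\frX,k}(\frV)$ and $\sD^{\dagger}_{\frX,k}(\frV)\otimes_{\sD^{\dagger}_{\frX,k+1}(\frV)} M_{k+1} \simeq M_k$, I would produce an $(\hsD^{(k,0)}_{\frX,\Q}(\frV))$-sequence for $M = \varprojlim_k M_k$ by base-changing along the interleaving maps supplied by \ref{comp_level}, i.e. set $N_k := \hsD^{(k,0)}_{\frX,\Q}(\frV) \otimes_{\sD^{\dagger}_{\frX,k+e'}(\frV)} M_{k+e'}$ (using the inclusion $\sD^{\dagger}_{\frX,k+e'} \hra \hsD^{(k,0)}_{\frX,\Q}$); flatness of all maps involved gives the compatibility isomorphisms $\hsD^{(k,0)}_{\frX,\Q}(\frV)\otimes_{\hsD^{(k+1,0)}_{\frX,\Q}(\frV)} N_{k+1} \simeq N_k$, and $\varprojlim_k N_k \simeq \varprojlim_k M_k = M$ because the two towers are cofinal. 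Each $N_k$ is finitely generated over the noetherian algebra $\hsD^{(k,0)}_{\frX,\Q}(\frV)$, so $M \in \cC_{\sD_{\frX,\infty}(\frV)}$. For the ``only if'' direction one runs the same base-change in the opposite direction, using the other half of the interleaving $\hsD^{(k,m)}_{\frX,\Q} \hra \sD^{\dagger}_{\frX,k}$ from \ref{comp_level} and \ref{prop_local_description}(i), plus \ref{easy_thmAB2} (equivalence of categories, which yields that $M_k := \sD^{\dagger}_{\frX,k}(\frV) \otimes_{(\ldots)} (\text{term of the other sequence})$ is finitely presented over the coherent ring $\sD^{\dagger}_{\frX,k}(\frV)$).

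The main obstacle, I expect, is the bookkeeping around coherence versus noetherianity: one must be careful that $\sD^{\dagger}_{\frX,k}(\frV)$ is only asserted to be coherent in \ref{easy_thmAB2}, so ``finitely presented'' is the unavoidable finiteness condition in the statement, and one has to check that the base-change functors $\sD^{\dagger}_{\frX,k}(\frV)\otimes_{\sD^{\dagger}_{\frX,k+1}(\frV)}(-)$ preserve finite presentation (clear, as tensoring a finite presentation gives a finite presentation) and that the resulting modules glue into a genuine coadmissible module in the sense of \ref{prop_local_description}. The second, milder, obstacle is verifying $\varprojlim^{(1)}_k M_k = 0$ for the $\sD^{\dagger}_{\frX,k}(\frV)$-tower: this follows because the tower is interleaved with the $\hsD^{(k,0)}_{\frX,\Q}(\frV)$-tower for which $\varprojlim^{(1)}$ vanishes by the Fréchet--Stein structure \cite[sec.~3]{ST03}, and $\varprojlim^{(1)}$ is insensitive to passing to a cofinal subsystem. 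None of this requires a new idea beyond the invariance of $\cC_A$ under change of (weak) Fréchet--Stein presentation, together with the comparison results \ref{comp_level}, \ref{flatFS}, \ref{flatFSdag} already established.
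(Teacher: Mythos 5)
Your proposal is correct and follows essentially the same route as the paper: interleave the two towers $\big(\hsD^{(k,0)}_{\frX,\Q}(\frV)\big)_k$ and $\big(\sD^{\dagger}_{\frX,k}(\frV)\big)_k$ via the inclusions of \ref{comp_level} (with the shift $k\mapsto k+e'$), base-change the module sequences back and forth, and conclude by the invariance of coadmissibility under equivalent (weak) Fr\'echet--Stein presentations, which is exactly the argument of \cite[Prop. 1.2.9]{EmertonA} that the paper invokes. Your caveat that $\sD^{\dagger}_{\frX,k}(\frV)$ is only coherent (so ``finitely presented'' is the right finiteness notion and $(B_k)$ is not literally an ST03 Fr\'echet--Stein presentation) matches how the paper handles it, namely by keeping the $\hsD^{(k,0)}_{\frX,\Q}(\frV)$-tower as the genuine Fr\'echet--Stein structure and using the $\sD^\dagger$-tower only through the interleaving.
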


\begin{proof}
This follows from the discussion given in the proof of \cite[Prop. 1.2.9]{EmertonA} and we explain the main points. We write
$A_k=\hsD^{(k,0)}_{\frX,\bbQ}(\frV)$ and $B_k= \sD^{\dagger}_{\frX,k}(\frV)$. We have strictly increasing functions $\phi$ and $\psi$ mapping $\bbN$ to itself such that
the map $A\rightarrow A_k$ (resp. $A\rightarrow B_k$) factors through the map $A\rightarrow B_{\phi(k)}$ (resp. $A\rightarrow A_{\psi(k)}$). Indeed, we may take
$\phi(k)=k+e'$ and $\psi(k)=k$ with $e'\geq \frac{e}{p-1}$ a fixed number, according to the proposition \ref{comp_level}. In particular, the systems of $K$-algebras $A_k$ and $B_k$ are equivalent. Now suppose $N$ is coadmissible with system of Banach modules $N_k$. For $k\geq 1$ define the finitely presented $B_k$-module
$$M_k = B_k \otimes_{A_{\psi(k)}} N_{\psi(k)} \;.$$ Since $\psi(k+1)\geq \psi(k)$, the map $N_{\psi(k+1)}\rightarrow N_{\psi(k)}$ induces a map $M_{k+1}\rightarrow M_k$ and then a map
$B_k\otimes_{B_{k+1}} M_{k+1} \rightarrow M_k$. This map is bijective as follows from the diagram presented in the proof of \cite[Prop. 1.2.9]{EmertonA}. Moreover, the projection
map $M\rightarrow N_{\psi(k)}$ induces a $A$-linear map
$$ M \rightarrow B_k \otimes_A M \rightarrow B_k  \otimes_{A_{\psi(k)}} N_{\psi(k)}=M_k $$
compatible with $M_{k+1}\rightarrow M_k$. This gives an $A$-linear map $M\rightarrow \varprojlim_k M_k$ and it remains to see that it is bijective.
We have the natural map $ N_{\psi(k)}\rightarrow M_k$. On the other hand, $\psi(\phi (k))\geq k$ such that there is a map
$$ M_{\phi(k)}= B_{\phi(k)} \otimes_{A_{\psi(\phi(k))}} N_{\psi(\phi(k))}\rightarrow A_k \otimes_{A_{\psi(\phi(k))}} N_{\psi(\phi(k))} \rightarrow N_k$$
using the map $B_{\phi(k)} \rightarrow A_k$. Hence the systems $(M_k)$ and $(N_k)$ are equivalent and $$M\simeq \varprojlim_k N_k \simeq \varprojlim_k M_k \;.$$
This shows that the system $(M_k)$ is as required for $M$. Conversely, starting with a module $M$ and such a system $(M_k)$ the coadmissibility of $M$ follows with the same argument.
\end{proof}




\begin{dfn}\label{dfn_coadm_frX} A $\sD_{\frX,\infty}$-module $\sM$ is called {\it coadmissible} if there  a projective system $(\sM_k, \sM_{k+1} \ra \sM_k)_{k \ge k_\frX}$, where $\sM_k$ is a coherent $\hsD^{(k,0)}_{\frX,\Q}$-module, such that

 \vskip5pt

\hskip10pt (i) the transition map $\sM_{k+1} \ra \sM_k$ is $\hsD^{(k+1,0)}_{\frX,\Q}$-linear and the induced map
\begin{numequation}\label{transition_2}
\hsD^{(k,0)}_{\frX,\Q}\otimes_{\hsD^{(k+1,0)}_{\frX,\Q}} \sM_{k+1} \lra \sM_k
\end{numequation}
is an isomorphism as $\hsD^{(k,0)}_{\frX,\Q}$-modules, and

\vskip5pt

\hskip10pt (ii) $\sM$ is isomorphic to $\varprojlim_k \sM_k$ as $\sD_{\frX,\infty}$-module.
\end{dfn}

We denote by
$$\cC_\frX \subseteq {\rm Mod}(\sD_{\frX,\infty})$$
the full subcategory of coadmissible $\sD_{\frX,\infty}$-modules in the category of all $\sD_{\frX,\infty}$-modules.

\begin{prop}\label{lemma-weak-to-strong_FS} A $\sD_{\frX,\infty}$-module $\sM$ is coadmissible if and only if a projective system
		$(\sM_k, \sM_{k+1} \ra \sM_k)_{k \ge k_\frX}$, where $\sM_k$ is a coherent $\sD^\dagger_{\frX,k}$-module, such that

 \vskip5pt

\hskip10pt (i) the transition map $\sM_{k+1} \ra \sM_k$ is $\sD^\dagger_{\frX,k+1}$-linear and the induced morphism of sheaves
		
\begin{numequation}\label{transition_1}
\sD^\dagger_{\frX,k} \otimes_{\sD^\dagger_{\frX,k+1}} \sM_{k+1} \lra \sM_k
\end{numequation}

is an isomorphism of $\sD^\dagger_{\frX,k}$-modules, and

 \vskip5pt

\hskip10pt (ii)  $\sM$ is isomorphic to $\varprojlim_k \sM_k$ as $\sD_{\frX,\infty}$-module.
\end{prop}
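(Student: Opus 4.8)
The plan is to deduce this proposition from Definition~\ref{dfn_coadm_frX} together with the local comparison result \ref{prop_local_description}(i) and the invariance/equivalence theorem \ref{prop-exactdirectimage} (for coherent $\sD^\dagger_{\frX,k}$- resp. $\hsD^{(k,m)}_{\frX,\Q}$-modules on a fixed $\frX$, applied with $\pi = \mathrm{id}$, which simply relates coherent modules over $\hsD^{(k,0)}_{\frX,\Q}$ and over $\sD^\dagger_{\frX,k}$ once we know flatness). The two conditions to be matched are: a system $(\sM_k)$ of coherent $\hsD^{(k,0)}_{\frX,\Q}$-modules with transition isomorphisms as in \eqref{transition_2}, versus a system $(\sM'_k)$ of coherent $\sD^\dagger_{\frX,k}$-modules with transition isomorphisms as in \eqref{transition_1}. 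The key point is that, by \ref{comp_level}, the projective systems of sheaves of rings $(\hsD^{(k,0)}_{\frX,\Q})_k$ and $(\sD^\dagger_{\frX,k})_k$ are \emph{equivalent} as projective systems (there are cofinal chains of maps in both directions: $\sD^\dagger_{\frX,k+e'} \hra \hsD^{(k,0)}_{\frX,\Q} \hra \sD^\dagger_{\frX,k}$), and both have $\varprojlim$ equal to $\sD_{\frX,\infty}$ by \ref{prop_local_description}(i). This is the sheaf-level analogue of the algebra-level equivalence $A_k \leftrightarrow B_k$ used in the proof of \ref{prop_coadmissible}, and I would in fact model the argument closely on that proof.

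First, I would show that a system $(\sM_k)$ as in Definition~\ref{dfn_coadm_frX} gives rise to a system $(\sM'_k)$ as in the proposition. Set $\sM'_k := \sD^\dagger_{\frX,k} \otimes_{\hsD^{(k,0)}_{\frX,\Q}} \sM_k$; since $\sD^\dagger_{\frX,k}$ is flat over $\hsD^{(k,0)}_{\frX,\Q}$ (this is \ref{prop-coherence_D-dagger2} together with \ref{comp_level}, which identifies $\sD^\dagger_{\frX,k} = \varinjlim_m \hsD^{(k,m)}_{\frX,\Q}$) and $\sM_k$ is coherent, $\sM'_k$ is a coherent $\sD^\dagger_{\frX,k}$-module by \ref{prop-coherence_D-dagger2}(ii). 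Applying $\sD^\dagger_{\frX,k}\otimes_{\hsD^{(k+1,0)}_{\frX,\Q}}(-)$ to the isomorphism \eqref{transition_2} and using associativity of tensor product together with $\sD^\dagger_{\frX,k+1}\otimes_{\hsD^{(k+1,0)}_{\frX,\Q}}\sM_{k+1} = \sM'_{k+1}$, one obtains $\sD^\dagger_{\frX,k}\otimes_{\sD^\dagger_{\frX,k+1}}\sM'_{k+1}\simeq \sM'_k$, which is condition (i). For condition (ii) I must check $\varprojlim_k \sM'_k \simeq \varprojlim_k \sM_k = \sM$; this follows because, exactly as for the algebras, the systems $(\sM_k)$ and $(\sM'_k)$ are intertwined by the chain of maps of \ref{comp_level}, so they are equivalent projective systems of sheaves and have the same limit. (One checks this section-wise on an affine basis $\frV$ of $\frX$, where \ref{prop_coadmissible} and its proof provide exactly the needed bijectivity of the comparison maps $\sM_k(\frV) \to \sM'_{\phi(k)}(\frV) \to \sM_{\psi(\phi(k))}(\frV)$; then one sheafifies, or rather observes the limits agree on each $\frV$.)

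Conversely, given $(\sM'_k)$ as in the proposition, set $\sM_k := \hsD^{(k,0)}_{\frX,\Q}\otimes_{\sD^\dagger_{\frX,k}}\sM'_k$; this is coherent over $\hsD^{(k,0)}_{\frX,\Q}$ since $\hsD^{(k,0)}_{\frX,\Q}$ is a noetherian ring with $\cO_\frX$-coherent ring structure (\ref{finite_tDm}), so coherent modules over it are stable under base change along any ring map from a coherent ring --- more precisely, $\sM'_k$ admits a finite presentation by \ref{Xaf}-type reasoning (or directly: coherence over the coherent ring sheaf $\sD^\dagger_{\frX,k}$), and tensoring a finite presentation stays a finite presentation. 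The transition isomorphisms and the limit identity are obtained symmetrically to the first direction, again using \ref{comp_level} and \ref{prop_local_description}(i). Finally one notes that the two constructions are mutually inverse up to the equivalence of systems, so a $\sD_{\frX,\infty}$-module is coadmissible in the sense of Definition~\ref{dfn_coadm_frX} if and only if it admits a system of the type described in the proposition.

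The main obstacle I anticipate is condition (ii) --- namely, verifying that forming $\varprojlim_k$ of the two intertwined systems of \emph{sheaves} yields the same sheaf, and in particular that the sheaf-theoretic projective limit is computed correctly (no $R^1\varprojlim$ issues). Here I would work locally over the affine basis $\sB$ of \ref{finite_tDm}: over each such $\frV$ the relevant modules are finitely generated Banach modules over the Fréchet--Stein algebra $\sD_{\frX,\infty}(\frV)$ (\ref{prop_local_description}(ii)), and the vanishing $\varprojlim^{(1)}_k M_k = 0$ for coadmissible modules (quoted at the end of the Fréchet--Stein paragraph, from \cite{ST03}) together with \ref{prop_coadmissible} handles the limit on sections; since presheaf kernels/limits of sheaves are sheaves, $\varprojlim_k \sM_k$ and $\varprojlim_k \sM'_k$ agree as sheaves because they agree on the basis $\sB$. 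Everything else is a routine manipulation of flat base change and finite presentations, entirely parallel to the proof of \ref{prop_coadmissible}.
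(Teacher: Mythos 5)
Your overall strategy is exactly the paper's: the paper's proof consists of one sentence saying that the argument of Proposition \ref{prop_coadmissible} (the Emerton-style comparison of the two interleaved systems) carries over verbatim, using that the chain of inclusions of \ref{comp_level} holds at the level of sheaves. Your forward direction ($\sM'_k := \sD^\dagger_{\frX,k}\otimes_{\hsD^{(k,0)}_{\frX,\Q}}\sM_k$, transition isomorphisms by associativity of the tensor product, identification of the limits via equivalence of the two projective systems, checked on sections over an affine basis) is precisely this argument, and is fine.

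There is, however, a concrete slip in your converse direction: you set $\sM_k := \hsD^{(k,0)}_{\frX,\Q}\otimes_{\sD^\dagger_{\frX,k}}\sM'_k$, but this tensor product is not defined, because by \ref{comp_level} the inclusion of sheaves of rings goes $\hsD^{(k,0)}_{\frX,\Q}\hra\sD^\dagger_{\frX,k}$ and there is no morphism $\sD^\dagger_{\frX,k}\ra\hsD^{(k,0)}_{\frX,\Q}$ along which to base change. To pass from the $\sD^\dagger$-system back to an $\hsD^{(k,0)}$-system you must shift the congruence level, using the inclusion $\sD^\dagger_{\frX,k+e'}\hra\hsD^{(k,0)}_{\frX,\Q}$ with $e'\geq \frac{e}{p-1}$: set $\sM_k := \hsD^{(k,0)}_{\frX,\Q}\otimes_{\sD^\dagger_{\frX,k+e'}}\sM'_{k+e'}$, which is exactly the role of the reindexing function $\phi(k)=k+e'$ in the proof of \ref{prop_coadmissible}. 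With this correction the symmetric argument (finite presentations base change to finite presentations, transition isomorphisms by associativity, pro-isomorphism of the systems) goes through. A minor further remark: your worry about $R^1\varprojlim$ in condition (ii) is unnecessary, since the plain projective limit of sheaves is computed sectionwise and pro-isomorphic systems have isomorphic limits; the sectionwise identification via \ref{prop_coadmissible} is all that is needed.
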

\begin{proof} This follows literally as for the modules of local sections, cf. proposition \ref{prop_coadmissible},
 taking into account that proposition \ref{comp_level} holds on the level of sheaves.
\end{proof}

\vskip8pt

\begin{thm}\label{thm_A_for_frX_infty} (Theorem A for coadmissible modules on $\frX$) Let $\frV\subset\frX$ be an affine open subset.
Then the global sections functor $\Ga(\frV, -)$ induces an equivalence of categories $$\Ga(\frV, -): \cC_\frV\stackrel{\simeq}{\longrightarrow} \cC_{\sD_{\frX,\infty}(\frV)} \;.$$
\end{thm}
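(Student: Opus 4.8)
The plan is to obtain the asserted equivalence as the projective limit of the level-wise equivalences furnished by Corollary \ref{easy_thmAB2}. Here $\cC_\frV$ denotes the category of coadmissible $\sD_{\frV,\infty}$-modules, where $\sD_{\frV,\infty} = \sD_{\frX,\infty}|_\frV$; note that Definition \ref{dfn_coadm_frX} and Proposition \ref{lemma-weak-to-strong_FS} apply verbatim to the open subscheme $\frV$ (the integer $k_\frX$ being unchanged). Write $B_k = \sD^\dagger_{\frX,k}(\frV) = \Ga(\frV,\sD^\dagger_{\frV,k})$, so that by \ref{prop_local_description}(i)--(ii) one has $\sD_{\frX,\infty}(\frV) = \varprojlim_k B_k$ with its Fr\'echet--Stein structure, and recall from \ref{easy_thmAB2} that for each $k \ge k_\frX$ the functors $\sN \mapsto \Ga(\frV,\sN)$ and $M \mapsto \sD^\dagger_{\frV,k}\otimes_{B_k} M$ are mutually quasi-inverse equivalences between coherent $\sD^\dagger_{\frV,k}$-modules and coherent $B_k$-modules, with $\Ga(\frV,-)$ exact on the former category.

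First I would check that $\Ga(\frV,-)$ sends $\cC_\frV$ into $\cC_{\sD_{\frX,\infty}(\frV)}$. Given $\sM \in \cC_\frV$, choose by \ref{lemma-weak-to-strong_FS} a projective system $(\sM_k)_{k \ge k_\frX}$ of coherent $\sD^\dagger_{\frX,k}$-modules with transition isomorphisms $\sD^\dagger_{\frX,k}\otimes_{\sD^\dagger_{\frX,k+1}}\sM_{k+1} \xrightarrow{\sim} \sM_k$ and $\sM \simeq \varprojlim_k \sM_k$. Put $M_k := \Ga(\frV,\sM_k)$, a coherent $B_k$-module. Since inverse limits of sheaves are computed section-wise, $\Ga(\frV,\sM) = \varprojlim_k M_k$; so it suffices to verify condition (i) of \ref{prop_coadmissible}, i.e. that $B_k\otimes_{B_{k+1}} M_{k+1} \to M_k$ is an isomorphism. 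This follows by applying $\Ga(\frV,-)$ to the chain
$$ \sM_k \;\simeq\; \sD^\dagger_{\frX,k}\otimes_{\sD^\dagger_{\frX,k+1}}\sM_{k+1} \;\simeq\; \sD^\dagger_{\frX,k}\otimes_{\sD^\dagger_{\frX,k+1}}\bigl(\sD^\dagger_{\frX,k+1}\otimes_{B_{k+1}}M_{k+1}\bigr) \;\simeq\; \sD^\dagger_{\frX,k}\otimes_{B_k}\bigl(B_k\otimes_{B_{k+1}}M_{k+1}\bigr) \;, $$
where the first identification is the transition isomorphism, the second is \ref{easy_thmAB2} at level $k+1$, and the third is associativity of $\otimes$ together with $\sD^\dagger_{\frX,k} = \sD^\dagger_{\frX,k}\otimes_{\sD^\dagger_{\frX,k+1}}\sD^\dagger_{\frX,k+1}$; now \ref{easy_thmAB2} at level $k$ identifies $\Ga(\frV,-)$ of the right-hand side with $B_k\otimes_{B_{k+1}}M_{k+1}$, compatibly with the transition map. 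Hence $\Ga(\frV,\sM)$ is coadmissible and $\Ga(\frV,-)\colon \cC_\frV \to \cC_{\sD_{\frX,\infty}(\frV)}$ is a well-defined functor.

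Next I would build a quasi-inverse. For $M \in \cC_{\sD_{\frX,\infty}(\frV)}$ pick by \ref{prop_coadmissible} a system $(M_k)$ of coherent $B_k$-modules with $B_k\otimes_{B_{k+1}}M_{k+1}\simeq M_k$ and $M \simeq \varprojlim_k M_k$, set $\sM_k := \sD^\dagger_{\frV,k}\otimes_{B_k} M_k$ (coherent over $\sD^\dagger_{\frX,k}$ by \ref{easy_thmAB2}), and observe that the same associativity manipulation as above turns the isomorphisms $B_k\otimes_{B_{k+1}}M_{k+1}\simeq M_k$ into isomorphisms $\sD^\dagger_{\frX,k}\otimes_{\sD^\dagger_{\frX,k+1}}\sM_{k+1}\simeq\sM_k$; by \ref{lemma-weak-to-strong_FS} the sheaf $\sM := \varprojlim_k \sM_k$ then lies in $\cC_\frV$. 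To see these functors are mutually quasi-inverse I would compute the two composites: starting from $M$ one gets $\Ga(\frV,\varprojlim_k\sM_k) = \varprojlim_k \Ga(\frV,\sD^\dagger_{\frV,k}\otimes_{B_k}M_k) \simeq \varprojlim_k M_k = M$ by \ref{easy_thmAB2}; starting from $\sM\simeq\varprojlim_k\sM_k \in \cC_\frV$ one gets $\varprojlim_k\bigl(\sD^\dagger_{\frV,k}\otimes_{B_k}\Ga(\frV,\sM_k)\bigr) \simeq \varprojlim_k \sM_k = \sM$, again by \ref{easy_thmAB2}. All these identifications are natural in the module, so both composites are naturally isomorphic to the identity and $\Ga(\frV,-)$ is an equivalence.

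The substantive inputs are all formal: that $\Ga(\frV,-)$ commutes with $\varprojlim_k$ (automatic, since limits of sheaves are section-wise) and that the level-wise equivalences of \ref{easy_thmAB2} intertwine extension of scalars along $\sD^\dagger_{\frX,k+1}\to\sD^\dagger_{\frX,k}$ with extension of scalars along $B_{k+1}\to B_k$ (the tensor-product identity used above). I expect the main work to be the bookkeeping needed to make the naturality of these identifications precise, so that they also govern morphisms of projective systems; once that is in place the theorem is the statement that a projective limit of compatible equivalences is an equivalence. The independence of the quasi-inverse from the chosen system $(M_k)$ is part of the Schneider--Teitelbaum formalism of coadmissible modules, cf. \cite{ST03}.
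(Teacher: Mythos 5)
Your proposal is correct and follows essentially the same route as the paper: apply the level-wise equivalences of \ref{easy_thmAB2} to the systems $(\sM_k)$, resp.\ $(M_k)$, furnished by \ref{lemma-weak-to-strong_FS} and \ref{prop_coadmissible}, and pass to the projective limit, with the quasi-inverse given by $\sM_k = \sD^\dagger_{\frV,k}\otimes_{B_k}M_k$. The only (inessential) difference is that the paper checks the compatibility of global sections with the change-of-rings isomorphisms by reducing to a finite presentation of $\sM_{k+1}$ over $\sD^\dagger_{\frV,k+1}$, whereas you deduce it from the quasi-inverse tensor description plus associativity of the tensor product.
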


\begin{proof} This is an application of proposition \ref{easy_thmAB2}.
Abbreviate $D^{\dagger}_{k}=\Ga(\frV,\sD^{\dagger}_{\frV,k})$.
Let $\sM\in  \cC_\frV$ with coherent $\sD^\dagger_{\frX,k}$-modules $\sM_k$.
Then $M_k=\Ga(\frV,\sM_k)$ is a coherent $D^{\dagger}_{k}$-module and $M=\Ga(\frV,\sM)=  \varprojlim_{k} M_k$.
Taking global sections in the isomorphism
\begin{numequation}\label{star1}\sD^\dagger_{\frX,k} \otimes_{\sD^\dagger_{\frX,k+1}} \sM_{k+1} \stackrel{\simeq}{\lra} \sM_k
\end{numequation} shows that the canonical map
 $$\sD^{\dagger}_{k}\otimes_{\sD^{\dagger}_{k+1}} M_{k+1} \stackrel{\simeq}{\lra} M_k$$ is an isomorphism too. Indeed, this is clear in the case
$\sM_{k+1}=\sD^{\dagger}_{\frV,k+1}$ and the general case follows from taking a finite presentation of $\sM_{k+1}$ as $\sD^{\dagger}_{\frV,k+1}$-module. We conclude with proposition \ref{prop_coadmissible} that $M\in \cC_{\sD_{\frX,\infty}(\frV)}$.
Conversely, given $M\in \cC_{\sD_{\frX,\infty}(\frV)}$ with coherent $D^{\dagger}_{k}$-modules $M_k$ the $\sD^{\dagger}_{\frV,k}$-module
$\sM_k= \sD^{\dagger}_{\frV,k} \otimes_{D^{\dagger}_{k}} M_k$ is coherent and these modules satisfy \ref{star1}. Indeed, that the canonical map \ref{star1} is an isomorphism can be checked on global sections and follows then
from the compatibility with the tensor product. This shows that $\sM= \varprojlim_{k} \sM_k$ lies in $\cC_\frV$. This provides a quasi-inverse to $\Ga(\frV,-)$. \end{proof}

\begin{lemma}\label{lemma-top_ML} Let $\sM\in \cC_\frX$ with sequence $(\sM_k)$ of coherent $\hsD^{(k,0)}_{\frX,\Q}$-modules. Let $\frV \sub \frX$ be an open affine subset.
Then the projective system $(\sM_k(\frV))_{k \ge k_\frX}$ has the following properties:

\vskip8pt

\begin{enumerate}
\item For $k' \ge k$ the transition map $\sM_{k'}(\frV) \ra \sM_k(\frV)$ is uniformly continuous.

\vskip8pt

\item For all $k \ge  k_\frX$ there exists $k' \ge k$ such that for all $k'' \ge k'$ the image of $\sM_{k''}(\frV) \ra \sM_k(\frU)$ is dense in $\im(\sM_{k'}(\frV) \ra \sM_k(\frV))$.

\vskip8pt

\item $\varprojlim^{(1)}_k \sM_k(\frV) = 0$.
\end{enumerate}
\end{lemma}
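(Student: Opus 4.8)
The plan is to reduce everything to the corresponding statement for the local sections $M_k = \Gamma(\frV, \sM_k)$, which are coherent $\hsD^{(k,0)}_{\frX,\Q}(\frV)$-modules by \ref{easy_thmAB2} (or \ref{thmA} if we do not assume $\frX_0$ affine; but since $\frV$ is affine we may as well apply the local theorem after shrinking to the affine formal scheme $\frV$ itself, which is an admissible blow-up of an affine open of $\frX_0$). Set $A = \sD_{\frX,\infty}(\frV) = \varprojlim_k \hsD^{(k,0)}_{\frX,\Q}(\frV)$ and $A_k = \hsD^{(k,0)}_{\frX,\Q}(\frV)$; by \ref{prop_local_description}(ii) this is a Fr\'echet-Stein presentation of $A$. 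By \ref{thm_A_for_frX_infty} (together with \ref{prop_coadmissible}), $M = \Gamma(\frV,\sM) = \varprojlim_k M_k$ is a coadmissible $A$-module and $(M_k)$ is (equivalent to) an $(A_k)$-sequence for $M$. Now the three assertions are exactly the standard properties of an $(A_k)$-sequence for a coadmissible module over a Fr\'echet-Stein algebra recorded in \cite[sec. 3]{ST03}, which we already invoked in \ref{notations}-style at the end of the first \texttt{para}: the image of $M \to M_k$ is dense in the canonical topology and $\varprojlim^{(1)}_k M_k = 0$.

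First I would treat (i): the transition maps $\sM_{k'} \to \sM_k$ are $\hsD^{(k',0)}_{\frX,\Q}$-linear, hence on global sections $M_{k'} \to M_k$ is a morphism of finitely generated modules over the noetherian Banach algebras $A_{k'} \to A_k$, and any such module map is continuous for the canonical topology by \cite[Prop. 2.1]{ST03}; continuity of a linear map between Banach spaces is automatically uniform continuity. For (ii), the condition "$\im(M_{k''} \to M_k)$ dense in $\im(M_{k'} \to M_k)$" is (a reformulation of) the property that the projective system $(M_k)$ is "topologically Mittag-Leffler" with dense images, which is precisely part of the first theorem of \cite[sec. 3]{ST03}; in fact for an $(A_k)$-sequence the image of $M_{k+1} \to M_k$ is already dense, so one may take $k' = k+1$ (or any $k' \ge k$) and the assertion follows. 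For (iii), $\varprojlim^{(1)}_k M_k = 0$ is the second half of that same theorem of \cite{ST03}, valid for any $(A_k)$-sequence of a coadmissible module.

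The one genuine point to check is the compatibility of the two notions of "sequence": in \ref{dfn_coadm_frX} the sheaf-level sequence $(\sM_k)$ has transition morphisms that become \emph{isomorphisms} after base change $\hsD^{(k,0)}_{\frX,\Q} \otimes_{\hsD^{(k+1,0)}_{\frX,\Q}}(-)$, and I need to know that applying $\Gamma(\frV,-)$ turns this into an $(A_k)$-sequence in the sense of \cite{ST03}, i.e. that $A_k \otimes_{A_{k+1}} M_{k+1} \xrightarrow{\simeq} M_k$. This is exactly the argument given inside the proof of \ref{thm_A_for_frX_infty} (take global sections in the sheaf isomorphism \eqref{transition_2}, using exactness of $\Gamma(\frV,-)$ on coherent modules from \ref{easy_thmAB} and the fact that global sections commute with the relevant tensor products for coherent modules, reducing to the free case). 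Thus the real content is simply citing \cite[Prop. 2.1 and sec. 3]{ST03} once the translation to $(M_k)$ is in place.

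I do not expect a serious obstacle here; the main (minor) subtlety is making sure that the density statement in (ii) is phrased for a pair $k' \ge k$ in the way \cite{ST03} provides it (their formulation is "for all $k$ there is $k'$ such that for $k'' \ge k'$ the images in $M_k$ of $M_{k''}$ and $M_{k'}$ coincide up to dense subspaces"), which matches (ii) verbatim. I would write the proof as: "Put $M = \Gamma(\frV,\sM)$, $M_k = \Gamma(\frV,\sM_k)$. By \ref{easy_thmAB2} the $M_k$ are coherent $\hsD^{(k,0)}_{\frX,\Q}(\frV)$-modules, and taking global sections in \eqref{transition_2} (using exactness of $\Gamma(\frV,-)$ and reducing to the free case) shows $(M_k)$ is an $(A_k)$-sequence for the coadmissible $A := \sD_{\frX,\infty}(\frV)$-module $M$, where $A = \varprojlim_k A_k$ with $A_k = \hsD^{(k,0)}_{\frX,\Q}(\frV)$ is the Fr\'echet-Stein presentation of \ref{prop_local_description}. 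Now (i) is \cite[Prop. 2.1]{ST03}, and (ii), (iii) are \cite[first theorem in sec. 3]{ST03}."
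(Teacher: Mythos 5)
Your proposal is correct and follows essentially the same route as the paper: reduce to the global sections $M_k=\Ga(\frV,\sM_k)$, use \ref{thm_A_for_frX_infty} (via \ref{easy_thmAB}/\ref{easy_thmAB2}) to see that $M=\varprojlim_k M_k$ is coadmissible and that applying $\Ga(\frV,-)$ to \eqref{transition_2} (free case plus finite presentation) makes $(M_k)$ an $(A_k)$-sequence, and then quote \cite[Prop. 2.1 and sec. 3]{ST03} for (i)--(iii). The compatibility point you single out is exactly the step the paper verifies, so no gap remains.
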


\begin{proof} Since a continuous map between Banach spaces is uniformly continuous, (i) is clear.
Abbreviate $D_{k}=\Ga(\frV,\hsD^{(k,0)}_{\frX,\Q})$ and $M_k=\Ga(\frV,\sM_k)$.
By the above theorem $M\in\cC_{\sD_{\frX,\infty}(\frV)}$ and, by the general properties of Fr\'echet-Stein algebras which we have recalled above, it remains to see that the modules $M_k=\sM_k(\frV)$ form a $(D_k)$-sequence for $M$. This is an application of proposition \ref{easy_thmAB}:
First, $M_k$ is a coherent $D_{k}$-module. Applying $\Ga(\frV,-)$ to the isomorphism
$$\hsD^{(k,0)}_{\frX,\Q} \otimes_{\hsD^{(k+1,0)}_{\frX,\Q}} \sM_{k+1} \stackrel{\simeq}{\lra} \sM_k$$ shows that the canonical map
$$\sD_{k}\otimes_{\sD_{k+1}} M_{k+1} \stackrel{\simeq}{\lra} M_k$$ is an isomorphism.
 Indeed, this is clear in the case where the restriction of
$\sM_{k+1}$ to $\frV$ equals $\hsD^{(k,0)}_{\frV,\Q}$ and the general case follows from taking a finite presentation of it as $\hsD^{(k,0)}_{\frV,\Q}$-module.
\end{proof}


\begin{thm}\label{thm_B_for_frX_infty} (Theorem B for coadmissible modules on $\frX$) For every coadmissible $\sD_{\frX,\infty}$-module $\sM$ and for all $q>0$ one has $H^q(\frX,\sM) = 0$. Moreover, if $\sM = \varprojlim_k \sM_k$ with coherent $\hsD^{(k,0)}_{\frX,k}$-modules $\sM_k$, then $R\varprojlim_k \sM_k  = \sM$.
\end{thm}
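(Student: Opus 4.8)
The plan is to deduce both assertions from the ``local'' and ``affine base'' results already established, by the usual $R\varprojlim$/$R\Ga$ formalism; as in Theorem~\ref{thmB} one works under the hypothesis that $\frX_0$ is affine. First I would prove the \emph{moreover} part, $R\varprojlim_k\sM_k=\sM$. Recall (cf.\ \cite[4.1]{HartshorneO} and the discussion preceding \ref{lemprojlim}) that $R\varprojlim$ has cohomological dimension $\le 1$ on the relevant categories of projective systems, so it suffices to check $\cH^1(R\varprojlim_k\sM_k)=0$. Computing $R\varprojlim$ of the tower $(\sM_k)$ by the two-term complex $\prod_k\sM_k\ra\prod_k\sM_k$ (difference of the identity and the transition maps) identifies $\cH^1(R\varprojlim_k\sM_k)$ with the sheafification of the presheaf $\frV\mapsto\varprojlim^{(1)}_k\sM_k(\frV)$; by \ref{lemma-top_ML}(iii) this presheaf vanishes on every affine open $\frV\subseteq\frX$, hence on a basis of the topology, so $\cH^1(R\varprojlim_k\sM_k)=0$. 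As $\cH^0(R\varprojlim_k\sM_k)=\varprojlim_k\sM_k=\sM$ by coadmissibility, this gives $R\varprojlim_k\sM_k=\sM$; the same argument works on any admissible blow-up.

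Next I would prove the cohomology vanishing. Since $R\Ga(\frX,-)$ commutes with $R\varprojlim$ of towers of sheaves (as in the proof of \ref{prop-exactdirectimage-forD}, via \cite[Lemma 20.32.2]{stacks-project}), the previous step gives $R\Ga(\frX,\sM)\simeq R\varprojlim_k R\Ga(\frX,\sM_k)$. By the global Theorem~A and~B over the affine base (\ref{thmB}(i), applied to the coherent $\hsD^{(k,0)}_{\frX,\Q}$-modules $\sM_k$), $H^q(\frX,\sM_k)=0$ for all $q>0$, so each $R\Ga(\frX,\sM_k)$ is concentrated in degree $0$ with value $M_k:=\Ga(\frX,\sM_k)$. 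Using once more that $R\varprojlim$ has cohomological dimension $\le 1$, one obtains $H^q(\frX,\sM)=0$ for $q\ge 2$ together with an isomorphism $H^1(\frX,\sM)\simeq\varprojlim^{(1)}_k M_k$. Everything thus reduces to the vanishing $\varprojlim^{(1)}_k M_k=0$.

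To get that, I would realize $(M_k)_k$ as a coadmissible sequence over a Fr\'echet-Stein algebra. By \ref{globcalc} and \ref{prop_local_description}(iv) we have $A_k:=\Ga(\frX,\hsD^{(k,0)}_{\frX,\Q})=\Ga(\frX_0,\hsD^{(k,0)}_{\frX_0,\Q})$ and $A:=\Ga(\frX,\sD_{\frX,\infty})=\varprojlim_k A_k$, and $A$ is Fr\'echet-Stein by \ref{prop_local_description}(ii) applied on the affine $\frX_0$; by \ref{thmB}(ii) each $M_k$ is a finitely generated $A_k$-module and $M:=\Ga(\frX,\sM)=\varprojlim_k M_k$ (as $\Ga$ is left exact). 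Applying $\Ga(\frX,-)$ to the transition isomorphism \eqref{transition_2} and commuting global sections past the scalar extension as in the proof of \ref{thm_A_for_frX_infty}, now using the global exactness \ref{thmB}(i) (it is clear for $\sM_{k+1}=\hsD^{(k+1,0)}_{\frX,\Q}$ and the general case follows by taking a finite presentation), one finds that the canonical map $A_k\otimes_{A_{k+1}}M_{k+1}\car M_k$ is an isomorphism. Hence $(M_k)$ is an $(A_k)$-sequence for the coadmissible $A$-module $M$, and the structure theory of Fr\'echet-Stein algebras (\cite[first theorem in sec.\ 3]{ST03}) yields $\varprojlim^{(1)}_k M_k=0$, completing the proof.

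I expect the main obstacle to be exactly this last vanishing $\varprojlim^{(1)}_k M_k=0$, i.e.\ $H^1(\frX,\sM)=0$: it does not follow from Mittag-Leffler properties of the tower alone, and forces one to exhibit $\Ga(\frX,\sM)$ as a coadmissible module over the Fr\'echet-Stein algebra $\Ga(\frX,\sD_{\frX,\infty})$ --- which is precisely where Theorems~A and~B for $\hsD^{(k,0)}_{\frX,\Q}$-modules must already hold \emph{globally} on $\frX$, hence where the affineness of $\frX_0$ and the invariance theorem \ref{prop-exactdirectimage} enter. The rest is routine $R\varprojlim$/$R\Ga$ bookkeeping.
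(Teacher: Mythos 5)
Your argument is sound in the case it treats, but note that it proves the statement only under an extra hypothesis --- $\frX_0$ affine --- which is not part of the theorem, and it takes a genuinely different route from the paper. The paper's proof is purely local on $\frX$ and makes no affineness assumption: taking $\cB$ to be the affine opens of $\frX$ contained in some $\pr^{-1}(\frU_0)$ with $\frU_0\subseteq\frX_0$ affine, it verifies the three hypotheses of the limit lemma \cite[20.32.4]{stacks-project} --- $\cB$ is a basis, $H^q(\frV,\sM_k)=0$ for $\frV\in\cB$ and $q>0$ by the local Theorem B \ref{easy_thmAB}, and $\varprojlim^{(1)}_k\sM_k(\frV)=0$ by \ref{lemma-top_ML} --- and reads off both $R\varprojlim_k\sM_k=\sM$ and the cohomological vanishing in one stroke. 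In particular neither the global Theorems A and B over an affine base (\ref{thmA}), nor \ref{globcalc}, nor the Fr\'echet--Stein structure of $\Ga(\frX,\sD_{\frX,\infty})$ and the coadmissibility of $\Ga(\frX,\sM)$ enter the paper's proof; your second and third steps replace this local input on $\frX$ by global input over an affine base. What your route buys, when $\frX_0$ is affine, is strictly more than the theorem: the identification $H^1(\frX,\sM)\simeq\varprojlim^{(1)}_k\Ga(\frX,\sM_k)$ and the fact that $\Ga(\frX,\sM)$ is a coadmissible $\Ga(\frX,\sD_{\frX,\infty})$-module, i.e.\ a global Theorem A. What it loses is the stated generality: you offer no reduction of the general case to the affine one, so as a proof of the statement as written it is incomplete.

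Two further remarks. First, in your opening step the computation of $R\varprojlim_k\sM_k$ by the naive telescope $\prod_k\sM_k\ra\prod_k\sM_k$ of (non-derived) sheaf products, and the claim that only $\cH^1$ matters, presuppose exactness of countable products of sheaves, i.e.\ exactly the cohomological-dimension-one assertion at the sheaf level that is the delicate point here; the clean argument --- the one the limit lemma encodes --- computes $H^p(\frV,R\varprojlim_k\sM_k)=H^p\bigl(R\varprojlim_k R\Ga(\frV,\sM_k)\bigr)$ for $\frV\in\cB$, which uses the local vanishing $H^p(\frV,\sM_k)=0$ of \ref{easy_thmAB} (a reference your first step never invokes) together with \ref{lemma-top_ML}, and then sheafifies; with this emendation your \emph{moreover} part coincides with the paper's. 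Second, your closing concern is not unfounded: the cohomological conclusion one extracts from the cited limit lemma concerns the members of $\cB$, and for a non-affine base the vanishing on all of $\frX$ is genuinely problematic --- e.g.\ for $\frX=\frX_0$ the formal completion of a projective curve of genus at least $1$, the module $\cO_{\frX_0,\Q}$ is coadmissible by the Examples subsection while $H^1(\frX_0,\cO_{\frX_0,\Q})\neq 0$ --- so the affineness you impose is in effect the natural hypothesis for the first assertion on all of $\frX$, and in that setting your proof, though longer than the paper's, is a valid alternative.
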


\begin{proof} Our aim is to apply \cite[20.32.4]{stacks-project}. To this end, we let $\cB$ be the set of open affine subsets $\frV \sub \frX$ which are contained in a subset of the form $\pr^{-1}(\frU)$ for some open affine $\frU\sub \frX_0$. We are going to show that the three hypotheses of loc.cit. are fulfilled, namely

\vskip8pt

\begin{enumerate}
\item Every open subset of $\frX$ has a covering whose members are elements of $\cB$.

\item For every $\frV \in \cB$, all $k \ge 0$, and all $q>0$ one has $H^q(\frV,\sM_k) = 0$.

\item For every $\frV \in \cB$ one has $\varprojlim^{(1)}_k \sM_k(\frV) = 0$.
\end{enumerate}

{\it Proof of these conditions.} (i) This is true because $\cB$ is a basis of the topology of $\frX$.

\vskip8pt

(ii) This is true by \ref{easy_thmAB} (ii).

\vskip8pt

(iii) This is true by \ref{lemma-top_ML}.
\qed

\vskip8pt

This shows that the conclusions of \cite[20.32.4]{stacks-project} hold, namely that $R\varprojlim_k \sM_k  = \sM$, and that $H^q(\frX,\sM) = 0$ for all $q>0$.
\end{proof}

\begin{cor}\label{abelian}
The category $\cC_\frX$ is abelian.
\end{cor}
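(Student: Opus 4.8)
The plan is to deduce the abelianness of $\cC_\frX$ from the corresponding fact for coadmissible modules over the Fréchet-Stein algebras $\sD_{\frX,\infty}(\frV)$, which is a theorem of Schneider-Teitelbaum, together with Theorems A and B just proven (\ref{thm_A_for_frX_infty} and \ref{thm_B_for_frX_infty}). Since $\cC_\frX$ is by definition a full subcategory of $\Mod(\sD_{\frX,\infty})$, it suffices to show it is closed under kernels and cokernels (the zero object being obviously coadmissible, and finite direct sums being handled termwise). So let $\varphi: \sM \ra \sN$ be a morphism of coadmissible $\sD_{\frX,\infty}$-modules; I must produce coadmissible structures on $\ker\varphi$ and $\coker\varphi$ and check that the kernel/cokernel computed in $\Mod(\sD_{\frX,\infty})$ agree with them.

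First I would fix the covering $\cB$ of $\frX$ by affine opens $\frV \subseteq \pr^{-1}(\frU)$ as in the proof of \ref{thm_B_for_frX_infty}. For each $\frV \in \cB$, apply $\Gamma(\frV,-)$: by Theorem A (\ref{thm_A_for_frX_infty}), $\Gamma(\frV,-)$ is an equivalence $\cC_\frV \car \cC_{\sD_{\frX,\infty}(\frV)}$, and $\sD_{\frX,\infty}(\frV)$ is Fréchet-Stein by \ref{prop_local_description}(ii). Now $\cC_{\sD_{\frX,\infty}(\frV)}$ is abelian, and moreover formation of kernels and cokernels in it is compatible with the forgetful functor to abstract $\sD_{\frX,\infty}(\frV)$-modules — this is precisely \cite[Cor. 3.4 and surrounding discussion]{ST03}. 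Hence $K(\frV) := \ker(\Gamma(\frV,\varphi))$ and $C(\frV) := \coker(\Gamma(\frV,\varphi))$ are coadmissible $\sD_{\frX,\infty}(\frV)$-modules, and by Theorem A they correspond to coadmissible objects $\sK_\frV, \sC_\frV \in \cC_\frV$ fitting into exact sequences of sheaves on $\frV$
\begin{numequation}\label{local_exact_seq}
0 \lra \sK_\frV \lra \sM|_\frV \lra \sN|_\frV \lra \sC_\frV \lra 0 \;.
\end{numequation}
The key point is that these local pieces glue: if $\frV' \subseteq \frV$ with $\frV,\frV' \in \cB$, then by Theorem B (\ref{thm_B_for_frX_infty}) applied over $\frV$ (noting $\frV \ra \frU$ is an admissible blow-up of the affine $\frU$, so the global Theorems A and B hold over $\frV$), the restriction $\Gamma(\frV,-) \ra \Gamma(\frV',-)$ is flat on the relevant level algebras via \ref{prop_coherence_D-dagger2} and \ref{flatFSdag}, so $K(\frV)|_{\frV'} = K(\frV')$ and similarly for $C$. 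Thus the $\sK_\frV$ (resp. $\sC_\frV$) patch to sheaves $\ker\varphi$ (resp. $\coker\varphi$) of $\sD_{\frX,\infty}$-modules, and \eqref{local_exact_seq} shows these are indeed the kernel and cokernel of $\varphi$ in $\Mod(\sD_{\frX,\infty})$ — exactness of a sequence of sheaves can be checked on the stalks over a basis.

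It remains to see $\ker\varphi$ and $\coker\varphi$ are coadmissible, i.e. to build the required global projective systems $(\sK_k)$ and $(\sC_k)$ of coherent $\hsD^{(k,0)}_{\frX,\Q}$-modules. Here I would use the defining sequences $\sM = \varprojlim_k \sM_k$, $\sN = \varprojlim_k \sN_k$; after passing to cofinal subsystems I may assume $\varphi$ is induced by a compatible system of maps $\varphi_k: \sM_k \ra \sN_k$ of coherent $\hsD^{(k,0)}_{\frX,\Q}$-modules (this is the usual argument lifting a morphism of projective limits to a morphism of systems, as in \cite[Lemma 3.3 ff.]{ST03} or \cite[Prop. 1.2.9]{EmertonA}, using that the transition maps are flat so that $\Hom$ of the limits is the limit of the $\Hom$'s up to the indicated cofinality). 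Set $\sK_k = \ker\varphi_k$ and $\sC_k = \coker\varphi_k$, which are coherent $\hsD^{(k,0)}_{\frX,\Q}$-modules by coherence of that sheaf of rings (\ref{prop-coherence_D-dagger2}, resp.\ \ref{finite_tDm}). Flatness of $\hsD^{(k,0)}_{\frX,\Q} \ra \hsD^{(k+1,0)}_{\frX,\Q}$ — here one needs flatness in the direction $k' \geq k$, i.e. \ref{flatFS} — gives that $\hsD^{(k,0)}_{\frX,\Q} \otimes_{\hsD^{(k+1,0)}_{\frX,\Q}} \sC_{k+1} \simeq \sC_k$, verifying condition (i) of \ref{dfn_coadm_frX} for $\coker$; and a small homological argument (flatness again, applied to the four-term exact sequence) gives the analogous isomorphism for $\sK$. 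Finally $\varprojlim_k \sK_k = \ker\varphi$ and $\varprojlim_k \sC_k = \coker\varphi$ because $\varprojlim$ over these systems is exact on the level of sections over each $\frV \in \cB$ — this is the vanishing of $\varprojlim^{(1)}$ from \ref{lemma-top_ML}(iii) — and sheafifies correctly by \ref{thm_B_for_frX_infty}.

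The main obstacle I anticipate is the patching/independence-of-chart step: one must be careful that the coadmissible structures on $\ker\varphi|_\frV$ and $\coker\varphi|_\frV$ produced locally are genuinely compatible with restriction to smaller affines in $\cB$, which is where flatness of the transition maps in the $k$-direction (\ref{flatFS}, \ref{flatFSaffine}) and the exactness statements of \ref{easy_thmAB}/\ref{easy_thmAB2} are essential — without them the tensor products defining the $\sM_k$ would not commute with restriction. Everything else (kernels/cokernels of coherent sheaves are coherent; $\varprojlim^{(1)}$ vanishes; Schneider--Teitelbaum's abelianness of $\cC_A$) is either already in the excerpt or a direct citation.
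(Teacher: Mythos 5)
Your proposal is correct and is essentially the paper's own argument: lift $\varphi$ to the level-$k$ systems, take level-wise kernels and cokernels (the paper works with kernel and image), verify the base-change condition \eqref{transition_2} using flatness in the $k$-direction (\ref{flatFS}), and identify the limits by passing to sections over affines in $\cB$, where Theorem A (\ref{thm_A_for_frX_infty}) plus Schneider--Teitelbaum's abelianness of $\cC_A$ give coadmissibility and the $\varprojlim^{(1)}$-vanishing needed to commute cokernel/image with $\varprojlim$. The only slips are inessential: an affine $\frV\subseteq\pr^{-1}(\frU)$ is merely an open piece of the blow-up $\pr^{-1}(\frU)$, not itself an admissible blow-up of $\frU$, but this is not needed since \ref{thm_A_for_frX_infty} is stated for arbitrary affine opens $\frV\subset\frX$, and the compatibility with restriction to smaller affines comes from the local theorems \ref{easy_thmAB} and \ref{easy_thmAB2} (sections of coherent modules are obtained by base change) rather than from \ref{prop-coherence_D-dagger2} or \ref{flatFSdag}, which concern the $m$- and $k$-directions.
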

\begin{proof}
Let $f_k:\sM\rightarrow \sN$ be a morphism in  $\cC_\frX$ and denote by $\sK=\ker(f)$ and $\sI= \im(f)$ its kernel and image in ${\rm Mod}(\sD_{\frX,\infty})$. By left-exactness of the projective limit, we have $\sK=  \varprojlim_{k} \sK_k$ where $\sK_k=\ker(f_k)$ and a linear map
\begin{numequation}\label{star2}\sD^\dagger_{\frX,k} \otimes_{\sD^\dagger_{\frX,k+1}} \sK_{k+1}\lra \sK_k \;.
\end{numequation}
Let $\frV\subseteq \frX$ be an open affine. By the preceding theorem, we know that $\sK(\frV)\in  \cC_{\sD_{\frX,\infty}(\frV)}$.
Since the morphism \ref{star2}, restricted to $\frV$, is a morphism of coherent $\sD^\dagger_{\frV,k}$-modules, we may test its bijectivity
on global sections according to proposition \ref{easy_thmAB}. Applying $\Ga(\frV,-)$ to \ref{star2} yields
\begin{numequation}\label{star3} \sD^{\dagger}_{k}\otimes_{\sD^{\dagger}_{k+1}} \sK_{k+1}(\frV) \lra \sK_k(\frV) \;,
\end{numequation}
where $\sD^{\dagger}_{k}=\sD^\dagger_{\frX,k} (\frV)$.
Indeed, this is clear in the case where the restriction of
$\sK_{k+1}$ to $\frV$ equals $\sD^{\dagger}_{\frV,k+1}$ and the general case follows from taking a finite presentation of it as $\sD^{\dagger}_{\frV,k+1}$-module.
However, $(\sK_k(\frV))$ forms a  $(\sD^{\dagger}_{k})$-sequence for the coadmissible module $\sK(\frV)$ and hence \ref{star3} is bijective. So \ref{star2} is bijective and $\sK\in\cC_\frX$.
This implies  $\varprojlim^{(1)}_k \sK_k= 0$ by the preceding theorem and hence  $\sI=  \varprojlim_{k} \sI_k$ where $\sI_k=\im(f_k)$. Again, we have a linear map
$$\sD^\dagger_{\frX,k} \otimes_{\sD^\dagger_{\frX,k+1}} \sI_{k+1}\lra \sI_k$$ whose bijectivity follows with the same argument as above. Hence $\sI\in\cC_\frX$ and this suffices to conclude that the category $\cC_\frX$ is abelian.
\end{proof}

We conclude with a result on how the category $\cC_{\frX}$ behaves under morphisms over $\frX_0$.

\begin{prop}\label{prop_invariance_for_D_infty} Let $\frX' \ra \frX_0$ be another admissible formal blow-up, and let $\pi: \frX' \ra \frX$ be a morphism over $\frX_0$. Then, for every coadmissible $\sD_{\frX',\infty}$-module $\sM$ the sheaf $\pi_* \sM$ is a coadmissible $\sD_{\frX,\infty}$-module via the isomorphism $\pi_* \sD_{\frX',\infty} = \sD_{\frX,\infty}$. Moreover, one has $R^q\pi_* \sM = 0$ for all $q>0$ and an equivalence of categories
 $$\pi_*: \cC_{\frX'}\stackrel{\simeq}{\longrightarrow} \cC_{\frX} \;.$$
\end{prop}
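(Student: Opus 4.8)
The plan is to transport the projective-system description of coadmissible modules along $\pi$, level by level, using the invariance theorem \ref{prop-exactdirectimage} and Theorem B (\ref{thm_B_for_frX_infty}) for the blow-ups $\frX$ and $\frX'$.

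First I would fix a coadmissible $\sD_{\frX',\infty}$-module $\sM$ with a defining system $(\sM'_k)_{k\ge k_{\frX'}}$ of coherent $\hsD^{(k,0)}_{\frX',\Q}$-modules as in \ref{dfn_coadm_frX}, so $\sM\simeq\varprojlim_k\sM'_k$. Passing to a cofinal subsystem I may assume $k\ge k_0:=\max\{k_\frX,k_{\frX'}\}$, which is the range where \ref{prop-exactdirectimage} (in its $\hsD^{(k,0)}$-version) applies: for such $k$, the sheaf $\sM_k:=\pi_*\sM'_k$ is a coherent $\hsD^{(k,0)}_{\frX,\Q}$-module, $R^j\pi_*\sM'_k=0$ for $j>0$, $\pi_*\hsD^{(k,0)}_{\frX',\Q}=\hsD^{(k,0)}_{\frX,\Q}$, and $\pi_*$, $\pi^!$ are mutually quasi-inverse. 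The only computation required is the canonical isomorphism
$$\pi^!\left(\hsD^{(k,0)}_{\frX,\Q}\otimes_{\hsD^{(k+1,0)}_{\frX,\Q}}\sN\right)\;\simeq\;\hsD^{(k,0)}_{\frX',\Q}\otimes_{\hsD^{(k+1,0)}_{\frX',\Q}}\pi^!\sN\;,$$
valid for any coherent $\hsD^{(k+1,0)}_{\frX,\Q}$-module $\sN$, which is immediate from the definition $\pi^!(-)=\hsD^{(\bullet,0)}_{\frX',\Q}\otimes_{\pi^{-1}\hsD^{(\bullet,0)}_{\frX,\Q}}\pi^{-1}(-)$, cf. \ref{inversefunctor}, and associativity of the tensor product. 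Applying it to $\sN=\sM_{k+1}$, substituting $\pi^!\sM_{k+1}\simeq\sM'_{k+1}$ and the transition isomorphism $\hsD^{(k,0)}_{\frX',\Q}\otimes_{\hsD^{(k+1,0)}_{\frX',\Q}}\sM'_{k+1}\simeq\sM'_k$ for $\sM$, and then applying $\pi_*$ together with $\pi_*\pi^!\simeq{\rm id}$, one finds that the natural map $\hsD^{(k,0)}_{\frX,\Q}\otimes_{\hsD^{(k+1,0)}_{\frX,\Q}}\sM_{k+1}\to\sM_k$ is an isomorphism. Since $\pi_*$ is a right adjoint it commutes with projective limits, so $\pi_*\sM=\varprojlim_k\pi_*\sM'_k=\varprojlim_k\sM_k$; combined with the identification $\pi_*\sD_{\frX',\infty}=\sD_{\frX,\infty}$ of \ref{prop_local_description}(iv), this exhibits $\pi_*\sM$ as a coadmissible $\sD_{\frX,\infty}$-module with defining system $(\sM_k)$.

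For the vanishing of the higher direct images I would argue as in \ref{prop-exactdirectimage-forD}: by Theorem B for $\frX'$ one has $R\varprojlim_k\sM'_k\simeq\sM$, hence
$$R\pi_*\sM\;\simeq\;R\pi_*\,R\varprojlim_k\sM'_k\;\simeq\;R\varprojlim_k R\pi_*\sM'_k\;\simeq\;R\varprojlim_k\sM_k\;,$$
using \cite[Lemma 20.32.2]{stacks-project} for the middle step and $R^j\pi_*\sM'_k=0$ ($j>0$) for the last. Since $\pi_*\sM$ is coadmissible with system $(\sM_k)$, Theorem B for $\frX$ gives $R\varprojlim_k\sM_k\simeq\pi_*\sM$, concentrated in degree $0$; comparing, $R^q\pi_*\sM=0$ for $q>0$.

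Finally, for the equivalence I would introduce $\pi^!\colon\cC_\frX\to\cC_{\frX'}$ by $\pi^!\sN:=\varprojlim_k\pi^!\sN_k$ for a coadmissible $\sN=\varprojlim_k\sN_k$ with $\sN_k$ coherent over $\hsD^{(k,0)}_{\frX,\Q}$; the same bookkeeping as above — using that $\pi^!$ preserves coherence and the displayed base-change identity — shows $\pi^!\sN\in\cC_{\frX'}$, and the level-wise natural isomorphisms $\pi_*\pi^!\sN_k\simeq\sN_k$, $\pi^!\pi_*\sM'_k\simeq\sM'_k$ of \ref{prop-exactdirectimage} pass to the projective limit (using again that $\pi_*$ commutes with $\varprojlim$) to yield natural isomorphisms $\pi_*\pi^!\simeq{\rm id}_{\cC_\frX}$ and $\pi^!\pi_*\simeq{\rm id}_{\cC_{\frX'}}$. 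Thus $\pi_*$ is an equivalence with quasi-inverse $\pi^!$. The main obstacle I anticipate is the acyclicity statement, the only genuinely derived part of the argument, where one commutes $R\pi_*$ past $R\varprojlim_k$ and brings in Theorem B on both blow-ups; everything else is formal once \ref{prop-exactdirectimage} is at hand, modulo routine care with the cofinality of the index $k$.
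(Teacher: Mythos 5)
Your proposal is correct and its backbone coincides with the paper's: push the defining system forward level by level using the invariance theorem \ref{prop-exactdirectimage}, verify the transition isomorphisms, and obtain the acyclicity by commuting $R\pi_*$ past $R\varprojlim_k$ via \cite[20.32.2]{stacks-project} and invoking Theorem B \ref{thm_B_for_frX_infty} on both $\frX'$ and $\frX$ — that last step is essentially verbatim the paper's argument. The differences are local, and on one point in your favour. For the transition isomorphisms the paper works with the $\sD^\dagger_{\frX,k}$-description of coadmissibility (\ref{lemma-weak-to-strong_FS}) and checks directly that the exact functor $\pi_*$ commutes with the tensor product $\sD^\dagger_{\frX,k}\otimes_{\sD^\dagger_{\frX,k+1}}(-)$ by restricting to an affine and choosing a finite presentation; you instead stay with the $\hsD^{(k,0)}$-systems and deduce the same isomorphism from the base-change identity for $\pi^!$ together with $\pi_*\pi^!\simeq{\rm id}$. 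Both reductions are of the same routine depth (yours additionally requires the small check that the chain of canonical isomorphisms is indeed the natural transition map), and both need the index bookkeeping you flag, namely that one may pass to the cofinal range $k\ge\max\{k_\frX,k_{\frX'}\}$ and then extend the system back down by flat base change — a point the paper glosses over as well. Finally, the paper's written proof never returns to the asserted equivalence $\pi_*:\cC_{\frX'}\stackrel{\simeq}{\lra}\cC_\frX$; it only establishes coadmissibility of $\pi_*\sM$ and the vanishing of $R^q\pi_*\sM$. Your explicit quasi-inverse $\sN\mapsto\varprojlim_k\pi^!\sN_k$, with the level-wise unit and counit isomorphisms of \ref{prop-exactdirectimage} passed to the projective limit, supplies exactly this missing piece, which is in fact what the paper later relies on (e.g. in the proof of \ref{prop-equivZar}, where the quasi-inverse is invoked).
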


\begin{proof} Write $\sM = \varprojlim_k \sM_k$ with coherent $\sD^\dagger_{\frX',k}$-modules $\sM_k$. By \ref{prop-exactdirectimage} we know that each sheaf $\pi_*(\sM_k)$ is a coherent $\sD^\dagger_{\frX,k}$-module. Moreover $\pi_*(\sM) = \varprojlim_k \pi_*(\sM_k)$, and

$$\sD^\dagger_{\frX,k} \otimes_{\sD^\dagger_{\frX,k+1}} \pi_* \sM_{k+1} = \pi_* \sD^\dagger_{\frX',k} \otimes_{\pi_* \sD^\dagger_{\frX',k+1}} \pi_* \sM_{k+1} \simeq \pi_* \Big(\sD^\dagger_{\frX',k} \otimes_{\sD^\dagger_{\frX',k+1}} \sM_{k+1}\Big) \simeq \pi_* \sM_k \;.$$

\vskip8pt

Note here that the exact functor $\pi_*$, cf. \ref{prop-exactdirectimage}, indeed commutes with the tensor product: this may be checked over
an open affine $\frV\subseteq \frX'$ where we may take a finite presentation of the restriction of the module $\sM_{k+1}$ to $\frV$ as  $\sD^{\dagger}_{\frV,k+1}$-module to reduce to the case of the sheaf $\sD^{\dagger}_{\frV,k+1}$, as in the proof of the preceding theorem. This shows $\pi_* \sM\in \cC_{\frX}.$ We now write $\sM = \varprojlim_k \sM'_k$ with coherent $\hsD^{(k,0)}_{\frX,k}$-modules $\sM'_k$. Because of \cite[20.32.2]{stacks-project} the functors $R\pi_*$ and $R\varprojlim_k$ commute. By \ref{thm_B_for_frX_infty} we have $\sM = R\varprojlim_k \sM'_k$. Hence we obtain $R\pi_*(\sM) = R \pi_*(R\varprojlim_k \sM'_k) \simeq R\varprojlim_k R\pi_*(\sM'_k)$. By \ref{prop-exactdirectimage} we have $R \pi_*(\sM'_k) = \pi_*(\sM'_k)$, and so we obtain  a canonical isomorphism $R\pi_*(\sM) \simeq R\varprojlim_k \pi_*(\sM'_k)$. In the beginning of the proof we have seen that $\pi_* \sM = \varprojlim_k \pi_*(\sM'_k)$ is a coadmissible $\sD_{\frX,\infty}$-module, and so $R\varprojlim_k \pi_*(\sM'_k) = \pi_*(\sM)$, by \ref{thm_B_for_frX_infty}. From $R\pi_*(\sM) = \pi_*(\sM)$ we conclude that $R^q \pi_* \sM =0$ for all $q>0$.
\end{proof}

\subsection{Coadmissible \texorpdfstring{$\sD$}{}-modules on the Zariski-Riemann space \texorpdfstring{$\langle \frX_0 \rangle$}{}}

We finally explain how to pass the previous construction and results
to the projective limit in $\frX$, that is to say, to the Zariski-Riemann space of $\frX_0$.

\begin{para} Let $\cF_{\frX_0}$ be the set of all admissible formal blow-ups $\frX \ra \frX_0$.\footnote{We emphasize that the blow-up morphism $\frX \ra \frX_0$ is part of the datum of $\frX$.} This set is partially ordered by setting $\frX' \succeq \frX$ if the blow-up morphism $\frX' \ra \frX_0$ factors as $\frX' \stackrel{\pi}{\lra} \frX \ra \frX_0$, where  $\frX \ra \frX_0$ is the blow-up morphism. The morphism $\pi: \frX' \ra \frX$ is then uniquely determined by the universal property of blowing up, and is itself a blow-up morphism
\cite[ch. 8, 1.24]{LiuBook}, and we will denote it henceforth by $\pi_{\frX',\frX}$. By \cite[Remark 10 in sec. 8.2]{BoschLectures} the set $\cF_{\frX_0}$ is directed in the sense that any two elements have a common upper bound, and we can consider the topological space equal to the projective limit \footnote{In the paper \cite{HPSS} this space is denoted by $\frX_\infty$.}

$$\langle \frX_0 \rangle = \varprojlim_{\frX \in \cF_{\frX_0}} \frX. $$

\vskip8pt

This is the Zariski-Riemann space associated with $\frX_0$. For its basic properties we refer to \cite[9.3]{BoschLectures}.
\end{para}

\begin{para}{\it Sheaves on the space $\langle \frX_0 \rangle$.}  For $\frX \in \cF_{\frX_0}$ we denote the canonical projection map $\langle \frX_0 \rangle \ra \frX$ by $\sp_\frX$. If $\frX' \succeq \frX$ in $\cF_{\frX_0}$, we have $\sp_\frX = \pi_{\frX',\frX} \circ \sp_{\frX'}$. The isomorphism $(\pi_{\frX',\frX})_* \sD_{\frX',\infty} = \sD_{\frX,\infty}$ from \ref{can_iso_D_infty}, together with the adjunction map $\pi_{\frX',\frX}^{-1} \circ (\pi_{\frX',\frX})_* \ra {\rm id}$ gives rise to a canonical map

\begin{numequation}\label{transition_D_infty} \varphi_{\frX,\frX'}: \pi_{\frX',\frX}^{-1}\sD_{\frX,\infty} = \pi_{\frX',\frX}^{-1} (\pi_{\frX',\frX})_* \sD_{\frX',\infty} \lra \sD_{\frX',\infty} \;.
\end{numequation}

These morphisms of sheaves satisfy

$$\varphi_{\frX,\frX''} = \varphi_{\frX',\frX''} \circ \pi_{\frX'',\frX'}^{-1} \varphi_{\frX,\frX'}$$

whenever $\frX'' \succeq \frX' \succeq \frX$. We then obtain an inductive system $(\sp_\frX^{-1} \sD_{\frX,\infty})_{\frX \in \cF_{\frX_0}}$ of sheaves of rings on $\langle \frX_0 \rangle$, and we put

$$\sD_{\langle \frX_0 \rangle} = \varinjlim_\frX \sp_\frX^{-1} \sD_{\frX,\infty} \;.$$

\vskip8pt
\end{para}

\begin{dfn}\label{dfn_coadm_system} A $\sD_{\langle \frX_0 \rangle}$-module $\sM$ is called {\it coadmissible} if there is a family $(\sM_\frX,\psi^\sM_{\frX,\frX'})$ of coadmissible $\sD_{\frX,\infty}$-modules $\sM_\frX$, for all $\frX \in \cF_{\frX_0}$, together with an isomorphism

$$\psi^\sM_{\frX',\frX}: (\pi_{\frX',\frX})_*\sM_{\frX'} \stackrel{\simeq}{\lra} \sM_\frX \;,$$

\vskip8pt

of $\sD_{\frX,\infty}$-modules, whenever we have $\frX' \succeq \frX$ in $\cF_{\frX_0}$. This system of modules and isomorphisms is required to satisfy the following conditions:

\vskip8pt

\begin{enumerate}
\item  Whenever $\frX'' \succeq \frX' \succeq \frX$ in $\cF_{\frX_0}$ the following transitivity condition holds :$$\psi^\sM_{\frX',\frX} \circ (\pi_{\frX',\frX})_*(\psi^\sM_{\frX'',\frX'}) = \psi^\sM_{\frX'',\frX} \;.$$

\vskip8pt

\item $\sM$ is isomorphic to the inductive limit $\varinjlim_\frX  \sp_\frX^{-1} \sM_\frX$ as $\sD_{\langle \frX_0 \rangle} $-module.  \end{enumerate}
\end{dfn}

Note that the transition morphism $\sp_\frX^{-1} \sM_\frX \ra \sp_{\frX'}^{-1} \sM_{\frX'}$ in the inductive limit in (ii) is defined by applying the functor  $\sp_{\frX'}^{-1} $ to the morphism $$\pi_{\frX',\frX}^{-1}\sM_\frX \simeq \pi_{\frX',\frX}^{-1} (\pi_{\frX',\frX})_*\sM_{\frX'} \ra \sM_{\frX'} \;.$$ The latter is obtained
from $(\psi^\sM_{\frX',\frX})^{-1}: \sM_\frX \stackrel{\simeq}{\lra} (\pi_{\frX',\frX})_*\sM_{\frX'}$ and the adjunction map $\pi_{\frX',\frX}^{-1} \circ (\pi_{\frX',\frX})_* \ra {\rm id}$.

\vskip8pt

We denote by $$\cC_{\langle \frX_0 \rangle} \subseteq {\rm Mod}(\sD_{\langle \frX_0 \rangle})$$ the full subcategory of coadmissible
$\sD_{\langle \frX_0 \rangle}$-modules in the category of all $\sD_{\langle \frX_0 \rangle}$-modules.

\vskip8pt

\begin{prop}\label{prop-equivZar} Let $\frX\in\cF_{\frX_0}$. One has an equivalence of categories

 $$(\sp_\frX)_*: \cC_{\langle \frX_0 \rangle} \stackrel{\simeq}{\longrightarrow} \cC_{\frX} \;.$$

 \vskip5pt

 In particular, the category $\cC_{\langle \frX_0 \rangle} $ is abelian.
 \end{prop}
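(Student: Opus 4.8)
The plan is to construct a quasi-inverse to $(\sp_\frX)_*$ by hand, exploiting the fact that a coadmissible $\sD_{\langle\frX_0\rangle}$-module is, by definition \ref{dfn_coadm_system}, already given as a compatible family $(\sM_\frY,\psi^\sM_{\frY,\frY'})$ indexed by $\cF_{\frX_0}$. First I would fix $\frX\in\cF_{\frX_0}$ and define a functor $\Phi$ from $\cC_\frX$ to $\cC_{\langle\frX_0\rangle}$ as follows: given a coadmissible $\sD_{\frX,\infty}$-module $\sN$, and any $\frY\in\cF_{\frX_0}$, choose a common upper bound $\frZ\succeq\frX$, $\frZ\succeq\frY$ (the set is directed, as recalled in the excerpt), set $\sN_\frZ=(\pi_{\frZ,\frX})^!\sN$ using the equivalence $\pi_*$ from \ref{prop_invariance_for_D_infty}, and then $\sN_\frY=(\pi_{\frZ,\frY})_*\sN_\frZ$. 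One checks this is independent of the choice of $\frZ$ up to canonical isomorphism (again using directedness and \ref{prop_invariance_for_D_infty}), that each $\sN_\frY$ is coadmissible over $\sD_{\frY,\infty}$, and that the isomorphisms $\psi_{\frY',\frY}$ coming from functoriality of $\pi_*$ satisfy the transitivity condition in \ref{dfn_coadm_system}(i). Then $\Phi(\sN):=\varinjlim_\frY \sp_\frY^{-1}\sN_\frY$.

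Next I would verify that $(\sp_\frX)_*\circ\Phi\simeq\mathrm{id}$ and $\Phi\circ(\sp_\frX)_*\simeq\mathrm{id}$. For the first, note $(\sp_\frX)_*\varinjlim_\frY\sp_\frY^{-1}\sN_\frY$: since $\sp_\frX=\pi_{\frY,\frX}\circ\sp_\frY$ for $\frY\succeq\frX$, and the system is cofinal on the subset $\{\frY\succeq\frX\}$ (any $\frY$ has an upper bound with $\frX$), pushing forward along $\sp_\frX$ and using $(\sp_\frY)_*\sp_\frY^{-1}=\mathrm{id}$ on the relevant sheaves together with $(\pi_{\frY,\frX})_*\sN_\frY\simeq\sN$ collapses the inductive limit to $\sN$; the precise statement here needs the fact that $\sp_\frX$ is (pro-)proper and that the projective limit space $\langle\frX_0\rangle$ has the property that $(\sp_\frX)_*\sp_\frX^{-1}\sG=\sG$ for coherent-type sheaves, which should follow from quasi-compactness of the transition maps as in \cite{BoschLectures}. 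For the second, given a coadmissible $\sD_{\langle\frX_0\rangle}$-module $\sM$ with defining family $(\sM_\frY)$, one has $(\sp_\frX)_*\sM\simeq\sM_\frX$ (this is essentially the content of the desired equivalence applied in the easy direction, and follows by the same collapsing-of-the-colimit argument), and then $\Phi((\sp_\frX)_*\sM)$ recovers, for each $\frY$, the module $(\pi_{\frZ,\frY})_*(\pi_{\frZ,\frX})^!\sM_\frX\simeq(\pi_{\frZ,\frY})_*\sM_\frZ\simeq\sM_\frY$ using the transitivity condition \ref{dfn_coadm_system}(i); naturality of all these isomorphisms gives $\Phi\circ(\sp_\frX)_*\simeq\mathrm{id}$.

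The final clause --- that $\cC_{\langle\frX_0\rangle}$ is abelian --- then follows formally: $(\sp_\frX)_*$ is an equivalence onto $\cC_\frX$, which is abelian by corollary \ref{abelian}, so $\cC_{\langle\frX_0\rangle}$ inherits an abelian structure (kernels and cokernels of a morphism can be computed by transporting along the equivalence, and one checks they agree with the naive kernel/cokernel in $\mathrm{Mod}(\sD_{\langle\frX_0\rangle})$ by the same local-on-$\langle\frX_0\rangle$ argument used in the proof of \ref{abelian}).

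I expect the main obstacle to be the bookkeeping around the inductive limit defining $\sD_{\langle\frX_0\rangle}$-modules and the interaction of $\varinjlim_\frY\sp_\frY^{-1}(-)$ with $(\sp_\frX)_*$: one must show that pushing the colimit forward along $\sp_\frX$ genuinely collapses it to the $\frX$-term, which requires knowing that for a coadmissible (hence, locally, coherent-ish) sheaf $\sG$ on $\frX$ one has $(\sp_\frX)_*\sp_\frX^{-1}\sG=\sG$, and that $(\sp_\frX)_*$ commutes with the relevant filtered colimit. The first is a property of Zariski--Riemann spaces (the transition maps $\pi_{\frY,\frX}$ are proper with $(\pi_{\frY,\frX})_*\cO_{\frY}=\cO_{\frX}$-type behaviour already used throughout section 2, and \ref{prop_invariance_for_D_infty} packages exactly what is needed at finite level); the second uses that $\langle\frX_0\rangle$ is quasi-compact and the index category $\cF_{\frX_0}$ is directed. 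Once these two points are pinned down, everything else is diagram-chasing with the functor $\pi_*$ and its quasi-inverse $\pi^!$ from \ref{prop-exactdirectimage} and \ref{prop_invariance_for_D_infty}.
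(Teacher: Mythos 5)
Your proposal is correct and follows essentially the same route as the paper: the quasi-inverse is built from the equivalence $\pi_*$ of \ref{prop_invariance_for_D_infty} (the paper only defines the family on the cofinal subset $\frX'\succeq\frX$, while you extend it to all of $\cF_{\frX_0}$ via common refinements), the collapse $(\sp_\frX)_*\sM\simeq\sM_\frX$ is the key computation, and abelianness is transported from \ref{abelian}. The only difference is cosmetic: where you invoke (pro-)properness and $(\sp_\frX)_*\sp_\frX^{-1}=\mathrm{id}$, the paper verifies the collapse by directly computing sections, $(\sp_\frX)_*\sp_{\frX'}^{-1}\sM_{\frX'}(\frU)=\sM_{\frX'}\bigl(\pi_{\frX',\frX}^{-1}(\frU)\bigr)\simeq\sM_\frX(\frU)$, using $\sp_{\frX'}(\sp_\frX^{-1}(\frU))=\pi_{\frX',\frX}^{-1}(\frU)$.
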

 \begin{proof} Let  $\sM = \varinjlim_\frX  \sp_\frX^{-1} \sM_\frX$ and let $\frX' \succeq \frX$. There is a canonical isomorphism

$$  (\sp_\frX)_* \sp_{\frX'}^{-1} \sM_{\frX'} \stackrel{\simeq}{\longrightarrow} \sM_{\frX} \;.$$
\vskip5pt
Indeed, let $\frU\subseteq \frX$ be an open and let $\frV= \pi_{\frX',\frX}^{-1} (\frU)$. Then

$$  (\sp_\frX)_* \sp_{\frX'}^{-1} \sM_{\frX'}(\frU) = \sp_{\frX'}^{-1} \sM_{\frX'} (\sp^{-1}_\frX(\frU))=  \sM_{\frX'} (\frV) $$

\vskip5pt
using that  $\sp_{\frX'}(\sp^{-1}_\frX(\frU))=\frV$. But  $\sM_{\frX'} (\frV) \simeq  \sM_{\frX} (\frU)$ via the map $\psi^\sM_{\frX',\frX}$.

In particular, we get an isomorphism

$$ (\sp_\frX)_*(\sM) =  \varinjlim_{\frX'} (\sp_\frX)_* \sp_{\frX'}^{-1} \sM_{\frX'} \stackrel{\simeq}{\longrightarrow} \sM_{\frX} \;.$$

\vskip5pt

This shows that the functor $(\sp_\frX)_*$ appearing in the proposition is well-defined. In the other direction, let $\sM\in  \cC_{\frX}$ and
define for  $\frX' \succeq \frX$ the module $\sM_{\frX'}$ by the requirement  $(\pi_{\frX',\frX})_*(\sM_{\frX'})\simeq \sM_{\frX}$ via \ref{prop_invariance_for_D_infty}. The family $(\sM_{\frX'})$ then satisfies the conditions (i) and (ii) in the above definition and its inductive limit $\sM$ lies therefore in  $\cC_{\langle \frX_0 \rangle}$.
This gives a quasi-inverse to the functor  $(\sp_\frX)_*$. By corollary \ref{abelian}, the category $\cC_{\langle \frX_0 \rangle} $ is then abelian.

 \end{proof}

Remark: It follows from the above proof that there is a canonical isomorphism of sheaves of rings
$$\Ga(\langle \frX \rangle, \sD_{\langle \frX_0 \rangle})\simeq  \Ga(\frX, \sD_{\frX,\infty})$$
for any $\frX\in\cF_{\frX_0}.$

\vskip5pt

\begin{thm}\label{thm_AB_for_frX_infty} {\rm (Theorem A and B for coadmissible $\sD_{\langle \frX_0 \rangle}$-modules) }

\begin{enumerate} \item Let $\frV\subseteq \frX$ be an open affine and let $\langle \frV \rangle=\sp^{-1}_\frX(\frV)$ be its Zariski-Riemann space. One has an equivalence of categories
$$ \Ga(\langle \frV \rangle,-): \cC_{\langle \frV \rangle}\stackrel{\simeq}{\lra} \cC_{\sD_{\frV,\infty}(\frV)} \;.$$

\vskip8pt

\item  For every $\sM\in  \cC_{\langle \frX_0 \rangle}$ and every $q>0$ one has $$H^q(\langle \frX_0 \rangle,\sM) = 0 \;.$$
\end{enumerate}
\end{thm}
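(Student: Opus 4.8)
The plan is to reduce both statements to what has already been established for a single admissible blow-up $\frX$, namely Theorems \ref{thm_A_for_frX_infty} and \ref{thm_B_for_frX_infty}, together with the invariance result \ref{prop_invariance_for_D_infty} and the description of $\cC_{\langle \frX_0 \rangle}$ via \ref{prop-equivZar}. For part (i), let $\sM \in \cC_{\langle \frV \rangle}$, say $\sM = \varinjlim_{\frX} \sp_\frX^{-1}\sM_\frX$ with $\sM_\frX \in \cC_\frX$. First I would observe that the canonical isomorphism $(\sp_{\frX_0})_*(\sM) \simeq \sM_{\frX_0}$ from the proof of \ref{prop-equivZar} identifies $\Ga(\langle \frV \rangle, \sM)$ with $\Ga(\frV, \sM_{\frX_0})$, because $\sp_{\frX_0}^{-1}(\frV) = \langle \frV \rangle$ and hence $\Ga(\langle \frV\rangle, \sM) = \Ga(\frV, (\sp_{\frX_0})_*\sM) = \Ga(\frV, \sM_{\frX_0})$. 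Since $\frX_0$ is smooth, it is itself an admissible blow-up of itself (with $\cI = \cO_{\frX_0}$), so \ref{thm_A_for_frX_infty} applies to $\frX_0$ and $\frV \subseteq \frX_0$: it gives an equivalence $\Ga(\frV,-): \cC_\frV \stackrel{\simeq}{\to} \cC_{\sD_{\frX_0,\infty}(\frV)}$. Composing with the equivalence $(\sp_{\frX_0})_*: \cC_{\langle \frV\rangle} \stackrel{\simeq}{\to} \cC_\frV$ of \ref{prop-equivZar} (applied with base $\frV$ in place of $\frX_0$), and noting $\sD_{\frX_0,\infty}(\frV) = \sD_{\frV,\infty}(\frV)$ by the Remark after \ref{prop-equivZar}, yields the desired equivalence $\Ga(\langle \frV\rangle,-): \cC_{\langle \frV\rangle} \stackrel{\simeq}{\to} \cC_{\sD_{\frV,\infty}(\frV)}$.

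For part (ii), the strategy is to compute cohomology on $\langle \frX_0 \rangle$ as a direct limit of cohomology on the individual blow-ups $\frX$. Concretely, let $\sM = \varinjlim_\frX \sp_\frX^{-1}\sM_\frX \in \cC_{\langle \frX_0\rangle}$. The key input is that for each $\frX \in \cF_{\frX_0}$ the projection $\sp_\frX: \langle \frX_0\rangle \ra \frX$ is the projective limit of the directed system of blow-up morphisms $\pi_{\frX',\frX}: \frX' \ra \frX$ ($\frX' \succeq \frX$), each of which is a proper morphism of noetherian formal schemes; consequently $R^q(\sp_\frX)_* (\sp_\frX^{-1}\cF) \simeq \varinjlim_{\frX' \succeq \frX} R^q(\pi_{\frX',\frX})_* (\pi_{\frX',\frX}^{-1}\cF)$ and, more relevantly, cohomology over $\langle \frX_0\rangle$ of the inductive-limit sheaf $\sM$ can be computed as $H^q(\langle \frX_0\rangle, \sM) \simeq \varinjlim_\frX H^q(\frX, \sM_\frX)$ — this uses that $\langle \frX_0\rangle$ is a quasi-compact, quasi-separated space obtained as a cofiltered limit along affine (indeed proper) transition maps, so that cohomology commutes with the inductive limit defining $\sM$ and with the projective limit defining the space, cf. \cite[section 9.3]{BoschLectures} and the limit arguments in the Stacks Project. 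Granting this, I would then invoke Theorem \ref{thm_B_for_frX_infty}, which (once we know $\frX_0$ can be taken affine — see below) gives $H^q(\frX, \sM_\frX) = 0$ for all $q>0$ and all $\frX$, whence $H^q(\langle \frX_0\rangle, \sM) = 0$.

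There is a subtlety: Theorem \ref{thm_B_for_frX_infty} is stated without an affineness hypothesis on $\frX_0$, and indeed its proof via \cite[20.32.4]{stacks-project} only needs a basis $\cB$ of opens $\frV$ with the vanishing $H^q(\frV,\sM_k) = 0$ and $\varprojlim^{(1)}_k \sM_k(\frV) = 0$, which hold for $\frV \in \cB$ irrespective of whether $\frX_0$ is affine. So \ref{thm_B_for_frX_infty} already gives $H^q(\frX,\sM_\frX) = 0$ for every blow-up $\frX$ of our fixed $\frX_0$. Thus for (ii) I would proceed: reduce to an affine cover $\frX_0 = \bigcup \frU_0^{(j)}$, noting that $\sp_{\frX_0}^{-1}(\frU_0^{(j)}) = \langle \frU_0^{(j)}\rangle$ and that restriction of a coadmissible module stays coadmissible, so by a Mayer–Vietoris / Čech argument it suffices to treat $H^q(\langle \frX_0\rangle, \sM)$ when $\frX_0$ itself is affine; but in fact the cleanest route is simply $H^q(\langle \frX_0\rangle, \sM) = \varinjlim_\frX H^q(\frX,\sM_\frX) = \varinjlim_\frX 0 = 0$, the first equality being the limit-interchange statement above and the middle vanishing being \ref{thm_B_for_frX_infty}.

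The main obstacle I anticipate is justifying rigorously the interchange $H^q(\langle \frX_0\rangle, \varinjlim_\frX \sp_\frX^{-1}\sM_\frX) \simeq \varinjlim_\frX H^q(\frX, \sM_\frX)$. This requires: (a) that $\langle \frX_0\rangle$ is quasi-compact and quasi-separated so that $H^q(\langle\frX_0\rangle,-)$ commutes with filtered colimits of sheaves — this is part of the basic theory of Zariski–Riemann spaces, cf. \cite[9.3]{BoschLectures}; and (b) that for each individual sheaf, $H^q(\langle \frX_0\rangle, \sp_\frX^{-1}\sM_\frX) \simeq \varinjlim_{\frX'\succeq\frX} H^q(\frX', \pi_{\frX',\frX}^{-1}\sM_\frX)$, together with $\pi_{\frX',\frX}^{-1}\sM_\frX$ being related to a coherent module and the higher pushforwards along the proper maps $\pi_{\frX',\frX}$ being controlled — here one uses that $\langle\frX_0\rangle = \varprojlim_{\frX'\succeq\frX}\frX'$ with affine transition maps, so Grothendieck's theorem on cohomology and limits applies. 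I would phrase (b) carefully using \cite[9.3]{BoschLectures} or a direct spectral-sequence argument, after which the conclusion is immediate. Part (i) I expect to be essentially formal given \ref{prop-equivZar}, \ref{thm_A_for_frX_infty}, and the Remark on global sections.
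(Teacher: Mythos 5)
Your proposal is essentially the paper's own argument: for (i), reduce via the equivalence of \ref{prop-equivZar} to a coadmissible module on the blow-up and then invoke the local Theorem A (\ref{thm_A_for_frX_infty}); for (ii), identify $H^q(\langle \frX_0\rangle,\sM)$ with $\varinjlim_\frX H^q(\frX,\sM_\frX)$ and apply Theorem B (\ref{thm_B_for_frX_infty}), which indeed requires no affineness of $\frX_0$ — the paper gets the colimit identification in one stroke by citing \cite[0.3.1.19]{FujiwaraKato_I}, which is exactly the interchange you flag as the main obstacle. Two small corrections to how you set this up. First, in (i) the open $\frV$ sits inside an arbitrary admissible blow-up $\frX$, not inside $\frX_0$; your reduction should therefore restrict the equivalence $(\sp_\frX)_*:\cC_{\langle\frX_0\rangle}\to\cC_\frX$ to $\langle\frV\rangle=\sp_\frX^{-1}(\frV)$ and compose with $\Ga(\frV,-):\cC_\frV\to\cC_{\sD_{\frX,\infty}(\frV)}$ from \ref{thm_A_for_frX_infty}, rather than applying \ref{prop-equivZar} ``with base $\frV$'': the theory of the paper is built over a smooth base, and an affine open of a blow-up need not be smooth, so that rebasing is not literally available (nor needed). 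Second, in (ii) the transition morphisms $\pi_{\frX',\frX}$ are proper blow-ups, not affine, so Grothendieck's affine-transition limit theorem is not the right reference; what one uses is the limit theorem for cofiltered systems of coherent sober spaces with quasi-compact transition maps (this is the content of \cite[0.3.1.19]{FujiwaraKato_I}), after which $H^q(\langle\frX_0\rangle,\sM)\simeq\varinjlim_\frX H^q(\frX,\sM_\frX)=0$ exactly as you say. With these adjustments your plan matches the paper's proof.
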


\begin{proof} Part (i) follows from the preceding proposition together with theorem \ref{thm_A_for_frX_infty}. By \cite[0.3.1.19]{FujiwaraKato_I} the canonical map

$$\varinjlim_\frX H^q(\frX,\sM_\frX)\stackrel{\simeq}{\lra}  H^q(\langle \frX_0 \rangle,\sM)$$

is an isomorphism. Thus, part (ii) follows from \ref{thm_B_for_frX_infty}.
\end{proof}

\subsection{Examples}

The first example is given by the structure sheaf of the Zariski-Riemann space tensored with $\Q$. Let us denote
$$     \cO_{\langle \frX_0 \rangle,\Q}= \varinjlim_{\frX} \sp_{\frX}^{-1}\cO_{\frX,\Q} \;.$$
\begin{prop} The sheaf $\cO_{\langle \frX_0 \rangle,\Q}$ is a coadmissible $\sD_{\langle \frX_0 \rangle}$-module.
\end{prop}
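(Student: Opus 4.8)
The plan is to verify the definition of a coadmissible $\sD_{\langle \frX_0 \rangle}$-module directly, by exhibiting the required system $(\sM_\frX, \psi^\sM_{\frX',\frX})$ of coadmissible $\sD_{\frX,\infty}$-modules. For each admissible blow-up $\frX \in \cF_{\frX_0}$ the natural candidate is $\sM_\frX = \cO_{\frX,\Q}$. First I would check that $\cO_{\frX,\Q}$ is a coadmissible $\sD_{\frX,\infty}$-module: the sheaf $\cO_{\frX,\Q}$ carries a natural action of $\sD^\dagger_{\frX,k}$ for every $k \ge k_\frX$ (this is built into the construction in \ref{subsec-const}, where $\sD^{(k,m)}_{\frX}$ acts on $\cO_\frX$, hence $\hsD^{(k,m)}_{\frX,\Q}$ and $\sD^\dagger_{\frX,k}$ act on $\cO_{\frX,\Q}$), and it is coherent over each $\sD^\dagger_{\frX,k}$ since it is already coherent over $\cO_{\frX,\Q} \subseteq \sD^\dagger_{\frX,k}$. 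Setting $\sM_k = \cO_{\frX,\Q}$ with all transition maps the identity, condition (i) of \ref{lemma-weak-to-strong_FS} reads $\sD^\dagger_{\frX,k} \otimes_{\sD^\dagger_{\frX,k+1}} \cO_{\frX,\Q} \simeq \cO_{\frX,\Q}$, which holds because $\cO_{\frX,\Q}$ is a quotient of both rings compatibly (the unit section generates it over either ring), and condition (ii) holds trivially since the projective limit of a constant system is that constant sheaf. Thus $\cO_{\frX,\Q} \in \cC_\frX$.

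Next I would supply the comparison isomorphisms. For $\frX' \succeq \frX$ with structural morphism $\pi = \pi_{\frX',\frX}: \frX' \ra \frX$ (itself an admissible blow-up), Lemma \ref{lemma_int} — or rather its evident analogue for blow-ups over a not-necessarily-affine base, obtained by working locally over affine opens $\frU_0 \subseteq \frX_0$ — gives a canonical isomorphism $\cO_{\frX,\Q} \stackrel{\simeq}{\lra} \pi_* \cO_{\frX',\Q}$. This is exactly the datum $\psi^\sM_{\frX',\frX}: (\pi_{\frX',\frX})_* \cO_{\frX',\Q} \stackrel{\simeq}{\lra} \cO_{\frX,\Q}$ (after inverting it), and it is a morphism of $\sD_{\frX,\infty}$-modules because the $\sD$-module structures are compatible with the pushforward identifications $\pi_* \sD_{\frX',\infty} = \sD_{\frX,\infty}$ of \ref{can_iso_D_infty}. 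The transitivity condition (i) of Definition \ref{dfn_coadm_system}, namely $\psi^\sM_{\frX',\frX} \circ (\pi_{\frX',\frX})_*(\psi^\sM_{\frX'',\frX'}) = \psi^\sM_{\frX'',\frX}$, follows from the functoriality of the canonical map $\cO_{\frX,\Q} \ra \pi_* \cO_{\frX',\Q}$ under composition of blow-up morphisms, i.e.\ from the identity $\sp_\frX = \pi_{\frX',\frX} \circ \sp_{\frX'}$ applied to structure sheaves. Finally, condition (ii) holds by the very definition $\cO_{\langle \frX_0 \rangle,\Q} = \varinjlim_\frX \sp_\frX^{-1} \cO_{\frX,\Q}$, once one checks that the transition maps in this inductive limit agree with those prescribed in Definition \ref{dfn_coadm_system} — but both are induced by the same adjunction maps $\pi_{\frX',\frX}^{-1} (\pi_{\frX',\frX})_* \ra \mathrm{id}$ applied to $\cO_{\frX',\Q}$, so they coincide.

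The only mild subtlety — and the step I expect to require the most care — is the verification of the coadmissibility of $\cO_{\frX,\Q}$ over $\sD_{\frX,\infty}$ with respect to the precise bookkeeping of \ref{dfn_coadm_frX} versus \ref{lemma-weak-to-strong_FS}: one should present it via a $(\hsD^{(k,0)}_{\frX,\Q})$-sequence, and here again the constant system $\sM_k = \cO_{\frX,\Q}$ works, with the isomorphism $\hsD^{(k,0)}_{\frX,\Q} \otimes_{\hsD^{(k+1,0)}_{\frX,\Q}} \cO_{\frX,\Q} \simeq \cO_{\frX,\Q}$ holding for the same reason as before. Everything else is formal manipulation with pushforwards and adjunctions, using results already established: \ref{prop-exactdirectimage} (exactness and invariance of $\pi_*$), \ref{prop_invariance_for_D_infty} (the equivalence $\pi_*: \cC_{\frX'} \simeq \cC_\frX$), and \ref{can_iso_D_infty}. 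In fact, a slicker packaging is available: since $\cO_{\frX_0,\Q} \in \cC_{\frX_0}$ by the above, \ref{prop_invariance_for_D_infty} immediately produces, for every $\frX$, a coadmissible $\sD_{\frX,\infty}$-module $\sM_\frX$ with $(\pr_\frX)_* \sM_\frX \simeq \cO_{\frX_0,\Q}$; one identifies $\sM_\frX = \cO_{\frX,\Q}$ using Lemma \ref{lemma_int} (localized over $\frX_0$), and the required compatibilities $\psi^\sM_{\frX',\frX}$ are then forced by the uniqueness in \ref{prop_invariance_for_D_infty}. I would likely present the argument in this second form, as it minimizes the explicit checking of transitivity, which then comes for free from the coherence of the equivalences of categories.
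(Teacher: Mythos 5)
Your overall architecture is the same as the paper's: reduce, via $\pi_*\cO_{\frX',\Q}=\cO_{\frX,\Q}$ (\ref{eq_catO}, \ref{lemma_int}) and the equivalences \ref{prop-equivZar} / \ref{prop_invariance_for_D_infty}, to the single statement that the structure sheaf (the paper works with $\cO_{\frX_0,\Q}$ on the smooth base, which is the better choice) is a coadmissible module; your ``slicker packaging'' at the end is essentially verbatim the paper's reduction. The problem is that the one step carrying the real content is asserted with an invalid argument. You write that $\cO_{\frX,\Q}$ ``is coherent over each $\sD^\dagger_{\frX,k}$ since it is already coherent over $\cO_{\frX,\Q}\subseteq\sD^\dagger_{\frX,k}$.'' Coherence over a subring does not transfer upward: coherence of $\cO_{\frX,\Q}$ as a module over itself gives local finite \emph{generation} over $\sD^\dagger_{\frX,k}$ (by the unit section), but coherence over the coherent sheaf of rings $\sD^\dagger_{\frX,k}$ requires a local finite \emph{presentation}, i.e.\ an identification of the kernel of $P\mapsto P\cdot 1$ as a finitely generated left ideal. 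Likewise, your justification of $\sD^\dagger_{\frX,k}\otimes_{\sD^\dagger_{\frX,k+1}}\cO_{\frX,\Q}\simeq\cO_{\frX,\Q}$ (``the unit section generates it over either ring'') only yields surjectivity; injectivity does not come for free.

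This is precisely where the paper invests its effort: over a coordinatized affine $\frU_0\subseteq\frX_0$ it proves a division (Spencer-type) lemma, \ref{div1}, showing that any $P\in D^\dagger_k$ can be written $P=a_{\uzero}+\sum_i P_i\der_i$ with $P_i\in D^\dagger_k$, and deduces the presentation $(\sD^\dagger_{\frU_0,k})^M\ra\sD^\dagger_{\frU_0,k}\ra\cO_{\frU_0,\Q}\ra 0$, the first map being $(P_1,\ldots,P_M)\mapsto\sum_i P_i\der_i$. The proof of \ref{div1} is not formal: one divides the coefficients $a_{\unu}$ by $\nu_1$ and must check that the overconvergence condition $|a_{\unu}|<C\eta^{|\unu|}$ survives, using $|1/\nu_1|_p=O(|\unu|_p)$. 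The presentation then gives both the coherence of $\cO_{\frX_0,\Q}$ over $\sD^\dagger_{\frX_0,k}$ and (by tensoring the presentation over $\sD^\dagger_{\frX_0,k+1}$ with $\sD^\dagger_{\frX_0,k}$) the transition isomorphism needed for coadmissibility. Without this, or some substitute for it, your argument does not establish $\cO_{\frX_0,\Q}\in\cC_{\frX_0}$, and everything downstream collapses. (A further reason to follow the paper and prove coherence on $\frX_0$ rather than directly on an arbitrary blow-up $\frX$, as in your first variant: the division argument uses the global \'etale coordinates of the smooth base, and the reduction to $\frX_0$ via \ref{prop-equivZar} lets you avoid redoing it on the non-smooth $\frX$.)
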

\begin{proof} If $\frX,\frX'\in\cF_{\frX_0}$ and $\pi$ : $\frX'\ra \frX$ is a morphism over $\frX_0$, then $\pi_*\cO_{\frX',\Q}=\cO_{\frX,\Q}$ by ~\ref{eq_catO}. By
\ref{prop-equivZar} we have an equivalence of categories
$\cC_{\langle \frX_0 \rangle}\rightarrow \cC_{\frX_0}$ with an explicit quasi-inverse. From these considerations we see that the claim will follow from the fact that $\cO_{\frX_0,\Q}\in \cC_{\frX_0}$.

For any integer $k$, the sheaf $\cO_{\frX_0,\Q}$ is a $\sD^\dagger_{\frX_0,k}$-module. Let $\frU_0\subset \frX_0$ an affine
open of $\frX_0$ with coordinates $x_1, \ldots, x_M$, and corresponding derivations $\der_1,\ldots,\der_M$. Following
~\ref{ddagkX}, we write
$$D^{\dagger}_k=\Ga(\frU_0,\sD^\dagger_{\frX_0,k})=\left\{\sum_{\unu}\vpi^{k|\unu|}a_{\unu}\uder^{[ \unu ]}\,|\, a_{\unu}\in
\cO_{\frX_0,\Q}(\frU_0),
\textrm{ and } \exists C>0, \eta<1 \, | \, |a_{\unu}|< C \eta^{|\unu|} \right\} \;.$$
We have the following lemma, using the notation $\uzero=(0,\ldots,0).$
\begin{lemma} \label{div1} Let $P\in D^{\dagger}_k$, there exist $P_1,\ldots,P_M\in D^{\dagger}_k$ and $a_{\uzero}\in \cO_{\frX_0,\Q}(\frU_0)$ such that
              $$P=a_{\uzero}+\sum_{i=1}^M P_i \cdot \der_i \;.$$
\end{lemma}
        \begin{proof} The proof of this lemma
          is essentially the proof of the Spencer lemma by Berthelot ~\cite[3.2.1]{Berthelot_Trento}
          for the case $k=0$, meaning for the sheaf of arithmetic differential operators. Let us denote $\uone=(1,0,\ldots,0)\in
		\Ne^M$ and by $OP_1$ the set of operators in $D^{\dagger}_k$ such that $a_{\unu}=0$ if $\nu_1 \neq 0$.
		Let $P=\sum_{\unu}\vpi^{k|\unu|}a_{\unu}\uder^{[ \unu ]}\in D^{\dagger}_k$, and consider
		$$P_1=\sum_{\unu\, | \, \nu_1\neq 0}\vpi^{k|\unu|}\frac{a_{\unu}}{\nu_1}\uder^{[ \unu-\uone]} \;,$$ then,
		as $|1/\nu_1|_p=O(|\nu_1|_p)=O(|\unu|_p)$ when $|\unu|_p \ra +\infty$, this operator $P_1$
           belongs to $D^{\dagger}_k$ (here $|.|_p$ is the usual $p$-adic norm over the field $\Q$). Moreover since we have the
		identity
		$$ \nu_1 \uder^{[ \unu]} = \der_1 \cdot \uder^{[ \unu-\uone]} \;,$$ we get that
		$P = P_1 \der_1 + Q_1$ where $Q_1 \in OP_1$. We can now apply the same procedure to $Q_1$ relatively to $\der_2$.
		Doing this, we see that there exist
		 $P_2 \in D^{\dagger}_k$ and $Q_2$ with no terms containing
		neither $\der_1$, nor $\der_2$, such that $P=P_1 \der_1 + P_2 \der_2 + Q_2$. We finally find the lemma iterating $M$ times. Note
		that the $a_{\uzero}$ term given by the lemma is necessarily the same as the initial $a_{\uzero}$ term of $P$.
		 \end{proof}

This allows us to prove the
\begin{lemma} There is a presentation
 $$ \xymatrix @R-25pt {   {\sD^\dagger_{\frU_0,k}}^M \ar@{->}[r]^{\psi} & \sD^\dagger_{\frU_0,k}\ar@{->}[r]
 & \cO_{\frU_0,\Q}\ar@{->}[r] & 0 \\
                   (P_1, \ldots,P_M)\ar@{->}[r] & \sum_{i=1}^M P_i \der_i. &  & \\
      &    P \ar@{->}[r] & P \cdot 1 & .}$$
\end{lemma}
   \begin{proof} Let $\frV_0 \subset \frU_0$ be affine, and denote by $D^{\dagger}_k=\Ga(\frV_0,\sD^\dagger_{\frX_0,k})$,
         we have to prove that we have a presentation, with the same maps as in the statement
		 $$ \xymatrix @R-25pt {   {D^{\dagger}_k}^M \ar@{->}[r] & D^{\dagger}_k\ar@{->}[r]
		 & \cO_{\frX_0,\Q}(\frV_0)\ar@{->}[r] & 0 .}$$
		Let $P\in D^{\dagger}_k$, $P=\sum_{\unu}\vpi^{k|\unu|}a_{\unu}\uder^{[ \unu ]}$, such that $P(1)=a_{\uzero}=0$.
         By the previous lemma, there exist $P_1,\ldots, P_M$ such that $P=\sum_{i=1}^M P_i \der_i$ so that $P\in
		\im(\psi)$.
 \end{proof}

Let us come back now to the proof of the proposition.
Using this presentation, we see that $\cO_{\frX_0,\Q}$ is a coherent $\sD^\dagger_{\frX_0,k}$-module,
 and that we have canonical compatibility relations
$$   \sD^\dagger_{\frX_0,k}\ot_{\sD^\dagger_{\frX_0,k+1}} \cO_{\frX_0,\Q}\simeq \cO_{\frX_0,\Q} \;.$$
Finally the $\sD_{\frX_0,\infty}$-module $\cO_{\frX_0,\Q}$ is isomorphic to the constant projective system of
coherent $\sD^\dagger_{\frX_0,k}$-modules $(\cO_{\frX_0,\Q})$ and is an element of $\cC_{\frX_0}$. As explained at the beginning
of the proof, this implies that $\cO_{\langle \frX_0 \rangle,\Q}\in \cC_{\langle \frX_0 \rangle}$. This ends the proof
of the proposition.
\end{proof}

\vskip8pt

For the second example, we consider a Cartier divisor $\frZ$, which is assumed to be smooth over $\fro$, of the formal scheme $\frX_0$. As above, we denote by
$\frX_{0,\Q}$ and $\frZ_\Q$ the rigid analytic spaces associated with
$\frX_0$ and $\frZ$, respectively. Let $U =\frX_{0,\Q} \backslash \frZ_\Q$
be the open complement, and $j: U \ra \frX_{0,\Q}$ the inclusion of rigid spaces. We have the specialization map
 $\sp: \frX_{0,\Q} \ra \frX_0$.

\begin{prop}\label{coad2} The sheaf $\sp_{*}j_*\cO_U$ is a coadmissible $\sD_{\frX_0,\infty}$-module.
\end{prop}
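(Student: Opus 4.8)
The strategy is to reduce, as in the previous example, to showing that $\sp_* j_* \cO_U$ is a coadmissible $\sD_{\frX_0,\infty}$-module on the smooth base $\frX_0$ itself, and then to produce for each congruence level $k$ a coherent $\sD^\dagger_{\frX_0,k}$-module $\sM_k$ together with compatible isomorphisms $\sD^\dagger_{\frX_0,k}\otimes_{\sD^\dagger_{\frX_0,k+1}}\sM_{k+1}\simeq \sM_k$ whose projective limit is $\sp_* j_*\cO_U$. The natural candidate for $\sM_k$ is a suitable $\sD^\dagger_{\frX_0,k}$-module of sections of $\cO_{\frX_{0,\Q}}$ with ``poles of bounded order'' along $\frZ$; concretely, if locally $\frZ = V(t)$ for a coordinate $t$ (part of an \'etale coordinate system $x_1=t,x_2,\ldots,x_M$), then $\sp_* j_*\cO_U$ is locally the ring $\cO_{\frX_0,\Q}(\frU_0)[1/t]$ (functions on $\frU_{0,\Q}\setminus\frZ_\Q$), and the level-$k$ piece will be the $\sD^\dagger_{\frX_0,k}(\frU_0)$-submodule generated by $1$, which should consist of convergent series $\sum_{n\geq 0} a_n \vpi^{-(\textrm{something})n} t^{-n}$ with appropriate growth conditions coming from the action of $\vpi^{k\nu}\der_1^{[\nu]}$ on $t^{-n}$.

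\textbf{Key steps.} First I would work locally: choose an open affine $\frU_0\subseteq\frX_0$ with \'etale coordinates $x_1,\ldots,x_M$ such that $\frZ\cap\frU_0 = V(x_1)$, and identify $\Ga(\frU_0,\sp_* j_*\cO_U)$ with the appropriate ring of functions on the rigid space $\frU_{0,\Q}\setminus\frZ_{0,\Q}$, which is the $p$-adic completion-type localization of $\cO_{\frX_0,\Q}(\frU_0)$ inverting $x_1$. Second, I would compute the action of the generators $\vpi^{kp^r}\der_l^{[p^r]}$ of $D^{(k,m)}=\Ga(\frU_0,\sD^{(k,m)}_{\frX_0})$ on powers $x_1^{-n}$, using the formula $\der_1^{\lan\nu\ran}(x_1^{-n}) = \pm\binom{n+\nu-1}{\nu}_{(m)} x_1^{-n-\nu}$ together with the $p$-adic estimates on $\binom{n+\nu-1}{\nu}_{(m)}$ and on $\vpi^{k\nu}$, to show that the $\sD^\dagger_{\frX_0,k}(\frU_0)$-submodule $M_k$ generated by $1$ is exactly the set of series $\sum_{n\geq 0} b_n x_1^{-n}$ with $b_n\in\cO_{\frX_0,\Q}(\frU_0)$ satisfying a growth condition of the form $\|b_n\|\leq C\eta^n$ for some $\eta<1$ (the key being that inverting $x_1$ via $\der_1$ costs exactly one factor of $\vpi^{-k}$ per order, so the $\vpi^{k}$ in the congruence level compensates the poles and forces convergence radii $<1$ to persist). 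Third, I would check that $M_k$ is a \emph{coherent} $\sD^\dagger_{\frX_0,k}$-module — most cleanly by exhibiting a finite presentation. For instance, $\sp_*j_*\cO_U$ fits in an exact sequence reminiscent of a Spencer-type resolution: one has the map $\sD^\dagger_{\frX_0,k}\to \sp_*j_*\cO_U$, $P\mapsto P\cdot(1)$ (or $P\mapsto P\cdot \tfrac 1{x_1}$ after twisting), and I would identify its kernel using a division lemma analogous to Lemma \ref{div1}, now ``centered'' along $x_1$: every operator killing $x_1^{-1}$ (or annihilating the whole module modulo $\cO_{\frX_0,\Q}$) lies in the left ideal generated by $x_1\der_1+1$ and $\der_2,\ldots,\der_M$. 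Fourth, I would verify the transition isomorphisms $\sD^\dagger_{\frX_0,k}\otimes_{\sD^\dagger_{\frX_0,k+1}} M_{k+1}\simeq M_k$ — these follow from the flatness result \ref{flatFSdag} together with the explicit local description, since base change along $\sD^\dagger_{k+1}\hookrightarrow\sD^\dagger_{k}$ of the submodule-generated-by-$1$ is the submodule-generated-by-$1$ — and that $\varprojlim_k M_k = \sp_*j_*\cO_U$, which amounts to checking that the union (over $k$) of the growth conditions $\|b_n\|\leq C\eta^n$, $\eta<1$, gives \emph{all} of the localization, while the intersection structure is controlled by the Fr\'echet-Stein formalism of \ref{prop_local_description}. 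Finally, globalizing: the $M_k$ glue to a coherent $\sD^\dagger_{\frX_0,k}$-module $\sM_k$ on $\frX_0$ (gluing is unproblematic because the construction is canonical — it is the $\sD^\dagger_{\frX_0,k}$-submodule of $\sp_*j_*\cO_U$ generated by the structure sheaf), the compatibilities are local hence already checked, and so $\sp_*j_*\cO_U = \varprojlim_k \sM_k\in\cC_{\frX_0}$; then \ref{prop-equivZar}/\ref{prop_invariance_for_D_infty} (pulling back along all admissible blow-ups) is not even needed here since $\frX_0$ is already the base, and we are done.

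\textbf{Main obstacle.} The crux is the coherence of $\sM_k$ over $\sD^\dagger_{\frX_0,k}$, equivalently, finite generation of $M_k=\Ga(\frU_0,\sM_k)$ over $D^\dagger_k = \Ga(\frU_0,\sD^\dagger_{\frX_0,k})$. The division/Spencer-type lemma that I expect to need is more delicate than Lemma \ref{div1}: there the target was $\cO$, generated by the element $1$ which is killed by all $\der_i$, whereas here the module $\cO_{\frX_0,\Q}(\frU_0)[1/x_1]$ is generated over $D^\dagger_k$ by $1$ (or $1/x_1$), but describing the relations requires carefully tracking the convergence conditions through the Euler operator $x_1\der_1$, and in particular confirming that the presence of the congruence twist $\vpi^k$ is exactly what makes the action of $\sD^\dagger_{\frX_0,k}$ on the ``order-$n$ pole'' part decay fast enough to stay within the module — this is where the hypothesis $k\geq k_\frX$ (here $k\geq 0$ suffices since $\frX=\frX_0$) and the $p$-adic estimate \ref{lem_ineq} on $v_p(\unu!)$ versus $v_p$ of the level-$m$ binomial coefficients enter, analogously to the comparison \ref{comp_level}. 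I would expect the cleanest route is to first establish the local presentation over a dense enough basis of affinoids (using the division lemma), deduce coherence via the local Theorem A \ref{easy_thmAB2}, and only then assemble the projective system; the growth-condition bookkeeping in the division lemma is the one genuinely technical computation that cannot be bypassed.
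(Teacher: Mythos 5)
Your plan is essentially the paper's proof: the paper's level-$k$ piece $\cE^{\dagger}_k=\varinjlim_{W\in\cV_s(V_k)}\sp_*j_{W*}\cO_W$, with $V_k=\frX_{0,\Q}\setminus\,]\frZ[_{|\varpi|^k}$, is exactly your cyclic $\sD^{\dagger}_{\frX_0,k}$-submodule generated by $1/t_1$, and coherence plus the base-change isomorphisms are obtained exactly as you propose, via a division/Spencer-type presentation whose relations are generated by $\der_1 t_1$ (your $x_1\der_1+1$) and $\der_2,\ldots,\der_M$, the crux being the same Berthelot-style growth estimate on the partial sums that you correctly single out as unavoidable. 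One correction to your bookkeeping: the local description of the level-$k$ module must carry the congruence factor, i.e. it consists of series $\sum_{\nu}b_\nu t_1^{-\nu-1}$ with $\|b_\nu\|\le C\,(|\varpi|^{k}\eta)^{\nu}$ for some $\eta<1$ (equivalently $b_\nu=\varpi^{k\nu}a_\nu$ with $\|a_\nu\|\le C\eta^{\nu}$); with the $k$-independent condition $\|b_\nu\|\le C\eta^{\nu}$ as literally written all levels would coincide and $\varprojlim_k M_k$ would be the ring of functions overconvergent along the radius-one tube rather than $\sp_*j_*\cO_U$, whereas the corrected condition — which is what your remark that inverting $x_1$ via $\der_1$ costs one factor of $\vpi^{-k}$ per order actually produces — makes the limit over $k$ exactly the ring of rigid functions on $U$, as in the paper.
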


\begin{proof} We freely use the notation and terminology of \cite[4.0.1]{Berthelot_Trento}.
Let us consider $$V_k=\frX_{0,\Q}\,\backslash \,]\frZ[_{|\varpi|^k}$$
and $\cV_s (V_k)$ the set of strict neighborhoods of $V_k$. Note that $V_k$ is well defined since $\frZ$ is a Cartier divisor of $\frX_0$.
If $\frV=\Spf A$ is an open affine subset of $ \frX_0$, such
that $\frZ\bigcap \frV=V(t_1)$, then $$ V_k\bigcap \frV_\Q=\left\{ x\in \frV_\Q \, | \, |t_1(x)| \geq |\varpi|^k\right\} \;.$$
Note that $$ U=\bigcup_k V_k \textrm{ and } U=\bigcup_k \bigcup_{W\in \cV_s (V_k)} W \;.$$
We introduce also
        $$\cE^{\dagger}_k =\varinjlim_{W \in \cV_s (V_k)}\sp_{*}j_{W*}\cO_W \;,$$
where $j_W$ : $W \ra \frX_{0,\Q}$ is the inclusion in $\frX_{0,\Q}$ of a strict neighborhood $W$ of $V_k$.
We have an inclusion
$V_{k}\subset V_{k+1}$ and $V_{k+1}$ is a strict neighborhood of $V_{k}$, so that $\cV_s (V_{k+1})\subset \cV_s (V_k)$.
As a consequence, for any $k$, there is a canonical
morphism $\cE^{\dagger}_{k+1}\ra \cE^{\dagger}_{k}$. Moreover, since $\sp_*$ commutes with projective limits, we have
$$  \sp_{*}j_*\cO_{U_L}=\varprojlim_k \cE^{\dagger}_{k} \;.$$

The proposition will follow from the
\begin{lemma}\label{lemEk} \begin{enumerate}\item The sheaf $\cE^{\dagger}_k$ is a coherent $\sD^{\dagger}_{\frX_0,k}$-module.
              \item The canonical map $\cE^{\dagger}_{k+1}\ra \cE^{\dagger}_k$ induces a canonical isomorphism
                 of coherent $\sD^{\dagger}_{\frX_0,k}$-modules,
                 $$\sD^{\dagger}_{\frX_0,k}\ot_{\sD^{\dagger}_{\frX_0,k+1}}\cE^{\dagger}_{k+1}\simeq \cE^{\dagger}_k \;.$$
               \end{enumerate}
\end{lemma}
{\it Proof.} Let $W$ be admissible open in $\frX_L$. Then $W$ is the generic fiber of some Zariski open $\frW'$ of $\frX'$
where $\pr: \frX'\ra \frX_0$ is an admissible blow-up of $\frX_0$. Denote by $j'$ the inclusion : $\frW' \hra \frX'$. Then $j'_* \cO_{\frW',\Q}$ is a $\sD^{\dagger}_{\frX',k'}$-module, for $k'\geq k_{\frX'}$, so that the sheaf
$\sp_* j_{W*}\cO_W=\pr_*j'_* \cO_{\frW',\Q}$  has an action of $\sD^{\dagger}_{\frX_0,k'}$ as
$\sD^{\dagger}_{\frX_0,k'}=\pr_*\sD^{\dagger}_{\frX',k'}$ by ~\ref{prop-exactdirectimage}. In particular,
the sheaf $\sp_* j_{W*}\cO_W$ is a
$\sD_{\frX_0,\infty}$-module for any admissible open $W$. As a consequence, the sheaf $\cE^{\dagger}_k$ has a structure of
$\sD_{\frX_0,\infty}$-module as well.
Let us check locally that this structure extends to a structure of $\sD^{\dagger}_{\frX_0,k}$-module. Let $\frV=\Spf A\subset
\frX_0 $ be affine open in $\frX_0$ such that $\frZ \bigcap \frV=V(t_1)$ where $t_1$ is a local coordinate on
$\frV$. Then we have the following description, where $A_L=A\ot L$ is an affinoid algebra,
$$\cE^{\dagger}_k(\frV)=\left\{\sum_{\nu \geq 0}a_\nu \varpi^{k\nu}t_1^{-\nu-1}, a_{\nu}\in A_L \, | \,
             \exists C>0, \eta <1  \, | \,|a_\nu| < C \eta^{\nu}\right\} \;.$$
Denote by $\der_1$ the derivation corresponding to the coordinate $t_1$, and $\der_2,\ldots,\der_M$ the other
derivations. Let us denote $D^{\dagger}_k=\sD^\dagger_{\frX_0,k}(\frV)$, then we have the following description using ~\ref{ddagkX}
$$ D^{\dagger}_k=
\left\{\sum_{\unu}a_{\unu}\vpi^{k|\unu|}\uder^{[ \unu ]}\,|\, a_{\unu}\in A_L,
\textrm{ and } \exists C>0, \eta<1 \, | \, |a_{\unu}|< C \eta^{|\unu|} \right\} \;.$$
To prove the lemma it is thus enough to check the
\begin{lemma} \label{lem2ex2}There is a presentation
$$ \xymatrix @R-25pt {   {D^{\dagger}_k}^M \ar@{->}[r]^{\psi} & D^{\dagger}_k\ar@{->}[r]^{\varphi}
 & \cE^{\dagger}_k(\frV)\ar@{->}[r] & 0 \\
                   (P_1, \ldots,P_M)\ar@{->}[r] & P_1 \der_1 t_1+ \sum_{i=2}^M P_i \der_i & &  \\
                        &  P\ar@{->}[r] & P\cdot\frac{1}{t_1} \;.&  }$$
\end{lemma}
{\it Proof.} Again, we follow the proof by Berthelot ~\cite[4.4.2]{Berthelot_Trento} of the analogous statement for
arithmetic
differential operators. It is clear that $\im(\psi)\subset \ker(\varphi)$. Observe that
$$ \der_1^{[\nu_1]}\cdot t_1^{-1}=(-1)^{\nu_1}t_1^{-\nu_1-1} \;.$$
Let $h=\sum a_\nu \varpi^{k\nu}t_1^{-\nu-1} \in \cE^{\dagger}_k(\frV)$, then $P=\sum_{\nu}(-1)^{\nu}\varpi^{k\nu}a_\nu
\der_1^{[\nu]}$ belongs to $D^{\dagger}_k$ and $P(1/t_1)=h$, so that $\varphi$ is surjective. Let now $P\in
\ker(\varphi)$, then, applying repeatedly lemma~\ref{div1}, we see that modulo $\im(\psi)$, $P $ can be written
$P=\sum_{\nu} \varpi^{k\nu}a_\nu\der_1^{[\nu]} $, with coefficients $a_{\nu}\in A_L$ such that $\sum_{\nu} \varpi^{k\nu}(-1)^{\nu}a_\nu
(t_1)^{-\nu-1}=0 \in \cE^{\dagger}_k(\frV)$.
Moreover, there exist $C>0, \eta<1$ such that $|a_{\nu}|<C\eta^{\nu}$ where $|\cdot |$ is a Banach norm
on $A_L$. Denote $$ b_j=\sum_{\nu=0}^j a_{\nu} \varpi^{k\nu} t_1^{j-\nu} \;.$$
Let us now state the following
\begin{aux}\label{lem3ex2} There exist $C'>0, \eta'<1$ such that $|b_{j}|<C'|\varpi|^{kj}{\eta'}^{j}$.
\end{aux}
\begin{proof} Berthelot proved this lemma for $k=0$ in ~\cite[4.2.1]{Berthelot_Trento}.
 Let us check that the proof can be adapted to any $k$. Since $\frV$
is smooth, the affinoid algebra $A_L$ is reduced, so that the spectral semi-norm is a norm and defines the Banach topology on $A_L$. All Banach norms being equivalent, we can use this norm to prove the statement, which we keep on denoting by
$|\cdot|$. Let $\frV_\Q=\Spm A_L$ be the
generic fiber of $\frV$, seen as rigid analytic space. Let $\eta'>\eta$, such that $\eta'<1$ and some power of $\eta'$
lies in the valuation group of $L$. We consider the following admissible cover of $\frV_\Q$ by open $V_1$ and $V_2$
defined by
$$ V_1=\left\{ x\in \frV_\Q\, | \, |t_1(x)|\leq |\varpi|^k \eta'\right\} \textrm{ and }
            V_2=\left\{ x\in \frV_\Q\, |\,|\varpi|^k \eta' \leq |t_1(x)|\leq 1\right\} \;.$$
It is enough to bound the spectral norm of the $b_j$ on each of this affinoid open.
As $A_L$ is reduced, $\Ga(V_1,\cO_{\frX_L})$, resp. $\Ga(V_2,\cO_{\frX_L})$,
 is reduced as well by Corollary 10
of~\cite[7.3.2]{BGR}, so that the spectral norm induced a norm on these two affinoid open sets. If $x\in V_1$, then
$$ |b_j(x)| = |\sum_{\nu=0}^j \varpi^{k\nu} a_{\nu} t_1^{j-\nu}| \leq C |\varpi|^{kj}{\eta'}^j \;.$$
Consider $$B=\left\{\sum_{\nu \geq 0}a_\nu \varpi^{k\nu}t_1^{-\nu-1}, a_{\nu}\in A_L \, | \,
             |a_{\nu}|\ra 0 \right\} \;,$$
which is the Banach algebra of analytic functions on the affinoid $\{x \in \frV_\Q \, | \, |\varpi|^{k} \leq |t_1(x)| \leq
1 \}$. Obviously, $\cE^{\dagger}_k(\frV)\subset B$.
If $x\in V_2$, then $$|\varpi^{k\nu}a_{\nu}t_1^{-\nu}(x)|\leq C \left(\frac{\eta}{\eta'}\right)^{\nu} \;,$$
so that the series $\sum_{\nu} \varpi^{k\nu}(-1)^{\nu}a_\nu (t_1)^{-\nu-1}$ converges to some element
$b\in \Ga(V_2,\cO_{\frX_L})$. Moreover the image of this element $b$ in $B$ is zero. The support of $b$ is a closed
affinoid subset of $V_2$, and at each point $x$ of this support, $|t_1(x)|<|\varpi^{k}|$. By the maximum principle,
increasing $\eta'$ if necessary, provided that $\eta'<1$,
 we can assume that the support is contained in $\{x \, |\, |t_1(x)|\leq \,|\varpi|^{k}
(2\eta'-1) \}$,
so that $h=0$ restricted to $V_2$. Then we have the following upper bound for $x\in V_2$
$$ |b_j(x)|=|\sum_{\nu \geq j+1}\varpi^{k\nu}a_{\nu}(t_1(x))^{j-\nu}|\leq C |\varpi|^{kj} {\eta'}^j  \;.$$
\end{proof}
Let us come back to the proof of ~\ref{lem2ex2}. We need to check that $P=\sum_{\nu} \varpi^{k\nu}a_\nu\der_1^{[\nu]}$,
such that $\sum_{\nu} \varpi^{k\nu}(-1)^{\nu}a_\nu (t_1)^{-\nu-1}=0 \in \cE^{\dagger}_k(\frV)$, belongs to $\im(\psi)$.
Let us define
$$ b_j=(-1)^{j+1}\sum_{\nu=0}^{j-1} (-1)^{\nu}a_{\nu} \varpi^{k\nu} t_1^{j-\nu-1} \;,$$
and $$Q=\sum_{j\geq 0}b_j \der_1^{[j]} \;,$$
that belongs to $D^{\dagger}_k$ thanks to \ref{lem3ex2}. Berthelot checked at the end of the proof of
~\cite[4.2.1]{Berthelot_Trento}, that $P=Qt_1$. But since this is true in $D^{\dagger}_0$, this is also true in
$D^{\dagger}_k$ and $P=Qt_1$. By hypothesis, $b_0=0$, and by the lemma ~\ref{div1}, this implies that there exists $Q_1\in D^{\dagger}_k$
such that $Q=Q_1\der_1$. We finally conclude that $P=Q_1 \der_1 t_1$ and that $P\in \im(\psi)$.

This presentation proves (i) and (ii) of the lemma~\ref{lemEk} and completes the proof of ~\ref{coad2}.

\end{proof}


\end{document}